\documentclass[12pt]{amsart}
\usepackage[usenames]{color}
\usepackage{amsfonts,amssymb,amsrefs,comment,url,amscd,graphicx} 
\usepackage[all]{xy}
\usepackage{a4wide}
\usepackage{hyperref}

\newcommand{\Irr}{\operatorname{Irr}}
\newcommand{\Cusp}{\Irr_c}
\newcommand{\leunq}{\trianglelefteq}
\newcommand{\xbasis}{x}
\newcommand{\ssf}{\operatorname{ss}}
\newcommand{\one}{\mathbf{1}}
\newcommand{\cnt}{\mathbf{C}}
\newcommand{\stb}{\mathbf{G}}
\newcommand{\repn}{\mathbf{A}}
\newcommand{\Vect}{\mathbf{V}}
\newcommand{\grdim}{\operatorname{grdim}}
\newcommand{\pr}{\operatorname{pr}}
\newcommand{\canJ}{\mathfrak{p}}
\newcommand{\IrrS}{\Irr^\square}
\newcommand{\n}{\mathfrak{n}}
\newcommand{\embd}{\imath}
\newcommand{\srjct}{\jmath}
\newcommand{\mult}{\mu}
\newcommand{\rk}{\operatorname{rk}}

\newcommand{\Aut}{\mathbb{G}}
\newcommand{\LeviAut}{\mathbb{M}}
\newcommand{\ParAut}{\mathbb{P}^{\embd}}
\newcommand{\ParAutt}{\mathbb{P}^{\embd'}}
\newcommand{\UnipAut}{\mathbb{U}^{\embd}}
\newcommand{\UnipAutt}{\mathbb{U}^{\embd'}}
\newcommand{\levistb}{\mathbf{M}}
\newcommand{\parstb}{\mathbf{P}^{\embd}}
\newcommand{\parstbb}{\mathbf{P}^{\embd'}}
\newcommand{\unipstb}{\mathbf{U}^{\embd}}
\newcommand{\unipstbb}{\mathbf{U}^{\embd'}}
\newcommand{\grend}{\mathbb{E}}
\newcommand{\grendpres}{\grend^{\embd}}
\newcommand{\grendpress}{\grend^{\embd'}}
\newcommand{\cntpres}{\cnt_{\n}^{\embd}}
\newcommand{\cntpress}{\cnt_{\n}^{\embd'}}
\newcommand{\cntbw}{\cnt_{\n}^{\embd,\embd'}}
\newcommand{\Specvar}{\mathfrak{Y}}
\newcommand{\maxdel}{\Delta}
\newcommand{\mxprt}{{\operatorname{mx}}}
\newcommand{\nmxprt}{{\operatorname{nmx}}}
\newcommand{\cs}{\mathfrak{s}}
\newcommand{\comp}{\mathfrak{C}}
\newcommand{\Prodcomp}{\mathfrak{Z}}
\newcommand{\JH}{\operatorname{JH}}
\newcommand{\Hom}{\operatorname{Hom}}
\newcommand{\Gr}{\mathcal{G}}
\newcommand{\lderiv}[2]{\,_{#1}{#2}}
\newcommand{\OO}{\mathcal{O}}

\newcommand{\commvar}{\mathfrak{X}}
\newcommand{\cl}[1]{\overline{#1}}                            
\newcommand{\Lie}{\operatorname{Lie}}
\newcommand{\N}{\mathbb{N}}
\newcommand{\C}{\mathbb{C}}

\newcommand{\Z}{\mathbb{Z}}

\newcommand{\id}{\operatorname{id}}                            
\newcommand{\End}{\operatorname{End}}

\newcommand{\Seg}{\mathcal{SEG}}
\newcommand{\std}[1]{\mathfrak{z}(#1)}
\newcommand{\cstd}[1]{\lambda(#1)}
\newcommand{\MS}[1]{\N({#1})}
\newcommand{\Reps}{\mathcal{R}}
\newcommand{\abs}[1]{\left|{#1}\right|}
\newcommand{\rest}{\big|}
\newcommand{\dembed}{\Sigma_{\embd,\embd'}}
\newcommand{\tr}{\operatorname{tr}}
\newcommand{\GL}{\operatorname{GL}}
\newcommand{\supp}{\operatorname{supp}}
\newcommand{\m}{\mathfrak{m}}
\newcommand{\soc}{\operatorname{soc}}                          
\newcommand{\SI}{SI}                                           
\newcommand{\lshft}[1]{\overset{\leftarrow}{#1}\vphantom{#1}}               
\newcommand{\rshft}[1]{\overset{\rightarrow}{#1}\vphantom{#1}}              
\newcommand{\rlshft}[1]{\overset{\leftrightarrow}{#1}\vphantom{#1}}              
\newcommand{\LI}{\operatorname{SA}}                                          
\newcommand{\SLI}{\operatorname{SSA}}                                          
\newcommand{\LC}{\operatorname{SG}}                                          
\newcommand{\IC}{\operatorname{IG}}
\newcommand{\GLS}{GLS}
\newcommand{\Lieg}{\mathfrak{g}}
\renewcommand{\Im}{\operatorname{Im}}
\newcommand{\sprt}
{\mathrel{\reflectbox{\rotatebox[origin=c]{315}{$\pitchfork$}}}}

\newtheorem*{theorem}{Theorem}
\newtheorem*{lemma}{Lemma}
\newtheorem*{conjecture}{Conjecture}
\newtheorem*{proposition}{Proposition}
\newtheorem*{corollary}{Corollary}
\newtheorem*{example}{Example}
\newtheorem*{definition}{Definition}
\newtheorem*{remark}{Remark}
\newtheorem*{question}{Question}

\numberwithin{equation}{section}

\newcommand{\Erez}[1]{{\color{magenta}{#1}}}

\begin{document}

\title[Conjectures and results about parabolic induction]
{Conjectures and results about parabolic induction of representations of $\GL_n(F)$}

\author{Erez Lapid}
\address{Department of Mathematics, Weizmann Institute of Science, Rehovot 7610001, Israel}
\email{erez.m.lapid@gmail.com}
\author{Alberto M\'inguez}
\address{Department of Mathematics, University of Vienna, Oskar-Morgenstern-Platz 1, 1090 Wien, Austria}
\email{alberto.minguez@univie.ac.at}
\thanks{A.M. was partially funded by grant P12-FQM-2696.}
\date{}
\maketitle

\begin{abstract}
In 1980 Zelevinsky introduced commuting varieties whose irreducible components classify
complex, irreducible representations of the general linear group over a non-archimedean local field
with a given supercuspidal support.
We formulate geometric conditions for certain triples of such components and
conjecture that these conditions are related to irreducibility of parabolic induction.
The conditions are in the spirit of the Geiss--Leclerc--Schr\"oer condition that occurs
in the conjectural characterization of $\square$-irreducible representations.
We verify some special cases of the new conjecture and check that the geometric and representation-theoretic conditions are
compatible in various ways.
\end{abstract}

\setcounter{tocdepth}{1}
\tableofcontents

\section{Introduction}
\subsection{}
Let $F$ be a local non-archimedean field.
The smooth, complex representations of $\GL_n(F)$, $n\ge0$
were studied in depth in the seminal work of Bernstein and Zelevinsky \cites{MR0425031, MR0579172, MR584084}.
In particular, Zelevinsky obtained a classification of the set $\Irr=\cup\Irr_n$ of irreducible
representations of $\GL_n(F)$, $n\ge0$ in terms of multisegments -- an essentially combinatorial object.
Denote by $Z(\m)$ the irreducible representation corresponding to a multisegment $\m$.
A basic property is that for any $\pi=Z(\m), \pi'=Z(\m')\in\Irr$, the representation
$Z(\m+\m')$ occurs with multiplicity one in the Jordan--H\"older sequence of the representation
$\pi\times\pi'$ parabolically induced from $\pi\otimes\pi'$.
In particular, if $\pi\times\pi'$ happens to be irreducible, then it is equivalent to $Z(\m+\m')$.
However, the problem of characterizing the irreducibility of $Z(\m)\times Z(\m')$ was left open.

An important special case is when $\pi'=\pi$. We say that a representation $\pi$ is $\square$-irreducible if
$\pi\times\pi$ is irreducible.
It is a well-known consequence of a theorem of Bernstein \cite{MR748505} that every \emph{unitarizable} representation
$\pi\in\Irr$ is $\square$-irreducible. However, our emphasis in this paper is on \emph{general} irreducible representations.
The first example of a non-$\square$-irreducible $\pi\in\Irr$ was given by Leclerc \cite{MR1959765}.

The analogues of $\square$-irreducible representations in different (but related) contexts (where a different terminology is used)
play an important role in the monoidal categorification of cluster algebras, a problem considered
by Hernandez--Leclerc \cites{MR2682185, MR3077685} and more recently by  Kang--Kashiwara--Kim--Oh \cite{MR3758148}.
We will not say anything about this problem here except to mention that
the argument of \cite{MR3314831}, adapted to the case at hand, shows that
if $\pi$ is $\square$-irreducible, then for any $\pi'\in\Irr$,
the socles $\soc(\pi\times\pi')$ and $\soc(\pi'\times\pi)$ are irreducible and each occurs with multiplicity one in
the Jordan--H\"older sequence of $\pi\times\pi'$ \cite{MR3866895}.
It follows that if at least one of $\pi=Z(\m)$ and $\pi'=Z(\m')$ is $\square$-irreducible, then the irreducibility
of $Z(\m)\times Z(\m')$ is equivalent to the conjunction of the condition
\begin{equation} \label{eq: LI}
Z(\m+\m')\hookrightarrow Z(\m)\times Z(\m')
\end{equation}
and its symmetric counterpart (interchanging $\m$ and $\m'$).

On the other hand, Zelevinsky proposed a geometric framework for his classification \cites{MR617466, MR783619} and
in this context, the work of Geiss--Leclerc--Schr\"oer \cite{MR2144987} suggests a simple, conjectural geometric criterion for
the $\square$-irreducibility of $Z(\m)$.
More precisely, fix a finite-dimensional graded vector space $V$ over $\C$ and let $\Aut(V)$ be the group of
grading preserving automorphisms of $V$.
Then, the multisegments with supercuspidal support determined by the graded dimension of $V$,
parameterize the finitely many $\Aut(V)$-orbits on the space of
linear transformations $A:V\rightarrow V$ that are homogeneous of degree $1$ (or alternatively, $-1$).
They also parameterize the irreducible components of the variety $\commvar(V)$ of pairs
of commuting linear transformation $A,B:V\rightarrow V$ of degree $1$ and $-1$ respectively.
Denote by $\comp_\m$ irreducible component corresponding to $\m$.
The conjecture of \cite{MR3866895}, inspired by \cite{MR2822235}*{Conjecture 18.1},
is that $Z(\m)$ is $\square$-irreducible if and only if $\comp_\m$ admits an open $\Aut(V)$-orbit.
(See \S\ref{sec: GLS} for more details.)
This condition can be explicated fairly concretely and can be efficiently
checked, at least probabilistically -- see \cite{MR3866895} where also a special case of this conjecture was proved,
and was related to singularities of Schubert varieties of type $A$.

\subsection{}
The main goal of this paper is to propose and substantiate a conjectural geometric criterion 
for the irreducibility of $Z(\m)\times Z(\m')$, provided that at least one of $Z(\m)$ and $Z(\m')$ is $\square$-irreducible.
More precisely, suppose that $\comp_{\m}\subset\commvar(V)$ and $\comp_{\m'}\subset\commvar(V')$.
Then, $\comp_{\m+\m'}\subset\commvar(V\oplus V')$ and we can embed $\comp_{\m}\times\comp_{\m'}$ diagonally in $\comp_{\m+\m'}$.
\begin{conjecture} \label{conj: intro1}
Suppose that at least one of $Z(\m)$ and $Z(\m')$ is $\square$-irreducible. Then
\[
Z(\m)\times Z(\m')\text{ is irreducible if and only if }
\Aut(V\oplus V')\cdot(\comp_{\m}\times\comp_{\m'})\text{ is dense in }\comp_{\m+\m'}.
\]
\end{conjecture}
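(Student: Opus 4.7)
\medskip
\noindent\textbf{Proof plan.} The natural starting point is to use the reformulation recalled just before the conjecture: since one of $Z(\m)$, $Z(\m')$ is $\square$-irreducible, the irreducibility of $Z(\m)\times Z(\m')$ is equivalent to the conjunction of the embedding \eqref{eq: LI} and its mirror. My plan would therefore be to establish, on each side separately, an equivalence between a half of the geometric density condition and one of the two embeddings. By symmetry it suffices to handle one of them, say \eqref{eq: LI} itself, and one would try to show that $Z(\m+\m')\hookrightarrow Z(\m)\times Z(\m')$ holds if and only if some open subset of $\comp_{\m+\m'}$ is met by $\Aut(V\oplus V')\cdot(\comp_{\m}\times\comp_{\m'})$.

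The geometric side is best analyzed through the preprojective-algebra viewpoint on $\commvar(V)$. Points of $\commvar(V\oplus V')$ are modules for the preprojective algebra $\Lambda$ attached to the underlying quiver with graded dimension $\dim(V\oplus V')$; the components $\comp_\m$ correspond to the closures of the indecomposable strata determined by Zelevinsky's classification. The saturation $\Aut(V\oplus V')\cdot(\comp_{\m}\times\comp_{\m'})$ is exactly the locus of modules admitting a direct-sum decomposition $M\oplus M'$ with $(M,M')\in\comp_{\m}\times\comp_{\m'}$. Thus the density condition in the conjecture should translate, via Crawley-Boevey--Schr\"oer/Schofield-type arguments on generic decompositions, into the vanishing of $\operatorname{Ext}^1_{\Lambda}(M,M')$ (or of a suitable restricted Ext, given the oriented flavour of the commuting variety) on an open set of $\comp_{\m}\times\comp_{\m'}$; the $\square$-irreducibility hypothesis together with the GLS criterion guarantees that ``generic'' is unambiguous on (at least) the $\m$-side.

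The main link to representation theory would then be to compare this generic Ext-vanishing condition with the embedding \eqref{eq: LI}. In the analogous contexts of quantum affine algebras and monoidal categorifications studied by Hernandez--Leclerc and Kang--Kashiwara--Kim--Oh, such a comparison is known and underlies their R-matrix constructions; adapting their machinery, together with the Bernstein--Zelevinsky geometric lemma for the asymptotics of $Z(\m)\times Z(\m')$, one should be able to identify the two conditions. As evidence, one would first verify the equivalence on the tractable classes of multisegments (segments, ladders, separated supports, and the cases already handled in the authors' previous work), and then on the combinatorial building blocks that appear in the recursive structure of the Zelevinsky classification, so as to propagate the statement by induction on the total length of $\m+\m'$.

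The main obstacle is the bridge in the previous paragraph: the precise dictionary between generic $\operatorname{Ext}^1$-vanishing on the preprojective-algebra side and the existence of an embedding of $Z(\m+\m')$ into $Z(\m)\times Z(\m')$ is exactly the missing ingredient that makes the $\square$-irreducibility conjecture open in full generality, so one cannot hope to circumvent it here. A secondary obstacle is that even granting GLS for $\m$, the component $\comp_{\m'}$ need not have an open $\Aut(V')$-orbit, so one must work with families of orbits in $\comp_{\m'}$ and control how the $\Aut(V\oplus V')$-action mixes them; establishing an appropriate upper-semicontinuity of $\dim\operatorname{Ext}^1(M,M')$ on the $\m'$-side, and combining it with the openness on the $\m$-side, is the most delicate technical point I would expect to encounter.
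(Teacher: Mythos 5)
You have attempted to prove a statement that the paper itself leaves open: Conjecture \ref{conj: intro1} is a \emph{conjecture}, not a theorem, and the paper offers no proof of it. What the paper does is (i) reduce it to Conjecture \ref{conj: intro2} (equivalently, Conjecture \ref{conj: weakform}) by exactly the first step you propose, namely Corollary \ref{cor: irredcond} together with the fact that $\IC(\m,\m')$ is the conjunction of $\LC(\m,\m')$ and $\LC(\m',\m)$ (Proposition \ref{prop: LCint}); and (ii) verify the resulting equivalence of $\LI(\m,\m')$ with $\LC(\m,\m')$ only in special cases (ladder multisegments, unitarizable $Z(\m)$, and sums of good multisegments, via Theorem \ref{thm: mainevid} and Propositions \ref{prop: mm-}, \ref{prop: splitdisj}, \ref{prop: 3ms}, \ref{prop: rhoext}), using Jacquet-module and explicit linear-algebra arguments rather than any Ext-theoretic dictionary. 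Your proposal follows the same reduction but then, as you yourself concede in the final paragraph, the bridge between the geometric density condition and the embedding \eqref{eq: LI} is missing. That bridge is not a technical loose end one can defer: it \emph{is} the content of the conjecture, so the proposal is a plan with an acknowledged gap, not a proof.

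Two further points in your sketch would fail even as stated. First, the Crawley-Boevey--Schr\"oer generic-decomposition criterion tells you when the closure of the direct-sum locus $\Aut(V\oplus V')\cdot(\comp_{\m}\times\comp_{\m'})$ is an irreducible \emph{component} of $\commvar(V\oplus V')$; it does not identify that component with $\comp_{\m+\m'}$, and deciding which component one lands in is precisely what the paper's conditions $\LC$ and $\IC$ encode through the explicit coordinates of Propositions \ref{prop: LCint} and \ref{prop: LCint2} (note also that the paper's formulation of Conjecture \ref{conj: intro2} uses the open strata $\comp_{\m}^{\circ}$, $\comp_{\m'}^{\circ}$, not the full components). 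Second, your appeal to the GLS criterion to make ``generic'' unambiguous presupposes Conjecture \ref{conj: GLS}, which is itself open and which the paper carefully avoids assuming; likewise, the Hernandez--Leclerc and Kang--Kashiwara--Kim--Oh machinery does not supply a known equivalence between generic $\operatorname{Ext}^1$-vanishing on the preprojective-algebra side and irreducibility (or the embedding \eqref{eq: LI}) on the representation-theoretic side --- in those settings the analogous statements are also conjectural. Consequently the proposed induction on the length of $\m+\m'$ cannot be propagated, because the inductive step is the unproved equivalence itself.
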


\subsection{}
More generally, under the above assumption we give a conjectural geometric criterion for \eqref{eq: LI}.
(See \S\ref{sec: newconj} for more details.)
Recall that $\comp_\m$ comes with a distinguished $\Aut(V)$-invariant Zariski open subset $\comp_\m^\circ$
(containing the interior of $\comp_\m$ in $\commvar(V)$, but not equal to it in general).
Let $\Specvar$ be the subvariety of $\comp_{\m+\m'}$ consisting of the pairs $(A,B)$ satisfying the following
two conditions.
\begin{enumerate}
\item $A(V'),B(V')\subset V'$, and $(A\rest_{V'},B\rest_{V'})\in\comp_{\m'}^{\circ}$.
\item The induced pair on the quotient $V$ belongs to $\comp_{\m}^{\circ}$.
\end{enumerate}

\begin{conjecture} \label{conj: intro2}
Suppose that at least one of $Z(\m)$ and $Z(\m')$ is $\square$-irreducible. Then
\[
Z(\m+\m')\hookrightarrow Z(\m)\times Z(\m')\text{ if and only if }
\Aut(V\oplus V')\cdot\Specvar\text{ is dense in }\comp_{\m+\m'}.
\]
\end{conjecture}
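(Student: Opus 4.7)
The plan is to reformulate both sides of the stated equivalence as parallel statements about the Jacquet module $J$ of $Z(\m+\m')$ along the standard maximal parabolic of block sizes $(\abs{\m},\abs{\m'})$, and then exploit the $\square$-irreducibility hypothesis to match them up. By Frobenius reciprocity, the embedding \eqref{eq: LI} is equivalent to the existence of a non-zero Levi-homomorphism $J\to Z(\m)\otimes Z(\m')$, and since the target is irreducible this is the same as the existence of a surjection $J\twoheadrightarrow Z(\m)\otimes Z(\m')$. When, say, $Z(\m')$ is $\square$-irreducible, the socle-uniqueness recalled in the introduction forces this $\Hom$ space to be at most one-dimensional, so \eqref{eq: LI} becomes a clean ``zero versus non-zero'' condition.

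For the geometric side, I would unpack $\Specvar$ as the preimage, inside the variety of commuting pairs $(A,B)\in\commvar(V\oplus V')$ preserving $V'$, of the distinguished open subset $\comp_\m^\circ\times\comp_{\m'}^\circ$ under the natural ``restriction and induced quotient'' map. The density condition then states that a generic commuting pair in $\comp_{\m+\m'}$ admits, after conjugation by $\Aut(V\oplus V')$, a $V'$-stable structure whose restriction to $V'$ and induced quotient on $V$ lie in $\comp_{\m'}^\circ$ and $\comp_\m^\circ$ respectively.

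To bridge the two reformulations I would invoke the Lusztig--Zelevinsky dictionary between multisegments and perverse sheaves (or constructible functions) on $\commvar$, under which parabolic induction corresponds to a convolution product carried by a flag-type variety $\Gr$ parameterizing $V'$-stable commuting pairs; the natural map $\Gr\to\commvar(V\oplus V')$ is $\Aut(V\oplus V')$-equivariant and its image is exactly the closure of $\Aut(V\oplus V')\cdot\Specvar$. A dimension count, together with transversality of the restriction map $\Specvar\to\comp_\m^\circ\times\comp_{\m'}^\circ$, should identify the density condition with the non-vanishing of the top-stratum multiplicity of $Z(\m)\otimes Z(\m')$ inside the perverse decomposition of the pushforward -- precisely the Jacquet module quotient of Step~1.

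The hard part will be to upgrade the density condition, which a priori only detects $Z(\m)\otimes Z(\m')$ as \emph{some} composition factor of $J$, to the much stronger statement that it occurs as a \emph{quotient}. This is where $\square$-irreducibility must enter decisively: by the socle-uniqueness arguments recalled in the introduction, one expects that when $Z(\m')$ is $\square$-irreducible, the only composition factor of $J$ with the right cuspidal data to contribute to a quotient of the correct form already sits at the top of $J$. Making this precise -- perhaps by reducing, via induction on the length of $\m'$, to the case where $\m'$ is a single segment, where $J$ can be analysed explicitly through the Bernstein--Zelevinsky geometric lemma -- seems to be the decisive technical step, and is likely the reason the statement is still only conjectural.
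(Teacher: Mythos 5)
The statement you are trying to prove is Conjecture \ref{conj: intro2} of the paper (equivalently, the first part of Conjecture \ref{conj: weakform}): the authors do not prove it, and explicitly say that new ideas are likely needed; what the paper supplies is only supporting evidence (ladder and unitarizable cases, closure properties such as Theorem \ref{thm: mainevid} and Propositions \ref{prop: mm-}--\ref{prop: rhoext}), obtained by explicit Jacquet-module combinatorics and by linear-algebra analysis of the vectors $v^{\m,\m'}_{i,j}$ from Proposition \ref{prop: LCint2}, not by any sheaf-theoretic dictionary. So your text cannot be matched against a proof in the paper; it has to stand on its own, and it does not.

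The genuine gap is the bridge in your third and fourth steps. Your first step (Frobenius reciprocity: $\LI(\m,\m')$ holds iff $J(Z(\m+\m'))$ surjects onto $Z(\m)\otimes Z(\m')$) and your unpacking of $\Specvar$ are correct but are just restatements of the two sides. The claimed ``Lusztig--Zelevinsky dictionary'' does not deliver what you need: the perverse-sheaf picture behind Zelevinsky's classification lives on the graded nilpotent spaces $\rshft\grend(\Vect)$ and computes Jordan--H\"older multiplicities of standard modules (Kazhdan--Lusztig numbers); there is no known mechanism identifying the density of $\Aut(V\oplus V')\cdot\Specvar$ in $\comp_{\m+\m'}$ --- an orbit-closure statement on the commuting variety $\commvar(V\oplus V')$ --- with any multiplicity in a pushforward, let alone with the existence of a \emph{quotient} of the Jacquet module. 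Indeed, composition-series data is in principle insufficient: as the paper stresses (Leclerc's example \eqref{eq: Lex} and Remark \ref{rem: mult>1}), questions like $\LI(\m,\m)$ concern submodule structure that Jordan--H\"older multiplicities do not determine, and the authors state they cannot decide such conditions in practice. Your final step, where $\square$-irreducibility is supposed to ``upgrade'' a composition-factor statement to a quotient statement, is exactly the open content of the conjecture, and you offer no argument for it beyond the expectation that socle-uniqueness should help. In short, the proposal is a plausible programme outline, not a proof, and its decisive step coincides with what the authors themselves leave open.
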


This conjecture implies Conjecture \ref{conj: intro1}.

The geometric conditions in the two conjectures are in the spirit of the Geiss--Leclerc--Schr\"oer condition.
They are equally concrete and can be checked (at least probabilistically) very efficiently on a computer.

\subsection{}
We will give supportive evidence for Conjecture \ref{conj: intro2}, ergo, Conjecture \ref{conj: intro1}.
(See \S\ref{sec: mainev} for more details.)
For instance, we show that they hold whenever $Z(\m)$ is unitarizable (without restriction on $\m'$)
or if $\m$ is a ladder multisegment in the sense of \cite{MR3163355}.
For the latter, we use the results of \cite{MR3573961}.
In fact, the conjecture was forged as an attempt to explain the results of \cite{MR3573961} geometrically.
We also show that Conjecture \ref{conj: intro2} satisfies a number of non-trivial consistency checks.
While in all likelihood new ideas will be needed to establish the conjecture in general, we believe that
the attestation that we already have so far cannot be coincidental.

In the best-case scenario, Conjecture \ref{conj: intro2} may in fact hold without restriction on $\m$ and $\m'$.
However, we feel that at this stage it would be too parlous to postulate such a strong form of Conjecture \ref{conj: intro2}
since our evidence for this generality is indirect and far from conclusive.
The main reason is that we do not have a practical way to check the condition \eqref{eq: LI} independently in general.

In contrast, there are counterexamples for Conjecture \ref{conj: intro1} if we lift the assumptions on $\m$ and $\m'$.
In order to obtain a precise irreducibility criterion, we would need to characterize the condition
\[
Z(\m+\m')=\soc(Z(\m)\times Z(\m'))
\]
(which in general is stronger than \eqref{eq: LI})
and at present we do not have a conjectural geometric criterion for this.

In a different direction, a natural follow-up question, which we hope to study in the future, is to obtain a geometric insight
on $\soc(Z(\m)\times Z(\m'))$, assuming as before that at least one of $Z(\m)$ and $Z(\m')$ is $\square$-irreducible.

Finally, we point out that our conjectures do not seem to lie in the scope of the Langlands program.

The contents of the paper are as follows.
After introducing the relevant notation and the Zelevinsky classification (\S\ref{sec: notation})
we recall the notion of $\square$-irreducible representations and basic facts
about irreducibility of parabolic induction (\S\ref{sec: LI}).
In \S\ref{sec: GLS} we recall the geometric condition of Geiss--Leclerc--Schr\"oer and the conjecture
relating it to $\square$-irreducibility.
The heart of the paper is \S\ref{sec: newconj} where we state Conjectures \ref{conj: intro1} and \ref{conj: intro2}
and analyze the pertinent geometric conditions.
In \S\ref{sec: mainev} we state results confirming these conjectures in special cases and provide
several consistency checks for them.
These results are proved in \S\ref{sec: proofs} using Jacquet module techniques and other combinatorial tools
which are recalled in \S\ref{sec: prep}.

\subsection{Acknowledgment}
We would like to thank David Kazhdan for useful discussions.

The first-named author would like to thank the University of Vienna and the Institute for Mathematical Sciences,
National University of Singapore for their hospitality and support.

Both authors would like thank the University Institute of Mathematics Research of the University of Seville
for its hospitality and support.

\section{Notation and preliminaries} \label{sec: notation}
\subsection{}
Throughout the paper we fix a non-archimedean local field $F$ with normalized absolute value $\abs{\cdot}$.
For a non-negative integer $n$ let $\Reps_n$ denote the $\C$-linear, locally finite,
abelian category of complex, smooth, finite length (hence admissible)
representations of the group $\GL_n(F)$. Set
\[
\Reps=\oplus_{n\ge0}\Reps_n
\]
and denote the simple objects of $\Reps$ by
\[
\Irr=\coprod_{n\ge0}\Irr\Reps_n.
\]
In particular, write $\Irr\Reps_0=\{\one\}$.
We denote by
\[
\Cusp=\coprod_{n>0}\Cusp\Reps_n
\]
the subset of irreducible supercuspidal representations.
By abuse of notation we often write $\pi\in\Reps$ to mean that $\pi$ is an object of $\Reps$.

For $\tau,\pi\in\Reps$ we write $\tau\hookrightarrow\pi$ (resp., $\tau\twoheadrightarrow\pi$) if there exists an injective
(resp., surjective) morphism from $\tau$ to $\pi$.
If $\tau\in\Irr$ and $\pi\in\Reps$, we will write $\tau\le\pi$ for the condition that
$\tau$ occurs as a subquotient of $\pi$ (i.e., $\tau$ occurs in the Jordan--H\"older sequence $\JH(\pi)$ of $\pi$).
If $\tau$ occurs with multiplicity one in $\JH(\pi)$, then we will write $\tau\leunq\pi$.

As customary, normalized parabolic induction with respect to standard (block upper triangular) parabolic subgroup
will be denoted by $\times$. This is a bilinear biexact bifunctor
with associativity constraints given by induction in stages. In other words, $\times$ endows
$\Reps$ with the structure of a ring category with unit element $\one$.
The Grothendieck group
\[
\Gr=\oplus_{n\ge0}\Gr_n
\]
of $\Reps$ inherits a structure of a graded commutative ring.

For any $\pi\in\Reps_n$ and a character $\chi$ of $F^*$, we denote by $\pi\cdot\chi$ the representation obtained
from $\pi$ by twisting by the character $\chi\circ\det$. In particular, we write
\[
\rshft\pi=\pi\cdot\abs{\cdot},\ \ \lshft\pi=\pi\cdot\abs{\cdot}^{-1}.
\]
We denote by $\pi^\vee$ the contragredient of $\pi$ and by $\soc(\pi)$ the socle of $\pi$, i.e.,
the largest semisimple subobject of $\pi$. (If $\pi\ne0$, then $\soc(\pi)\ne0$.)
\begin{definition}
We say that $\pi\in\Reps$ is \emph{socle irreducible} (\SI) if $\soc(\pi)$ is irreducible and $\soc(\pi)\leunq\pi$.
\end{definition}

By the argument of \cite{MR863522}*{p. 173}, for any $\pi_1,\pi_2,\tau\in\Irr$ we have
\begin{equation} \label{eq: switch12}
\tau\hookrightarrow\pi_1\times\pi_2\iff\pi_2\times\pi_1\twoheadrightarrow\tau.
\end{equation}

For any set $A$ we denote by $\MS{A}$ the free commutative monoid generated by the elements of $A$.
It consists of finite formal sums of elements of $A$. The standard order on $\MS{A}$ will be denoted by $\le$.

We may view $\JH$ as a map from the objects of $\Reps$ to $\MS{\Irr}$.
Thus, for $\tau\in\Irr$ and $\pi\in\Reps$ we have $\rho\le\pi$ if and only if $\rho\le\JH(\pi)$ in $\MS{\Irr}$.

For any $\pi\in\Irr$ there exist $\rho_1,\dots,\rho_k\in\Cusp$ such that $\pi\le\rho_1\times\dots\times\rho_k$.
The supercuspidal support map
\[
\cs:\Irr\rightarrow\MS{\Cusp},\ \ \pi\mapsto\rho_1+\dots+\rho_k
\]
is well-defined and finite-to-one \cite{MR0579172}.

\subsection{Zelevinsky classification \cite{MR584084}}
A \emph{segment} is a nonempty finite set of the form
\[
\Delta=\{\rho_1,\dots,\rho_k\}
\]
where $\rho_i\in\Cusp$ and $\rho_{i+1}=\rshft{\rho}_i$ for all $i=1,\dots,k-1$. For any such $\Delta$ we set
\begin{gather*}
Z(\Delta):=\soc(\rho_1\times\dots\times\rho_k)\in\Irr,\ \
L(\Delta):=\soc(\rho_k\times\dots\times\rho_1)\in\Irr,\\
\supp(\Delta)=\rho_1+\dots+\rho_k\in\MS{\Cusp},\\
\Delta^\vee=\{\rho_k^\vee,\dots,\rho_1^\vee\},\ \ b(\Delta)=\rho_1, \ \ e(\Delta)=\rho_k,\\
\rshft\Delta=\{\rshft\rho_1,\dots,\rshft\rho_k\},\ \Delta^-=\{\rho_1,\dots,\rho_{k-1}\},\ \Delta^+=\{\rho_1,\dots,\rho_k,\rshft\rho_k\},\\
\lshft\Delta=\{\lshft\rho_1,\dots,\lshft\rho_k\},\ ^-\Delta=\{\rho_2,\dots,\rho_k\},\ ^+\Delta=\{\lshft\rho_1,\rho_1,\dots,\rho_k\}.
\end{gather*}
For compatibility, we also set $Z(\emptyset)=L(\emptyset)=\one$.
Let $\Seg$ be the set of all segments.
(Note that $^-\Delta,\Delta^-\in\Seg$ if and only if $k>1$; otherwise $^-\Delta=\Delta^-=\emptyset$.)
For $\Delta, \Delta'\in\Seg$ we write $\Delta\prec\Delta'$ (and say that $\Delta$ precedes $\Delta'$)
if $b(\Delta)\notin\Delta'$, $b(\Delta')\in\rshft\Delta$ and
$e(\Delta')\notin\Delta$. Thus,
\begin{equation} \label{eq: sprec}
\lshft{\Delta}\prec\Delta'\iff\Delta\prec\rshft{\Delta'}\iff b(\Delta')\in\Delta\text{ and }e(\Delta)\in\Delta'.
\end{equation}
We say that $\Delta,\Delta'\in\Seg$ are linked if either $\Delta\prec\Delta'$ or $\Delta'\prec\Delta$.
(Equivalently, $\Delta\cup\Delta'\in\Seg$ and $\Delta,\Delta'\subsetneq\Delta\cup\Delta'$.)

By definition, a \emph{multisegment} is an element of $\MS{\Seg}$.
We extend the maps $\Delta\mapsto\supp(\Delta)$ and $\Delta\mapsto\Delta^\vee$ to an additive map
\[
\supp:\MS{\Seg}\rightarrow\MS{\Cusp}
\]
and an additive involution $\m\mapsto\m^\vee$ on $\MS{\Seg}$, respectively.
Given $\m=\Delta_1+\dots+\Delta_k\in\MS{\Seg}$
we may enumerate the $\Delta_i$'s (in possibly more than one way) such that $\Delta_i\not\prec\Delta_j$ whenever $i<j$.
Then, the representation $\std{\m}=Z(\Delta_1)\times\dots\times Z(\Delta_k)$
is \SI\ and up to equivalence, depends only on $\m$. The main result of Zelevinsky is that the map
\[
\m\mapsto Z(\m):=\soc(\std{\m})
\]
is a bijection between $\MS{\Seg}$ and $\Irr$.
There is also a dual bijection given by $\m\mapsto L(\m):=\soc(\cstd{\m})$ where
$\cstd{\m}=L(\Delta_k)\times\dots\times L(\Delta_1)$ (under the same assumption on the order of the $\Delta_i$'s).
The latter is essentially the Langlands classification in this context.
We refer the reader to \cite{MR3573961}*{\S3} for a summary of the basic properties of these bijections.
In particular, $Z(0)=L(0)=\one$,
\begin{equation} \label{eq: contra}
Z(\m)^\vee=Z(\m^\vee)\text{ and }L(\m)^\vee=L(\m^\vee)\text{ for any $\m\in\MS{\Seg}$,}
\end{equation}
$\cs(Z(\m))=\cs(L(\m))=\supp\m$ and
\begin{equation} \label{eq: mult1}
Z(\m+\m')\leunq Z(\m)\times Z(\m')\text{ for any }\m,\m'\in\MS{\Seg}.
\end{equation}

The two bijections are related by Deligne--Lusztig type duality.
Let $^t$ denote the duality functor on $\Reps$ defined by Schneider--Stuhler in \cite{MR1471867},
composed with the contragredient (in order to make it covariant).
(See also \cite{MR3769724} for a more recent approach.)
Then
\begin{equation} \label{eq: ZtL}
Z(\m)^t=L(\m)
\end{equation}
for all $\m\in\MS{\Seg}$ and
\begin{equation} \label{eq: proddual}
(\pi\times\pi')^t=\pi'^t\times\pi^t\text{ for any }\pi,\pi'\in\Reps.
\end{equation}

Finally, let
\begin{equation} \label{def: MW}
^{\#}:\MS{\Seg}\rightarrow\MS{\Seg}
\end{equation}
be the bijection (in fact, involution) such that $L(\m)=Z(\m^\#)$.
Note that $^{\#}$ is not additive.

A combinatorial description of $\m^{\#}$ was given by M\oe glin--Waldspurger \cite{MR863522}.
We will recall it in \S\ref{sec: Zeleinv} below. (A different one was later on given in \cite{MR1371654}.)

\section{The conditions \texorpdfstring{$\LI(\m,\m')$}{bb} and \texorpdfstring{$\SLI(\m,\m')$}{cc};
\texorpdfstring{$\square$}{dd}-irreducibility} \label{sec: LI}
\subsection{}
Let $\m,\m'\in\MS{\Seg}$. The following properties will be our main concern.
\begin{definition}
\begin{enumerate}
\item We denote by $\LI(\m,\m')$ the following equivalent conditions (by \eqref{eq: switch12}, \eqref{eq: ZtL} and \eqref{eq: proddual}).
\begin{enumerate}
\item $Z(\m+\m')\hookrightarrow Z(\m)\times Z(\m')$.
\item $L(\m+\m')\hookrightarrow L(\m')\times L(\m)$.
\item $Z(\m')\times Z(\m)\twoheadrightarrow Z(\m+\m')$.
\item $L(\m)\times L(\m')\twoheadrightarrow L(\m+\m')$.
\end{enumerate}
\item We denote by $\SLI(\m,\m')$ the following equivalent conditions (by \eqref{eq: mult1} and \cite{MR3573961}*{Lemma 4.2}).
\begin{enumerate}
\item $Z(\m+\m')\simeq\soc(Z(\m)\times Z(\m'))$.
\item $\forall\tau\in\Irr$, $\tau\hookrightarrow Z(\m)\times Z(\m')\implies\tau=Z(\m+\m')$.
\item $Z(\m)\times Z(\m')\hookrightarrow\std{\m+\m'}$.
\item $L(\m+\m')\simeq\soc(L(\m')\times L(\m))$.
\item $\forall\tau\in\Irr$, $\tau\hookrightarrow L(\m')\times L(\m)\implies\tau=L(\m+\m')$.
\item $\cstd{\m+\m'}\twoheadrightarrow L(\m)\times L(\m')$.
\end{enumerate}
\end{enumerate}
(The notation $\LI(\m,\m')$ and $\SLI(\m,\m')$ stand for (strongly) ``subrepresentation+additive''.)
\end{definition}

Clearly, $\SLI(\m,\m')\implies\LI(\m,\m')$, while
\begin{equation} \label{eq: LIcont}
\text{the conditions $\LI(\m,\m')$ and $\LI(\m'^\vee,\m^\vee)$ are equivalent.}
\end{equation}

A simple sufficient condition for $\SLI(\m,\m')$ is the following (see e.g., \cite{MR3573961}*{Proposition 3.5}).
\begin{equation} \label{eq: LIsimp}
\text{$\SLI(\m,\m')$ is satisfied if for every segment $\Delta$ in $\m$ and $\Delta'$ in $\m'$ we have $\Delta\not\prec\Delta'$.}
\end{equation}

The conditions $\LI(\m,\m')$ and $\SLI(\m,\m')$ are instrumental for proving irreducibility.
More precisely, again by \eqref{eq: mult1},
\begin{subequations}
\begin{gather}
\text{$Z(\m+\m')$ is a direct summand of $Z(\m)\times Z(\m')\iff\LI(\m,\m')$ and $\LI(\m',\m)$.}\\
\begin{split}
\label{eq: irredcrit}
Z(\m)\times Z(\m')\text{ is irreducible }\iff Z(\m+\m')\simeq Z(\m)\times Z(\m')\\
\iff \SLI(\m,\m')\text{ and }\SLI(\m',\m)\iff\LI(\m,\m')\text{ and }\SLI(\m',\m).
\end{split}
\end{gather}
\end{subequations}

As we will recall below, it is possible that $Z(\m)\times Z(\m')$ is semisimple but not irreducible.
In particular, $\SLI(\m,\m')$ is strictly stronger than $\LI(\m,\m')$.

\subsection{$\square$-irreducible representations}

For any $\pi_i\in\Reps_{n_i}$, $i=1,2$ consider the standard (unnormalized) intertwining operators
\[
\pi_1\cdot\abs{\cdot}^s\times\pi_2\rightarrow\pi_2\times\pi_1\cdot\abs{\cdot}^s.
\]
They are given by convergent integrals for $\Re s\gg0$ and admit a meromorphic continuation (e.g.,
\cite{MR1989693}*{Ch. IV}).\footnote{The
intertwining operators depend implicitly on a choice of a Haar measure on the space of $n_1\times n_2$ matrices,
so technically they are only defined up to a positive scalar. However, for our purposes this is immaterial.}
The leading term in the Laurent expansion at $s=0$ is a non-zero intertwining operator
\[
R_{\pi_1,\pi_2}:\pi_1\times\pi_2\rightarrow\pi_2\times\pi_1.
\]

The following key definition is inspired by the work of Hernandez--Leclerc and Kang--Kashiwara--Kim--Oh.
\begin{definition} (See \cites{MR3314831, MR3758148, MR3866895})
An object $\pi$ of $\Reps$ (necessarily in $\Irr$) is called $\square$-irreducible if the following
equivalent conditions are satisfied.
\begin{enumerate}
\item $\pi\times\pi$ is irreducible.
\item $\End_{\Reps}(\pi\times\pi)=\C$.
\item $R_{\pi,\pi}$ is a scalar.
\item $\pi\times\pi$ is \SI.
\item $\pi=Z(\m)$ and $\SLI(\m,\m)$.
\end{enumerate}
\end{definition}

We denote by $\IrrS\subset\Irr$ the class of $\square$-irreducible representations.
Clearly, this class is invariant under contragredient and the duality $^t$.
The first example of an irreducible representation $\pi$ (of $\GL_8(F)$) which is not $\square$-irreducible
was given by Leclerc \cite{MR1959765}. Explicitly, if
\begin{equation} \label{eq: Lex}
\pi=Z(\m)\text{ where }\m=[1,2]+[-1,1]+[0,0]+[-2,-1],
\end{equation}
and for simplicity we write $[a,b]$ for the segment $\{\abs{\cdot}^a,\dots,\abs{\cdot}^b\}$
consisting of characters of $\GL_1(F)=F^*$, then
\begin{multline*}
\pi\times\pi=Z(\m+\m)\oplus Z([1,2]+[0,1]+[-1,2]+[-1,0]+[-2,1]+[-2,-1])\\
=Z(\m+\m)\oplus (Z([-1,2]+[-2,1])\times Z([1,2]+[0,1]+[-1,0]+[-2,-1])).
\end{multline*}
Note that $\pi^\vee=\pi^t=\pi$.


\subsection{}
An important property of $\square$-irreducible representations is the following.
\begin{proposition} \label{prop: sqrirred}  (\cite{MR3866895} which is based on \cites{MR3314831, MR3758148})
Suppose that $\pi$ is $\square$-irreducible.
Then, for any $\pi'\in\Irr$
\begin{enumerate}
\item $\pi\times\pi'$ and $\pi'\times\pi$ are \SI.
\item $\soc(\pi\times\pi')=\Im R_{\pi',\pi}$.
\item $\soc(\pi'\times\pi)=\Im R_{\pi,\pi'}$.
\item The following conditions are equivalent:
\begin{enumerate}
\item $\pi\times\pi'$ is irreducible.
\item $R_{\pi,\pi'}$ is an isomorphism.
\item $R_{\pi',\pi}$ is an isomorphism.
\item $\pi\times\pi'\simeq\pi'\times\pi$.
\item $\soc(\pi\times\pi')\simeq\soc(\pi'\times\pi)$.
\end{enumerate}
\end{enumerate}
\end{proposition}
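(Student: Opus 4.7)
The plan is to adapt the argument developed in \cites{MR3314831, MR3758148} for (dual) quiver Hecke algebras to the present setting, along the lines of \cite{MR3866895}. The operative consequence of $\square$-irreducibility that drives the argument is condition (3) in the definition, namely that $R_{\pi,\pi}$ is a nonzero scalar endomorphism of $\pi\times\pi$.

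For parts (1), (2) and (3) it suffices, by symmetry between $\pi\times\pi'$ and $\pi'\times\pi$ (and by \eqref{eq: switch12}), to treat one of the two cases; I will focus on $\pi\times\pi'$. Take any irreducible $\tau\hookrightarrow\pi\times\pi'$. By \eqref{eq: switch12} this embedding corresponds to a surjection $\pi'\times\pi\twoheadrightarrow\tau$, and composing gives a nonzero intertwiner
\[
\phi:\pi'\times\pi\rightarrow\pi\times\pi'.
\]
The heart of the argument is to show that $\phi$ is proportional to $R_{\pi',\pi}$. Once this is established, $\tau$ is forced to coincide with $\Im R_{\pi',\pi}$; consequently $\pi\times\pi'$ is \SI, $\soc(\pi\times\pi')=\Im R_{\pi',\pi}$, and the analogous statement for $\pi'\times\pi$ follows by interchanging the roles of $\pi$ and $\pi'$.

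To pin down $\phi$ up to a scalar, the idea is to enlarge the picture to the threefold product and exploit the scalarity of $R_{\pi,\pi}$. Concretely, one constructs two a priori distinct intertwiners from $\pi\times\pi'\times\pi$ to $\pi\times\pi\times\pi'$: one built from $R_{\pi',\pi}$ acting on the middle two factors (and $R_{\pi,\pi}$ on the right two), the other built from $\phi$ acting on the middle two factors. Since $R_{\pi,\pi}$ is a scalar and $\pi\times\pi$ is irreducible, one can compose and restrict to force $\phi$ to be a scalar multiple of $R_{\pi',\pi}$. This is the direct analogue of the mechanism in \cite{MR3758148}*{Proposition 3.2.10}, adapted to the $\GL_n(F)$ setting using the intertwining operators of \cite{MR1989693}.

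Part (4) then follows formally. The implication (a)$\Rightarrow$(d) holds because $\pi\times\pi'$ and $\pi'\times\pi$ have the same Jordan--H\"older content, so if either is irreducible they are isomorphic. The equivalences (a)$\iff$(b)$\iff$(c) follow from (2) and (3): irreducibility of $\pi\times\pi'$ forces $\Im R_{\pi',\pi}=\pi\times\pi'$, i.e., surjectivity of $R_{\pi',\pi}$; bijectivity of both $R_{\pi,\pi'}$ and $R_{\pi',\pi}$ is then extracted from the fact that their composition $R_{\pi,\pi'}\circ R_{\pi',\pi}$ lives in $\End_{\Reps}(\pi'\times\pi)$ and is non-zero when restricted to the socle. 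Finally (e)$\Rightarrow$(a): if $\soc(\pi\times\pi')\simeq\soc(\pi'\times\pi)$, then by (2) and (3) the irreducible representations $\Im R_{\pi,\pi'}$ and $\Im R_{\pi',\pi}$ are isomorphic, and matching multiplicities in $\JH(\pi\times\pi')=\JH(\pi'\times\pi)$ together with the uniqueness of the \SI\ socle forces the socles to exhaust the whole representation. The main obstacle is the proportionality step in the previous paragraph: the scalarity of $R_{\pi,\pi}$ is used in a rather subtle way via the threefold-product R-matrix relations, and this is the technical core that carries over from \cite{MR3758148} with care.
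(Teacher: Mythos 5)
Note first that the paper itself offers no proof of this proposition: it is imported verbatim from \cite{MR3866895}, which in turn adapts the mechanism of \cites{MR3314831,MR3758148}. Your sketch follows exactly that strategy (use the scalarity of $R_{\pi,\pi}$ and a threefold-product computation to show that every nonzero map $\pi'\times\pi\rightarrow\pi\times\pi'$ is proportional to $R_{\pi',\pi}$), so at the level of architecture you are aligned with the cited argument and there is no internal proof in this paper to compare against.

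There is, however, a genuine gap in what you claim follows from the proportionality step. In this paper, \SI\ means that $\soc$ is irreducible \emph{and} occurs with multiplicity one in the Jordan--H\"older sequence ($\soc(\pi\times\pi')\leunq\pi\times\pi'$). Your argument yields that every irreducible subrepresentation of $\pi\times\pi'$ is isomorphic to $\Im R_{\pi',\pi}$, and the one-dimensionality of $\Hom(\pi'\times\pi,\pi\times\pi')$ rules out a socle of length $\ge2$; but nothing in this Hom-space computation bounds the multiplicity of $\Im R_{\pi',\pi}$ in $\JH(\pi\times\pi')$ -- it could a priori recur as a subquotient deeper in a filtration. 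In \cite{MR3314831} and in the adaptation of \cite{MR3866895} this multiplicity-one statement is a separate step, proved by deforming $\pi$ to $\pi\cdot\abs{\cdot}^s$ (the analogue of the affinization/spectral parameter) and analyzing the leading term of the meromorphic family of intertwining operators; your proposal never invokes this. The omission matters downstream: in part (4) your implication (e)$\Rightarrow$(a) works by observing that $\soc(\pi\times\pi')$ is then both a subrepresentation and a quotient of $\pi\times\pi'$, and it is precisely the relation $\soc(\pi\times\pi')\leunq\pi\times\pi'$ that lets one split it off as a direct summand and conclude that the complement vanishes. So the skeleton is correct, but the $\leunq$ half of part (1) -- and with it the (e)$\Rightarrow$(a) step -- requires the deformation argument, not just the proportionality of $\phi$ to $R_{\pi',\pi}$.
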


\label{sec: ALOref}

\begin{corollary} \label{cor: irredcond}
Suppose that
\begin{equation} \label{eq: ALO}
\tag{ALO} \text{at least one of $Z(\m)$ or $Z(\m')$ is $\square$-irreducible}.
\end{equation}
Then, the following conditions are equivalent.
\begin{enumerate}
\item $\LI(\m,\m')$.
\item $\SLI(\m,\m')$.
\item The image of $R_{Z(\m'),Z(\m)}$ is isomorphic to $Z(\m+\m')$.
\end{enumerate}
In particular, $Z(\m)\times Z(\m')$ is irreducible $\iff$ both $\LI(\m,\m')$ and $\LI(\m',\m)$ are satisfied.
\end{corollary}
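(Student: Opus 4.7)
\emph{Plan.} The plan is to deduce everything mechanically from Proposition \ref{prop: sqrirred} and the irreducibility criterion \eqref{eq: irredcrit} already recorded in the excerpt.

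First, the implication $\SLI(\m,\m')\Rightarrow\LI(\m,\m')$ is tautological from the definitions: the identification $Z(\m+\m')\simeq\soc(Z(\m)\times Z(\m'))$ entails, a fortiori, the embedding $Z(\m+\m')\hookrightarrow Z(\m)\times Z(\m')$.

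For the converse, assume (ALO). Applying Proposition \ref{prop: sqrirred}(1) to the $\square$-irreducible member of $\{Z(\m),Z(\m')\}$, the product $Z(\m)\times Z(\m')$ is \SI, so $\soc(Z(\m)\times Z(\m'))$ is irreducible and occurs with multiplicity one in its Jordan--H\"older sequence. If $\LI(\m,\m')$ holds then $Z(\m+\m')$ embeds into $Z(\m)\times Z(\m')$, and therefore must coincide with this socle; this is precisely $\SLI(\m,\m')$. To reformulate via the normalized intertwining operator I distinguish the two cases in (ALO). If $Z(\m)$ is $\square$-irreducible, Proposition \ref{prop: sqrirred}(2) with $\pi=Z(\m)$ and $\pi'=Z(\m')$ gives $\soc(Z(\m)\times Z(\m'))=\Im R_{Z(\m'),Z(\m)}$. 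If instead $Z(\m')$ is $\square$-irreducible, the same identity follows from Proposition \ref{prop: sqrirred}(3) with $\pi=Z(\m')$ and $\pi'=Z(\m)$. Either way, $\SLI(\m,\m')$ is equivalent to $\Im R_{Z(\m'),Z(\m)}\simeq Z(\m+\m')$, which finishes the equivalence of conditions (1)--(3).

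For the final assertion, observe that (ALO) is symmetric in the roles of $\m$ and $\m'$, so the equivalence $\LI\iff\SLI$ just proved applies equally to the pair $(\m',\m)$. Substituting $\LI$ for $\SLI$ in \eqref{eq: irredcrit} yields the asserted irreducibility criterion. The whole argument is essentially bookkeeping once Proposition \ref{prop: sqrirred} is available; the only delicate point is the case analysis identifying $\soc(Z(\m)\times Z(\m'))$ with $\Im R_{Z(\m'),Z(\m)}$ regardless of which factor carries the $\square$-irreducibility, which is handled cleanly by the symmetry between parts (2) and (3) of that proposition.
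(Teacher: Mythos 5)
Your proof is correct and follows exactly the route the paper intends: the corollary is stated without proof as an immediate consequence of Proposition \ref{prop: sqrirred} (SI-ness of the product forces an embedded $Z(\m+\m')$ to be the socle, parts (2)--(3) identify that socle with $\Im R_{Z(\m'),Z(\m)}$ in either case of \eqref{eq: ALO}, and the symmetry of \eqref{eq: ALO} combined with \eqref{eq: irredcrit} gives the final assertion).
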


We do not know whether in general the condition $\LI(\m,\m')$ implies that
$Z(\m+\m')$ is a subrepresentation of the image of $R_{Z(\m'),Z(\m)}$.

\begin{remark} \label{rem: mult>1}
Let $\pi=Z(\m)\in\Irr$ and $\Pi=\pi\times\pi$.
\begin{enumerate}
\item By \eqref{eq: switch12}, every irreducible subrepresentation of $\Pi$ also occurs as a quotient of $\Pi$.
Hence, any irreducible subrepresentation $\tau$ of $\Pi$ such that $\tau\leunq\Pi$ is a direct summand of $\Pi$.
It easily follows that if $\Pi$ is multiplicity free (i.e., if every element of $\JH(\Pi)$ occurs with multiplicity one),
then $\Pi$ is semisimple, and in particular $\LI(\m,\m)$ is satisfied.

\item In Leclerc's example \eqref{eq: Lex}, $\Pi$ is of length two, and in particular multiplicity free.
Hence, in general the condition $\LI(\m,\m)$ is strictly weaker than $\SLI(\m,\m)$
(i.e., the $\square$-irreducibility of $\pi$).

\item The first-named author computed $\JH(\Pi)$ for all multisegments $\m$ consisting of at most 6 segments.
(The computation involves Kazhdan--Lusztig polynomials of the symmetric group $S_{12}$.
See \cite{Elmaunote} for more details.)
It turns out that in these cases, $\Pi$ is multiplicity free if and only if $\Pi$ has length one or two.
An example where this condition fails is
\[
\m=[1,3]+[-2,2]+[-1,1]+[0,0]+[-3,-1]
\]
in which $\Pi$ has length 9 with
\begin{multline*}
Z([-2,3]+[-3,2]+[1,3]+[-1,2]+[-2,1]+[0,1]+[-1,0]+[-3,-1])\\=
Z([-2,3]+[-3,2])\times Z([1,3]+[-1,2]+[-2,1]+[0,1]+[-1,0]+[-3,-1])
\end{multline*}
occurring with multiplicity 2 in $\JH(\Pi)$. Another example is
\[
\m=[2,4]+[-2,3]+[-1,2]+[0,1]+[-4,0]+[-3,-1]
\]
for which $\Pi$ has length 257 (the largest possible for $\m$ up to 6 segments), with
\begin{multline*}
Z([-2,4]+[-3,3]+[-4,2]+[2,4]+[-4,3]+[-1,2]+[-2,1]+[-3,0]+[0,0]+[-1,-1])=
\\Z([-2,4]+[-3,3]+[-4,2])\times Z([2,4]+[-4,3]+[-1,2]+[-2,1]+[-3,0])\times Z([0,0]+[-1,-1])
\end{multline*}
occurring with multiplicity 9 (again, the highest possible) in $\JH(\Pi)$.

\item Unfortunately, in contrast to the multiplicity freeness of $\Pi$, we do not have a practical way to
completely determine whether $\Pi$ is semisimple or whether the condition $\LI(\m,\m)$ is satisfied.
In fact, at the moment we are unable to refute the condition $\LI(\m,\m)$, or even the semisimplicity of $\Pi$,
in any single example.
\end{enumerate}
\end{remark}

Finally, we mention another simple property, which is a powerful tool to show $\square$-irreducibility (\cite{MR3866895}*{Lemma 2.10}).
\begin{equation} \label{eq: factsirrs}
\text{Suppose that $\pi_1,\pi_2\in\IrrS$, $\pi\hookrightarrow\pi_1\times\pi_2$ and $\pi\times\pi_1$ is irreducible.
Then, $\pi\in\IrrS$.}
\end{equation}

An interesting question, which will not be discussed here, is whether conversely, given a non-supercuspidal $\pi\in\IrrS$,
do there always exist $\one\ne\pi_1,\pi_2\in\IrrS$ such that
$\pi\hookrightarrow\pi_1\times\pi_2$ and $\pi\times\pi_1$ is irreducible?

\section{A geometric condition of Geiss--Leclerc--Schr\"oer} \label{sec: GLS}

\subsection{}
We recall Zelevinsky's geometric picture of his classification \cites{MR617466, MR783619, MR863522, MR1371654}.
Consider a finite-dimensional $\Cusp$-graded $\C$-vector space
\[
\Vect=\oplus_{\rho\in\Cusp}\Vect_\rho.
\]
Up to isomorphism, $\Vect$ is determined by its graded dimension
\[
\grdim\Vect=\sum_{\rho\in\Cusp}(\dim\Vect_\rho)\rho\in\MS{\Cusp}.
\]
Let
\[
\Aut(\Vect)=\prod_{\rho\in\Cusp}\GL(\Vect_\rho)
\]
be the group of grading preserving linear automorphisms of $\Vect$ and let
\[
\rshft \grend(\Vect)\text{ (resp., $\lshft \grend(\Vect)$)}
\]
be the vector space of (nilpotent) linear transformations $A:\Vect\rightarrow\Vect$
such that $A(\Vect_\rho)\subseteq\Vect_{\rshft\rho}$ (resp., $A(\Vect_\rho)\subseteq\Vect_{\lshft\rho}$) for all $\rho\in\Cusp$.
We will use the notational convention $\rlshft \grend(\Vect)$ to denote either of the spaces $\rshft \grend(\Vect)$ and $\lshft \grend(\Vect)$.
A similar praxis will apply in other instances.

The group $\Aut(\Vect)$ acts on each of the spaces $\rlshft \grend(\Vect)$ (by conjugation) with finitely many orbits
and the spaces $\rlshft \grend(\Vect)$ are in duality with respect to the $\Aut(\Vect)$-invariant pairing $(A,B)\mapsto\tr AB=\tr BA$.
The orbit of an element $A\in\rlshft \grend(\Vect)$ is determined by the non-negative integers $\rk A^i\rest_{\Vect_\rho}$, $i\ge0$, $\rho\in\Cusp$
(which are non-zero for only finitely many $i$'s and $\rho$'s).
The orbits are parameterized by the multisegments $\m$ such that $\supp(\m)=\grdim\Vect$.
Concretely, given such $\m=\sum_{i\in I}\Delta_i\in\MS{\Seg}$, \label{sec: OOm}
an element $A\in\rlshft \grend(\Vect)$ belongs to the orbit $\rlshft\OO_{\m}$ corresponding to $\m$ if and only if
there exists a graded basis $\xbasis_{\rho,i}$, $i\in I$, $\rho\in\Delta_i$ (graded by $\rho$) of $\Vect$ for which
$A$ has a graded Jordan form
\begin{equation} \label{eq: defA}
A\xbasis_{\rho,i}=\xbasis_{\rlshft\rho,i}
\end{equation}
where we set $\xbasis_{\rho,j}=0$ if $\rho\notin\Delta_j$.

The $\Aut(\Vect)$-orbits in $\rlshft \grend(\Vect)$ are also in one-to-one correspondence with the irreducible components of the commuting variety
\[
\commvar(\Vect)=\{(A,B)\in \rshft \grend(\Vect)\times \lshft \grend(\Vect):AB=BA\}
\]
\cite{MR0390138}. These bijections take an orbit $\OO$ in $\rlshft \grend(\Vect)$ to the Zariski closure
of $\rlshft p_{\Vect}^{-1}(\OO)$ where \label{sec: p_V}
\[
\rlshft p_{\Vect}:\commvar(\Vect)\rightarrow \rlshft \grend(\Vect)
\]
are the canonical projections.
Let $\rlshft\comp_{\m}$ be the irreducible components of $\commvar(\Vect)$ corresponding to $\rlshft\OO_{\m}$.
They are related by $\rshft\comp_{\m}=\lshft\comp_{\m^\#}$ \cite{MR863522} where we recall that $\m^\#$
was defined in \eqref{def: MW}.
We also write
\[
\rlshft\comp^\circ_{\m}=\rlshft p_{\Vect}^{-1}(\rlshft\OO_{\m}),
\]
so that $\rlshft\comp_{\m}$ is the Zariski closure of $\rlshft\comp^\circ_{\m}$.
Note that $\rlshft\comp^\circ_{\m}$ is $\Aut(\Vect)$-invariant and open in $\rlshft\comp_{\m}$.
(We caution that $\rlshft\comp^\circ_{\m}$ contains the interior $\rlshft\comp_{\m}\setminus
\cup_{\m'\ne\m}\rlshft\comp_{\m'}$ of $\rlshft\comp_\m$ in $\commvar(\Vect)$, but the inclusion is strict in general.)

\subsection{}
In general, there are infinitely many $\Aut(\Vect)$-orbits in $\commvar(\Vect)$.
Following Geiss--Leclerc--Schr\"oer \cite{MR2822235} we make the following definition.
(As before, $\m\in\MS{\Seg}$ and $\supp\m=\grdim\Vect$.)

\label{sec: GLSdef}
\begin{definition}
We say that the condition $\GLS(\m)$ holds if $\rshft\comp_\m$ admits an open (i.e., dense) $\Aut(\Vect)$-orbit.
\end{definition}

By \cite{MR3866895}*{Remark 4.6}, the analogous condition for $\lshft\comp_\m$ is equivalent.
Therefore, from now on we will exclusively work with $\rshft\comp_\m$.
In order to simplify the notation, we will henceforth write $\comp_{\m}$ instead of $\rshft\comp_\m$.
(and similarly for $\OO_{\m}$ and $\comp_{\m}^{\circ}$).

Note that an open $\Aut(\Vect)$-orbit in $\comp_\m$, if exists, is necessarily contained in $\comp_\m^\circ$.

\begin{conjecture}(\cite{MR3866895}, following \cite{MR2822235}*{Conjecture 18.1}) \label{conj: GLS}
For any multisegment $\m$, $Z(\m)$ is $\square$-irreducible if and only if $\GLS(\m)$ holds.
\end{conjecture}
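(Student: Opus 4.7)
My plan is to prove the two implications of Conjecture \ref{conj: GLS} separately, using the companion Conjecture \ref{conj: intro1} (specialized to $\m' = \m$) as a bridge between representation theory and geometry.

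For the forward direction, suppose $Z(\m)$ is $\square$-irreducible, equivalently (by \eqref{eq: irredcrit}) that $Z(\m) \times Z(\m) \simeq Z(\m + \m)$. The goal is to recast this representation-theoretic statement geometrically by studying the interaction of the diagonal embedding $\comp_\m \times \comp_\m \hookrightarrow \comp_{\m+\m}$ with the $\Aut(V \oplus V)$-action. Granting (the easier direction of) Conjecture \ref{conj: intro1} with $\m' = \m$, one would obtain that $\Aut(V \oplus V) \cdot (\comp_\m \times \comp_\m)$ is dense in $\comp_{\m+\m}$. A dimension computation, comparing $\dim \comp_{\m + \m}$ (which equals twice $\dim \comp_\m$ plus a combinatorial correction coming from Gerstenhaber-type formulas) to the orbit dimension $\dim \Aut(V \oplus V) + 2\dim\comp_\m - \dim(\text{generic stabilizer})$, should then pin down the $\Aut(V)$-orbit dimension of a generic point of $\comp_\m$ and force it to equal $\dim \comp_\m$, thereby yielding $\GLS(\m)$.

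For the reverse direction, suppose $\GLS(\m)$, so $\comp_\m$ contains a dense $\Aut(V)$-orbit through some pair $(A_0, B_0)$. One interprets this pair as a module $M$ over the preprojective algebra $\Pi$ of the underlying type $A_\infty$ quiver; density of the orbit then translates to rigidity, namely $\operatorname{Ext}^1_\Pi(M, M) = 0$. Following the Geiss--Leclerc--Schr\"oer philosophy \cite{MR2822235} together with the monoidal categorification work of Hernandez--Leclerc and Kang--Kashiwara--Kim--Oh, rigidity on the preprojective side should match $\square$-irreducibility on the simple-module side of a suitable categorification functor. Concretely, one would try to upgrade this cohomological rigidity into the statement that $\Aut(V\oplus V) \cdot (\comp_\m \times \comp_\m)$ is dense in $\comp_{\m+\m}$, and then invoke (the reverse direction of) Conjecture \ref{conj: intro1} to conclude that $Z(\m) \times Z(\m)$ is irreducible, i.e., $Z(\m) \in \IrrS$.

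The main obstacle is that both implications ultimately rest on Conjecture \ref{conj: intro1}, which is itself open, so the strategy is only conditional. An unconditional proof would require bypassing this intermediate step and constructing a direct bridge between the geometry of $\comp_\m$ and the endomorphism algebra $\End(Z(\m) \times Z(\m))$. In the quantum-group setting such a bridge exists (via Lusztig's geometric construction of canonical bases, or the KLR formalism), but its $p$-adic analogue has been established only in restricted cases. An incremental line of attack would be to first settle multisegments supported on a single cuspidal line (where the picture reduces to an affine type $A$ quiver), and then bootstrap to the general case by factoring off unlinked components using \eqref{eq: factsirrs}.
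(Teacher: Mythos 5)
This statement is a conjecture in the paper (quoted from \cite{MR3866895}, following \cite{MR2822235}); the paper offers no proof of it, only partial evidence (the regular case proved in \cite{MR3866895} and computational verification for up to $6$ segments), so there is no argument of the authors to compare yours with. Your proposal, moreover, is not a proof even conditionally. For the reverse direction you invoke Conjecture \ref{conj: intro1} with $\m'=\m$, but its hypothesis \eqref{eq: ALO} in that case is precisely that $Z(\m)$ is $\square$-irreducible --- the very statement you are trying to deduce --- so the appeal is circular; the substitute suggestion that open-orbit ``rigidity'' should ``match'' $\square$-irreducibility via the GLS/KKKO philosophy is just a restatement of Conjecture \ref{conj: GLS}, not an argument (and the claim that a dense orbit forces $\operatorname{Ext}^1_\Pi(M,M)=0$ is itself unjustified for non-hereditary algebras such as the preprojective algebra: vanishing of $\operatorname{Ext}^1$ gives an open orbit, not conversely).

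The forward direction has a concrete fatal step. Granting Conjecture \ref{conj: intro1}, $\square$-irreducibility of $Z(\m)$ gives you density of $\Aut(V\oplus V)\cdot(\comp_\m\times\comp_\m)$ in $\comp_{\m+\m}$, i.e.\ the condition $\IC(\m,\m)=\LC(\m,\m)$. You then hope a dimension count upgrades this to $\GLS(\m)$. But this geometric implication is false: as recorded in the remark of \S\ref{subsec: newconj}, Leclerc's multisegment \eqref{eq: Lex} satisfies $\LC(\m,\m)$ while $\GLS(\m)$ fails, so no dimension bookkeeping can pass from density of the saturation of $\comp_\m\times\comp_\m$ to the existence of a dense $\Aut(V)$-orbit in $\comp_\m$; any such argument would have to use the representation-theoretic hypothesis in an essential way that your outline does not supply. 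In short, the statement remains open, and both halves of your strategy break down at identifiable points.
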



\subsection{} \label{sec: expGLS}
As in \cite{MR3866895}*{\S4}, it is advantageous to explicate the condition $\GLS(\m)$ by linearization.
More precisely, fix $\repn_{\m}\in\OO_\m$ and write it in the form \eqref{eq: defA} for a suitable basis
$\xbasis_{\rho,i}$, $\rho\in\Delta_i$, $i\in I$.
The map $\lshft p_{\Vect}$ identifies the fibre $\rshft p_{\Vect}^{-1}(\repn_{\m})$ with the centralizer $\cnt_\m$ of $\repn_{\m}$ in $\lshft \grend(\Vect)$. \label{sec: defC}
The stabilizer $\stb_\m$ of $\repn_{\m}$ in $\Aut(\Vect)$ acts linearly on $\cnt_{\m}$. (Note that $\stb_{\m}$ is usually not reductive.)
Thus, $\GLS(\m)$ holds if and only if $\stb_{\m}$ admits an open (i.e., dense) orbit in $\cnt_{\m}$.
Passing to the Lie algebra, we can rephrase it by saying that there exists $\lambda\in \cnt_{\m}$ such that $[\Lieg,\lambda]=\cnt_{\m}$
where $\Lieg_\m$ is the Lie algebra of $\stb_{\m}$.

The vector space $\cnt_{\m}$ was explicated in \cite{MR863522}*{Lemme II.4} as follows.
Let \label{sec: defXm} $X_\m$ be the set
\[
X_\m=\{(i,j)\in I\times I:\Delta_i\prec\Delta_j\}.
\]
Then, $\cnt_{\m}$ admits a basis $\alpha_{i,j}=\alpha_{i,j}^{\m}$, $(i,j)\in X_\m$ given by
\[
\alpha_{i,j}(\xbasis_{\rho,l})=\delta_{j,l}\xbasis_{\lshft{\rho},i},\ \ \rho\in\Delta_l, l\in I
\]
where $\delta_{r,s}$ is Kronecker's delta.
(Recall that by convention, $\xbasis_{\rho,j}=0$ if $\rho\notin\Delta_j$.)
For any $\lambda\in \cnt_{\m}$, we write the coordinates of $\lambda$ with respect to the basis $\alpha_{i,j}$ as
$\lambda_{i,j}\in\C$, $(i,j)\in X_\m$. Thus,
\[
\lambda(\xbasis_{\rho,j})=\sum_{i\in I:(i,j)\in X_\m}\lambda_{i,j}\xbasis_{\lshft\rho,i},\ \ \rho\in\Delta_j.
\]

Similarly, the group $\stb_{\m}$ and its Lie algebra $\Lieg_\m$ are described in \cite{MR863522}*{Lemme II.5}. Let
$Y_\m$ be the set
\[
Y_\m=\{(i,j)\in I\times I:\Delta_i\prec\rshft\Delta_j\}.
\]
Then, as a vector space, $\Lieg_\m$ has a basis $\beta_{i,j}=\beta_{i,j}^{\m}$, $(i,j)\in Y_\m$ given by
\[
\beta_{i,j}(\xbasis_{\rho,l})=\delta_{j,l}\xbasis_{\rho,i},\ \ \rho\in\Delta_l, l\in I.
\]
Thus, $g\in\Aut(\Vect)$ is in $\stb_{\m}$ if and only if
\[
g(\xbasis_{\rho,j})=\sum_{i:(i,j)\in Y_\m}g_{i,j}\xbasis_{\rho,i},\ \ \rho\in\Delta_j
\]
for scalars $g_{i,j}$, $(i,j)\in Y_\m$. We will call $g_{i,j}$ the coordinates of $g$.

Moreover, we have
\[
[\beta_{i,j},\alpha_{k,l}]=\delta_{j,k}\alpha_{i,l}-\delta_{i,l}\alpha_{k,j},\ \ (i,j)\in Y_\m,\ (k,l)\in X_\m
\]
where for convenience we set $\alpha_{r,s}=0$ if $(r,s)\notin X_\m$.

Therefore, by passing to the dual map we can explicate the surjectivity of the linear map
$[\cdot,\lambda]:\Lieg_\m\rightarrow \cnt_{\m}$ as follows.
\begin{lemma} \label{lem: itfvij}
Consider the $\C$-vector space $\C^{Y_\m}$ with standard basis $\{y_{i,j}:(i,j)\in Y_\m\}$.
Then, $\GLS(\m)$ is satisfied if and only if there exists $\lambda\in \cnt_{\m}$, with coordinates $\lambda_{i,j}\in\C$, $(i,j)\in X_\m$,
such that the vectors
\[
v_{i,j}^\m(\lambda):=\sum_{k\in I:(k,j)\in X_\m,(i,k)\in Y_\m}\lambda_{k,j}y_{i,k}-
\sum_{l\in I:(l,j)\in Y_\m,(i,l)\in X_\m}\lambda_{i,l}y_{l,j},\ \ (i,j)\in X_\m
\]
are linearly independent in $\C^{Y_\m}$.
\end{lemma}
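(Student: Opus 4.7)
The plan is to reformulate $\GLS(\m)$ as the surjectivity of a linear map and then pass to the dual, where the stated linear-independence condition appears directly.

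First I would reduce $\GLS(\m)$ to a statement about Lie algebras. Since we work over $\C$, a $\stb_{\m}$-orbit in the affine space $\cnt_{\m}$ is open if and only if its dimension equals $\dim\cnt_{\m}$, and for any $\lambda\in\cnt_{\m}$ the dimension of $\stb_{\m}\cdot\lambda$ equals the rank of the infinitesimal action
\[
\phi_\lambda:\Lieg_\m\to\cnt_\m,\qquad X\mapsto[X,\lambda].
\]
Thus $\GLS(\m)$ is equivalent to the existence of $\lambda\in\cnt_\m$ for which $\phi_\lambda$ is surjective, which in turn is equivalent to the injectivity of the dual map $\phi_\lambda^*:\cnt_\m^*\to\Lieg_\m^*$. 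This merely rephrases, via duality, the reduction already made in \S\ref{sec: expGLS}.

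Next I would make $\phi_\lambda^*$ explicit. Let $\{\alpha_{k,l}^*\}_{(k,l)\in X_\m}$ and $\{\beta_{p,q}^*\}_{(p,q)\in Y_\m}$ denote the bases dual to $\{\alpha_{k,l}\}$ and $\{\beta_{p,q}\}$, and identify $\Lieg_\m^*\cong\C^{Y_\m}$ via $\beta_{p,q}^*\leftrightarrow y_{p,q}$. For $(i,j)\in X_\m$ the vector $\phi_\lambda^*(\alpha_{i,j}^*)\in\C^{Y_\m}$ is obtained by pairing against each $\beta_{p,q}$. Using the bracket formula
\[
[\beta_{p,q},\alpha_{k,l}]=\delta_{q,k}\alpha_{p,l}-\delta_{p,l}\alpha_{k,q}
\]
(with the convention $\alpha_{r,s}=0$ for $(r,s)\notin X_\m$), expanding $\lambda=\sum_{(k,l)\in X_\m}\lambda_{k,l}\alpha_{k,l}$, and reading off the coefficient of $\alpha_{i,j}$ in $[\beta_{p,q},\lambda]$, one obtains a sum of two contributions: one forcing $p=i$ together with $(q,j)\in X_\m$, and one forcing $q=j$ together with $(i,p)\in X_\m$. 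Summing over $(p,q)\in Y_\m$ and relabelling the dummy indices yields precisely the expression $v_{i,j}^\m(\lambda)$ of the lemma.

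With this in hand, injectivity of $\phi_\lambda^*$ is precisely the linear independence of the family $\{v_{i,j}^\m(\lambda):(i,j)\in X_\m\}$ in $\C^{Y_\m}$, which closes the argument. The main (indeed, only) delicate step is the index bookkeeping: one must carefully match the support constraints $(k,j)\in X_\m$, $(i,k)\in Y_\m$ (and their symmetric counterparts) built into $v_{i,j}^\m(\lambda)$ against the Kronecker deltas and the convention $\alpha_{r,s}=0$ outside $X_\m$. Beyond this transcription, the lemma is a formal consequence of the equivalence between surjectivity of a linear map and injectivity of its dual.
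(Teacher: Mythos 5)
Your proposal is correct and follows the same route as the paper: the paper's own justification is precisely the discussion in \S\ref{sec: expGLS}, namely that $\GLS(\m)$ amounts to the existence of $\lambda\in\cnt_\m$ with $[\Lieg_\m,\lambda]=\cnt_\m$, and the lemma is obtained by dualizing this surjectivity and computing the dual map in the bases $\{\alpha_{i,j}\}$, $\{\beta_{i,j}\}$ via the stated bracket formula. Your index bookkeeping reproduces $v_{i,j}^\m(\lambda)$ exactly, so there is nothing to add.
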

Note that the linear independence condition of Lemma \ref{lem: itfvij} is open and $\stb_{\m}$-invariant in $\lambda\in \cnt_{\m}$.
This condition is also easy to check (at least probabilistically) on a computer.
(See \cite{MR3866895}*{Remark 4.9} for a hypothetical approach to make this deterministic.)

The condition $\GLS(\m)$ is invariant under the involutions $\m\mapsto\m^\vee$ and $\m\mapsto\m^\#$ \cite{MR3866895}*{Remark 4.18}.

The main result of \cite{MR3866895} is that Conjecture \ref{conj: GLS} holds in the special case
where $\m=\sum_{i\in I}\Delta_i$ is regular, i.e., when $e(\Delta_i)\ne e(\Delta_j)$ and $b(\Delta_i)\ne b(\Delta_j)$ for all $i\ne j$.
In this case, the condition is also related, somewhat surprisingly, to smoothness of Schubert varieties of type $A$.

It was also verified computationally that Conjecture \ref{conj: GLS} holds for $\m$ consisting of up to 6 segments.

\section{Conjectural geometric conditions for \texorpdfstring{$\LI(\m,\m')$}{aa}} \label{sec: newconj}

In this section, which is the heart of the paper, we introduce a new, and easy to check, geometric condition
$\LC(\m,\m')$ pertaining to two multisegments $\m,\m'$.
This condition is in the spirit of the condition $\GLS(\m)$ discussed in \S\ref{sec: GLSdef}.
We conjecture that $\LC(\m,\m')$ is equivalent to $\LI(\m,\m')$, at least
under the condition \eqref{eq: ALO} of \S\ref{sec: ALOref}.
In particular, in this case it would give a practical condition for irreducibility of parabolic induction.

\subsection{}

We continue to work with the geometric setup and the notation of the previous section. 

Let $\Vect$ and $\Vect'$ be two finite-dimensional $\Cusp$-graded vector spaces over $\C$.
Consider the direct sum $\tilde\Vect=\Vect\oplus\Vect'$ and the short exact sequence
\[
0\rightarrow\Vect\xrightarrow{\embd}\tilde\Vect\xrightarrow{\srjct}\Vect'\rightarrow0.
\]
Define the spaces $\rshft \grendpres(\tilde\Vect)$ and $\lshft \grendpres(\tilde\Vect)$ by
\[
\rlshft \grendpres(\tilde\Vect)=\{A\in\rlshft \grend(\tilde\Vect):\srjct\circ A\circ\embd=0\}=\{A\in \rlshft \grend(\tilde\Vect):A(\Vect)\subset\Vect\}
\]
and let
\[
\commvar^\embd(\tilde\Vect)=\commvar(\tilde\Vect)\cap (\rshft \grendpres(\tilde\Vect)\times \lshft \grendpres(\tilde\Vect))
=\{(A,B)\in\commvar(\tilde\Vect):A(\Vect),B(\Vect)\subset\Vect\}.
\]
The stabilizer $\ParAut$ of $\embd(\Vect)$ in $\Aut(\tilde\Vect)$
is a parabolic subgroup of $\Aut(\tilde\Vect)$ with Levi quotient isomorphic to $\Aut(\Vect)\times\Aut(\Vect')$
and unipotent radical
\[
\UnipAut=\{g\in\Aut(\tilde\Vect):g\circ\embd=\embd\text{ and }\srjct\circ g=\srjct\},
\]
which is commutative.

Any $A\in \rlshft \grendpres(\tilde\Vect)$ restricts to a map in $\rlshft \grend(\Vect)$ and induces
a map in $\rlshft \grend(\Vect')$. We denote by
\[
\rlshft p^{\embd}:\rlshft \grendpres(\tilde\Vect)\rightarrow\rlshft \grend(\Vect)\times \rlshft \grend(\Vect')
\]
the resulting $\ParAut$-equivariant surjective maps
(with $\UnipAut$ acting trivially on the target).
(These projections should not be confused with the projections $\rlshft p_{\Vect}$ defined in \S\ref{sec: p_V}.)
Let
\[
p_\embd=(\rshft p^{\embd},\lshft p^{\embd})\rest_{\commvar^{\embd}(\tilde\Vect)}:
\commvar^{\embd}(\tilde\Vect)\rightarrow\commvar(\Vect)\times\commvar(\Vect').
\]
Again, $p_\embd$ is $\ParAut$-equivariant (and in particular, $\UnipAut$-invariant) and surjective.

Define similarly $\rlshft \grendpress(\tilde\Vect)$, $\commvar^{\embd'}(\tilde\Vect)$, $\commvar^{\embd'}(\tilde\Vect)$, $\ParAutt$, $\UnipAutt$ and
\[
p_{\embd'}:\commvar^{\embd'}(\tilde\Vect)\rightarrow\commvar(\Vect')\times\commvar(\Vect)
\]
by interchanging the roles of $\Vect$ and $\Vect'$. The parabolic subgroups $\ParAut$ and $\ParAutt$ are opposite with
\[
\LeviAut:=\ParAut\cap\ParAutt=\{g\in\Aut(\tilde\Vect):g(\Vect)=\Vect\text{ and }g(\Vect')=\Vect'\}\simeq\Aut(\Vect)\times\Aut(\Vect').
\]
Finally, let
\[
\dembed:\commvar(\Vect)\times\commvar(\Vect')\rightarrow\commvar(\tilde\Vect)
\]
be the diagonal embedding. Its image is
\[
\commvar^{\embd}(\tilde\Vect)\cap\commvar^{\embd'}(\tilde\Vect)
=\{(A,B)\in\commvar(\tilde\Vect):A(\Vect),B(\Vect)\subset\Vect\text{ and }A(\Vect'),B(\Vect')\subset\Vect'\}
\]
and $p_{\embd}\circ\dembed=\id$.

\subsection{}  \label{sec: mm'}
Now let $\m,\m'$ be two multisegments such that $\supp\m=\grdim\Vect$ and $\supp\m'=\grdim\Vect'$,
and let $\n=\m+\m'$, so that $\supp\n=\grdim\tilde\Vect$. As in \S\ref{sec: OOm},
let $\OO_{\m}$ (resp., $\OO_{\m'}$, $\OO_{\n}$) be the $\Aut(\Vect)$ (resp., $\Aut(\Vect')$, $\Aut(\tilde\Vect)$)
orbit in $\rshft \grend(\Vect)$ (resp., $\rshft \grend(\Vect')$, $\rshft \grend(\tilde\Vect)$)
corresponding  to $\m$ (resp., $\m'$, $\n$).
Recall that $\comp^\circ_{\m}=\rshft p_{\Vect}^{-1}(\OO_{\m})$,
$\comp^\circ_{\m'}=\rshft p_{\Vect'}^{-1}(\OO_{\m'})$, $\comp^\circ_{\n}=\rshft p_{\tilde\Vect}^{-1}(\OO_{\n})$
and $\comp_{\m}$, $\comp_{\m'}$, $\comp_{\n}$ are the respective (Zariski) closures.

Let
\[
\Specvar_{\m,\m'}^{\embd}=\cl{\comp_{\n}\cap p_{\embd}^{-1}(\comp_{\m}^\circ\times\comp_{\m'}^\circ)}
\subset\comp_{\n}\cap p_{\embd}^{-1}(\comp_{\m}\times\comp_{\m'})
\]
and
\[
\Specvar_{\m',\m}^{\embd'}=\cl{\comp_{\n}\cap p_{\embd'}^{-1}(\comp_{\m'}^\circ\times\comp_{\m}^\circ)}
\subset\comp_{\n}\cap p_{\embd'}^{-1}(\comp_{\m'}\times\comp_{\m}).
\]
(We do not know whether the above inclusions can be strict, but fortunately this will not matter in what follows.)

Let
\[
\Prodcomp_{\m,\m'}=\Specvar_{\m,\m'}^{\embd}\cap\Specvar_{\m',\m}^{\embd'}.
\]
Note that
\[
\Prodcomp_{\m,\m'}=\dembed(\comp_{\m}\times\comp_{\m'})
\]
since $\supset$ clearly holds while
\[
\Prodcomp_{\m,\m'}\subset p_{\embd}^{-1}(\comp_{\m}\times\comp_{\m'})\cap
p_{\embd'}^{-1}(\comp_{\m'}\times\comp_{\m})=\dembed(\comp_{\m}\times\comp_{\m'}).
\]


Clearly, $\Specvar_{\m',\m}^{\embd'}$ is $\ParAutt$-invariant and hence,
$\cl{\Aut(\tilde\Vect)\cdot\Specvar_{\m',\m}^{\embd'}}=\cl{\UnipAut\cdot\Specvar_{\m',\m}^{\embd'}}$,
since $\UnipAut\ParAutt$ is dense in $\Aut(\tilde\Vect)$.

Also, $\Prodcomp_{\m,\m'}$ is $\LeviAut$-invariant and hence, $\ParAut\cdot\Prodcomp_{\m,\m'}=\UnipAut\cdot\Prodcomp_{\m,\m'}$.

We can now formulate the main geometric conditions.

\begin{proposition} \label{prop: LCint}
The following conditions are equivalent.
\begin{subequations}
\begin{gather} \label{cond: GVorb}
\Aut(\tilde\Vect)\cdot\Specvar_{\m',\m}^{\embd'}\text{ (or equivalently, $\UnipAut\cdot\Specvar_{\m',\m}^{\embd'}$)
is dense in }\comp_{\n}. \\
\label{cond: parorb}
\ParAut\cdot\Prodcomp_{\m,\m'}\text{ (or equivalently, $\UnipAut\cdot\Prodcomp_{\m,\m'}$) is dense in }\Specvar_{\m,\m'}^{\embd}.
\end{gather}
\end{subequations}
\end{proposition}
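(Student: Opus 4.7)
The equivalences in parentheses within (a) and (b) have already been established in the paragraph preceding the Proposition, so it suffices to show the equivalence of the $\UnipAut$-versions. Set $X = \UnipAut \cdot \Specvar_{\m',\m}^{\embd'}$ and $Y = \Specvar_{\m,\m'}^{\embd}$; both are irreducible subvarieties of the irreducible variety $\comp_\n$, so density in each case reduces to an equality of dimensions.

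The pivotal observation is the set-theoretic identity
\[
X \cap Y = \UnipAut \cdot \Prodcomp_{\m,\m'}.
\]
The inclusion $\supseteq$ follows from $\Prodcomp_{\m,\m'} \subseteq \Specvar_{\m',\m}^{\embd'}$ together with the $\UnipAut$-stability of $Y$ (inherited from its $\ParAut$-stability). For $\subseteq$, given $v \in X \cap Y$, write $v = u \cdot w$ with $u \in \UnipAut$ and $w \in \Specvar_{\m',\m}^{\embd'}$; the $\UnipAut$-invariance of $Y$ gives $w = u^{-1} v \in Y$, whence $w \in \Specvar_{\m,\m'}^{\embd} \cap \Specvar_{\m',\m}^{\embd'} = \Prodcomp_{\m,\m'}$ by the identity established in \S\ref{sec: mm'}.

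Given this identity, the equivalence (a) $\iff$ (b) reduces to the transversality dimension formula
\[
\dim X + \dim Y = \dim(X \cap Y) + \dim \comp_\n,
\]
which immediately yields $\dim X = \dim \comp_\n \iff \dim(X \cap Y) = \dim Y$. The main obstacle is establishing this formula. The plan is to work over a generic base point $w_0 \in \comp_\m^\circ \times \comp_{\m'}^\circ$: the fibers of $p_\embd$ and $p_{\embd'}$ over $w_0$ (and its swap), intersected with $\comp_\n$, are affine subspaces of $\rshft\Hom(\Vect', \Vect) \oplus \lshft\Hom(\Vect', \Vect)$ and its dual, cut out by a single linear commutativity relation, meeting only at the diagonal point $\dembed(w_0)$. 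A tangent-space calculation there, leveraging $\UnipAut \cap \UnipAutt = \{1\}$ and the explicit description of $\cnt_\m$ from Lemma \ref{lem: itfvij}, yields the formula by tracking how the $\UnipAut$-orbit through $\dembed(w_0)$ sits inside the $p_\embd$-fiber and how its transverse complement recovers $\Specvar_{\m',\m}^{\embd'}$.
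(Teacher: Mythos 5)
Your preliminary reductions are sound as far as they go: the parenthetical equivalences are indeed settled before the Proposition, the set-theoretic identity $\UnipAut\cdot\Specvar_{\m',\m}^{\embd'}\cap\Specvar_{\m,\m'}^{\embd}=\UnipAut\cdot\Prodcomp_{\m,\m'}$ is correct (and a nice observation), and, granted irreducibility of $Y=\Specvar_{\m,\m'}^{\embd}$, each density statement does reduce to an equality of dimensions. The genuine gap is that the ``transversality dimension formula'' $\dim X+\dim Y=\dim(X\cap Y)+\dim\comp_{\n}$ is never proved, and it carries the entire content of the proposition: $\comp_{\n}$ is a (generally singular) irreducible component of a commuting variety, so no general intersection-theoretic principle yields such an exact equality, and your ``plan'' for it contains concrete errors. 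The fiber of $p_{\embd}$ over a point $w_0\in\comp_{\m}^\circ\times\comp_{\m'}^\circ$ inside $\commvar^{\embd}(\tilde\Vect)$ is indeed a linear space (cut out by the commutator relation in the space of pairs of homogeneous maps $\Vect'\rightarrow\Vect$ of degrees $1$ and $-1$), but intersecting it with the particular component $\comp_{\n}$ is an additional, non-linear, closed condition; that intersection is not an ``affine subspace cut out by a single linear commutativity relation'', and identifying it is exactly where the paper needs Lemma \ref{lem: elem}. Likewise, the idea of recovering $\Specvar_{\m',\m}^{\embd'}$ as a ``transverse complement'' inside a $p_{\embd}$-fiber does not parse: $\Specvar_{\m',\m}^{\embd'}$ is not contained in $\commvar^{\embd}(\tilde\Vect)$, so it meets the $p_{\embd}$-fibers only along $\Prodcomp_{\m,\m'}$. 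Finally, the irreducibility of $\Specvar_{\m,\m'}^{\embd}$ is asserted but is not evident from its definition as the closure of $\comp_{\n}\cap p_{\embd}^{-1}(\comp_{\m}^\circ\times\comp_{\m'}^\circ)$; it follows from the orbit description $\comp_{\n}\cap p_{\embd}^{-1}(\comp_{\m}^\circ\times\comp_{\m'}^\circ)=\ParAut\cdot(\rshft p_{\tilde\Vect}^{-1}(\repn_{\n})\cap\commvar^{\embd}(\tilde\Vect))$, which again requires proof.

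For what it is worth, your dimension formula is in fact true, but the only way I see to establish it is to carry out the fibration and linearization that the paper performs: using Lemma \ref{lem: svtrns} and Lemma \ref{lem: elem} one identifies the generic fibers over $\OO_{\n}$, resp.\ $\ParAut\cdot\repn_{\n}$, with $\cntpress$, $\cntpres$ and $\cntbw$, and then both defects $\dim\comp_{\n}-\dim\cl{X}$ and $\dim Y-\dim\cl{X\cap Y}$ are equal to $\full X_{\m,\m'}-\max_{\mu}\rk L_\mu$, where $L_\mu:\C^{Y_{\m,\m'}}\rightarrow\C^{X_{\m,\m'}}$ is the infinitesimal map of the $\unipstb$-action and the key point is that $L_\mu$ depends only on the image of $\mu$ in $\cnt_{\m}\oplus\cnt_{\m'}$ (so that the maxima over $\mu\in\cntpress$ and over $\mu\in\cntbw$ coincide). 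That computation is precisely the paper's route, which proves the two conditions equivalent by reducing both to the common criterion of Proposition \ref{prop: LCint2}; as written, your proposal replaces this essential step by an unproven assertion, so it does not yet constitute a proof.
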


\begin{definition}
We denote the above equivalent conditions by $\LC(\m,\m')$. (This stands for ``subrepresentation+geometric'')
We denote by $\IC(\m,\m')$ (for ``irreducible+geometric'') the condition
\[
\Aut(\tilde\Vect)\cdot\Prodcomp_{\m,\m'}\text{ is dense in }\comp_{\n}.
\]
\end{definition}

Note that under the proposition above, $\IC(\m,\m')$ is simply the conjunction of $\LC(\m,\m')$ and $\LC(\m',\m)$.
Indeed, $\IC(\m,\m')$ clearly implies \eqref{cond: GVorb} and its symmetric analog (interchanging $\m$ and $\m'$).
Conversely, \eqref{cond: parorb} and the symmetric counterpart of \eqref{cond: GVorb} imply that
\[
\cl{\Aut(\tilde\Vect)\cdot\Prodcomp_{\m,\m'}}=
\cl{\Aut(\tilde\Vect)\ParAut\cdot\Prodcomp_{\m,\m'}}=
\cl{\Aut(\tilde\Vect)\cdot\Specvar_{\m,\m'}^{\embd}}=\comp_{\n}.
\]

We will give some more equivalent conditions for $\LC(\m,\m')$ in \S\ref{sec: moreconds} below
and ultimately prove the proposition in \S\ref{sec: concprof}. \label{sec: ICdefeq}

\subsection{} \label{subsec: newconj}
We now state the new conjecture.

\begin{conjecture} \label{conj: weakform} 
Suppose \eqref{eq: ALO} of \S\ref{sec: ALOref}.
Then,
\begin{enumerate}
\item The conditions $\LI(\m,\m')$ and $\LC(\m,\m')$ are equivalent.
\item $Z(\m)\times Z(\m')$ is irreducible if and only if $\IC(\m,\m')$.
\end{enumerate}
\end{conjecture}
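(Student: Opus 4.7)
The plan is to first reduce part (2) to part (1). By Corollary \ref{cor: irredcond}, under the assumption \eqref{eq: ALO}, the irreducibility of $Z(\m)\times Z(\m')$ is equivalent to the conjunction of $\LI(\m,\m')$ and $\LI(\m',\m)$. On the geometric side, as observed immediately after the definition of $\IC(\m,\m')$ in \S\ref{sec: ICdefeq}, the condition $\IC(\m,\m')$ is precisely the conjunction of $\LC(\m,\m')$ and $\LC(\m',\m)$. So (2) follows formally from (1) applied to both orderings of the pair, and the entire content lies in part (1).

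For part (1) I would attempt the two implications separately. For the direction $\LC(\m,\m')\Rightarrow\LI(\m,\m')$, the plan is to translate the formulation \eqref{cond: parorb} into a Jacquet-module statement: saying that $\ParAut\cdot\Prodcomp_{\m,\m'}$ is dense in $\Specvar_{\m,\m'}^{\embd}$ is saying that the generic element of $\comp_{\n}$ which preserves $\embd(\Vect)$ and whose graded pieces lie in $\comp_{\m}^\circ\times\comp_{\m'}^\circ$ is unipotently conjugate to a block-diagonal pair. Through the dictionary between orbit closures in $\commvar(\tilde\Vect)$ and representations, coming from Lusztig's geometric construction, this degeneration should correspond precisely to the occurrence of $Z(\m)\otimes Z(\m')$ in the Jacquet module of $Z(\m+\m')$ along the standard parabolic. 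By Frobenius reciprocity, this is exactly $\LI(\m,\m')$.

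For the converse $\LI(\m,\m')\Rightarrow\LC(\m,\m')$, Corollary \ref{cor: irredcond} upgrades $\LI$ to $\SLI$ under \eqref{eq: ALO}, so $Z(\m)\times Z(\m')\hookrightarrow\std{\m+\m'}$ and the image of $R_{Z(\m'),Z(\m)}$ is isomorphic to $Z(\m+\m')$. I would try to exploit the rigidity provided by Proposition \ref{prop: sqrirred} to transport the non-vanishing of this intertwining operator into density of $\Aut(\tilde\Vect)\cdot\Specvar_{\m',\m}^{\embd'}$ in $\comp_{\n}$. An alternative route is an induction on the combinatorial complexity of $(\m,\m')$, performing surgery (removal of a maximal or minimal segment) so as to reduce to the ladder and unitarizable cases treated in \S\ref{sec: mainev}, while propagating $\square$-irreducibility through the reduction by means of \eqref{eq: factsirrs} and checking that both $\LI$ and $\LC$ behave compatibly under the surgery.

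The main obstacle is undoubtedly the converse direction: the paper itself signals in \S\ref{sec: mainev} that a full proof is out of reach with present methods. The root cause is the lack of a direct functorial dictionary between the closure order on the irreducible components of $\commvar(\tilde\Vect)$ and the submodule structure of parabolically induced representations outside the ladder regime, where intertwining operators fail to enjoy the rigidity of Proposition \ref{prop: sqrirred}. A serious attack would likely require extending the sheaf-theoretic machinery behind Conjecture \ref{conj: GLS} (established for regular $\m$ in \cite{MR3866895}) from single multisegments to pairs, and promoting the linear-independence criterion of Lemma \ref{lem: itfvij} to a two-multisegment version that directly encodes $\LI(\m,\m')$. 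Short of that, my proposal is best viewed as a roadmap to be executed case by case.
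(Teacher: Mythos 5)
Be aware that the statement you were asked about is Conjecture \ref{conj: weakform}: the paper does not prove it, and explicitly says that new ideas are likely needed. What the paper actually establishes is the formal reduction of part (2) to part (1) (via Corollary \ref{cor: irredcond} and the observation in \S\ref{sec: ICdefeq} that $\IC(\m,\m')$ is the conjunction of $\LC(\m,\m')$ and $\LC(\m',\m)$), plus supporting evidence: Theorem \ref{thm: mainevid} (ladder multisegments are good, and sums of good multisegments with pairwise irreducible products are good, hence the unitarizable case), and the consistency checks of Propositions \ref{prop: mm-}, \ref{prop: splitdisj}, \ref{prop: 3ms}, \ref{prop: sumofseg} and \ref{prop: rhoext}, all proved by Jacquet-module and explicit linear-algebra arguments with the criterion of Proposition \ref{prop: LCint2}. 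Your reduction of (2) to (1) is exactly the paper's remark, and your closing paragraph correctly identifies that the rest is open; so as a piece of self-assessment your proposal is honest, but it is not a proof and should not be presented as one.

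The concrete gap is in your claimed implication $\LC(\m,\m')\Rightarrow\LI(\m,\m')$. You invoke a ``dictionary'' under which the density of $\ParAut\cdot\Prodcomp_{\m,\m'}$ in $\Specvar_{\m,\m'}^{\embd}$ would ``correspond precisely'' to the occurrence of $Z(\m)\otimes Z(\m')$ in the Jacquet module of $Z(\m+\m')$. No such dictionary is available: the geometric picture of Zelevinsky relates orbits to irreducible representations and orbit closures to Grothendieck-group (Kazhdan--Lusztig type) data, not to the submodule or Jacquet-module structure of individual representations; the absence of exactly this functorial link is the reason the statement is a conjecture. Moreover, even granting some correspondence, your Frobenius reciprocity step is off: $\LI(\m,\m')$, i.e.\ $Z(\m+\m')\hookrightarrow Z(\m)\times Z(\m')$, is equivalent to $Z(\m)\otimes Z(\m')$ being a \emph{quotient} of the Jacquet module of $Z(\m+\m')$, not merely a subquotient, and the paper's own evidence (e.g.\ the use of Lemma \ref{lem: SIspr} and the separation relation $\sprt$) is designed precisely to control this finer structure in special cases. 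The converse direction $\LI\Rightarrow\LC$ is likewise not reduced by your surgery suggestions to the ladder/unitarizable cases in any way that covers a general $\square$-irreducible $Z(\m)$; Propositions \ref{prop: mm-}, \ref{prop: splitdisj} and \ref{prop: rhoext} give compatibilities, not a complete induction. So both implications remain open, and your text should be framed as a research plan rather than a proof.
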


Note that the second part of the conjecture follows from the first one by Corollary \ref{cor: irredcond}
and the discussion above.

More generally, it would be desirable to have a geometric/combinatorial grasp on $\soc(Z(\m)\times Z(\m'))$
under \eqref{eq: ALO}. We hope to get back to this question in the future.


A more ambitious formulation of Conjecture \ref{conj: weakform} is the following.

\begin{question} \label{qu: strongform}
Are the conditions $\LI(\m,\m')$ and $\LC(\m,\m')$ equivalent even without assuming \eqref{eq: ALO} ?
\end{question}

At any rate, an affirmative answer to this question would not directly give an irreducibility
criterion for $\pi_1\times\pi_2$ when neither $\pi_1$ nor $\pi_2$ is $\square$-irreducible.

\begin{remark}
\begin{enumerate}
\item As we shall see in \S\ref{sec: GLSLC} below, the condition $\GLS(\m)$ implies $\LC(\m,\m)$ for any $\m$.
(This is consistent with Conjecture \ref{conj: GLS} and the fact that $\SLI(\m,\m)$ implies $\LI(\m,\m)$.)

\item By inspection, for $\m$ consisting of up to 6 segments, $\LC(\m,\m)$ is equivalent to the multiplicity freeness
of $Z(\m)\times Z(\m)$, and in particular it implies $\LI(\m,\m)$ in these cases (cf. Remark \ref{rem: mult>1}).
Leclerc's example \eqref{eq: Lex} shows that in general, the condition $\LC(\m,\m)$ is strictly weaker than $\GLS(\m)$.


\item If neither $\GLS(\m)$ nor $\GLS(\m')$ is satisfied, then at the moment we do not have a conjectural geometric criterion
for either $\SLI(\m,\m')$ or the irreducibility of $Z(\m)\times Z(\m')$ (cf. \eqref{eq: irredcrit}).
\end{enumerate}
\end{remark}



\subsection{} \label{sec: moreconds}
Before proving the equivalence of the conditions in Proposition \ref{prop: LCint} we will first linearize
and explicate them in a way similar to what was done for the condition $\GLS(\m)$ in \S\ref{sec: expGLS}.

Write $\m=\sum_{i\in I}\Delta_i$ and $\m'=\sum_{i'\in I'}\Delta_{i'}$ with $I\cap I'=\emptyset$.
Fix $\repn_{\m}\in\OO_{\m}$ and $\repn_{\m'}\in\OO_{\m'}$ and let $\repn_{\n}=\repn_{\m}\oplus \repn_{\m'}\in\OO_{\n}$.
We take a graded basis $\xbasis_{\rho,i}$, $\rho\in\Delta_i$, $i\in I$ (resp., $i\in I'$) for $\Vect$ (resp. $\Vect'$)
for which $\repn_{\m}$ (resp., $\repn_{\m'}$) has a graded Jordan normal form \eqref{eq: defA}.
Thus, $\repn_{\n}$ has a graded Jordan form with respect to the union $\xbasis_{\rho,i}$, $\rho\in\Delta_i$, $i\in I\cup I'$.
Let \label{def: Xmm'}
\[
X_{\m,\m'}=\{(i,i')\in I\times I':\Delta_i\prec\Delta_{i'}\}
\]
and
\[
Y_{\m,\m'}=\{(i,i')\in I\times I':\Delta_i\prec\rshft\Delta_{i'}\},
\]
so that $X_{\m}=X_{\m,\m}$ and $Y_\m=Y_{\m,\m}$ (see \S\ref{sec: defXm}).
We have $X_\n=X_{\m}\cup X_{\m'}\cup X_{\m,\m'}\cup X_{\m',\m}$ and $Y_\n=Y_{\m}\cup Y_{\m'}\cup Y_{\m,\m'}\cup Y_{\m',\m}$ (disjoint unions).
As in \S\ref{sec: defC}, let $\cnt_{\m}$, $\cnt_{\m'}$ and $\cnt_{\n}$ be the centralizers of $\repn_{\m}$ $\repn_{\m'}$ and $\repn_{\n}$ in
$\lshft \grend(\Vect)$, $\lshft \grend(\Vect')$ and $\lshft \grend(\tilde\Vect)$ respectively.
Similarly, let $\stb_{\m}$ (resp., $\stb_{\m'}$, $\stb_{\n}$) be the centralizer of $\repn_{\m}$ (resp., $\repn_{\m'}$, $\repn_{\n}$) in $\Aut(\Vect)$ (resp., $\Aut(\Vect')$, $\Aut(\tilde\Vect)$).
Let $\cntpres=\cnt_{\n}\cap\lshft \grendpres(\tilde\Vect)$.
Thus, $\cntpres$ consists of the elements of $\cnt_{\n}$ whose $X_{\m',\m}$-coordinates vanish.
Define similarly $\cntpress$ and let $\cntbw=\cntpres\cap\cntpress\simeq \cnt_{\m}\oplus \cnt_{\m'}$.
Let
\begin{gather*}
\unipstb=\stb_{\n}\cap\UnipAut=\{g\in\stb_{\n}:g_{i,j}=\delta_{i,j}\text{ for all }(i,j)\in Y_{\n}\setminus Y_{\m,\m'}\},\\
\parstb=\stb_{\n}\cap\ParAut=\{g\in\stb_{\n}:g_{i,j}=0\text{ for all }(i,j)\in Y_{\m',\m}\},\\
\unipstbb=\stb_{\n}\cap\UnipAutt=\{g\in\stb_{\n}:g_{i,j}=\delta_{i,j}\text{ for all }(i,j)\in Y_{\n}\setminus Y_{\m',\m}\},\\
\parstbb=\stb_{\n}\cap\ParAutt=\{g\in\stb_{\n}:g_{i,j}=0\text{ for all }(i,j)\in Y_{\m,\m'}\},\\
\levistb=\stb_{\n}\cap\LeviAut=\parstb\cap\parstbb\simeq \stb_{\m}\times \stb_{\m'},
\end{gather*}
so that
\[
\parstb=\levistb\ltimes\unipstb,\ \ \parstbb=\levistb\ltimes\unipstbb.
\]
The Lie algebra of $\unipstb$ is spanned by $\beta_{i,j}^{\n}$, $(i,j)\in Y_{\m,\m'}$.

\begin{proposition} \label{prop: LCint2}
We have $\cl{\stb_{\n}\cdot \cntpress}=\cl{\unipstb\cdot\cntpress}$
and $\parstb\cdot\cntbw=\unipstb\cdot\cntbw$.
Moreover, the following conditions are equivalent. 
\begin{enumerate}
\item \label{part: gc2}
$\stb_{\n}\cdot \cntpress$ (or equivalently, $\unipstb\cdot\cntpress$) is dense in $\cnt_{\n}$.
\item \label{part: trf2'}
$\parstb\cdot\cntbw$ (or equivalently. $\unipstb\cdot\cntbw$) is dense in $\cntpres$.
\item \label{part: linind} There exist $\lambda\in \cnt_{\m}$ and $\lambda'\in \cnt_{\m'}$
with coordinates $\lambda_{i,j}$, $(i,j)\in X_{\m}$
and $\lambda'_{i,j}$, $(i,j)\in X_{\m'}$ respectively, such that the vectors
$v_{i,j}^{\m,\m'}(\lambda,\lambda')$, $(i,j)\in X_{\m,\m'}$ given by
\begin{equation} \label{eq: vectors}
v_{i,j}^{\m,\m'}(\lambda,\lambda')=
\sum_{r\in I:(i,r)\in X_{\m},(r,j)\in Y_{\m,\m'}}\lambda_{i,r}y_{r,j}-
\sum_{s\in I':(s,j)\in X_{\m'},(i,s)\in Y_{\m,\m'}}\lambda'_{s,j}y_{i,s}
\end{equation}
are linearly independent in the complex vector space $\C^{Y_{\m,\m'}}$ with standard basis $\{y_{i,j}:(i,j)\in Y_{\m,\m'}\}$.
\end{enumerate}
Moreover, $\LC(\m,\m')$ is equivalent to the conditions above.
\end{proposition}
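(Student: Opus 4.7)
The plan is to proceed in three linked stages and then match with $\LC(\m,\m')$. First, the group-action equalities. We have $\parstb=\levistb\ltimes\unipstb$ and $\cntbw\simeq\cnt_{\m}\oplus\cnt_{\m'}$ is $\levistb$-invariant (under the identification $\levistb\simeq\stb_{\m}\times\stb_{\m'}$), whence $\parstb\cdot\cntbw=\unipstb\cdot\cntbw$ on the nose. For $\cl{\stb_{\n}\cdot\cntpress}=\cl{\unipstb\cdot\cntpress}$, I would use that the ``big cell'' $\unipstb\parstbb$ is open and dense in $\stb_{\n}$ (mirroring the density of $\UnipAut\ParAutt$ in $\Aut(\tilde\Vect)$) and that $\cntpress$ is $\parstbb$-stable, so $\stb_{\n}\cdot\cntpress\supseteq\unipstb\parstbb\cdot\cntpress=\unipstb\cdot\cntpress$ with equal closures.

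For (\ref{part: trf2'}) $\Leftrightarrow$ (\ref{part: linind}), at a point $(\lambda,\lambda')\in\cntbw$ the density of $\unipstb\cdot\cntbw$ in $\cntpres$ is equivalent to surjectivity of the differential
\[
\Lie(\unipstb)\longrightarrow\cntpres/\cntbw,\qquad \beta\longmapsto[\beta,\lambda+\lambda']\bmod\cntbw.
\]
Using the commutator identity $[\beta^{\n}_{p,q},\alpha^{\n}_{k,l}]=\delta_{q,k}\alpha^{\n}_{p,l}-\delta_{p,l}\alpha^{\n}_{k,q}$, the $X_{\m,\m'}$-component of $[\beta^{\n}_{p,q},\lambda+\lambda']$ for $(p,q)\in Y_{\m,\m'}$ splits as $\sum_{l:(q,l)\in X_{\m'}}\lambda'_{q,l}\alpha^{\n}_{p,l}-\sum_{k:(k,p)\in X_{\m}}\lambda_{k,p}\alpha^{\n}_{k,q}$ (the terms from $\lambda$ with $\delta_{q,k}=0$ and from $\lambda'$ with $\delta_{p,l}=0$ vanish by the $I/I'$-grading). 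Dualizing against the bases $\{\alpha^{\n}_{i,j}\}_{(i,j)\in X_{\m,\m'}}$ and $\{y_{p,q}\}_{(p,q)\in Y_{\m,\m'}}$, the transpose sends the functional dual to $\alpha^{\n}_{i,j}$ to $-v_{i,j}^{\m,\m'}(\lambda,\lambda')$, so surjectivity is precisely the linear independence in (\ref{part: linind}).

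For (\ref{part: gc2}) $\Leftrightarrow$ (\ref{part: trf2'}), write an element of $\cntpress$ as $\lambda+\lambda'+\mu$ with $\mu\in\C^{X_{\m',\m}}$. A parallel commutator computation yields $[\Lie(\unipstb),\mu]\subseteq\cntbw\subset\cntpress$, so under the canonical identification $\cnt_{\n}/\cntpress\simeq\cntpres/\cntbw\simeq\C^{X_{\m,\m'}}$, the differential of the $\unipstb$-action on $\cnt_{\n}$ modulo $\cntpress$ at $\lambda+\lambda'+\mu$ agrees with the one of step two at $(\lambda,\lambda')$. Its generic surjectivity is therefore again (\ref{part: linind}). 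Finally, to match $\LC(\m,\m')$ I would use the Luna-type slice $\cnt_{\n}\hookrightarrow\comp_{\n}^{\circ}$, $B\mapsto(\repn_{\n},B)$, whose $\Aut(\tilde\Vect)$-saturation realizes $\comp_{\n}^{\circ}\simeq\Aut(\tilde\Vect)\times^{\stb_{\n}}\cnt_{\n}$. At such a slice point, conditions (1)--(2) in the definition of $\Specvar_{\m',\m}^{\embd'}$ reduce to $B\in\cntpress$, because $\repn_{\n}\rest_{\Vect'}=\repn_{\m'}$ and the induced map on $\tilde\Vect/\Vect'$ is $\repn_{\m}$. This identifies $\Aut(\tilde\Vect)\cdot\Specvar_{\m',\m}^{\embd'}\cap\comp_{\n}^{\circ}$ with $\Aut(\tilde\Vect)\times^{\stb_{\n}}(\stb_{\n}\cdot\cntpress)$, and density in $\comp_{\n}$ becomes condition (\ref{part: gc2}).

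The main obstacle I expect is the slice step: one has to verify carefully that $\Specvar_{\m',\m}^{\embd'}\cap(\{\repn_{\n}\}\times\cnt_{\n})=\{\repn_{\n}\}\times\cntpress$ (not merely $\supseteq$, given that $\Specvar_{\m',\m}^{\embd'}$ is defined as a Zariski closure), and that taking closures commutes with the homogeneous fibration $\Aut(\tilde\Vect)\times^{\stb_{\n}}(-)$. Once the slice identification is in hand, the tangent-space computations above are routine bookkeeping, and the parabolic decomposition of step one is essentially formal.
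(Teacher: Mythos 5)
Your first three steps (the group equalities, and the equivalence of conditions (1), (2), (3)) are correct and are essentially the paper's own argument: the same Levi/unipotent decomposition, the same "dense orbit of a subspace" criterion ($G\cdot W'$ dense in $W$ iff $\Lie(G)\cdot w+W'=W$ for some $w\in W'$), and the same commutator bookkeeping showing that the map $\Lie(\unipstb)\to\C^{X_{\m,\m'}}$ depends only on the $(\lambda,\lambda')$-part of a point of $\cntpress$, with the dual map giving exactly the vectors $v_{i,j}^{\m,\m'}(\lambda,\lambda')$. That part is fine.

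The gap is in the last step, the identification of $\LC(\m,\m')$ with condition (1), and it is not the verification you flag. Writing $S=\comp_\n\cap p_{\embd'}^{-1}(\comp_{\m'}^\circ\times\comp_\m^\circ)$ and $X_0=\{\repn_\n\}\times\cnt_\n$, your fiber computation only shows $S\cap X_0=\{\repn_\n\}\times\cntpress$; from this you cannot conclude that $\Aut(\tilde\Vect)\cdot\Specvar_{\m',\m}^{\embd'}\cap\comp_\n^\circ$ equals $\Aut(\tilde\Vect)\times^{\stb_\n}(\stb_\n\cdot\cntpress)$, because $S$ is only $\ParAutt$-invariant, not $\Aut(\tilde\Vect)$-invariant: if $(A,B)\in S$ and $g$ conjugates $A$ to $\repn_\n$, the translate $(\repn_\n,gBg^{-1})$ need not lie in $S$, so its $B$-part need not lie in $\cntpress$. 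What is actually needed (and is the heart of the paper's proof) is the statement $S=\ParAutt\cdot(S\cap X_0)$, i.e.\ that every $A\in\OO_\n$ with $A(\Vect')\subset\Vect'$, $A\rest_{\Vect'}\in\OO_{\m'}$ and induced map on $\tilde\Vect/\Vect'$ in $\OO_\m$ is conjugate to $\repn_\n$ by an element \emph{preserving} $\Vect'$. Equivalently, the extension $0\to(\Vect',\repn_{\m'})\to(\tilde\Vect,A)\to(\Vect,\repn_\m)\to 0$ splits whenever the middle term is isomorphic to $\repn_\n$; the paper proves this by a $\Hom$-space dimension count (its Lemma on orbit descriptions), and also uses the rank inequalities $\rk A^r\ge\rk\repn_\m^r+\rk\repn_{\m'}^r$ to see that $S\subset\comp_\n^\circ$ in the first place. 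Without this splitting input, the implication ``$\Aut(\tilde\Vect)\cdot\Specvar_{\m',\m}^{\embd'}$ dense $\Rightarrow$ $\stb_\n\cdot\cntpress$ dense in $\cnt_\n$'' does not follow, since hypothetical points of $S$ not sweepable into the slice with $B$-part in $\cntpress$ could account for the density. Even establishing the closure identity $\Specvar_{\m',\m}^{\embd'}\cap X_0=\{\repn_\n\}\times\cntpress$, which you name as the main obstacle, would not repair this; the missing ingredient is the transitivity/splitting lemma, after which a general fiber-over-homogeneous-base argument (the paper's Lemma on $G$-equivariant maps to transitive $G$-varieties) completes the equivalence.
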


\label{sec: GLSLC}

We will prove the proposition below.
\begin{remark} \label{rem: LCsimcons}
Assume that the above proposition holds. Then,
\begin{enumerate}
\item By the same argument as in the end of \S\ref{sec: ICdefeq} (or alternatively, using Remark \ref{rem: chkclostrns} below),
$\IC(\m,\m')$ is equivalent to the condition
\[
\stb_{\n}\cdot\cntbw\text{ is dense in }\cnt_{\n}.
\]
\item For any $\m$, $\GLS(\m)$ implies $\LC(\m,\m)$, since
in the notation of Lemma \ref{lem: itfvij} we have
\[
v_{i,j}^\m(\lambda)=v_{i,j}^{\m,\m}(\lambda,\lambda)
\]
for $(i,j)\in X_\m$ and $\lambda\in \cnt_{\m}$.
\item The linear independence of \eqref{eq: vectors}
is an open and $\stb_{\m}\times \stb_{\m'}$-invariant condition in $(\lambda,\lambda')\in \cnt_{\m}\times \cnt_{\m'}$.
\item The conditions $\LC(\m,\m')$ and $\LC(\m'^\vee,\m^\vee)$ are equivalent,
since $X_{\m,\m'}=X_{\m'^\vee,\m^\vee}$ and $Y_{\m,\m'}=Y_{\m'^\vee,\m^\vee}$.
Thus, taking into account \eqref{eq: LIcont}, in Conjecture \ref{conj: weakform} we may assume without loss of generality that
$Z(\m)$ is $\square$-irreducible.
\item Condition \ref{part: linind} is the least conceptual but the most practical to check.
It can be easily implemented on a computer, at least as a probabilistic algorithm.
Nonetheless, it would be interesting to replace it by a deterministic criterion.
\end{enumerate}
\end{remark}

\subsection{Proof of first part of Proposition \ref{prop: LCint2}}

Since $\cntbw$ is $\levistb$-invariant and
$\parstb=\levistb\ltimes\unipstb$, it is clear that
$\parstb\cdot\cntbw=\unipstb\cdot\cntbw$.

Also, since $\unipstb\parstbb$ is dense in $\stb_{\n}$ (e.g., by considering the Lie algebras)
and $\cntpress$ is $\parstbb$-invariant, we have
$\cl{\stb_{\n}\cdot \cntpress}=\cl{\unipstb\cdot\cntpress}$.

Next, we show the equivalence of conditions \ref{part: gc2}, \ref{part: trf2'} and \ref{part: linind}
of Proposition \ref{prop: LCint2}.
We use the following simple general criterion.

\begin{remark} \label{rem: chkclostrns}
Suppose that $G$ is a linear algebraic group with a rational (linear) representation on a finite-dimensional vector space $W$
and let $W'$ be a subspace of $W$. Then,
\[
\text{$G\cdot W'$ is dense in $W$ if and only if there exists $w\in W'$ such that $\Lie(G)\cdot w+W'=W$.}
\]
\end{remark}

Returning to the case at hand, we identify
\[
\cntpres/\cnt_{\n}^{\embd,\embd}\simeq\cnt_{\n}/\cntpress
\]
with $\C^{X_{\m,\m'}}$ by considering the $X_{\m,\m'}$-coordinates.
For any $\mu\in\cntpress$ the linear map
\[
u\in\Lie(\unipstb)\mapsto u\cdot\mu+\cntpress
\]
gives rise to a linear map
\[
L_\mu:\C^{Y_{\m,\m'}}\rightarrow\C^{X_{\m,\m'}}
\]
which depends only on the image $(\lambda,\lambda')$ of $\mu$ under the projection $\cntpress\rightarrow \cnt_{\m}\oplus \cnt_{\m'}$.
Thus, by the remark above, the conditions \ref{part: gc2} and \ref{part: trf2'} of Proposition \ref{prop: LCint2}
are both equivalent to the surjectivity of $L_\mu$ for some $\mu\in\cntbw$.
Condition \ref{part: linind} is merely an explication of this (or more precisely, the injectivity of the dual map).

\subsection{}

It remain to show the equivalence of each of the conditions in Proposition \ref{prop: LCint}
with the corresponding condition in Proposition \ref{prop: LCint2}, thus completing
the proof of the two propositions.

We use the following simple result.

\begin{lemma} \label{lem: svtrns}
Let $G$ be a linear algebraic group acting algebraically on quasi-affine varieties $X$ and $Y$
and let $p:X\rightarrow Y$ be a $G$-equivariant morphism.
Let $H$ be a closed subgroup of $G$ and let $W$ be an $H$-invariant subvariety of $X$.
Assume that $G$ acts transitively on $Y$ and fix $y_0\in Y$. Let $G_0$ be the stabilizer of $y_0$ in $G$,
$X_0=p^{-1}(y_0)$ and $W_0=W\cap X_0$.
Assume that $X_0$ is irreducible and that $W=HW_0$.
Then, $G\cdot W=G\cdot W_0$ and
\[
G\cdot W\text{ is dense in $X$ if and only if $G_0\cdot W_0$ is dense in $X_0$}.
\]
\end{lemma}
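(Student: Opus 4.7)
The plan is to separate the proof into two independent parts: the identity $G\cdot W=G\cdot W_0$, and the density equivalence. For the identity, the hypothesis $W=HW_0$ together with $H\subset G$ gives $G\cdot W=G(HW_0)=(GH)\cdot W_0=G\cdot W_0$. It therefore suffices to show that $G\cdot W_0$ is dense in $X$ if and only if $G_0\cdot W_0$ is dense in $X_0$. The key structural input is that $\{y_0\}$ is closed in the variety $Y$, so $X_0=p^{-1}(y_0)$ is closed in $X$, and in particular for any $S\subset X_0$ one has $\overline{S}^{X_0}=\overline{S}^{X}$.

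For the direction $(\Leftarrow)$, I would argue directly from closures: if $G_0\cdot W_0$ is dense in $X_0$ then $X_0=\overline{G_0\cdot W_0}^{X_0}=\overline{G_0\cdot W_0}^{X}\subset\overline{G\cdot W_0}^{X}$, and since the closure of the $G$-invariant set $G\cdot W_0$ is itself $G$-invariant, this yields $\overline{G\cdot W_0}^{X}\supset G\cdot X_0=p^{-1}(G\cdot y_0)=X$, as desired.

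For $(\Rightarrow)$ the plan is a dimension count. Set $Z_0=\overline{G_0\cdot W_0}^{X_0}$, a $G_0$-invariant closed subset of $X_0$. The equality $(G\cdot Z_0)\cap X_0=Z_0$ is verified by noting that if $g'z\in X_0$ with $z\in Z_0\subset X_0$, then $g'\cdot y_0=p(g'z)=y_0$ forces $g'\in G_0$, hence $g'z\in G_0\cdot Z_0=Z_0$. Combined with $G$-equivariance of $p$ and transitivity of $G$ on $Y$, this shows that the fiber of $p|_{G\cdot Z_0}\colon G\cdot Z_0\to Y$ over $g\cdot y_0$ is the translate $g\cdot Z_0$, which has dimension $\dim Z_0$. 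One therefore obtains $\dim(G\cdot Z_0)=\dim Y+\dim Z_0$, and by the same argument with $Z_0$ replaced by $X_0$, $\dim X=\dim Y+\dim X_0$. If $G_0\cdot W_0$ were not dense in $X_0$, irreducibility of $X_0$ would force $\dim Z_0<\dim X_0$ and hence $\dim\overline{G\cdot W_0}^{X}\le\dim(G\cdot Z_0)<\dim X$, contradicting the density hypothesis.

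The step requiring the most care is the dimension identity $\dim(G\cdot Z_0)=\dim Y+\dim Z_0$: one needs the fibers of $p|_{G\cdot Z_0}$ over $Y$ to be equidimensional, and this should be justified from $G$-equivariance and $G$-transitivity alone, without slipping in an assumption that $X\to Y$ is locally trivial. Beyond this the argument is elementary and invokes no representation-theoretic content; the lemma is a general statement about equivariant morphisms to homogeneous spaces, and the irreducibility of $X_0$ is used only to convert the strict dimension inequality into the final contradiction.
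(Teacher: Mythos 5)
Your argument is correct, and its first half (the identity $G\cdot W=G\cdot W_0$ and the implication from density of $G_0\cdot W_0$ in $X_0$ to density of $G\cdot W_0$ in $X$) coincides with the paper's. For the converse you take a genuinely different route: the paper never counts dimensions, but instead uses Chevalley constructibility to find a dense open $U\subset X$ inside $G\cdot W_0$ (which may be taken $G$-invariant), observes that $\{y\in Y:p^{-1}(y)\cap U\ne\emptyset\}$ is $G$-invariant and nonempty, hence all of $Y$ by transitivity, so $X_0\cap U$ is a nonempty open, hence dense, subset of the irreducible $X_0$, and finally identifies $X_0\cap G\cdot W_0$ with $G_0\cdot W_0$ exactly as you do. Your dimension count does work, but your self-diagnosis points at the wrong spot: equidimensionality of the fibers of $p|_{G\cdot Z_0}$ is already settled by your translate computation ($g'z\in p^{-1}(gy_0)$ forces $g^{-1}g'\in G_0$, so the fiber is $g\cdot Z_0$). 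The real care is elsewhere: $G\cdot Z_0$ is only a constructible set and neither $X$, $Y$ nor $G$ is assumed irreducible or connected, so both $\dim\overline{G\cdot Z_0}\le\dim Y+\dim Z_0$ and $\dim X\ge\dim Y+\dim X_0$ must be proved componentwise, using that a constructible set contains a dense open subset of its closure (so the generic fiber of a top-dimensional component of the closure is not larger than the fibers of the set itself), and that $p$ is surjective because $X=G\cdot X_0$. These are standard fiber-dimension arguments, so the gap is one of bookkeeping rather than of ideas; still, the paper's soft argument buys you exactly this: it needs only constructibility, invariance and the irreducibility of $X_0$, with no dimension theory and no discussion of components, whereas your version is more quantitative (it shows the defect is detected already at the level of dimensions) at the cost of that extra care.
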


\begin{proof}
The first assertion is clear since $W=HW_0$.
Suppose that $\cl{G_0\cdot W_0}=X_0$. Then $\cl{G\cdot W_0}=\cl{G\cdot G_0\cdot W_0}=\cl{G\cdot X_0}=X$
by the transitivity of $G$ on $Y$. Conversely, suppose that $\cl{G\cdot W_0}=X$.
Then $G\cdot W_0$ contains an open dense (and without loss of generality, $G$-invariant)
subset $U$ of $X$. The set
\[
\{y\in Y:p^{-1}(y)\cap U\ne\emptyset\}
\]
is $G$-invariant and non-empty (and in fact dense in $Y$). Since $G$ acts transitively on $Y$
we infer that $p^{-1}(y)\cap U\ne\emptyset$ for all $y\in Y$. In particular,
$X_0\cap U$ is a non-empty open subset of $X_0$, hence dense since $X_0$ is irreducible.
A fortiori, $X_0\cap G\cdot W_0$ is dense in $X_0$. However, $X_0\cap G\cdot W_0=G_0\cdot W_0$
since if $gx_0\in X_0$ where $x_0\in X_0$ and $g\in G$, then necessarily $g\in G_0$.
Hence, $G_0\cdot W_0$ is dense in $X_0$ as required.
\end{proof}

\subsection{}

We go back to the setup of \S\ref{sec: mm'}.
In order to invoke Lemma \ref{lem: svtrns} we will need an additional result.

\begin{lemma} \label{lem: elem}
We have the following equalities of spaces.
\begin{subequations}
\begin{gather} \label{eq: unipreporb}
\UnipAut\cdot\repn_{\n}=\overline{\OO_{\n}}\cap
(\rshft p^{\embd})^{-1}(\repn_{\m},\repn_{\m'})=\OO_{\n}\cap
(\rshft p^{\embd})^{-1}(\repn_{\m},\repn_{\m'}),\\ \label{eq: pareporb}
\ParAut\cdot\repn_{\n}=
\overline{\OO_{\n}}\cap (\rshft p^{\embd})^{-1}(\OO_\m\times\OO_{\m'})=
\OO_{\n}\cap (\rshft p^{\embd})^{-1}(\OO_\m\times\OO_{\m'}),\\ \label{eq: comporb}
\comp_\n\cap p_{\embd}^{-1}(\comp_{\m}^\circ\times\comp_{\m'}^\circ)=
\comp_\n^\circ\cap p_{\embd}^{-1}(\comp_{\m}^\circ\times\comp_{\m'}^\circ)=
\ParAut\cdot(\rshft p_{\tilde\Vect}^{-1}(\repn_{\n})\cap\commvar^{\embd}(\tilde\Vect)).
\end{gather}
\end{subequations}
A similar statement holds for $\embd'$.
\end{lemma}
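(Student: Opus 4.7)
My plan handles the three equalities \eqref{eq: unipreporb}, \eqref{eq: pareporb}, and \eqref{eq: comporb} in order, each following from its predecessor. The engine is the rank inequality
\[
\rk(A^k|_{\tilde\Vect_\rho})\ge\rk((A|_\Vect)^k|_{\Vect_\rho})+\rk(\bar A^k|_{\Vect'_\rho}),\quad k\ge0,\ \rho\in\Cusp,
\]
valid for every $A\in\rshft\grendpres(\tilde\Vect)$, obtained by restricting $0\to\Vect\to\tilde\Vect\to\Vect'\to0$ to $\ker A^k$ and counting dimensions. Since $\cl{\OO_{\n}}$ is cut out by $\rk(A^k|_{\tilde\Vect_\rho})\le\rk(\repn_{\n}^k|_{\tilde\Vect_\rho})$ and $\repn_\n=\repn_\m\oplus\repn_{\m'}$ forces the right-hand side of the displayed inequality to equal this bound whenever $A|_\Vect\in\OO_\m$ and $\bar A\in\OO_{\m'}$, all inequalities collapse to equalities, forcing $A\in\OO_\n$. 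This yields the second equalities in \eqref{eq: unipreporb} and \eqref{eq: pareporb} at once.

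The heart of the lemma is the inclusion $\OO_\n\cap(\rshft p^\embd)^{-1}(\repn_\m,\repn_{\m'})\subseteq\UnipAut\cdot\repn_\n$ in \eqref{eq: unipreporb}. The rank equality just established upgrades the tautological left-exact sequence
\[
0\to\ker(\repn_\m^k|_{\Vect_\rho})\to\ker(A^k|_{\tilde\Vect_\rho})\to\ker(\repn_{\m'}^k|_{\Vect'_\rho})
\]
to a short exact one. For each $i'\in I'$, applying this with $k=|\Delta_{i'}|$ and $\rho=b(\Delta_{i'})$ produces a lift $y_{b(\Delta_{i'}),i'}\in\tilde\Vect_{b(\Delta_{i'})}$ of $x_{b(\Delta_{i'}),i'}$ that lies in $\ker A^{|\Delta_{i'}|}$. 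Defining $y_{\rshft\rho,i'}=A(y_{\rho,i'})$ inductively along $\Delta_{i'}$ yields a graded Jordan basis $\{x_{\rho,i}\}_{i\in I}\cup\{y_{\rho,i'}\}_{i'\in I'}$ of $A$ whose projection modulo $\Vect$ is the given basis $\{x_{\rho,i'}\}$ of $\Vect'$. The grading-preserving linear map $g$ fixing $x_{\rho,i}$ for $i\in I$ and sending $x_{\rho,i'}\mapsto y_{\rho,i'}$ for $i'\in I'$ then lies in $\UnipAut$, and a direct verification using $A|_\Vect=\repn_\m$ gives $g\repn_\n g^{-1}=A$.

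Equation \eqref{eq: pareporb} then reduces to \eqref{eq: unipreporb} via the Levi structure: given $A$ in the RHS, choose $\ell\in\LeviAut\simeq\Aut(\Vect)\times\Aut(\Vect')$ conjugating $(A|_\Vect,\bar A)$ to $(\repn_\m,\repn_{\m'})$; applying \eqref{eq: unipreporb} to $\ell^{-1}A\ell$ places it in $\UnipAut\cdot\repn_\n$, whence $A\in\LeviAut\cdot\UnipAut\cdot\repn_\n=\ParAut\cdot\repn_\n$. For \eqref{eq: comporb}, the projection $\rshft p_{\tilde\Vect}:\commvar(\tilde\Vect)\to\rshft\grend(\tilde\Vect)$ carries $\comp_\n$ into $\cl{\OO_\n}$; combined with the fibre hypothesis, \eqref{eq: pareporb} forces any $(A,B)$ on the LHS into $\comp_\n^\circ$. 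The identification with $\ParAut\cdot(\rshft p_{\tilde\Vect}^{-1}(\repn_\n)\cap\commvar^\embd(\tilde\Vect))$ then follows by writing $A=p\repn_\n p^{-1}$ with $p\in\ParAut$ and noting that $p^{-1}\cdot(A,B)=(\repn_\n,p^{-1}Bp)\in\rshft p_{\tilde\Vect}^{-1}(\repn_\n)\cap\commvar^\embd(\tilde\Vect)$, with the reverse inclusion routine.

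The only substantive step is the chain-lifting in \eqref{eq: unipreporb}, and its crux is the promotion of the left-exact kernel sequence to an SES via rank equality. Everything else is organizational: equivariance, elementary parabolic manipulations, and the fact that continuous projections send closures into closures.
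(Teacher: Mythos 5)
Your proposal is correct, and its overall skeleton matches the paper's: the rank (semi)additivity for $A\in\rshft\grendpres(\tilde\Vect)$ collapses $\overline{\OO_{\n}}$ to $\OO_{\n}$ over the relevant fibres, the heart is the inclusion $\OO_{\n}\cap(\rshft p^{\embd})^{-1}(\repn_{\m},\repn_{\m'})\subset\UnipAut\cdot\repn_{\n}$, and \eqref{eq: pareporb}, \eqref{eq: comporb} are then deduced by Levi/parabolic equivariance exactly as in the paper. Where you genuinely diverge is in that central inclusion. The paper recasts it abstractly: it works in the abelian category of pairs $(U,D)$ with $D\in\rshft\grend(U)$ and shows that a short exact sequence $0\to(\Vect,\repn_{\m})\to(\tilde\Vect,D)\to(\Vect',\repn_{\m'})\to0$ with middle term isomorphic to the direct sum must split, by comparing dimensions of $\Hom$ spaces; the splitting then furnishes the element of $\UnipAut$. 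You instead build the conjugating unipotent element by hand: the rank equalities make the graded kernel sequences exact, you lift each chain generator $x_{b(\Delta_{i'}),i'}$ into $\ker A^{\abs{\Delta_{i'}}}$ within the correct graded piece, propagate by $A$, and check that the resulting mixed basis puts $A$ in the graded Jordan form of $\n$ with an element of $\UnipAut$ carrying the standard basis to it. Both arguments are complete; yours is more elementary and makes the splitting (and the unipotent conjugator) explicit, while the paper's $\Hom$-dimension argument is shorter at that point and, as the authors note, applies verbatim in any locally finite $\C$-linear abelian category. Your verification of \eqref{eq: comporb} (pushing $\comp_{\n}$ into $\overline{\OO_{\n}}$ by continuity, then invoking \eqref{eq: pareporb}, and conjugating $B$ along with $A$ inside $\commvar^{\embd}(\tilde\Vect)$) is the same computation the paper performs with the projection identities for $\rshft p^{\embd}\circ\rshft p_{\tilde\Vect}$.
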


\begin{proof}
By the $\ParAut$-equivariant of the map $\rshft p^{\embd}$
and the surjectivity of the map $\ParAut\rightarrow\Aut(\Vect)\times\Aut(\Vect')$, we have
\[
\ParAut\cdot((\rshft p^{\embd})^{-1}(\repn_{\m},\repn_{\m'}))=
(\rshft p^{\embd})^{-1}(\Aut(\Vect)\cdot \repn_{\m},\Aut(\Vect')\cdot \repn_{\m'})=(\rshft p^{\embd})^{-1}(\OO_\m\times\OO_{\m'}).
\]
Hence, the equality \eqref{eq: unipreporb} implies \eqref{eq: pareporb}

Next, we show that \eqref{eq: pareporb} implies \eqref{eq: comporb}.
Indeed, assuming \eqref{eq: pareporb}, we have
\[
\ParAut\cdot (\rshft p_{\tilde\Vect}^{-1}(\repn_{\n}))=
\rshft p_{\tilde\Vect}^{-1}(\ParAut\cdot\repn_{\n})=
\rshft p_{\tilde\Vect}^{-1}(\OO_{\n}\cap (\rshft p^{\embd})^{-1}(\OO_\m\times\OO_{\m'}))=
\comp_{\n}^\circ\cap (\rshft p^{\embd}\circ\rshft p_{\tilde\Vect})^{-1}(\OO_\m\times\OO_{\m'}).
\]
Since
\[
(\rshft p_{\Vect},\rshft p_{\Vect'})\circ p_{\embd}=\rshft p^{\embd}\circ\rshft p_{\tilde\Vect}\rest_{\commvar^{\embd}(\tilde\Vect)},
\]
we deduce that
\[
\ParAut\cdot(\rshft p_{\tilde\Vect}^{-1}(\repn_{\n}))\cap \commvar^{\embd}(\tilde\Vect)=
\comp_{\n}^\circ\cap p_{\embd}^{-1} (\rshft p_{\Vect}^{-1}(\OO_\m)\times\rshft p_{\Vect'}^{-1}(\OO_{\m'}))=
\comp_{\n}^\circ\cap p_{\embd}^{-1}(\comp_{\m}^\circ\times\comp_{\m'}^\circ).
\]
Also,
\begin{multline*}
\rshft p_{\tilde\Vect}(\comp_\n\cap p_{\embd}^{-1}(\comp_{\m}^\circ\times\comp_{\m'}^\circ))\subset
\rshft p_{\tilde\Vect}(\comp_\n)\cap
\rshft p_{\tilde\Vect}(p_{\embd}^{-1}(\comp_{\m}^\circ\times\comp_{\m'}^\circ))\subset
\\\overline{\OO_{\n}}\cap (\rshft p^{\embd})^{-1}
(\rshft p_{\Vect}(\comp_{\m}^\circ),\rshft p_{\Vect'}(\comp_{\m'}^\circ))=
\overline{\OO_{\n}}\cap (\rshft p^{\embd})^{-1}(\OO_\m\times\OO_{\m'})\subset\OO_{\n}
\end{multline*}
by \eqref{eq: pareporb} and hence
\[
\comp_\n\cap p_{\embd}^{-1}(\comp_{\m}^\circ\times\comp_{\m'}^\circ)\subset
\rshft p_{\tilde\Vect}^{-1}(\OO_{\n})=\comp_\n^\circ
\]
as required.

Note that the statements with respect to $\embd'$ are obtained from the original ones by interchanging
$\m$ and $\m'$. Thus, it remains to prove \eqref{eq: unipreporb}.

Since $\rshft p^{\embd}$ is $\UnipAut$-invariant,
the $\UnipAut$-orbit of $\repn_{\n}$ is contained in $\OO_{\n}\cap(\rshft p^{\embd})^{-1}(\repn_{\m},\repn_{\m'})$.

Next, we show that
\[
\overline{\OO_\n}\cap(\rshft p^{\embd})^{-1}(\repn_{\m},\repn_{\m'})\subset \OO_\n.
\]
Indeed, recall that
\[
\OO_\n=\{D\in \rshft \grend(\tilde\Vect):\rk D^r\rest_{\tilde\Vect_\rho}=
\rk\repn_{\n}^r\rest_{\tilde\Vect_\rho}\ \ \forall\rho\in\Cusp,\ r\ge0\}
\]
and
\[
\overline{\OO_\n}=\{D\in \rshft \grend(\tilde\Vect):\rk D^r\rest_{\tilde\Vect_\rho}\le
\rk\repn_{\n}^r\rest_{\tilde\Vect_\rho}\ \ \forall\rho\in\Cusp,\ r\ge0\}.
\]
On the other hand, for any $D\in (\rshft p^{\embd})^{-1}(\repn_{\m},\repn_{\m'})$ we have
\[
\rk D^r\rest_{\tilde\Vect_\rho}\ge \rk \repn_{\m}^r\rest_{\Vect_\rho}+\rk \repn_{\m'}^r\rest_{\Vect'_\rho}=
\rk\repn_{\n}^r\rest_{\tilde\Vect_\rho}.
\]
Our claim follows.

It remains to show that $\OO_{\n}\cap(\rshft p^{\embd})^{-1}(\repn_{\m},\repn_{\m'})$
is contained in the $\UnipAut$-orbit of $\repn_{\n}$.

Consider the abelian category whose objects are pairs $(U,D)$ where $U$ is a finite-dimensional
$\Cusp$-graded vector space and $D\in \rshft \grend(U)$; the morphisms between $(U,D)$ and
$(U',D')$ are the grading preserving linear transformations $T:U\rightarrow U'$ such that $T\circ D=D'\circ T$.

The statement that we need to prove is that if
\begin{equation} \label{eq: ses}
0\rightarrow (\Vect,\repn_{\m})\xrightarrow{\embd} (\tilde\Vect,D)\xrightarrow{\srjct} (\Vect',\repn_{\m'})\rightarrow0
\end{equation}
is a short exact sequence and $(\tilde\Vect,D)\simeq(\tilde\Vect,\repn_{\n})=(\Vect,\repn_{\m})\oplus(\Vect',\repn_{\m'})$,
then \eqref{eq: ses} splits. In fact, this is true for any locally finite $\C$-linear abelian category.
Indeed, we have an exact sequence
\[
0\rightarrow\Hom((\Vect',\repn_{\m'}),(\Vect,\repn_{\m}))\xrightarrow{\srjct_*}\Hom((\tilde\Vect,D),(\Vect,\repn_{\m}))
\xrightarrow{\embd_*}\Hom((\Vect,\repn_{\m}),(\Vect,\repn_{\m}))
\]
and
\[
\Hom((\tilde\Vect,D),(\Vect,\repn_{\m}))\simeq \Hom((\Vect',\repn_{\m'}),(\Vect,\repn_{\m}))
\oplus\Hom((\Vect,\repn_{\m}),(\Vect,\repn_{\m})).
\]
Comparing dimensions we infer that $\embd_*$ is onto. Hence, \eqref{eq: ses} splits.
\end{proof}

\subsection{Conclusion of proof of Propositions \ref{prop: LCint} and \ref{prop: LCint2}} \label{sec: concprof}

Using Lemmas \ref{lem: svtrns} and \ref{lem: elem} we show the equivalence of \eqref{cond: GVorb} and \eqref{cond: parorb}
with the first (resp., second) condition in Proposition \ref{prop: LCint2}, thereby completing the proofs
of Propositions \ref{prop: LCint} and \ref{prop: LCint2}.

For the first equivalence we apply Lemma \ref{lem: svtrns} to
\[
W=\comp_{\n}^\circ\cap p_{\embd'}^{-1}(\comp_{\m'}^\circ\times\comp_{\m}^\circ)\hookrightarrow
X=\comp_{\n}^\circ\xrightarrow{\rshft p_{\tilde\Vect}} Y=\OO_{\n}\ni y_0=\repn_{\n},
\]
with $G=\Aut(\tilde\Vect)$ and $H=\ParAutt$.
Note that $G_0=\stb_{\n}$ and the embedding $W_0\hookrightarrow X_0$ can be identified via $\lshft p_{\tilde\Vect}$ with
$\cntpress\hookrightarrow \cnt_{\n}$.
By  Lemma \ref{lem: elem} (with respect to $\embd'$) we have $W=H\cdot W_0$.

For the second equivalence, we apply Lemma \ref{lem: svtrns} with
\[
W=\dembed(\comp_{\m}^\circ\times\comp_{\m'}^\circ)\hookrightarrow
X=p_{\embd}^{-1}(\comp_{\m}^\circ\times\comp_{\m'}^\circ)\cap\comp_{\n}^{\circ}
\xrightarrow{\rshft p_{\tilde\Vect}}Y=\OO_{\n}\cap (\rshft p^{\embd})^{-1}(\OO_\m\times\OO_{\m'})\ni y_0=\repn_{\n},
\]
$G=\ParAut$ and $H=\Aut(\Vect)\times\Aut(\Vect')$.
By Lemma \ref{lem: elem}, $G$ acts transitively on $Y$. The condition $W=H\cdot W_0$ is clear.
We have $G_0=\unipstb$ and $W_0\hookrightarrow X_0$ can be identified using $\lshft p_{\tilde\Vect}$,
with $\cntbw\hookrightarrow\cntpres$. \qed

\begin{remark}
Let $p:\cntpres\rightarrow \cnt_{\m}\oplus \cnt_{\m'}$ be the restriction of $\lshft p^{\embd}$
to $\cntpres$. Thus, $p(\lambda)_{i,j}=\lambda_{i,j}$ for $(i,j)\in X_{\m}\cup X_{\m'}$.
Clearly, $p$ is $\unipstb$-invariant, to wit, $\unipstb$ acts on the fibers of $p$.

It is not difficult to see that $\LC(\m,\m')$ is also equivalent to the following conditions.
\begin{enumerate}
\item The set
\[
\{(\lambda,\lambda')\in \cnt_{\m}\oplus \cnt_{\m'}:p^{-1}(\lambda,\lambda')\text{ is a $\unipstb$-orbit}\}
\]
is dense in $\cnt_{\m}\oplus \cnt_{\m'}$.
\item The set
\[
\{(x,y)\in \comp_{\m}\times\comp_{\m'}:
\comp_{\n}\cap p_{\embd}^{-1}(\lambda,\lambda')\text{ is a $\UnipAut$-orbit}\}
\]
is dense in $\comp_{\m}\times\comp_{\m'}$.
\end{enumerate}
We will not give details since we will not use this result.
\end{remark}

\section{Corroborating evidence} \label{sec: mainev}

\subsection{}

For convenience, let us say that a multisegment $\m$ is \emph{good} if the conditions
$\LI(\m,\m')$ and $\LC(\m,\m')$ are equivalent for all $\m'\in\MS{\Seg}$.
Note that if $\m$ is good, then for any $\m'\in\MS{\Seg}$ the irreducibility of $Z(\m)\times Z(\m')$
is equivalent to $\IC(\m,\m')$.
By remark \ref{rem: LCsimcons}, Conjecture \ref{conj: weakform} is equivalent to saying that every $\m$ such that
$Z(\m)\in\IrrS$ is good, while Question \ref{qu: strongform} asks whether in fact every $\m\in\MS{\Seg}$ is good.

Recall that a multisegment $\m$ is called a (strict) ladder if it can be written as $\Delta_1+\dots+\Delta_k$ where
$\Delta_{i+1}\prec\Delta_i$ for all $i=1,\dots,k-1$. The corresponding representations were studied
in \cites{MR3163355, MR3573961}. In particular, $\GLS(\m)$ is satisfied and $Z(\m)$ is $\square$-irreducible
for any ladder $\m$.

The following result provides plenty of examples of good multisegments, in support of Conjecture \ref{conj: weakform}.
\begin{theorem} \label{thm: mainevid}
\begin{enumerate}
\item Every ladder multisegment is good.
\item Suppose that $\m_1,\dots,\m_k$ are good multisegments and that
$Z(\m_i)\times Z(\m_j)$ is irreducible for all $i,j$. Then $\m_1+\dots+\m_k$ is good.
\end{enumerate}
\end{theorem}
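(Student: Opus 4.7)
For part (1), the strategy is to compare two combinatorial descriptions of the implied equivalence. On the representation-theoretic side, \cite{MR3573961} provides an explicit criterion for $\LI(\m,\m')$ when $\m=\Delta_1+\cdots+\Delta_\ell$ is a ladder (i.e.\ $\Delta_{s+1}\prec\Delta_s$), formulated as a condition on how the segments of $\m'$ interact with the totally ordered sequence $(\Delta_s)$. On the geometric side, I plan to exploit the rigid shape of $\cnt_\m$ and $\stb_\m$ in the ladder case: using the linear-independence reformulation of $\LC(\m,\m')$ from Proposition \ref{prop: LCint2}(3), one chooses a $\lambda\in\cnt_\m$ with generic (nonzero) "adjacent" entries $\lambda_{s,s+1}$ and a generic $\lambda'\in\cnt_{\m'}$, writes the vectors $v^{\m,\m'}_{i,j}(\lambda,\lambda')$ explicitly, and verifies that their linear independence matches the \cite{MR3573961} condition segment by segment.

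For part (2) I proceed by induction on $k$, so it suffices to treat $k=2$. Write $\m=\m_1+\m_2$. The assumption with $i=j$ gives each $Z(\m_i)\in\IrrS$, and then \eqref{eq: factsirrs} together with the irreducibility of $Z(\m_1)\times Z(\m_2)$ shows $Z(\m)=Z(\m_1)\times Z(\m_2)\in\IrrS$ as well. The key reductions I would establish are the decoupling equivalences
\[
\LI(\m,\m')\iff\LI(\m_1,\m')\text{ and }\LI(\m_2,\m'),
\qquad
\LC(\m,\m')\iff\LC(\m_1,\m')\text{ and }\LC(\m_2,\m'),
\]
which, combined with goodness of $\m_1$ and $\m_2$, immediately yield goodness of $\m$ (and then, by the inductive step with $Z(\m)\times Z(\m_{k+1})$ still irreducible via Proposition \ref{prop: sqrirred}(4), the general $k$).

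The geometric decoupling follows from Proposition \ref{prop: LCint2}(3): writing $I=I_1\sqcup I_2$, the set $X_{\m,\m'}$ and the target $\C^{Y_{\m,\m'}}$ split as $X_{\m_1,\m'}\sqcup X_{\m_2,\m'}$ and $\C^{Y_{\m_1,\m'}}\oplus\C^{Y_{\m_2,\m'}}$, and the system $\{v^{\m,\m'}_{i,j}(\lambda,\lambda')\}$ acquires a block structure whose off-diagonal blocks are controlled by the cross coordinates of $\lambda$ coming from $X_{\m_1,\m_2}\cup X_{\m_2,\m_1}$. Openness and $\stb_\m$-invariance of the condition (Remark \ref{rem: LCsimcons}(3)) allow setting those cross coordinates to zero, which uncouples the system into the two smaller conditions. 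The representation-theoretic decoupling is parallel: the backward direction composes the embeddings $Z(\m_i+\m')\hookrightarrow Z(\m_i)\times Z(\m')$ using the freedom to commute $Z(\m_1)$ and $Z(\m_2)$ guaranteed by the irreducibility hypothesis, while the forward direction uses the Jacquet module techniques of \S\ref{sec: prep} to project the witnessing inclusion onto the Levi block corresponding to the decomposition $\Vect=\Vect_1\oplus\Vect_2$. The main obstacle I anticipate is the combinatorial matching in part (1), which is transparent in small examples but requires careful ladder-index bookkeeping in general; a secondary delicate point is the forward direction of the $\LI$ decoupling in part (2), where one must ensure that the Jacquet module projection recovers a genuine embedding rather than merely a subquotient, which I expect to need the $\square$-irreducibility of $Z(\m)$ through Proposition \ref{prop: sqrirred}(1).
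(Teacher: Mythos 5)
Your plan for part (1) defers precisely the substantive content. The explicit criterion from \cite{MR3573961} that you want to compare against is a matching-type condition (cf.\ \cite{MR3573961}*{Proposition 6.15}), and for general multisegments that condition is \emph{strictly weaker} than $\LC(\m,\m')$, as noted in the remark at the end of the proof of this theorem; so the asserted ``segment by segment'' verification that generic linear independence of the vectors $v^{\m,\m'}_{i,j}(\lambda,\lambda')$ matches it must use the ladder structure in an essential way, and choosing $\lambda$ with nonzero adjacent entries and a ``generic'' $\lambda'$ is a starting point, not an argument (you acknowledge this yourself). The paper's proof is instead an induction on the number of segments of the ladder: Proposition \ref{prop: mm-} reduces to $\max\m'\le\max\m$, then \cite{MR3573961}*{Proposition 6.1} gives $\LI(\m,\m')\iff\LI(\m-\Delta_1,\m'_{<\Delta_1})$, and an explicit coordinate computation (imposing the open condition $\lambda_{i,i-1}\ne0$) establishes the same recursion for $\LC$. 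You would need either to carry out a recursion of this kind or to actually prove the combinatorial matching you postpone; as written, part (1) has no proof.

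For part (2) your architecture coincides with the paper's: the two decoupling equivalences are exactly Proposition \ref{prop: 3ms}(5) and (6). But two of your justifications do not stand as stated. On the geometric side, in the direction $\LC(\m_1+\m_2,\m')\Rightarrow\LC(\m_i,\m')$ you cannot ``set the cross coordinates to zero'' by openness and invariance alone: the vanishing locus is a proper subspace, hence not generic. What legitimizes this step is the hypothesis $\LC(\m_1,\m_2)$ (resp.\ $\LC(\m_2,\m_1)$), fed through Proposition \ref{prop: LCint2}, which gives the density of the stabilizer-saturation of the locus where the $X_{\m_1,\m_2}$-coordinates vanish; your hypotheses do supply these conditions (goodness plus irreducibility of $Z(\m_1)\times Z(\m_2)$, via Corollary \ref{cor: irredcond}), but your argument never invokes them — this is exactly why Proposition \ref{prop: 3ms}(2) carries the hypothesis $\LC(\m,\m')$. (The converse direction is unproblematic: there one merely chooses a witness with vanishing cross coordinates, as in Proposition \ref{prop: 3ms}(4).) On the representation-theoretic side, the backward direction does not follow by ``composing embeddings and commuting'': from $Z(\m_2+\m')\hookrightarrow Z(\m_2)\times Z(\m')$ you get $Z(\m_1)\times Z(\m_2+\m')\hookrightarrow Z(\m_1)\times Z(\m_2)\times Z(\m')$, but to continue you would need $\LI(\m_1,\m_2+\m')$, which is not among your hypotheses; and your forward direction via Jacquet modules has the embedding-versus-subquotient problem you yourself flag. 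The paper avoids both issues with intertwining operators: under \eqref{eq: ALO}, $\LI(\m_1+\m_2,\m')$ says that the image of $R_{Z(\m'),Z(\m_1+\m_2)}$ is $Z(\m_1+\m_2+\m')$, and since $Z(\m_1+\m_2)=Z(\m_1)\times Z(\m_2)$ this operator factors as $(\id\times R_{Z(\m'),Z(\m_2)})\circ(R_{Z(\m'),Z(\m_1)}\times\id)$, after which irreducibility of the images and multiplicity one give the equivalence. Finally, both your appeal to \eqref{eq: factsirrs} (which needs $Z(\m_1)\times Z(\m_2)\times Z(\m_1)$ irreducible) and the induction on $k$ (which needs $Z(\m_1+\dots+\m_{k-1})$ to be $\square$-irreducible with $Z(\m_1+\dots+\m_{k-1})\times Z(\m_k)$ irreducible) rest on the standard fact that products of pairwise-commuting $\square$-irreducible representations are irreducible and $\square$-irreducible; this should at least be stated and sourced.
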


We note that if both $\m,\m'$ are ladders, then the condition $\LC(\m,\m')$ (and consequently,
the irreducibility of $Z(\m)\times Z(\m')$) admits a very simple
combinatorial description -- see \cite{MR3573961}*{Lemma 6.21}.

\begin{corollary}
If $Z(\m)$ is unitarizable, then $\m$ is good.
\end{corollary}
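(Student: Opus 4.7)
The plan is to reduce the claim to Theorem \ref{thm: mainevid} via Tadi\'c's classification of the unitary dual of $\GL_n(F)$. According to this classification, any unitarizable irreducible representation decomposes as an irreducible parabolically induced product of building blocks, where each block is either a unitary Speh representation $U(\delta,n)$ or a complementary series piece $U(\delta,n)\cdot\abs{\cdot}^\alpha\times U(\delta,n)\cdot\abs{\cdot}^{-\alpha}$ with $0<\alpha<1/2$. Each Speh representation is, by definition, $Z(\mu)$ for a ladder multisegment $\mu$, and each complementary series piece is itself an irreducible product of two such $Z(\mu)$'s (coming from twisted Spehs). Thus one obtains ladder multisegments $\m_1,\dots,\m_N$ such that
\[
Z(\m)\cong Z(\m_1)\times\cdots\times Z(\m_N)
\]
and this product is irreducible; by \eqref{eq: mult1} applied inductively, this forces $\m=\m_1+\cdots+\m_N$.

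The next step is to verify that every pairwise product $Z(\m_i)\times Z(\m_j)$ is irreducible. This is a general fact about irreducible parabolically induced products of irreducibles. Since parabolic induction is biexact and the image in $\Gr$ is commutative, the Jordan--H\"older content of $Z(\m_1)\times\cdots\times Z(\m_N)$ is independent of the order; in particular one may rearrange the factors so that $Z(\m_i)$ and $Z(\m_j)$ are adjacent. If $Z(\m_i)\times Z(\m_j)$ admitted a proper nonzero subobject $\tau$, biexactness of induction by the remaining factors would yield a proper nonzero subobject of the full product, contradicting its irreducibility. Hence each $Z(\m_i)\times Z(\m_j)$ has length one.

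With the ladders $\m_1,\dots,\m_N$ and the pairwise irreducibility of their products in hand, the hypotheses of Theorem \ref{thm: mainevid}(2) are met: Theorem \ref{thm: mainevid}(1) ensures that each ladder $\m_i$ is good, and Theorem \ref{thm: mainevid}(2) then concludes that $\m=\m_1+\cdots+\m_N$ is good.

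The main obstacle is not internal to the argument but consists in invoking the deep external input of Tadi\'c's classification theorem, together with the explicit description of its building blocks as Spehs (i.e.\ $Z$ of ladders) and their complementary series. Once these facts are granted, the reduction to Theorem \ref{thm: mainevid} is essentially formal, with the only intermediate point being the elementary pairwise-irreducibility lemma described above.
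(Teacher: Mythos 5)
Your proposal is correct and takes essentially the same route as the paper: both invoke Tadi\'c's classification to write $\m$ as a sum of ladder multisegments $\m_1+\dots+\m_k$ (Speh-type blocks and the factors of complementary series) whose pairwise products $Z(\m_i)\times Z(\m_j)$ are irreducible, and then conclude by Theorem \ref{thm: mainevid}. The only cosmetic difference is that you derive the pairwise irreducibility from the irreducibility of the full product via the elementary sub-product argument (which is valid), whereas the paper reads it off directly from the classification.
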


Indeed, it follows from Tadi\'c classification \cite{MR870688} (see also \cite{MR3163355})
that if $Z(\m)$ is unitarizable, then we can write $\m$ as a sum of multisegments
\[
\m=\m_1+\dots+\m_k
\]
where each $\m_i$ is a ladder of the form $\m_i=\Delta_{i,1}+\dots+\Delta_{i,r_i}$
where $\Delta_{i,j+1}=\lshft\Delta_{i,j}$, $j=1,\dots,r_i-1$,
and moreover $Z(\m_i)\times Z(\m_j)$ is irreducible for all $i,j$.

Another case which will be useful later is the following.

\begin{example}
Suppose that $\m=\Delta_1+\dots+\Delta_k$ and there exists $\rho\in\Cusp$ such that
$b(\Delta_i)\in\{\rho,\rshft\rho\}$ for all $i$.
Then, we can write $\m=\m_1+\dots+\m_l$ such that each $\m_i$ is a ladder (consisting
of either one or two segments) and $Z(\m_i)\times Z(\m_j)$ is irreducible for all $i,j$.
(See \S\ref{sec: onlyrhosrho} below.)
Thus, $\m$ is good.
\end{example}

We will prove the theorem (along with all other results stated in this section) in \S\ref{sec: proofs} below.

\subsection{}

The proof of Theorem \ref{thm: mainevid} depends on several compatibility properties of the conditions $\LI(\m,\m')$ and
$\LC(\m,\m')$ which are interesting in their own right.
We formulate them in the following propositions, which provide additional attestation for Conjecture \ref{conj: weakform} as well as for
an affirmative answer to Question \ref{qu: strongform}.

\subsubsection{}
For convenience (although not absolutely necessary) we fix a total order $\le$ on $\Cusp$ subject only to the condition that
$\rho<\rshft\rho$ for all $\rho\in\Cusp$.
For any non-zero multisegment $\m$ define $\max\m:=\max\supp\m$.
We will also define $\m^-$ in \eqref{eq: defm-} below.

\begin{proposition} \label{prop: mm-}
Suppose that $0\ne\m,\m'\in\MS{\Seg}$ with $\max\m<\max\m'$. Then
\begin{enumerate}
\item $\LI(\m,\m')$ if and only if $\LI(\m,\m'^-)$ and $(\m+\m')^-=\m+\m'^-$.
\item $\LC(\m,\m')$ if and only if $\LC(\m,\m'^-)$ and $(\m+\m')^-=\m+\m'^-$.
\end{enumerate}
Thus, Conjecture \ref{conj: weakform} and Question \ref{qu: strongform}
reduce to the case where $\max\m'\le\max\m$.
\end{proposition}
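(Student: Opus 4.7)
The plan is to handle both parts in parallel by ``removing the top row''. Set $\rho := \max\m' = \max(\m+\m')$ (the second equality holds because $\max\m < \max\m'$); in particular $\rho \notin \supp\m$, so no segment of $\m$ contains $\rho$. The operation $\n \mapsto \n^-$ of \eqref{eq: defm-} modifies only segments reaching the top of $\supp\n$, and under our hypothesis only segments of $\m'$ qualify inside $\m+\m'$. The combinatorial identity $(\m+\m')^-=\m+\m'^-$ expresses this non-interference and must appear as part of the equivalence since it can fail when the recipe for choosing which segment to shorten depends on how segments of $\m'$ interact with segments of $\m$ sitting just below the top row.

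For part (1), the main tool will be a Bernstein--Zelevinsky derivative $D_\rho$ on $\Reps$ (an exact functor arising from an iterated Jacquet module at the maximal parabolic adapted to the $\rho$-component of the supercuspidal support) with two properties: $D_\rho Z(\n) = Z(\n^-)$ whenever $\rho = \max\n$, and a Leibniz rule for $D_\rho(\pi \times \pi')$. Since $\rho \notin \supp\m$, the summand $D_\rho Z(\m) \times Z(\m')$ vanishes and the Leibniz formula collapses to $D_\rho(Z(\m)\times Z(\m')) = Z(\m)\times Z(\m'^-)$. Applying $D_\rho$ to an embedding witnessing $\LI(\m,\m')$ yields $Z((\m+\m')^-) \hookrightarrow Z(\m)\times Z(\m'^-)$, which is $\LI(\m,\m'^-)$ precisely when $(\m+\m')^- = \m+\m'^-$. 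The reverse direction is obtained via Frobenius reciprocity together with \eqref{eq: mult1}: one uses that $Z(\m+\m')$ is the unique element of $\JH(Z(\m)\times Z(\m'))$ whose highest $\rho$-derivative equals $Z((\m+\m')^-)$, so any embedding one level below lifts uniquely.

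For part (2), the geometric counterpart proceeds as follows. Since $\rho$ is maximal in $\supp(\m+\m')$, the graded piece $\tilde\Vect_{\rshft\rho}$ of $\tilde\Vect = \Vect \oplus \Vect'$ vanishes, and hence every $A \in \rshft\grend(\tilde\Vect)$ kills $\tilde\Vect_\rho = \Vect'_\rho$. Choosing a line in $\Vect'_\rho$ corresponding to the segment shortened by $\m' \mapsto \m'^-$ yields a $\LeviAut$-equivariant quotient map $\comp_{\m+\m'} \to \comp_{\m+\m'^-}$, under which $\comp_{\m} \times \comp_{\m'}$ projects to $\comp_{\m} \times \comp_{\m'^-}$ and the whole $\ParAut$/$\UnipAut$ picture descends compatibly. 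The density statement \eqref{cond: parorb} characterizing $\LC(\m,\m')$ then descends through this map to the analogous statement for $\LC(\m,\m'^-)$, again contingent on $(\m+\m')^-=\m+\m'^-$ so that the index sets $X_{\m,\m'}$, $Y_{\m,\m'}$ restrict cleanly onto $X_{\m,\m'^-}$, $Y_{\m,\m'^-}$. The most transparent check is via the criterion of Proposition \ref{prop: LCint2}(\ref{part: linind}): the vectors $v_{i,j}^{\m,\m'}(\lambda,\lambda')$ whose index $j$ corresponds to a $\rho$-ending segment of $\m'$ decouple from the rest, reducing the linear independence condition to its $\m'^-$-level counterpart.

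The hard part will be the converse direction in both items, namely lifting the ``one level down'' statement back to the level of $\m'$. For part (1), the Leibniz rule alone does not guarantee that every embedding at the $\m'^-$-level descends from one at the $\m'$-level, and one needs finer control over the socle filtration of a suitable Jacquet module of $Z(\m)\times Z(\m')$, essentially encoding the M\oe glin--Waldspurger characterization of $Z(\m+\m')$ by its highest $\rho$-derivative. For part (2), the analogous difficulty is upgrading density in the quotient $\comp_{\m+\m'^-}$ to density in the total space $\comp_{\m+\m'}$, which will rely on the surjectivity and $\UnipAut$-equivariance of the projection together with irreducibility of the fibers. The identity $(\m+\m')^- = \m + \m'^-$ is precisely what makes both liftings possible, and it appears as a necessary condition because, without it, the derivatives of the two sides of $\LI$ (or the projected orbits in $\LC$) land in different strata.
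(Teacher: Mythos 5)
Your argument for part (1) rests on a property that is false: there is no $\rho$-derivative functor with $D_\rho Z(\n)=Z(\n^-)$ when $\rho=\max\n$. The Mœglin--Waldspurger operation $\n\mapsto\n^-$ does \emph{not} only shorten segments ending at the top: it removes the ends along the whole leading chain $\Delta_{i_1},\dots,\Delta_{i_m}$, whose ends decrease step by step, so $\supp\n-\supp\n^-=\supp\maxdel(\n)$ is a full segment, not a multiple of $\rho$. Concretely, for $\n=[1,2]+[0,1]$ one has $\n^-=[1,1]+[0,0]$, whereas any functor built from the $\rho$-isotypic part of a Jacquet module ($\rho=\abs{\cdot}^2$) sends $Z(\n)$ to a representation with supercuspidal support $\{0,1,1\}$ (in fact $Z([1,1]+[0,1])$), which is not even of the right size. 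Since your "Leibniz collapse" $D_\rho(Z(\m)\times Z(\m'))=Z(\m)\times Z(\m'^-)$ and the lifting step both depend on this identity, the mechanism of part (1) does not get off the ground. The paper instead works on the dual side: writing $Z(\m')=L(\m'^\#)$ and $Z(\m+\m')=L((\m+\m')^\#)$, it removes the whole segment $\maxdel(\m')$ (the smallest segment of $\m'^\#$ ending at $\max\m'$) using socle/separation lemmas (Lemmas \ref{lem: nnmx}, \ref{lem: socsocsoc} and Corollary \ref{cor: wdelta}), which is exactly the corrected form of the "derivative" you need. Moreover, the necessity of $(\m+\m')^-=\m+\m'^-$ is not a formality about "strata": it requires its own argument (in the paper, a reduction via Corollary \ref{cor: gedelta} to $\m=\m_{\ge\Delta_l}$ followed by Lemma \ref{lem: socsocsoc} applied to $\m'^\#$, yielding a contradiction on $\mxprt$-parts), which your proposal does not supply.

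Part (2) inherits the same misconception: the passage $\m'\mapsto\m'^-$ deletes one basis vector in \emph{each} graded piece $\Vect'_{\rho''}$, $\rho''\in\maxdel(\m')$, not a single line in $\Vect'_\rho$, so the proposed equivariant quotient $\comp_{\m+\m'}\to\comp_{\m+\m'^-}$ is not defined as stated, and the claimed "decoupling" of the vectors $v^{\m,\m'}_{i,j}$ indexed by top segments is not automatic. In the paper's proof it only holds after normalizing $\lambda'$ along the leading chain using the $\stb_{\m'}$-invariance of the linear-independence condition (Lemmas \ref{lem: vaninorb} and \ref{lem: vaninorb2}); the counting is then controlled by the injection $f_{\m,\m'}:\tilde Y_{\m,\m'}\to\tilde X_{\m,\m'}$ of Lemma \ref{lem: fmm'}, whose surjectivity is \emph{equivalent} to $(\m+\m')^-=\m+\m'^-$ and which also furnishes the triangularity needed for the converse implication. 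None of these ingredients appears in your sketch, so both the forward implication (that $\LC(\m,\m')$ forces the combinatorial identity) and the lifting back from $\m'^-$ to $\m'$ remain unproved.
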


\subsubsection{}

The first part of Theorem \ref{thm: mainevid} will follow from Proposition \ref{prop: mm-} and the following.

\begin{proposition} \label{prop: splitdisj}
Let $\m=\sum_{i\in I}\Delta_i,\m'=\sum_{i'\in I'}\Delta_{i'}\in\MS{\Seg}$.
Assume that $I=I_1\cup I_2$ and $I'=I_1'\cup I_2'$ (disjoint unions) and that
\begin{subequations}
\begin{gather}
\label{cond: np1}\text{for any $i\in I_1\cup I_1'$ and $j\in I_2\cup I_2'$ we have $\Delta_i\not\prec\Delta_j$,}\\
\label{cond: np2}\text{for any $i\in I_1\cup I_1'$ and $j\in I_2\cup I_2'$ we have $\lshft\Delta_i\not\prec\Delta_j$.}
\end{gather}
\end{subequations}
Let $\m_r=\sum_{i\in I_r}\Delta_i$ and $\m'_r=\sum_{i'\in I'_r}\Delta_{i'}$, $r=1,2$. Then,
\begin{enumerate}
\item $\LC(\m,\m')$ implies $\LC(\m_1,\m_1')$ and $\LC(\m_2,\m_2')$.
\item $\LI(\m,\m')$ implies $\LI(\m_1,\m_1')$ and $\LI(\m_2,\m_2')$.
\end{enumerate}
\end{proposition}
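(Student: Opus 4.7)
The plan for part (1) is direct, via part \ref{part: linind} of Proposition \ref{prop: LCint2}. Conditions \eqref{cond: np1} and \eqref{cond: np2} forbid any precedence $\Delta_i \prec \Delta_j$ or shifted precedence $\Delta_i \prec \rshft\Delta_j$ from a part-1 index $i \in I_1 \cup I_1'$ to a part-2 index $j \in I_2 \cup I_2'$. This forces the disjoint-union decompositions $X_{\m,\m'} = X_{\m_1, \m_1'} \sqcup X_{\m_2, \m_2'} \sqcup X_{\mathrm{c}}$ and $Y_{\m,\m'} = Y_{\m_1, \m_1'} \sqcup Y_{\m_2, \m_2'} \sqcup Y_{\mathrm{c}}$, where the cross pieces $X_{\mathrm{c}}, Y_{\mathrm{c}}$ collect only pairs in $I_2 \times I_1'$. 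A direct inspection of \eqref{eq: vectors} then shows: for $(i,j) \in X_{\m_1, \m_1'}$ with $i \in I_1$, the constraint $(i,r) \in X_\m$ forces $r \in I_1$ by \eqref{cond: np1}, and $(i,s) \in Y_{\m,\m'}$ forces $s \in I_1'$ by \eqref{cond: np2}; a symmetric argument using $j \in I_2'$ handles $(i,j) \in X_{\m_2, \m_2'}$. Consequently the $(r,r)$-block of vectors $\{v_{i,j}^{\m,\m'}(\lambda, \lambda')\}_{(i,j) \in X_{\m_r, \m_r'}}$ lies in the coordinate subspace $\C^{Y_{\m_r, \m_r'}}$ and coincides with $v_{i,j}^{\m_r, \m_r'}(\lambda|_{X_{\m_r}}, \lambda'|_{X_{\m_r'}})$. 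The two blocks occupy disjoint coordinates, so linear independence of the full set of vectors descends to each, yielding $\LC(\m_r, \m_r')$ for $r=1,2$.

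For part (2) the plan is representation-theoretic. Condition \eqref{cond: np1} and \eqref{eq: LIsimp} supply the $\SLI$ relations between any pair drawn from $\{\m_1, \m_1'\}$ and $\{\m_2, \m_2'\}$, and crucially $\SLI(\m_1 + \m_1', \m_2 + \m_2')$, which yields $Z(\m + \m') \hookrightarrow Z(\m_1 + \m_1') \times Z(\m_2 + \m_2')$ with $Z(\m+\m')$ the unique irreducible subobject of the socle. I would then apply the Jacquet functor $r_P$ for $P$ the parabolic with Levi $\GL_{n_1} \times \GL_{n_2}$, where $n_r = \sum_{i \in I_r \cup I_r'} \abs{\Delta_i}$, to the inclusion $Z(\m + \m') \hookrightarrow Z(\m) \times Z(\m')$ provided by the hypothesis $\LI(\m, \m')$. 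By Frobenius reciprocity and the fact that $Z(\m_1 + \m_1') \otimes Z(\m_2 + \m_2')$ sits at the top of $r_P(Z(\m + \m'))$, this passes to a surjection $r_P(Z(\m) \times Z(\m')) \twoheadrightarrow Z(\m_1 + \m_1') \otimes Z(\m_2 + \m_2')$. The geometric lemma expresses $r_P(Z(\m) \times Z(\m'))$ as a sum over matchings between $\m$ and $\m'$ across parts 1 and 2; the enumerated $\SLI$ relations should kill all non-diagonal summands on cuspidal-support grounds, leaving a surjection $(Z(\m_1) \times Z(\m_1')) \otimes (Z(\m_2) \times Z(\m_2')) \twoheadrightarrow Z(\m_1 + \m_1') \otimes Z(\m_2 + \m_2')$. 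This tensor-product surjection factors into surjections in each $\GL$-factor, producing $\LI(\m_1, \m_1')$ and $\LI(\m_2, \m_2')$.

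The main obstacle is in part (2): the hypotheses are asymmetric — precedence from part 2 to part 1 is allowed — so cross-shuffles in the geometric-lemma decomposition of $r_P(Z(\m) \times Z(\m'))$ (namely those pairing indices in $I_2$ with indices in $I_1'$) do not vanish a priori. Ruling these out in the target will require careful bookkeeping of supercuspidal supports together with the combinatorial Jacquet-module tools to be assembled in \S\ref{sec: prep}; alternatively, one might bypass the Mackey decomposition by using the collected $\SLI$ relations to embed $Z(\m)\times Z(\m')$ directly into a four-fold product whose block structure matches that of $Z(\m_1+\m_1')\times Z(\m_2+\m_2')$, and then arguing at the level of socles.
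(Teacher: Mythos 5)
Your part (1) is correct and is essentially the paper's own argument: by \eqref{cond: np1} and \eqref{cond: np2}, for $(i,j)\in X_{\m_r,\m_r'}$ all coordinates of $v_{i,j}^{\m,\m'}(\lambda,\lambda')$ outside $Y_{\m_r,\m_r'}$ vanish and the remaining ones agree with $v_{i,j}^{\m_r,\m_r'}(\lambda\rest_{X_{\m_r}},\lambda'\rest_{X_{\m_r'}})$, so linear independence descends to each block.

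Part (2), however, has a genuine gap -- precisely the one you flag at the end. Two specific problems. First, the step ``the inclusion $Z(\m+\m')\hookrightarrow Z(\m)\times Z(\m')$ passes to a surjection $J(Z(\m)\times Z(\m'))\twoheadrightarrow Z(\m_1+\m_1')\otimes Z(\m_2+\m_2')$'' is not valid: exactness of the Jacquet functor turns the inclusion into an inclusion $J(Z(\m+\m'))\hookrightarrow J(Z(\m)\times Z(\m'))$, and a quotient of a subrepresentation does not extend to a quotient of the ambient representation. Second, the cross terms of the geometric lemma (shuffles pairing $I_2$ with $I_1'$) are not eliminated by the $\SLI$ relations you list, all of which come from \eqref{cond: np1}; the hypothesis \eqref{cond: np2} never enters your argument, yet it is exactly what is needed here. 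The paper handles this through the left-separation relation: by \eqref{cond: np2} and Lemma \ref{lem: simpspr} (together with Lemma \ref{lem: subrep}) one gets $Z(\m_1)\times Z(\m_1')\sprt Z(\m_2)\times Z(\m_2')$, which forces the $\supp(\m_1+\m_1')\otimes\supp(\m_2+\m_2')$-component of $J(Z(\m)\times Z(\m'))$ to be exactly $(Z(\m_1)\times Z(\m_1'))\otimes(Z(\m_2)\times Z(\m_2'))$ (the geometric lemma supplies the lower bound), and, via Lemma \ref{lem: SIspr} combined with $\SLI(\m_1+\m_1',\m_2+\m_2')$ from \eqref{cond: np1}, the same component of $J(Z(\m+\m'))$ to be exactly $Z(\m_1+\m_1')\otimes Z(\m_2+\m_2')$. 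Applying $J$ to the inclusion given by $\LI(\m,\m')$ and restricting to that component yields $Z(\m_r+\m_r')\hookrightarrow Z(\m_r)\times Z(\m_r')$, i.e.\ $\LI(\m_r,\m_r')$ in subrepresentation form. Note finally a bookkeeping issue in your endgame: surjections $Z(\m_r)\times Z(\m_r')\twoheadrightarrow Z(\m_r+\m_r')$ are, by the definition of $\LI$, the conditions $\LI(\m_r',\m_r)$, not $\LI(\m_r,\m_r')$; a quotient-style argument would have to start from the form $Z(\m')\times Z(\m)\twoheadrightarrow Z(\m+\m')$ of \eqref{eq: switch12} and keep track of the order of factors. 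Working with subobjects and exact cuspidal-support components, as above, avoids both difficulties.
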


Extending the partial order $\Cusp$ lexicographically to $\Seg$ (see \S\ref{sec: partorder}) we infer

\begin{corollary} \label{cor: gedelta}
For any two multisegments $\m=\sum_{i\in I}\Delta_i$, $\m'=\sum_{i'\in I'}\Delta_{i'}$ and $\Delta\in\Seg$ we have
\begin{enumerate}
\item $\LI(\m,\m')\implies\LI(\sum_{i\in I:\Delta_i\ge\Delta}\Delta_i,\sum_{i'\in I':\Delta_{i'}\ge\Delta}\Delta_{i'})$.
\item $\LC(\m,\m')\implies\LC(\sum_{i\in I:\Delta_i\ge\Delta}\Delta_i,\sum_{i'\in I':\Delta_{i'}\ge\Delta}\Delta_{i'})$.
\item $\LI(\m,\m')\implies\LI(\sum_{i\in I:e(\Delta_i)\in\Delta}\Delta_i,\sum_{i'\in I':e(\Delta_{i'})\in\Delta}\Delta_{i'})$.
\item $\LC(\m,\m')\implies\LC(\sum_{i\in I:e(\Delta_i)\in\Delta}\Delta_i,\sum_{i'\in I':e(\Delta_{i'})\in\Delta}\Delta_{i'})$.
\item $\LI(\m,\m')\implies\LI(\sum_{i\in I:b(\Delta_i)\in\Delta}\Delta_i,\sum_{i'\in I':b(\Delta_{i'})\in\Delta}\Delta_{i'})$.
\item $\LC(\m,\m')\implies\LC(\sum_{i\in I:b(\Delta_i)\in\Delta}\Delta_i,\sum_{i'\in I':b(\Delta_{i'})\in\Delta}\Delta_{i'})$.
\end{enumerate}
\end{corollary}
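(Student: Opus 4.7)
The plan is to derive each of the six implications from Proposition \ref{prop: splitdisj}, applied (iteratively where necessary) with a suitable two-block partition of the index sets. Once a partition $I = I_1 \cup I_2$, $I' = I_1' \cup I_2'$ is fixed, the $\LI$ and $\LC$ implications come simultaneously, provided the non-precedence hypotheses \eqref{cond: np1} and \eqref{cond: np2} are verified.

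The key elementary observation I would isolate first is that linking respects the lexicographic order: if $\Delta_1 \prec \Delta_2$, then $\Delta_1$ and $\Delta_2$ lie on a common cuspidal line; the condition $b(\Delta_2) \in \rshft{\Delta_1}$ forces $b(\Delta_1) < b(\Delta_2)$, while $e(\Delta_2) \notin \Delta_1$ forces $e(\Delta_1) < e(\Delta_2)$, so $\Delta_1 < \Delta_2$ in lex. Similarly, $\lshft{\Delta_1} \prec \Delta_2$ forces $b(\Delta_2) \in \Delta_1$ and $e(\Delta_2) \ge e(\Delta_1)$, hence $\Delta_1 \le \Delta_2$. Consequently, the first pair of implications is obtained by setting $I_1 = \{i : \Delta_i \ge \Delta\}$ and $I_2 = I \setminus I_1$ (likewise for $I'$): for any $i \in I_1 \cup I_1'$ and $j \in I_2 \cup I_2'$, a linking relation $\Delta_i \prec \Delta_j$ or $\lshft{\Delta_i} \prec \Delta_j$ would yield $\Delta \le \Delta_i \le \Delta_j$, contradicting $j \in I_2 \cup I_2'$.

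The four remaining implications I would handle by iterating the first case. For the pair involving $b(\Delta_i) \in \Delta$, I would first apply the case just proved with threshold segment $\{b(\Delta)\}$, which selects those indices $i$ for which $\Delta_i$ lies on the cuspidal line of $\Delta$ with $b(\Delta_i) \ge b(\Delta)$; then apply Proposition \ref{prop: splitdisj} to the resulting sub-pair with the partition determined by $\Delta_i \ge \{\rshft{e(\Delta)}\}$, this time keeping the \emph{complementary} block. That block consists exactly of the $\Delta_i$ with $b(\Delta_i) \in \Delta$, and its non-precedence hypotheses follow from the same observation: any linking between $\Delta_i$ with $b(\Delta_i) > e(\Delta)$ and $\Delta_j$ with $b(\Delta_j) \in \Delta$ would force $b(\Delta_j) \ge b(\Delta_i) > e(\Delta)$, a contradiction. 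The pair involving $e(\Delta_i) \in \Delta$ is then obtained from the $b$-case via the involution $\m \mapsto \m^\vee$ (and $\Delta \mapsto \Delta^\vee$), invoking \eqref{eq: LIcont} for $\LI$ and Remark \ref{rem: LCsimcons}(4) for $\LC$, together with the identity $b(\Delta_i^\vee) = e(\Delta_i)^\vee$ which converts the condition ``$b \in \Delta^\vee$'' into ``$e \in \Delta$''.

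The main, if modest, obstacle is the recognition that the naive one-step partition by ``$b(\Delta_i) \in \Delta$'' (or ``$e(\Delta_i) \in \Delta$'') does \emph{not} satisfy the non-precedence hypotheses in general: for example $\Delta = [0,0]$, $\Delta_i = [0,1]$, $\Delta_j = [2,3]$ have $b(\Delta_i) \in \Delta$, $b(\Delta_j) \notin \Delta$, yet $\Delta_i \prec \Delta_j$. This is precisely why the two-step reduction through the lexicographic order is required.
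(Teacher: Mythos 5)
Your overall strategy is the paper's: everything is meant to follow from Proposition \ref{prop: splitdisj} applied to suitable two-block partitions, using the observation that $\Delta_i\prec\Delta_j$ or $\lshft\Delta_i\prec\Delta_j$ forces $\Delta_i,\Delta_j$ to lie on one cuspidal line with $b(\Delta_i)\le b(\Delta_j)$ and $e(\Delta_i)\le e(\Delta_j)$, and your treatment of parts (1)--(2) is correct. The gap is in the two-step reduction for parts (5)--(6) (and hence for (3)--(4), which you derive from them by duality): the lexicographic order compares \emph{ends} first and interleaves the different cuspidal lines, so the threshold block $\{i:\Delta_i\ge\{b(\Delta)\}\}$ is \emph{not} ``the $\Delta_i$ on the line of $\Delta$ with $b(\Delta_i)\ge b(\Delta)$''; it is $\{i:e(\Delta_i)>b(\Delta)\text{ in the fixed total order}\}\cup\{i:\Delta_i=\{b(\Delta)\}\}$, which retains segments with $b(\Delta_i)<b(\Delta)$ as well as (depending on the chosen order) segments on other lines. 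Likewise the complementary block of the second threshold $\{\rshft{e}(\Delta)\}$ is not $\{i:b(\Delta_i)\in\Delta\}$. Concretely, for $\Delta=[0,1]$ the segment $[-5,2]$ survives both of your steps although $b([-5,2])\notin\Delta$, while $[1,3]$ is discarded although $b([1,3])\in\Delta$; so the composite of your two reductions produces multisegments different from those in the statement, and the argument as written does not prove (5)--(6).

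The repair is short and uses exactly the monotonicity you isolated, but with partitions that are not lexicographic thresholds (Proposition \ref{prop: splitdisj} only needs \eqref{cond: np1}--\eqref{cond: np2}, not that the blocks be of the form $\m_{\ge\Delta'}$). First partition by ``$\Delta_i$ lies on the cuspidal line of $\Delta$ and $b(\Delta_i)\ge b(\Delta)$'' versus the rest: any relation $\Delta_i\prec\Delta_j$ or $\lshft\Delta_i\prec\Delta_j$ out of the first block would give $b(\Delta_j)\ge b(\Delta_i)\ge b(\Delta)$ on the same line, a contradiction. Then, inside that block, partition by ``$b(\Delta_i)>e(\Delta)$'' versus ``$b(\Delta_i)\in\Delta$'' and keep the second block; the verification is the very sentence you wrote for your step 2. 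Parts (3)--(4) then follow either by the same two-step argument with $e$ in place of $b$ (using $e(\Delta_j)\ge e(\Delta_i)$), or by your duality reduction via \eqref{eq: LIcont} and Remark \ref{rem: LCsimcons}, which is fine as stated once the $b$-case is fixed.
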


\subsubsection{}

The second part of Theorem \ref{thm: mainevid} follows from the following more precise statement.

\begin{proposition} \label{prop: 3ms}
Let $\m,\m',\n\in\MS{\Seg}$. Then,
\begin{enumerate}
\item $\LI(\m,\m')$ and $\LI(\m+\m',\n)$ imply $\LI(\m,\m'+\n)$.
\item $\LC(\m,\m')$ and $\LC(\m+\m',\n)$ imply $\LC(\m,\m'+\n)$ and $\LC(\m,\n)$.
\item $\LC(\m,\m')$ and $\LC(\m,\n)$ imply $\LC(\m,\m'+\n)$.
\item $\LC(\m,\n)$ and $\LC(\m',\n)$ imply $\LC(\m+\m',\n)$.
\item Suppose that $\LC(\m,\m')$ and $\LC(\m',\m)$.
Then
\[
\LC(\m+\m',\n)\iff\LC(\m,\n)\text{ and }\LC(\m',\n).
\]
\item Suppose that $Z(\m)$ and $Z(\m')$ are $\square$-irreducible
and $Z(\m)\times Z(\m')$ is irreducible. Then
\[
\LI(\m+\m',\n)\iff\LI(\m,\n)\text{ and }\LI(\m',\n).
\]
\end{enumerate}
\end{proposition}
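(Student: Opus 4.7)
The plan is to treat parts (2)--(5), which concern the geometric condition $\LC$, uniformly via the linearized criterion of Proposition \ref{prop: LCint2}(\ref{part: linind}); the representation-theoretic parts (1) and (6) are handled by parabolic-induction techniques, with (1) being the principal obstacle.

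For parts (3) and (4), the plan is to exploit a block-diagonal decomposition. Writing $\m'+\n$ as indexed by $I'\sqcup I''$, we have $X_{\m,\m'+\n}=X_{\m,\m'}\sqcup X_{\m,\n}$ and $Y_{\m,\m'+\n}=Y_{\m,\m'}\sqcup Y_{\m,\n}$. Restricting the witness $\nu\in\cnt_{\m'+\n}$ to the subspace $\cnt_{\m'}\oplus\cnt_{\n}$ (i.e.\ zeroing its coordinates on the cross-indices $X_{\m',\n}\sqcup X_{\n,\m'}$), a direct computation with \eqref{eq: vectors} shows that $v_{i,j}^{\m,\m'+\n}$ specializes to $v_{i,j}^{\m,\m'}$ for $(i,j)\in X_{\m,\m'}$ (supported only on $Y_{\m,\m'}$-coordinates) and to $v_{i,j}^{\m,\n}$ for $(i,j)\in X_{\m,\n}$ (supported only on $Y_{\m,\n}$-coordinates). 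Joint linear independence thus reduces to linear independence in each block, and a common $\lambda\in\cnt_{\m}$ is obtained by intersecting two non-empty Zariski-open conditions. Part (4) is entirely symmetric.

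For parts (2) and (5), the main combinatorial step is the restriction implication $\LC(\m+\m',\n)\Rightarrow\LC(\m,\n)$ and $\LC(\m',\n)$, which crucially uses the additional hypothesis $\LC(\m,\m')$ (respectively $\LC(\m,\m')$ and $\LC(\m',\m)$ in part (5)). My approach would be to revert to the geometric formulation of Proposition \ref{prop: LCint}: in the three-step flag $V_{\m}\subset V_{\m}\oplus V_{\m'}\subset\tilde V$, the hypothesis $\LC(\m+\m',\n)$ provides density of a certain $\Aut(\tilde V)$-orbit in $\comp_{\m+\m'+\n}$, while $\LC(\m,\m')$ ensures this orbit can be represented by elements compatible with the finer flag. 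A generic such representative then projects to a witness for $\LC(\m,\n)$ in $\comp_{\m+\n}\subset\commvar(V_{\m}\oplus V_{\n})$. Granted the restriction, (2) follows by combining it with (3) and $\LC(\m,\m')$, while (5) combines the restriction (forward direction) with (4) (backward direction).

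Part (1) is the principal obstacle. Exactness of parabolic induction together with the two $\LI$-hypotheses yields $Z(\m+\m'+\n)\hookrightarrow Z(\m)\times Z(\m')\times Z(\n)$. Frobenius reciprocity at the parabolic with Levi blocks of sizes matching $\m$ and $\m'+\n$ produces an irreducible $\sigma\hookrightarrow Z(\m')\times Z(\n)$ with $Z(\m+\m'+\n)\hookrightarrow Z(\m)\times\sigma$; the crux is to identify $\sigma=Z(\m'+\n)$. I would attack this by computing the relevant Jacquet module via the Bernstein--Zelevinsky geometric lemma and matching it against \eqref{eq: mult1} through a cuspidal-support tracking argument, possibly drawing on the derivative techniques of \cite{MR3573961}. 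For part (6), by \eqref{eq: factsirrs} the hypotheses force $Z(\m+\m')=Z(\m)\times Z(\m')\in\IrrS$, so Corollary \ref{cor: irredcond} identifies $\LI$ with $\SLI$ for pairs involving $\m+\m'$. The backward direction of (6) follows from part (1) together with the trivial $\LI(\m,\m')$; the forward direction uses the factorization
\[
R_{Z(\m+\m'),Z(\n)}=(R_{Z(\m),Z(\n)}\times\id)\circ(\id\times R_{Z(\m'),Z(\n)})
\]
combined with Proposition \ref{prop: sqrirred}, which forces each constituent intertwining operator to have the expected socle-image, yielding $\LI(\m,\n)$ and $\LI(\m',\n)$.
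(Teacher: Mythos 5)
Your treatment of the geometric parts (2)--(5) is in substance the paper's own proof: (3) and (4) are exactly the block-diagonal specialization of the vectors \eqref{eq: vectors} (zero the cross-coordinates, use that the two genericity conditions on $\lambda\in\cnt_{\m}$, resp.\ $\lambda'\in\cnt_{\n}$, are non-empty Zariski-open and hence intersect), and for (2) and (5) the paper implements precisely your ``move the witness into a flag-compatible position using $\LC(\m,\m')$, then restrict'' idea, only in the linearized picture: by Proposition \ref{prop: LCint2} and the $\stb$-invariance and openness of the independence condition one may take the witness $\lambda\in\cnt_{\m+\m'}$ with vanishing $X_{\m,\m'}$-coordinates, after which the $Y_{\m',\n}$-coordinates of the vectors indexed by $X_{\m,\n}$ vanish and their $Y_{\m,\n}$-coordinates are those of $v^{\m,\n}_{i,j}(\lambda_{\m},\lambda')$. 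Deriving the second conclusion of (2) from (3) is a legitimate shortcut (the paper instead constructs the witness directly).

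The genuine gaps are in (1) and (6). In (1) your plan stops exactly at the crux, and the proposed Jacquet-module/geometric-lemma/cuspidal-support computation is neither carried out nor needed: the identification $\pi=Z(\m'+\n)$ is a pure multiplicity-one argument. Indeed $Z(\m+\m'+\n)\leunq Z(\m)\times Z(\m')\times Z(\n)$ (the evident extension of \eqref{eq: mult1} to three factors), it occurs in $Z(\m)\times\pi$ and, by \eqref{eq: mult1}, also in $Z(\m)\times Z(\m'+\n)$; since both $\pi$ and $Z(\m'+\n)$ lie in $\JH(Z(\m')\times Z(\n))$, $\pi\ne Z(\m'+\n)$ would force multiplicity at least two, a contradiction. (Also, the filtration/Frobenius argument only produces $\pi$ as a subquotient of $Z(\m')\times Z(\n)$, not a subrepresentation as you assert; fortunately only the subquotient statement is needed.) In (6), the backward implication cannot follow from part (1): the conclusion of (1) is $\LI(\m,\m'+\n)$, with the sum in the \emph{second} slot, whereas (6) needs $\LI(\m+\m',\n)$, with the sum in the \emph{first} slot, and no substitution of variables matches these shapes. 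Moreover your factorization is of $R_{Z(\m+\m'),Z(\n)}$, whose image is $\soc(Z(\n)\times Z(\m+\m'))$ and therefore detects $\LI(\n,\m+\m')$ rather than $\LI(\m+\m',\n)$; by Corollary \ref{cor: irredcond} the relevant operator is $R_{Z(\n),Z(\m+\m')}=(\id\times R_{Z(\n),Z(\m')})\circ(R_{Z(\n),Z(\m)}\times\id)$. Finally, Proposition \ref{prop: sqrirred} alone does not ``force each constituent operator to have the expected socle-image'': the paper pins down the images using the same multiplicity-one bookkeeping as in (1), namely $Z(\m+\m'+\n)\leunq Z(\n)\times Z(\m+\m')$ together with its unique occurrence in $Z(\m+\n)\times Z(\m')$ and $Z(\m'+\n)\times Z(\m)$, and proves both directions of (6) simultaneously from the factorization; it does not invoke (1) at all.
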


Note the asymmetry in the statements of the proposition.
It would be interesting to show the missing representation-theoretic counterparts, namely
\begin{gather*}
\LI(\m,\m')\text{ and }\LI(\m+\m',\n)\implies\LI(\m,\n),\\
\LI(\m,\m')\text{ and }\LI(\m,\n)\implies\LI(\m,\m'+\n),\\
\LI(\m,\n)\text{ and }\LI(\m',\n)\implies\LI(\m+\m',\n),
\end{gather*}
at least when $Z(\m)$, $Z(\m')$ and $Z(\n)$ are $\square$-irreducible.

\subsection{}

The following compatibility result is motivated by \eqref{eq: factsirrs}.

\begin{proposition} \label{prop: sumofseg}
Let $\m,\m'\in\MS{\Seg}$ and $\n=\m+\m'$.
\begin{enumerate}
\item Suppose that $Z(\m)$, $Z(\m')$ are $\square$-irreducible and the conditions $\LI(\m,\m')$ and $\LI(\n,\m)$ are satisfied.
Then, $Z(\n)$ is $\square$-irreducible.
\item Suppose that the conditions $\GLS(\m)$, $\GLS(\m')$, $\LC(\m,\m')$ and $\LC(\n,\m)$ are satisfied.
Then, $\GLS(\n)$ holds.
\end{enumerate}
\end{proposition}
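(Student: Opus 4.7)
The proposition splits into a representation-theoretic assertion (part (1)) and its geometric counterpart (part (2)); I address each separately.

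For part (1), I would apply the criterion \eqref{eq: factsirrs} with $\pi=Z(\n)$, $\pi_1=Z(\m)$ and $\pi_2=Z(\m')$. The embedding $Z(\n)\hookrightarrow Z(\m)\times Z(\m')$ is precisely $\LI(\m,\m')$, so the task reduces to showing that $Z(\n)\times Z(\m)$ is irreducible. Since $Z(\m)\in\IrrS$, the hypothesis \eqref{eq: ALO} holds, and Corollary \ref{cor: irredcond} combined with \eqref{eq: irredcrit} identifies this irreducibility with the conjunction of $\LI(\n,\m)$ (given) and $\LI(\m,\n)$. The crux is therefore to establish $\LI(\m,\n)$: equivalently, by Proposition \ref{prop: sqrirred}(2), to identify $\soc(Z(\m)\times Z(\n))=\Im R_{Z(\n),Z(\m)}$ with $Z(\m+\n)$. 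To this end, chain the embedding from $\LI(\m,\m')$ to obtain $Z(\m)\times Z(\n)\hookrightarrow Z(\m)\times Z(\m)\times Z(\m')\cong Z(\m+\m)\times Z(\m')$; iterated application of Proposition \ref{prop: sqrirred}(4) via associativity shows that a square of a $\square$-irreducible representation remains $\square$-irreducible, so $Z(\m+\m)\in\IrrS$. On the other hand, \eqref{eq: switch12} applied to $\LI(\n,\m)$ yields a surjection $Z(\m)\times Z(\n)\twoheadrightarrow Z(\m+\n)$, and \eqref{eq: mult1} gives $Z(\m+\n)\leunq Z(\m)\times Z(\n)$. A careful socle analysis, exploiting these features together with the \SI\ property from Proposition \ref{prop: sqrirred}(1), pins down $\soc(Z(\m)\times Z(\n))=Z(\m+\n)$ and thereby delivers $\LI(\m,\n)$.

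For part (2), I would work with the linearized descriptions in Lemma \ref{lem: itfvij} and Proposition \ref{prop: LCint2}(3). Using the disjoint decompositions $X_\n=X_\m\sqcup X_{\m'}\sqcup X_{\m,\m'}\sqcup X_{\m',\m}$ and the analogous decomposition of $Y_\n$, I would construct $\nu\in\cnt_\n$ by prescribing its coordinates blockwise: on $X_\m$ to witness $\GLS(\m)$, on $X_{\m'}$ to witness $\GLS(\m')$, on $X_{\m,\m'}$ to witness $\LC(\m,\m')$, and on $X_{\m',\m}$ by unpacking $\LC(\n,\m)$---after reinterpreting that condition (which natively lives over the larger ambient space $V_\n\oplus V_\m$ and the multisegment $\n+\m$) as the desired linear independence of cross-vectors involving a second copy of $V_\m$. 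Openness and stabilizer-invariance of each of the four hypotheses in their respective coordinate spaces guarantees that generic choices witnessing them can be made simultaneously. One then verifies that the full family $\{v_{i,j}^\n(\nu):(i,j)\in X_\n\}$ is linearly independent in $\C^{Y_\n}$ by tracking how each such vector distributes across the four $Y_\n$-blocks.

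The main obstacle for part (1) is pinning down $\soc(Z(\m)\times Z(\n))$ as $Z(\m+\n)$: although $Z(\m+\n)$ appears both as a quotient and as a multiplicity-one constituent of $Z(\m)\times Z(\n)$, identifying it with the unique irreducible subrepresentation requires genuinely new input, and I would expect the Jacquet-module / derivative techniques recalled in \S\ref{sec: prep} to play an essential role. For part (2), the delicate point is the reinterpretation of $\LC(\n,\m)$: one must translate a linear-independence statement formulated over the ambient space $V_\n\oplus V_\m$ (strictly larger than $\tilde\Vect=\Vect\oplus\Vect'$) into constraints that control the $X_{\m',\m}$-coordinates of $\nu$ and their interaction with the $X_\m$-, $X_{\m'}$- and $X_{\m,\m'}$-blocks.
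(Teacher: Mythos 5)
Your part (1) stalls exactly at the decisive step. You correctly reduce the statement to \eqref{eq: factsirrs} with $\pi=Z(\n)$, $\pi_1=Z(\m)$, $\pi_2=Z(\m')$, and correctly observe that, since $Z(\m)\in\IrrS$, the irreducibility of $Z(\n)\times Z(\m)$ amounts (by Corollary \ref{cor: irredcond}) to $\LI(\n,\m)$ \emph{and} $\LI(\m,\n)$; but you do not prove $\LI(\m,\n)$ --- ``a careful socle analysis \dots pins down $\soc(Z(\m)\times Z(\n))=Z(\m+\n)$'' is not an argument, and the facts you list (that $Z(\m+\n)$ is a quotient and a multiplicity-one constituent, and that the product is \SI) do not identify the socle. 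No ``genuinely new input'' is needed: Proposition \ref{prop: 3ms}(1), applied to the triple $(\m,\m',\m)$, says precisely that $\LI(\m,\m')$ together with $\LI(\m+\m',\m)=\LI(\n,\m)$ implies $\LI(\m,\m'+\m)=\LI(\m,\n)$. With that, $Z(\n)\times Z(\m)$ is irreducible and \eqref{eq: factsirrs} finishes part (1); this is exactly how the paper treats it (it simply records part (1) as a special case of \eqref{eq: factsirrs}). (Your side claim that $Z(\m+\m)\in\IrrS$ ``by iterated application of Proposition \ref{prop: sqrirred}'' is also unsubstantiated, but it is not needed.)

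Part (2) is likewise a plan rather than a proof, and the block assignment is off. The vectors $v^{\n}_{i,j}(\nu)$ with $(i,j)\in X_{\m}$ involve the $X_{\m',\m}$- and $X_{\m,\m'}$-coordinates of $\nu$ and have components in $Y_{\m',\m}$ and $Y_{\m,\m'}$, so they cannot be handled by ``witnessing $\GLS(\m)$ on the $X_\m$-block'' in isolation; similarly, $\LC(\m,\m')$ is witnessed by a pair in $\cnt_{\m}\times\cnt_{\m'}$, not by $X_{\m,\m'}$-coordinates of $\nu$. The paper's proof instead takes $\nu\in\cntpress$ (so the $X_{\m,\m'}$-coordinates vanish), groups the indices as $X_{\n,\m}=X_{\m}\cup X_{\m',\m}$ and $X_{\n,\m'}=X_{\m'}\cup X_{\m,\m'}$, handles the second group by $\GLS(\m')$ and $\LC(\m,\m')$, and handles the first group by $\LC(\n,\m)$ --- where the crux is that the auxiliary variable $\lambda'\in\cnt_{\m}$ in $v^{\n,\m}_{i,j}(\lambda,\lambda')$ must be specialized to $\lambda_{\m}$, the $X_{\m}$-part of $\nu$ itself: a constraint, not an independent generic choice. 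This is exactly where the remaining hypotheses act: the independence condition is $\stb_{\n}$-invariant in $\lambda$, so by part \ref{part: gc2} of Proposition \ref{prop: LCint2} the hypothesis $\LC(\m,\m')$ (density of $\stb_{\n}\cdot\cntpress$ in $\cnt_{\n}$) transfers genericity from $\cnt_{\n}$ to $\cntpress$; and the condition is $\stb_{\m}$-invariant in $\lambda'$, so the open $\stb_{\m}$-orbit furnished by $\GLS(\m)$ allows taking $\lambda'=\lambda_{\m}$. Your appeal to ``openness and stabilizer-invariance \dots guarantees that generic choices \dots can be made simultaneously'' addresses neither of these two steps, and you yourself flag the reinterpretation of $\LC(\n,\m)$ as unresolved; as written, part (2) has a genuine gap at its core.
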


\subsection{}

For the last consistency check that we will state here, we fix $\rho\in\Cusp$.
For $\pi\in\Irr$ we write $\rho\sprt\pi$ if there does not exist $\pi'\in\Irr$ such that
$\pi\hookrightarrow\rho\times\pi'$. (A more general notation will be introduced in \S\ref{sec: pitchfork} below.)
For any $\pi=Z(\m)\in\Irr$ there exist a unique integer $m\ge0$ and a unique $\pi'=\Z(\m')\in\Irr$ such that
$\pi\hookrightarrow\overbrace{\rho\times\dots\times\rho}^m\times\pi'$ and $\rho\sprt\pi'$.
We denote $\lderiv\rho\m=\m'$.

\begin{proposition} \label{prop: rhoext}
Let $\m,\m'\in\MS{\Seg}$, $\rho\in\Cusp$ and assume that $\rho\sprt Z(\m')$.
Let $\tilde X^\rho_{\m,\m'}$ and $\tilde Y^\rho_{\m,\m'}$ be as in \eqref{def: xmm'rho} below.
Then
\begin{enumerate}
\item $\LI(\m,\m')\iff\LI(\lderiv\rho\m,\m')\text{ and }\#\tilde X^\rho_{\m,\m'}=\#\tilde Y^\rho_{\m,\m'}$.
\item $\LC(\m,\m')\iff\LC(\lderiv\rho\m,\m')\text{ and }\#\tilde X^\rho_{\m,\m'}=\#\tilde Y^\rho_{\m,\m'}$.
\end{enumerate}
\end{proposition}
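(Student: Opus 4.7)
The plan proceeds via the left $\rho$-derivative, reducing both conditions to the smaller multisegment $\lderiv\rho\m$. As a preliminary combinatorial step, I would prove that, under the hypothesis $\rho\sprt Z(\m')$, the equality $\#\tilde X^\rho_{\m,\m'}=\#\tilde Y^\rho_{\m,\m'}$ is equivalent to the additivity $\lderiv\rho(\m+\m')=\lderiv\rho\m+\m'$. The unconditional inequality $\#\tilde X^\rho_{\m,\m'}\le\#\tilde Y^\rho_{\m,\m'}$ should hold, with the defect measuring exactly the number of extra $\rho$'s extracted from $Z(\m+\m')$ on the left beyond those already present in $Z(\m)$; this would follow from the M\oe glin--Waldspurger recipe for the derivative recalled in \S\ref{sec: Zeleinv}.

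For part (1), I would exploit the adjunction properties of the $\rho$-extraction functor. Given $\LI(\m,\m')$, the chain of embeddings
\[
Z(\m+\m')\hookrightarrow Z(\m)\times Z(\m')\hookrightarrow \rho\times\dots\times\rho\times Z(\lderiv\rho\m)\times Z(\m')
\]
together with $\rho\sprt Z(\m')$ and Frobenius reciprocity yields $Z(\lderiv\rho(\m+\m'))\hookrightarrow Z(\lderiv\rho\m)\times Z(\m')$, while a multiplicity count of $\rho$ in an appropriate Jacquet module simultaneously forces the numerical equality (hence, by the preliminary lemma, the additivity $\lderiv\rho(\m+\m')=\lderiv\rho\m+\m'$, which turns the display into $\LI(\lderiv\rho\m,\m')$). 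Conversely, given the numerical equality and $\LI(\lderiv\rho\m,\m')$, an inductive reintroduction of the $m$ copies of $\rho$ on the left would reconstruct $\LI(\m,\m')$, leveraging the socle-irreducibility properties recorded in Proposition \ref{prop: sqrirred}.

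For part (2), I would use the linear-algebraic criterion of Proposition \ref{prop: LCint2}\ref{part: linind}. The sets $\tilde X^\rho_{\m,\m'}$ and $\tilde Y^\rho_{\m,\m'}$ should single out those indices in $X_{\m,\m'}$ and $Y_{\m,\m'}$ associated with the $\rho$-containing segments of $\m$ that get extracted. Direct inspection of the formula \eqref{eq: vectors} should show that the vectors $v^{\m,\m'}_{i,j}(\lambda,\lambda')$ for $(i,j)\in\tilde X^\rho_{\m,\m'}$ are supported in the coordinate subspace $\C^{\tilde Y^\rho_{\m,\m'}}\subset\C^{Y_{\m,\m'}}$. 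Hence their linear independence imposes $\#\tilde X^\rho_{\m,\m'}\le\#\tilde Y^\rho_{\m,\m'}$, which combined with the preliminary lemma gives the numerical equality. Once equality holds, the whole system splits block-triangularly into a full-rank $\tilde X^\rho\times\tilde Y^\rho$ block and a complementary block that is identified with the defining system of $\LC(\lderiv\rho\m,\m')$ via natural bijections $X_{\lderiv\rho\m,\m'}\simeq X_{\m,\m'}\setminus\tilde X^\rho_{\m,\m'}$ and $Y_{\lderiv\rho\m,\m'}\simeq Y_{\m,\m'}\setminus\tilde Y^\rho_{\m,\m'}$.

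The main obstacle I expect is the explicit bookkeeping behind these bijections, since the M\oe glin--Waldspurger derivative reshapes segments whose left endpoint involves $\rho$ and this can interact nontrivially with the precedence relation $\prec$ relative to $\m'$, particularly when several segments of $\m$ begin at $\rho$. A similarly delicate point on the representation-theoretic side is controlling the inductive reconstruction step in the converse of part (1), ensuring that the embeddings obtained after reintroducing the $\rho$ factors genuinely land in $Z(\m)\times Z(\m')$ rather than in a larger subquotient. Once these combinatorial and inductive details are settled, both equivalences follow from the adjunction and block-decomposition arguments outlined.
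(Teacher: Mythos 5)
Your overall strategy (reduce along the left $\rho$-derivative; socle/derivative arguments for $\LI$; a block-triangular analysis of the system \eqref{eq: vectors} for $\LC$) is the same as the paper's, and your preliminary observation that, under $\rho\sprt Z(\m')$, the equality $\#\tilde X^\rho_{\m,\m'}=\#\tilde Y^\rho_{\m,\m'}$ amounts to $\lderiv\rho(\m+\m')=\lderiv\rho\m+\m'$ is correct (it is Lemma \ref{lem: char=} combined with the description of the derivative). However, two of your key steps have genuine gaps. First, the unconditional inequality goes the other way: a maximal $\rho$-matching $R'$ of $\m'$ gives an \emph{injection} $\tilde Y^\rho_{\m,\m'}\to\tilde X^\rho_{\m,\m'}$, $(i,j')\mapsto(i,R'(j'))$, so $\#\tilde X^\rho_{\m,\m'}\ge\#\tilde Y^\rho_{\m,\m'}$ always, and correspondingly $\mult^\rho(\m+\m')\le\mult^\rho(\m)$ (there are no ``extra'' $\rho$'s). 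More seriously, in part (1) your forward direction rests on a $\rho$-multiplicity count, which from $Z(\m+\m')\hookrightarrow\rho^{\times m}\times Z(\lderiv\rho\m)\times Z(\m')$ yields only $\mult^\rho(\m+\m')=\mult^\rho(\m)$; this cardinality coincidence does \emph{not} imply the additivity (equivalently $\#\tilde X^\rho_{\m,\m'}=\#\tilde Y^\rho_{\m,\m'}$), because the multiset $A^\rho_{\m+\m'}$ can differ from $A^\rho_{\m}$ while having the same size. For instance, with $\m=[0,1]+[0,3]+[1,2]$ and $\m'=[0,2]+[1,4]$ one has $\rho\sprt Z(\m')$, $A^\rho_{\m}=\{[0,3]\}$, $A^\rho_{\m+\m'}=\{[0,2]\}$, so the multiplicities agree but $\#\tilde X^\rho_{\m,\m'}=1>0=\#\tilde Y^\rho_{\m,\m'}$. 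The paper derives the equality from $\LI(\m,\m')$ by a different and substantially harder route: reduce via Corollary \ref{cor: gedelta} to the case where every segment begins at $\rho$ or $\rshft\rho$, then invoke \eqref{eq: brhos} (goodness of such $\m$, resting on Theorem \ref{thm: mainevid} and best matchings) to get $\LI(\Delta_i,\m')$ for all $i\in A^\rho_{\m}$, which forces the matching map above to be onto. Your sketch contains no substitute for this input; likewise, in the converse of (1) the ingredient that makes the ``reintroduction of $\rho$'s'' land on $Z(\m+\m')$ is precisely the socle identity \eqref{eq: socrho}, whose proof needs the saturated-matching analysis (Lemma \ref{lem: satur}).

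In part (2), the claim that ``direct inspection'' shows the vectors $v^{\m,\m'}_{i,j}(\lambda,\lambda')$, $(i,j)\in\tilde X^\rho_{\m,\m'}$, are supported on $\C^{\tilde Y^\rho_{\m,\m'}}$ is false for general $\lambda$: for such $(i,j)$ the first sum in \eqref{eq: vectors} contributes terms $\lambda_{i,r}y_{r,j}$ with $r\in X^\rho_{\m}$, and these coordinates lie in $Y_{\lderiv\rho\m,\m'}$. The support statement holds only after replacing $\lambda$ by a point of its $\stb_{\m}$-orbit with $\lambda_{i,r}=0$ for all $(i,r)\in\tilde X^\rho_{\m}$; that such a point exists in any nonempty open invariant subset is exactly the paper's Lemma \ref{lem: opvan} (and Lemma \ref{lem: opvan2} for the converse), proved by a M\oe glin--Waldspurger-type induction, and its use is justified because the independence condition is open and $\stb_{\m}\times\stb_{\m'}$-invariant (Remark \ref{rem: LCsimcons}). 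Similarly, in the converse direction you simply assert a ``full-rank $\tilde X^\rho\times\tilde Y^\rho$ block''; this is where the combinatorics enters: one must normalize $\tilde\lambda$ via Lemma \ref{lem: opvan2} so that its extension by zero to $\cnt_{\m}$ preserves the $\LC(\lderiv\rho\m,\m')$ system, and then choose $\lambda'$ along a maximal $\rho$-matching of $\m'$ so that, by the surjectivity of $(i,j')\mapsto(i,R'(j'))$ granted by the numerical equality, the block becomes a permutation matrix. Until the $\LI\Rightarrow$ equality step in (1) and the orbit-normalization plus nonsingular-block steps in (2) are supplied, the argument is incomplete.
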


\section{Some preparation} \label{sec: prep}

\subsection{}
For any $n,m\ge0$ let $J_{n,m}:\Reps_{n+m}\rightarrow\Reps_n\otimes\Reps_m$
be the Jacquet functor with respect to the standard parabolic subgroup of type $(n,m)$
where we identify $\Reps_n\otimes\Reps_m$ with the category of finite-length representations
of $\GL_n(F)\times\GL_m(F)$.\footnote{The tensor product of categories was defined by Deligne in \cite{MR1106898}.}
We obtain the ``total'' Jacquet functor
\[
J=\oplus_{n,m\ge0}J_{n,m}:\Reps\rightarrow\Reps\otimes\Reps.
\]
On the level of Grothendieck group this gives rise to a ring homomorphism
\[
J_{\ssf}:\Gr\rightarrow\Gr\otimes\Gr.
\]
For instance, for any segment $\Delta=\{\rho_1,\dots,\rho_k\}$ with $\rho_{i+1}=\rshft\rho_i$, $i=1,\dots,k-1$ we have
\begin{subequations} \label{eq: JZL}
\begin{gather}
J(Z(\Delta))=\oplus_{l=0}^kZ(\{\rho_1,\dots,\rho_l\})\otimes Z(\{\rho_{l+1},\dots,\rho_k\}),\\
J(L(\Delta))=\oplus_{l=0}^kL(\{\rho_{l+1},\dots,\rho_k\})\otimes L(\{\rho_1,\dots,\rho_l\}).
\end{gather}
\end{subequations}

By Frobenius reciprocity, for any $\pi,\pi'\in\Reps$ we have a canonical functorial surjection \cite{MR0579172}
\[
\canJ_{\pi,\pi'}:J(\pi\times\pi')\rightarrow\pi\otimes\pi'.
\]

For any $\cs\in\MS{\Cusp}$ let $\Reps_{\cs}$ be the Serre subcategory of $\Reps$ consisting of representations
all of whose irreducible subquotients have supercuspidal support $\cs$.
We have
\begin{equation} \label{eq: cuspdecomp}
\Reps=\oplus_{\cs\in\MS{\Cusp}}\Reps_{\cs}
\end{equation}
and $\Reps_{\cs}\times\Reps_{\cs'}\subset\Reps_{\cs+\cs'}$ for any $\cs,\cs'\in\MS{\Cusp}$.
For any $\pi\in\Reps$ and $\cs\in\MS{\Cusp}$ we denote by $\pi_{\cs}$ the $\cs$-component of $\pi$
with the respect to the decomposition \eqref{eq: cuspdecomp}.
Thus,
\[
\pi=\oplus_{\cs\in\MS{\Cusp}}\pi_{\cs}.
\]
Similarly, for any $\Pi\in\Reps\otimes\Reps$ and $\cs,\cs'\in\MS{\Cusp}$ we denote by
$\Pi_{\cs\otimes\cs'}$ the $\cs\otimes\cs'$-component of $\Pi$ with the respect to the decomposition
\[
\Reps\otimes\Reps=\oplus_{\cs,\cs'\in\MS{\Cusp}}\Reps_{\cs}\otimes\Reps_{\cs'}.
\]

\begin{remark}
Let $\pi\in\Reps$, $\cs\in\MS{\Cusp}$ and $\tau\in\Reps_{\cs}$. Any morphism $p:\pi\rightarrow\tau$
factor through a morphism $\pi_{\cs}\rightarrow\tau$.
Suppose that $p$ is surjective. Then, the restriction $p^{\cs}$ of $p$ to $\pi_{\cs}$ is surjective.
Moreover, the following conditions are equivalent.
\begin{enumerate}
\item $p^{\cs}$ is an isomorphism.
\item $\pi_{\cs}\simeq\tau$.
\item $\JH(\pi_{\cs})=\JH(\tau)$.
\item $\JH(\pi_{\cs})\le\JH(\tau)$ in $\MS{\Irr}$.
\end{enumerate}
In this case, we say that $p$ is a \emph{component map}, or that (somewhat informally) $\tau$ ``is''
the $\cs$-component of $\pi$. Similar terminology will apply for $\Reps\otimes\Reps$ and $\cs\otimes\cs'$
with $\cs,\cs'\in\MS{\Cusp}$.
\end{remark}

\subsection{Separated representations}

\subsubsection{} \label{sec: pitchfork}
The following technical definition will be useful.

\begin{definition} \label{def: sprt}
Suppose that $\pi\in\Reps_{\cs}$ and $\pi'\in\Reps_{\cs'}$ for some $\cs,\cs'\in\MS{\Cusp}$.
We write $\pi\sprt\pi'$ and say that $\pi$ is left-separated from $\pi'$ if the
$\canJ_{\pi,\pi'}$ is a component map, i.e., if the following equivalent conditions are satisfied.
\begin{enumerate}
\item The map $\canJ_{\pi,\pi'}$ factors through an isomorphism
$J(\pi\times\pi')_{\cs\otimes\cs'}\rightarrow\pi\otimes\pi'$.
\item $J(\pi\times\pi')_{\cs\otimes\cs'}\simeq\pi\otimes\pi'$.
\item $\JH(J(\pi\times\pi')_{\cs\otimes\cs'})=\JH(\pi\otimes\pi')(=\JH(\pi)\otimes\JH(\pi'))$.
\item $\JH(J(\pi\times\pi')_{\cs\otimes\cs'})\le\JH(\pi\otimes\pi')$.
\end{enumerate}
\end{definition}

As the notation suggests, the relation $\sprt$ is not symmetric.

\begin{example}
Suppose that $\rho\in\Cusp$ and $\pi\in\Irr$.
Then
\begin{equation} \label{eq: rhosprtpi}
\rho\sprt\pi\iff\nexists\pi'\in\Irr\text{ such that }\pi\hookrightarrow\rho\times\pi'
\iff\nexists\pi'\in\Reps\text{ such that }\pi\hookrightarrow\rho\times\pi'.
\end{equation}
\end{example}

The following easy property will be used repeatedly.

\begin{lemma} \label{lem: subrep}
Suppose that $\pi\in\Reps_{\cs}$ and $\pi'\in\Reps_{\cs'}$ for some $\cs,\cs'\in\MS{\Cusp}$.
If $\pi\sprt\pi'$, then $\tau\sprt\tau'$ for any subquotient $\tau$ of $\pi$ and $\tau'$ of $\pi'$.
Conversely, if $\tau\sprt\tau'$ for every $\tau\in\JH(\pi)$ and $\tau'\in\JH(\pi')$, then $\pi\sprt\pi'$.
\end{lemma}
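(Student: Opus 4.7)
The plan is to deduce both implications from a single Grothendieck group computation, using only the biexactness of $\times$, the exactness of $J$, and the exactness of the block projection $\Pi\mapsto\Pi_{\cs\otimes\cs'}$. Fix composition series of $\pi$ and $\pi'$, with irreducible subquotients $\tau_1,\dots,\tau_m\in\JH(\pi)$ and $\tau'_1,\dots,\tau'_n\in\JH(\pi')$; since $\Reps_\cs$ and $\Reps_{\cs'}$ are Serre subcategories, each $\tau_i\in\Reps_\cs$ and $\tau'_j\in\Reps_{\cs'}$. By biexactness of $\times$ and exactness of $J$, these filtrations induce compatible filtrations on $J(\pi\times\pi')$ and on $\pi\otimes\pi'$; projecting to the $\cs\otimes\cs'$-component I obtain the equalities
\[
[J(\pi\times\pi')_{\cs\otimes\cs'}]=\sum_{i,j}[J(\tau_i\times\tau'_j)_{\cs\otimes\cs'}],\qquad [\pi\otimes\pi']=\sum_{i,j}[\tau_i\otimes\tau'_j]
\]
in $\MS{\Irr\otimes\Irr}$.

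Next I would record the universal inequality. For any $\sigma\in\Reps_\cs$ and $\sigma'\in\Reps_{\cs'}$, Frobenius reciprocity yields the surjection $\canJ_{\sigma,\sigma'}\colon J(\sigma\times\sigma')\twoheadrightarrow\sigma\otimes\sigma'$, which (since $\sigma\otimes\sigma'\in\Reps_\cs\otimes\Reps_{\cs'}$) factors through a surjection $J(\sigma\times\sigma')_{\cs\otimes\cs'}\twoheadrightarrow\sigma\otimes\sigma'$. Hence $[\sigma\otimes\sigma']\le[J(\sigma\times\sigma')_{\cs\otimes\cs'}]$ always holds, and the condition $\sigma\sprt\sigma'$ is equivalent to this being an equality. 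Applying this pairwise to $(\tau_i,\tau'_j)$ and summing recovers the global inequality for $(\pi,\pi')$ as the sum of the pointwise ones.

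Both directions of the lemma now drop out. For the converse, assuming $\tau_i\sprt\tau'_j$ for every $(i,j)$ upgrades every pointwise inequality to equality; summation then gives equality for $(\pi,\pi')$, i.e.\ $\pi\sprt\pi'$. For the forward direction with irreducible subquotients, assuming $\pi\sprt\pi'$ forces the global inequality to be an equality, and since a sum of inequalities in the free commutative monoid $\MS{\Irr\otimes\Irr}$ is an equality only when each summand is, every pair $(\tau_i,\tau'_j)$ satisfies $\tau_i\sprt\tau'_j$. To promote this to arbitrary subquotients, note that any subquotient $\tau$ of $\pi$ has $\JH(\tau)\subseteq\JH(\pi)$ (and similarly for $\tau'\le\pi'$); invoking the converse direction on the pair $(\tau,\tau')$ gives $\tau\sprt\tau'$.

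I do not anticipate a real obstacle: the argument is essentially formal once the setup is in place. The only point requiring a bit of care is verifying that the induced filtration of $J(\pi\times\pi')_{\cs\otimes\cs'}$ really has associated graded $\bigoplus_{i,j}J(\tau_i\times\tau'_j)_{\cs\otimes\cs'}$ in the Grothendieck group, but this is immediate from the three exactness properties listed at the outset.
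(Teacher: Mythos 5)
Your argument is correct. It rests on the same two ingredients as the paper's proof (biexactness of $\times$, exactness of $J$ and of the block projection, and the universal surjectivity of $\canJ_{\sigma,\sigma'}$ onto the $\cs\otimes\cs'$-component), but it is packaged differently: the paper proves, for a single short exact sequence $0\to\sigma\to\pi\to\tau\to0$, that $\pi\sprt\pi'$ holds if and only if both $\sigma\sprt\pi'$ and $\tau\sprt\pi'$ hold, via a commutative diagram with exact rows and surjective vertical maps (in effect a snake-lemma argument on the kernels), and then the lemma follows by d\'evissage; you instead pass directly to full Jordan--H\"older decompositions and argue numerically in $\MS{\Irr}$, writing the global ``defect'' $[J(\pi\times\pi')_{\cs\otimes\cs'}]-[\pi\otimes\pi']$ as a sum of the pointwise defects $[J(\tau_i\times\tau'_j)_{\cs\otimes\cs'}]-[\tau_i\otimes\tau'_j]$, each of which is effective, so the total vanishes exactly when every summand does. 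This exploits the equivalence, built into the definition of $\sprt$, between the categorical conditions and the equality of Jordan--H\"older multisets, and it avoids any diagram chase; a pleasant by-product is that you get the statement for arbitrary subquotients by bootstrapping: the irreducible case plus the converse direction applied to the pair $(\tau,\tau')$ (legitimate since $\Reps_\cs$, $\Reps_{\cs'}$ are Serre subcategories, so $\tau\in\Reps_\cs$, $\tau'\in\Reps_{\cs'}$ and $\JH(\tau)\le\JH(\pi)$, $\JH(\tau')\le\JH(\pi')$). The paper's exact-sequence formulation carries slightly more structural information (the two-out-of-three property at the level of each short exact sequence), while yours is a clean multiplicity count; both are complete proofs.
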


\begin{proof}
Suppose that we have a short exact sequence
\[
0\rightarrow\sigma\rightarrow\pi\rightarrow\tau\rightarrow0.
\]
Then, we have a commutative diagram
\[
\xymatrix{0 \ar[r] & J(\sigma\times\pi')_{\cs\otimes\cs'} \ar[d]^{\canJ_{\sigma,\pi'}^{\cs\otimes\cs'}} \ar[r] &
J(\pi\times\pi')_{\cs\otimes\cs'} \ar[d]^{\canJ_{\pi,\pi'}^{\cs\otimes\cs'}}
 \ar[r]  &J(\tau\times\pi')_{\cs\otimes\cs'}\ar[r] \ar[d]^{\canJ_{\tau,\pi'}^{\cs\otimes\cs'}} & 0\\
0\ar[r] & \sigma\otimes\pi' \ar[r] & \pi\otimes\pi' \ar[r] & \tau\otimes\pi' \ar[r] & 0}
\]
where the rows are exact and the vertical morphisms are surjective.
It follows that $\pi\sprt\pi'$ if and only if $\sigma\sprt\pi'$ and $\tau\sprt\pi'$.
A similar statement holds for a short exact sequence
\[
0\rightarrow\sigma'\rightarrow\pi'\rightarrow\tau'\rightarrow0.
\]
The lemma immediately follows.
\end{proof}


We will use two additional properties of the relation $\sprt$.

\subsubsection{}

\begin{lemma} \label{lem: SIspr}
Let $\pi,\pi'\in\Irr$ and suppose that $\pi\sprt\pi'$. Then, $\pi\times\pi'$ is \SI\ and
\[
J(\soc(\pi\times\pi'))_{\cs(\pi)\otimes\cs(\pi')}=\pi\otimes\pi'.
\]
Moreover, up to isomorphism, $\soc(\pi\times\pi')$ is the only irreducible subquotient $\sigma$ of $\pi\times\pi'$ such that $J(\sigma)_{\cs(\pi)\otimes\cs(\pi')}\ne0$.
\end{lemma}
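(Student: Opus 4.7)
The plan is to extract all three assertions from a single multiplicity count on the Grothendieck group. Set $\cs=\cs(\pi)$, $\cs'=\cs(\pi')$, and fix the decomposition $\JH(\pi\times\pi')=\sum_\sigma n_\sigma\sigma$ in $\MS{\Irr}$. Since $J$ is exact and respects the cuspidal decomposition \eqref{eq: cuspdecomp}, the identity
\[
\sum_\sigma n_\sigma\,\JH(J(\sigma)_{\cs\otimes\cs'})=\JH(J(\pi\times\pi')_{\cs\otimes\cs'})
\]
holds in $\MS{\Irr\otimes\Irr}$. The hypothesis $\pi\sprt\pi'$ identifies the right-hand side with $\JH(\pi\otimes\pi')$; as $\pi,\pi'\in\Irr$ the external tensor product $\pi\otimes\pi'$ is itself irreducible, so the right-hand side is a single element with multiplicity one.

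From this equality I would deduce that there is a \emph{unique} $\sigma_0\in\JH(\pi\times\pi')$ with $J(\sigma_0)_{\cs\otimes\cs'}\neq0$; furthermore $n_{\sigma_0}=1$ and $J(\sigma_0)_{\cs\otimes\cs'}\simeq\pi\otimes\pi'$. This already yields the ``uniqueness'' statement of the lemma and, once $\sigma_0$ is identified with $\soc(\pi\times\pi')$, the displayed equality.

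To make that identification, I would invoke Frobenius reciprocity: any irreducible $\tau\hookrightarrow\pi\times\pi'$ provides a non-zero map $J(\tau)\to\pi\otimes\pi'$, which, since $\pi\otimes\pi'\in\Reps_\cs\otimes\Reps_{\cs'}$, factors through $J(\tau)_{\cs\otimes\cs'}$. Hence $J(\tau)_{\cs\otimes\cs'}\neq0$, and by the uniqueness established above we must have $\tau\simeq\sigma_0$. Consequently every irreducible subrepresentation of $\pi\times\pi'$ is isomorphic to $\sigma_0$; because $n_{\sigma_0}=1$ the socle contains at most one copy of $\sigma_0$, and it is non-zero since $\pi\times\pi'\neq0$. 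Therefore $\soc(\pi\times\pi')\simeq\sigma_0\leunq\pi\times\pi'$, which is precisely the \SI\ assertion.

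The argument is essentially bookkeeping, so I do not anticipate a serious obstacle; the only point that needs care is that $\pi\otimes\pi'$ be irreducible with multiplicity one in its own Jordan--H\"older series, because this is what forces $n_{\sigma_0}=1$ and leaves no room for extra constituents in $J(\sigma_0)_{\cs\otimes\cs'}$. This is automatic from $\pi,\pi'\in\Irr$ together with the definition of the tensor product of locally finite $\C$-linear categories.
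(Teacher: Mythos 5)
Your proof is correct and is essentially the paper's argument: both rest on Frobenius reciprocity forcing $J(\tau)_{\cs(\pi)\otimes\cs(\pi')}\ne0$ for any irreducible subrepresentation $\tau$, the hypothesis $\pi\sprt\pi'$ identifying $J(\pi\times\pi')_{\cs(\pi)\otimes\cs(\pi')}$ with the irreducible $\pi\otimes\pi'$, and exactness of $J$ to run the multiplicity count. The paper merely phrases the count implicitly (``the statements now follow from the exactness of $J$'') after showing $J(\sigma)_{\cs(\pi)\otimes\cs(\pi')}=\pi\otimes\pi'$ for every non-zero subrepresentation $\sigma$, whereas you spell out the same bookkeeping in the Grothendieck group.
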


\begin{proof}
If $\sigma$ is a non-zero subrepresentation of $\pi\times\pi'$, then by Frobenius reciprocity $J(\sigma)\twoheadrightarrow\pi\otimes\pi'$.
Since $\pi\sprt\pi'$ it follows that $J(\sigma)_{\cs(\pi)\otimes\cs(\pi')}=J(\pi\times\pi')_{\cs(\pi)\otimes\cs(\pi')}=\pi\otimes\pi'$.
The statements now follow from the exactness of $J$.
\end{proof}

\subsubsection{}

\begin{lemma} \label{lem: socprod}
Suppose that $\pi\in\Reps_{\cs}$, $\pi'\in\Reps_{\cs'}$ and $\pi\sprt\pi'$.
Then,
\[
\soc(\pi\times\pi')=\soc(\soc(\pi)\times\soc(\pi')).
\]
\end{lemma}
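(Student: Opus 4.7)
The inclusion $\soc(\soc\pi\times\soc\pi')\subseteq\soc(\pi\times\pi')$ is immediate from exactness of parabolic induction applied to $\soc\pi\hookrightarrow\pi$ and $\soc\pi'\hookrightarrow\pi'$. For the reverse, I will show that every irreducible $\tau\hookrightarrow\pi\times\pi'$ embeds in $\soc\pi\times\soc\pi'$, by induction on $N(\pi,\pi'):=(\ell(\pi)-\ell(\soc\pi))+(\ell(\pi')-\ell(\soc\pi'))$. When $N(\pi,\pi')=0$, both $\pi,\pi'$ are semisimple and the statement is tautological.

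For the inductive step, set $\sigma=\soc\pi$, $\sigma'=\soc\pi'$, and assume without loss of generality that $\sigma\subsetneq\pi$. The short exact sequence $0\to\sigma\times\pi'\to\pi\times\pi'\to(\pi/\sigma)\times\pi'\to 0$ splits the analysis into two cases. If the image of $\tau$ in the quotient vanishes, then $\tau\hookrightarrow\sigma\times\pi'$, and applying the inductive hypothesis to $(\sigma,\pi')$ (using $\sigma\sprt\pi'$ from Lemma \ref{lem: subrep} and noting that $N(\sigma,\pi')<N(\pi,\pi')$ since $\sigma$ is semisimple) yields $\soc(\sigma\times\pi')=\soc(\sigma\times\sigma')$, hence $\tau\hookrightarrow\sigma\times\sigma'$. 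Otherwise $\tau$ embeds into $(\pi/\sigma)\times\pi'$; the inductive hypothesis applied to $(\pi/\sigma,\pi')$ (again using Lemma \ref{lem: subrep} and $N(\pi/\sigma,\pi')<N(\pi,\pi')$), together with the decomposition $\bar\sigma\times\sigma'=\bigoplus_{i,j}\bar\alpha_i\times\beta_j$ where $\bar\sigma:=\soc(\pi/\sigma)$, and the socle-irreducibility of each summand provided by Lemma \ref{lem: SIspr}, identifies $\tau\simeq\soc(\bar\alpha\times\bar\beta)$ for some $\bar\alpha\in\JH(\bar\sigma)$ and $\bar\beta\in\JH(\sigma')$. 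Lemma \ref{lem: SIspr} then pins down $J(\tau)_{\cs\otimes\cs'}=\bar\alpha\otimes\bar\beta$, which is irreducible.

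The crucial closing step invokes the hypothesis $\pi\sprt\pi'$: the exactness of $J$ applied to $\tau\hookrightarrow\pi\times\pi'$, combined with the isomorphism $J(\pi\times\pi')_{\cs\otimes\cs'}\simeq\pi\otimes\pi'$, yields an embedding $J(\tau)_{\cs\otimes\cs'}\hookrightarrow\pi\otimes\pi'$. Since $\bar\alpha\otimes\bar\beta$ is irreducible, its image must land inside $\soc(\pi\otimes\pi')=\sigma\otimes\sigma'$, which forces $\bar\alpha$ to be an irreducible summand of $\sigma$ and $\bar\beta$ one of $\sigma'$. Consequently $\bar\alpha\times\bar\beta\hookrightarrow\sigma\times\sigma'$, and therefore $\tau\simeq\soc(\bar\alpha\times\bar\beta)\hookrightarrow\sigma\times\sigma'$, as required. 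The principal subtlety is in exploiting two a priori independent embeddings of the same object $J(\tau)_{\cs\otimes\cs'}$—one into $\bar\sigma\otimes\sigma'$, obtained from the reduced problem and responsible for forcing irreducibility, and one into $\pi\otimes\pi'$, obtained from the original embedding and responsible for forcing that irreducible into the socle—and combining the information they provide to pin down the multisegments $\bar\alpha,\bar\beta$ inside $\sigma,\sigma'$.
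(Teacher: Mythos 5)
There is a genuine gap, and it lies in what your induction actually establishes versus what the lemma asserts. Since $\soc(\soc\pi\times\soc\pi')$ is canonically a subrepresentation of $\pi\times\pi'$, the content of the lemma is the containment $\soc(\pi\times\pi')\subseteq\soc\pi\times\soc\pi'$ \emph{inside} $\pi\times\pi'$. Your argument only shows that every irreducible subrepresentation $\tau$ of $\pi\times\pi'$ admits \emph{some abstract} embedding into $\soc\pi\times\soc\pi'$. In your Case 2 this embedding has nothing to do with the given inclusion $\tau\subseteq\pi\times\pi'$: there, by construction, $\tau\cap(\soc\pi\times\pi')=0$, so the original copy of $\tau$ is certainly \emph{not} contained in $\soc\pi\times\soc\pi'$, and your conclusion does not rule this situation out — it merely relocates $\tau$ elsewhere. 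From "each irreducible subrepresentation embeds abstractly" one cannot deduce the stated equality: it fixes neither the position of $\soc(\pi\times\pi')$ in $\pi\times\pi'$ nor the multiplicities (a priori $\soc(\pi\times\pi')$ could contain $\tau\oplus\tau$ while $\soc(\soc\pi\times\soc\pi')$ contains $\tau$ only once).

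The missing step is exactly the information you discard in Case 2, and it is the paper's whole proof. Since $\canJ_{\pi,\pi'}$ restricts to an isomorphism $J(\pi\times\pi')_{\cs\otimes\cs'}\simeq\pi\otimes\pi'$, and since by functoriality of $\canJ$ together with $\soc\pi\sprt\pi'$ (resp.\ $\soc\pi\sprt\soc\pi'$, from Lemma \ref{lem: subrep}) the preimage of $\soc\pi\otimes\pi'$ (resp.\ of $\soc(\pi\otimes\pi')=\soc\pi\otimes\soc\pi'$) under this isomorphism is precisely $J(\soc\pi\times\pi')_{\cs\otimes\cs'}$ (resp.\ $J(\soc\pi\times\soc\pi')_{\cs\otimes\cs'}$), your Case 2 computation that $J(\tau)_{\cs\otimes\cs'}\simeq\bar\alpha\otimes\bar\beta\neq0$ lands in $\soc\pi\otimes\soc\pi'$ contradicts the vanishing $J(\tau)\cap J(\soc\pi\times\pi')=0$ obtained by applying the exact functor $J$ to $\tau\cap(\soc\pi\times\pi')=0$. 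So Case 2 is vacuous, every $\tau$ falls into Case 1, and the induction then does yield containment as subrepresentations. Once this is observed, the induction is superfluous: the paper argues in one step that if $\tau\not\subseteq\soc\pi\times\soc\pi'$ then $\tau\cap(\soc\pi\times\soc\pi')=0$, hence $J(\tau)_{\cs\otimes\cs'}$ meets $\soc\bigl(J(\pi\times\pi')_{\cs\otimes\cs'}\bigr)=J(\soc\pi\times\soc\pi')_{\cs\otimes\cs'}$ trivially and must vanish, contradicting Frobenius reciprocity. Please either incorporate this contradiction into your Case 2 or replace the induction by the direct argument.
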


\begin{proof}
Clearly, $\soc(\soc(\pi)\times\soc(\pi'))\hookrightarrow\soc(\pi\times\pi')$.
Let $\tau$ be an irreducible subrepresentation of $\pi\times\pi'$ and suppose on the contrary that
$\tau$ is not a subrepresentation of $\soc(\pi)\times\soc(\pi')$.
Then $\tau\cap(\soc(\pi)\times\soc(\pi'))=0$, and by the exactness of $J$ we have $J(\tau)\cap J(\soc(\pi)\times\soc(\pi'))=0$.
In particular,
\begin{equation} \label{eq: onehand}
J(\tau)_{\cs\otimes\cs'}\cap J(\soc(\pi)\times\soc(\pi'))_{\cs\otimes\cs'}=0.
\end{equation}
On the other hand, we claim that
\begin{equation} \label{eq: otherhand}
J(\soc(\pi)\times\soc(\pi'))_{\cs\otimes\cs'}=\soc(J(\pi\times\pi')_{\cs\otimes\cs'}).
\end{equation}
Indeed, since $\pi\sprt\pi'$, the restriction of $\canJ_{\pi,\pi'}$ to $J(\pi\times\pi')_{\cs\otimes\cs'}$ is an isomorphism.
It is therefore enough to check that the image of $J(\soc(\pi)\times\soc(\pi'))_{\cs\otimes\cs'}$ under $\canJ_{\pi,\pi'}$ is
$\soc(\pi\otimes\pi')=\soc(\pi)\otimes\soc(\pi')$, and this holds since $\soc(\pi)\sprt\soc(\pi')$ by Lemma \ref{lem: subrep}.

It follows from \eqref{eq: onehand} and \eqref{eq: otherhand} that $J(\tau)_{\cs\otimes\cs'}=0$ since any non-zero
subrepresentation of $J(\pi\times\pi')_{\cs\otimes\cs'}$ must intersect its socle nontrivially.
However, by Frobenius reciprocity $\Hom(J(\tau),\pi\otimes\pi')\ne0$ and hence $J(\tau)_{\cs\otimes\cs'}\ne0$. We get a contradiction.
\end{proof}

\subsection{} \label{sec: partorder}
Recall that we fixed a total order $\le$ on $\Cusp$ subject only to the condition that $\rho<\rshft\rho$ for all $\rho\in\Cusp$.
We extend this lexicographically to a total order on $\Seg$ (also denoted by $\le$), that is, $\Delta\le\Delta'$
iff either $e(\Delta)<e(\Delta')$ or $e(\Delta)=e(\Delta')$ and $b(\Delta)\le b(\Delta')$.
Clearly, if $\Delta\prec\Delta'$ (resp., $\lshft\Delta\prec\Delta'$) then $\Delta<\Delta'$ (resp., $\Delta\le\Delta'$).
Note that if $e(\Delta)=e(\Delta')$ then $\Delta\le\Delta'$ if and only if $\Delta\supseteq\Delta'$.
(In order to avoid confusion, we will never consider the partial order on $\Seg$ defined by inclusion.)

If $\m=\sum_{i\in I}\Delta_i\in\MS{\Seg}$ and $\Delta\in\Seg$, then we write
$\m_{\ge\Delta}=\sum_{i\in I:\Delta_i\ge\Delta}\Delta_i$ and similarly for $\m_{<\Delta}$.

\begin{lemma} \label{lem: simpspr}
Let $\m=\sum_{i\in I}\Delta_i,\m'=\sum_{i'\in I'}\Delta_{i'}\in\MS{\Seg}$ with $I\cap I'=\emptyset$.
Suppose that $\lshft\Delta_i\not\prec\Delta_{i'}$ for all $i\in I$, $i'\in I'$.
(For instance, this holds if there exists $\Delta\in\Seg$ such that $\m=\m_{\ge\Delta}$ and $\m'=\m'_{<\Delta}$.)
Then, $\std{\m}\sprt\std{\m'}$ and $\cstd{\m'}\sprt\cstd{\m}$.
\end{lemma}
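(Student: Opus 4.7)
The plan is to compute the Jacquet module $J_{\ssf}(\std{\m} \times \std{\m'})$ via the Hopf algebra structure $J_{\ssf}(\pi \times \pi') = J_{\ssf}(\pi) \cdot J_{\ssf}(\pi')$ together with the formula $J_{\ssf}(Z(\Delta)) = \sum_{\Delta = \Delta_L \sqcup \Delta_R} Z(\Delta_L) \otimes Z(\Delta_R)$ from \eqref{eq: JZL}, and then to extract the $\cs(\m) \otimes \cs(\m')$-component. Expanding everything, one obtains a sum indexed by choices of initial-final splits $\Delta_\alpha = \Delta_{\alpha,L} \sqcup \Delta_{\alpha,R}$ for each $\alpha \in I \sqcup I'$; projecting onto the prescribed cuspidal component keeps exactly those splits for which $\sum_{i \in I} \supp(\Delta_{i,R}) = \sum_{i' \in I'} \supp(\Delta_{i',L})$ as multisets in $\MS{\Cusp}$. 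The trivial split (all right pieces of $\m$ and all left pieces of $\m'$ empty) contributes precisely $[\std{\m}] \otimes [\std{\m'}]$, so the assertion $\std{\m} \sprt \std{\m'}$ reduces to showing that no other split can satisfy this matching equation.

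Since the matching equation decomposes over the cuspidal lines of $\MS{\Cusp}$, I would argue on a single line where a hypothetical nontrivial match could occur. Let $M$ be the maximum element of the common multiset under the chosen total order on $\Cusp$. Because $\Delta_{i,R}$ is a final subsegment of $\Delta_i$ and $\Delta_{i',L}$ an initial subsegment of $\Delta_{i'}$, the maximality of $M$ forces $M = e(\Delta_i)$ for some $i$ with $\Delta_{i,R} \ne \emptyset$ and $M = e(\Delta_{i',L})$ for some $i'$ with $\Delta_{i',L} \ne \emptyset$; moreover the ``drop'' at $M+1$ shows that the cardinalities of the corresponding index sets $L_M, R_M$ agree. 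For any such pair $(i,i')$ the inclusion $e(\Delta_i) = M \in \Delta_{i'}$ is automatic, so one only needs to find a pair also satisfying $b(\Delta_{i'}) \in \Delta_i$; by \eqref{eq: sprec} this gives $\lshft \Delta_i \prec \Delta_{i'}$, contradicting the hypothesis. The existence of such a pair is an extremal/descent argument: among $i \in L_M$ minimize $b(\Delta_i)$ and among $i' \in R_M$ maximize $b(\Delta_{i'})$, and observe that if no selection works, then every $\Delta_{i',L}$ with $i' \in R_M$ extends strictly to the left of every $\Delta_i$ with $i \in L_M$, so the positions in the overshoot must be matched by $\Delta_{j,R}$ with $b(\Delta_j) < M$, from which one descends to a smaller instance of the same problem and reaches a contradiction.

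For the second assertion $\cstd{\m'} \sprt \cstd{\m}$, the same computation using the second formula of \eqref{eq: JZL} for $J_{\ssf}(L(\Delta))$ yields an analogous expansion. A direct bookkeeping shows that the $\cs(\m') \otimes \cs(\m)$-component of $J_{\ssf}(\cstd{\m'} \times \cstd{\m})$ is governed by the \emph{same} matching equation $\sum_{i \in I} \supp(\Delta_{i,R}) = \sum_{i' \in I'} \supp(\Delta_{i',L})$ --- the roles of initial and final subsegments swap twice (once in passing from $Z$ to $L$ and once in reversing the order of the product) and cancel out --- so the identical combinatorial claim gives the result.

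The main obstacle is the combinatorial claim itself. The easy case $|I'|=1$ is transparent: the matching equation forces the nonempty $\Delta_{i,R}$'s to tile the single segment $\Delta'_L$, and its leftmost tile $\Delta_{i_1,R}$ immediately yields $b(\Delta') \in \Delta_{i_1}$ and $e(\Delta_{i_1}) \in \Delta'$, i.e.\ $\lshft \Delta_{i_1} \prec \Delta'$. For the general case the extremal/descent step above is the delicate part; the cleanest route is likely an induction on $|I|+|I'|$ (or on $M$) that peels off the contributions of $L_M$ and $R_M$ and applies the inductive hypothesis to the residual interval equation.
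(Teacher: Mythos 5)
Your reduction is the same as the paper's: apply the geometric lemma together with \eqref{eq: JZL}, pass to the $\cs(\m)\otimes\cs(\m')$-component, and show that the only choice of splittings $\Delta_i=A_i\cup B_i$ ($A_i$ initial, $B_i$ final) with $\sum_{i\in I}\supp B_i=\sum_{i'\in I'}\supp A_{i'}$ is the trivial one; your observation that the $L$-statement is governed by the identical matching equation is also fine (the paper just calls it ``similar''). The gap is precisely the combinatorial step you defer. Your plan is to take the maximum $M$ of the common multiset and to look for a pair $(i,i')$ with $e(\Delta_i)=M\in A_{i'}$ and $b(\Delta_{i'})\in\Delta_i$; but no such pair need exist, because the pair violating the hypothesis may involve a segment of $\m$ whose end lies strictly below $M$. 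For instance, take $\m=\Delta_1+\Delta_2$ with $\Delta_1=[3,5]$, $\Delta_2=[1,2]$, and $\m'=\Delta_{1'}=[1,5]$, with $B_1=\Delta_1$, $B_2=\Delta_2$, $A_{1'}=\Delta_{1'}$ and all other pieces empty: the matching equation holds, $L_M\times R_M$ consists of the single pair $(\Delta_1,\Delta_{1'})$, and $b(\Delta_{1'})=1\notin\Delta_1$; the hypothesis fails only through the pair $(\Delta_2,\Delta_{1'})$. So all the weight falls on your ``descent'', which is not carried out: the ``smaller instance'' is not defined (you cannot simply discard $L_M$, $R_M$ and the top row, since the lower portions of the $A_{i'}$, $i'\in R_M$, stay in the equation), and as written the argument does not close.

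The paper settles this in one step using the \emph{minimum}: let $\rho=\min(\cup_{i\in I}B_i)=\min(\cup_{i'\in I'}A_{i'})$ and choose $i$ with $\rho\in B_i$. Then $e(\Delta_i)\in B_i$ lies in the common multiset, so $e(\Delta_i)\in A_{i'}$ for some $i'$; now $b(\Delta_{i'})=\min A_{i'}\ge\rho$ and $b(\Delta_{i'})\le e(\Delta_i)$ (as $e(\Delta_i)\in A_{i'}$), while $B_i$ is a final subsegment of $\Delta_i$ containing $\rho$, hence contains $b(\Delta_{i'})$. Thus $b(\Delta_{i'})\in\Delta_i$ and $e(\Delta_i)\in\Delta_{i'}$, i.e.\ $\lshft\Delta_i\prec\Delta_{i'}$ by \eqref{eq: sprec}, a contradiction. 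If you prefer to work with the maximum, the correct one-step variant runs in the opposite direction: pick $i'$ with $M\in A_{i'}$, then pick $j\in I$ with $b(\Delta_{i'})\in B_j$; then $b(\Delta_{i'})\le e(\Delta_j)\le M$, so $e(\Delta_j)\in A_{i'}\subset\Delta_{i'}$, while $b(\Delta_{i'})\in B_j\subset\Delta_j$, again contradicting the hypothesis. Either of these replaces your extremal/descent step and completes the proof along the lines you set up.
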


\begin{proof}
We show that $\std{\m}\sprt\std{\m'}$, the other part being similar.
Suppose on the contrary that this is not the case. Then, by the geometric lemma and \eqref{eq: JZL}
there exists for each $i\in I\cup I'$ a partition $\Delta_i=A_i\cup B_i$ such that
\begin{itemize}
\item $\rho_1<\rho_2$ whenever $\rho_1\in A_i$ and $\rho_2\in B_i$, $i\in I$.
In particular, $A_i,B_i\in\Seg\cup\{\emptyset\}$.
\item $0\ne\sum_{i\in I}\supp B_i=\sum_{i'\in I'}\supp A_{i'}$ and in particular $\cup_{i'\in I'}A_{i'}=\cup_{i\in I}B_i\ne\emptyset$.
\end{itemize}
Let
\[
\rho=\min\cup_{i\in I}B_i=\min\cup_{i'\in I'}A_{i'}
\]
and let $i\in I$ be such that $\rho\in B_i$.
Then, $e(\Delta_i)\in B_i$ and therefore there exists $i'\in I'$ such that $e(\Delta_i)\in A_{i'}$.
Thus, $b(\Delta_{i'})\in A_{i'}$ and we infer that $b(\Delta_{i'})\ge\rho$ by the minimality of $\rho$.
Hence, $b(\Delta_{i'})\in B_i$. It follows from \eqref{eq: JZL} that $\lshft\Delta_i\not\prec\Delta_{i'}$
in contradiction to our assumption.
\end{proof}

\begin{corollary} \label{cor: sprtdelta}
Suppose that $\pi\in\Reps_{\cs}$, $\pi'\in\Reps_{\cs'}$
and there exists $\Delta\in\Seg$ such that each irreducible subquotient $Z(\m)$ of $\pi$ (resp., $\pi')$
satisfies $\m=\m_{\ge\Delta}$ (resp., $\m=\m_{<\Delta}$). Then, $\pi\sprt\pi'$.
A similar conclusion holds if each irreducible subquotient $L(\m)$ of $\pi$ (resp., $\pi')$
satisfies $\m=\m_{<\Delta}$ (resp., $\m=\m_{\ge\Delta}$).
\end{corollary}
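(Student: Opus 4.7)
The plan is to assemble the corollary directly from Lemma \ref{lem: subrep} and Lemma \ref{lem: simpspr}, with essentially no new input. Since the relation $\sprt$ descends to Jordan--H\"older constituents by Lemma \ref{lem: subrep}, it is enough to establish $\tau\sprt\tau'$ for every $\tau\in\JH(\pi)$ and $\tau'\in\JH(\pi')$.

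For the first assertion, each such $\tau$ has the form $Z(\m)$ with $\m=\m_{\ge\Delta}$, and each $\tau'$ has the form $Z(\m')$ with $\m'=\m'_{<\Delta}$. The key observation is that $Z(\m)=\soc(\std{\m})$ and $Z(\m')=\soc(\std{\m'})$, so in particular $Z(\m)\in\JH(\std{\m})$ and $Z(\m')\in\JH(\std{\m'})$. A second application of Lemma \ref{lem: subrep} therefore reduces the desired statement $Z(\m)\sprt Z(\m')$ to $\std{\m}\sprt\std{\m'}$, and this is precisely the ``for instance'' clause of Lemma \ref{lem: simpspr}.

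For the second (dual) assertion, the constituents are $L(\m)$ with $\m=\m_{<\Delta}$ (for $\pi$) and $L(\m')$ with $\m'=\m'_{\ge\Delta}$ (for $\pi'$). Since $L(\m)\hookrightarrow\cstd{\m}$ and $L(\m')\hookrightarrow\cstd{\m'}$, Lemma \ref{lem: subrep} once again reduces matters to $\cstd{\m}\sprt\cstd{\m'}$. Here one cannot apply Lemma \ref{lem: simpspr} verbatim, because that lemma's second conclusion is $\cstd{\m'}\sprt\cstd{\m}$ (note the order). The fix is to swap the roles: set $\tilde{\m}=\m'$ (the ``$\ge\Delta$'' part) and $\tilde{\m}'=\m$ (the ``$<\Delta$'' part). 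Then $\tilde{\m}=\tilde{\m}_{\ge\Delta}$ and $\tilde{\m}'=\tilde{\m}'_{<\Delta}$, so the ``for instance'' hypothesis of Lemma \ref{lem: simpspr} is satisfied for the pair $(\tilde{\m},\tilde{\m}')$, yielding $\cstd{\tilde{\m}'}\sprt\cstd{\tilde{\m}}$, which is exactly the required $\cstd{\m}\sprt\cstd{\m'}$.

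There is no substantive obstacle; each step is an immediate invocation of an already-proved result. The only point requiring care is the left/right asymmetry of the relation $\sprt$, which forces the interchange of $\m$ and $\m'$ when passing from the $Z$-version to the $L$-version, matching the reversal in hypotheses ($\m=\m_{\ge\Delta}$ for $\pi$ in the first part versus $\m=\m_{<\Delta}$ for $\pi$ in the second).
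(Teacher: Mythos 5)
Your proof is correct and is exactly the argument the paper intends (the corollary is stated without proof, as an immediate consequence of Lemma \ref{lem: subrep} and the ``for instance'' clause of Lemma \ref{lem: simpspr}): reduce to irreducible constituents, pass to the standard modules $\std{\m}$, $\std{\m'}$ (resp. $\cstd{\m}$, $\cstd{\m'}$) via Lemma \ref{lem: subrep}, and invoke Lemma \ref{lem: simpspr}, with the correct swap of the roles of $\m$ and $\m'$ in the $L$-version to account for the asymmetry of $\sprt$.
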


\subsection{M\oe glin--Waldspurger algorithm} \label{sec: Zeleinv}
We recall the recursive algorithm, due to M\oe glin--Waldspurger \cite{MR863522}, for the computation of
the involution (of sets) $\m\mapsto\m^{\#}$ of $\MS{\Seg}$ defined in \eqref{def: MW}.
Set $0^{\#}=0$.
For $0\ne\m=\sum_{i\in I}\Delta_i\in\MS{\Seg}$ define
\[
\max\m=\max_{i\in I}e(\Delta_i).
\]
Let $m\ge1$ and $i_1,\dots,i_m\in I$ be indices such that
\begin{subequations} \label{eq: i1im}
\begin{gather}
\max\m=e(\Delta_{i_1})\text{ and }\Delta_{i_1}\text{ is maximal with respect to this property,}\\
\begin{split}\Delta_{i_{j+1}}\prec\Delta_{i_j}, e(\Delta_i)=e(\lshft{\Delta}_{i_j})\text{ and }
\Delta_{i_{j+1}}\text{ is maximal}\\\text{with respect to these properties },\ j=1,\dots,m-1,\end{split}\\
\text{There is no $i\in I$ such that $\Delta_i\prec\Delta_{i_m}$ and $e(\Delta_i)=e(\lshft{\Delta}_{i_m})$.}
\end{gather}
\end{subequations}
We call $i_1,\dots,i_m$ \emph{leading indices} of $\m$.
(Note that $i_1,\dots,i_m$ are not uniquely determined, but $m$ and $\Delta_{i_1},\dots,\Delta_{i_m}$ are.)
Let $\maxdel(\m)$ be the segment $\{e(\Delta_{i_m}),\dots,e(\Delta_{i_1})\}$ and set
\begin{equation} \label{eq: defm-}
\m^-=\sum_{i\in I}\Delta'_i\text{ where }\Delta'_i=\begin{cases}\Delta_i^-&\text{if }i\in\{i_1,\dots,i_m\},\\
\Delta_i&\text{otherwise.}\end{cases}
\end{equation}
(We discard $\Delta'_i$ if it is the empty set.)
By \cite{MR863522}*{\S II.2}, $\m^\#=(\m^-)^\#+\maxdel(\m)$ and in particular, $\supp(\m)=\supp(\m^-)+\supp(\maxdel(\m))$.
Moreover, $\maxdel(\m)$ is the smallest segment of $\m^\#$ such that $e(\maxdel(\m))=\max\m=\max\m^\#$.

\section{Proofs} \label{sec: proofs}

In this section we will prove the statements of \S\ref{sec: mainev}.

For the properties pertaining to the geometric condition $\LC(\m,\m')$
we will mostly use the last and most tangible criterion of Proposition \ref{prop: LCint2}.
It is likely, however, that the proofs can be made more conceptual.

When writing a multisegment $\m=\sum_{i\in I}\m_i$ it will be sometimes convenient to allow $\Delta_i=\emptyset$.
These inconsequential indices will not have any effect on $\m$ or on the objects pertaining to it
(such as $X_\m$, $Y_\m$ of \S\ref{sec: expGLS}).
We will use this convention throughout.

\subsection{Proof of Proposition \ref{prop: splitdisj} and Corollary \ref{cor: gedelta}}

Let $\pi=Z(\m)$, $\pi'=Z(\m')$, $\pi_j=Z(\m_j)$, $\pi_j'=Z(\m'_j)$, $j=1,2$.
Then, by \eqref{eq: LIsimp} and \eqref{cond: np1}, $\pi=\soc(\pi_1\times\pi_2)$
and $\pi'=\soc(\pi'_1\times\pi'_2)$.
Thus $J(\pi)\twoheadrightarrow\pi_1\otimes\pi_2$ and $J(\pi')\twoheadrightarrow\pi'_1\otimes\pi'_2$.
By the geometric lemma, it follows that $J(\pi\times\pi')$ admits $\pi_1\times\pi'_1\otimes\pi_2\times\pi'_2$ as a subquotient.
On the other hand by \eqref{cond: np2} and Lemma \ref{lem: simpspr}, $\pi_1\times\pi_1'\sprt\pi_2\times\pi_2'$ and hence
\[
J(\pi_1\times\pi_1'\times\pi_2\times\pi_2')_{\cs(\pi_1)+\cs(\pi_1')\otimes\cs(\pi_2)+\cs(\pi_2')}=
\pi_1\times\pi_1'\otimes\pi_2\times\pi_2'.
\]
Since $\JH(\pi_1\times\pi_1'\times\pi_2\times\pi_2')=\JH(\pi_1\times\pi_2\times\pi_1'\times\pi_2')$,
we infer that
\[
J(\pi\times\pi')_{\cs(\pi_1)+\cs(\pi_1')\otimes\cs(\pi_2)+\cs(\pi_2')}=\pi_1\times\pi'_1\otimes\pi_2\times\pi'_2.
\]
Moreover, $Z(\m_1+\m'_1)\sprt Z(\m_2+\m'_2)$ (since $\pi_1\times\pi_1'\sprt\pi_2\times\pi_2'$) and $\LI(\m_1+\m_1',\m_2+\m_2')$
(by \eqref{cond: np1} and \eqref{eq: LIsimp}). Hence, by Lemma \ref{lem: SIspr}
\[
J(Z(\m+\m'))_{\cs(\pi_1)+\cs(\pi_1')\otimes\cs(\pi_2)+\cs(\pi_2')}=Z(\m_1+\m'_1)\otimes Z(\m_2+\m'_2).
\]

If $Z(\m+\m')\hookrightarrow\pi\times\pi'$, then it follows that
\[
Z(\m_1+\m'_1)\otimes Z(\m_2+\m'_2)\hookrightarrow \pi_1\times\pi'_1\otimes\pi_2\times\pi'_2.
\]
This prove the first part of Proposition \ref{prop: splitdisj}.

To prove the second part, let $\lambda\in \cnt_\m$ and $\lambda'\in \cnt_{\m'}$.
For $r=1,2$, let $\tilde\lambda\in \cnt_{\m_r}$ be such that $\tilde\lambda_{i,j}=\lambda_{i,j}$ for all $(i,j)\in X_{\m_r}$.
Define $\tilde\lambda'\in \cnt_{\m'_r}$ similarly.
Clearly, for any $(i,j)\in X_{\m_r,\m_r'}$, the $Y_{\m_r,\m_r'}$-coordinates of
$v_{i,j}^{\m,\m'}(\lambda,\lambda')$ coincide with those of $v_{i,j}^{\m_r,\m_r'}(\tilde\lambda,\tilde\lambda')$.
On the other hand, the assumptions \eqref{cond: np1} and \eqref{cond: np2} imply that the coordinates outside $Y_{\m_r,\m_r'}$
vanish. This clearly implies the second part of Proposition \ref{prop: splitdisj}.

Corollary \ref{cor: gedelta} is now an immediate consequence.

\subsection{Proof of first part of Proposition \ref{prop: mm-}}

For any $0\ne\m=\sum_{i\in I}\Delta_i\in\MS{\Seg}$ we write
$\m=\m_{\mxprt}+\m_{\nmxprt}$ where
$\m_{\mxprt}=\sum_{i\in I:e(\Delta_i)=\max\m}\Delta_i$ and $\m_{\nmxprt}=\sum_{i\in I:e(\Delta_i)<\max\m}\Delta_i$.
For consistency we also write $0_{\mxprt}=0_{\nmxprt}=0$.

\subsubsection{}
\begin{lemma} \label{lem: nnmx}
For any $\m\in\MS{\Seg}$, the representation $L(\m_{\nmxprt})\times L(\m_\mxprt)$ is \SI\ and we have
$L(\m)=\soc(L(\m_{\nmxprt})\times L(\m_\mxprt))$.
Moreover, suppose that $L(\m')$ is an irreducible subquotient of $L(\m_{\nmxprt})\times L(\m_\mxprt)$ such that
$\m'_\mxprt=\m_\mxprt$. Then, $\m'=\m$.
\end{lemma}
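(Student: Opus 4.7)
The plan is to deduce both assertions from the relation $L(\m_\nmxprt)\sprt L(\m_\mxprt)$ together with a factorization of the standard module $\cstd{\m}$. First I would choose $\Delta\in\Seg$ with $e(\Delta)=\max\m$ and $b(\Delta)$ smaller than the beginning of every segment in $\m_\mxprt$; in the fixed lexicographic order on $\Seg$ this makes every segment of $\m_\nmxprt$ strictly less than $\Delta$ and every segment of $\m_\mxprt$ greater than or equal to $\Delta$. The $L$-version of Corollary \ref{cor: sprtdelta} then yields $L(\m_\nmxprt)\sprt L(\m_\mxprt)$, and Lemma \ref{lem: SIspr} immediately gives that $L(\m_\nmxprt)\times L(\m_\mxprt)$ is \SI.

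To identify its socle as $L(\m)$, the key observation is that for any $\Delta_1\in\m_\nmxprt$ and $\Delta_2\in\m_\mxprt$ we have $e(\Delta_1)<\max\m=e(\Delta_2)$, so if $\Delta_1$ and $\Delta_2$ happen to be linked, then necessarily $\Delta_1\prec\Delta_2$ (never the reverse). Consequently I can enumerate the segments of $\m$ as $\Delta_1,\dots,\Delta_k$ with $\Delta_i\not\prec\Delta_j$ for $i<j$ and with those from $\m_\mxprt$ occurring first. Induction in stages then factors $\cstd{\m}=\cstd{\m_\nmxprt}\times\cstd{\m_\mxprt}$, and exactness of parabolic induction embeds $L(\m_\nmxprt)\times L(\m_\mxprt)$ in $\cstd{\m}$. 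Its (nonzero, irreducible) socle therefore embeds in $\soc(\cstd{\m})=L(\m)$ and must equal it.

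For the final assertion, suppose $L(\m')\le L(\m_\nmxprt)\times L(\m_\mxprt)$ satisfies $\m'_\mxprt=\m_\mxprt$. Equality of supercuspidal supports forces $\supp\m'_\nmxprt=\supp\m_\nmxprt$. Applying the first part of the lemma to $\m'$ gives $L(\m')\hookrightarrow L(\m'_\nmxprt)\times L(\m'_\mxprt)$, so Frobenius reciprocity produces a surjection $J(L(\m'))\twoheadrightarrow L(\m'_\nmxprt)\otimes L(\m'_\mxprt)$; in particular the component $J(L(\m'))_{\supp\m_\nmxprt\otimes\supp\m_\mxprt}$ is nonzero. The uniqueness clause of Lemma \ref{lem: SIspr} then forces $L(\m')\cong L(\m)$, hence $\m'=\m$.

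The only point that requires any genuine verification is the precedence observation underpinning the factorization $\cstd{\m}=\cstd{\m_\nmxprt}\times\cstd{\m_\mxprt}$; without it, the crucial embedding $L(\m_\nmxprt)\times L(\m_\mxprt)\hookrightarrow\cstd{\m}$ would not be in hand. After that, everything is an assembly of Corollary \ref{cor: sprtdelta} and Lemma \ref{lem: SIspr}.
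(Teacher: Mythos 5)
Your argument is correct and takes essentially the same route as the paper: the separation $L(\m_{\nmxprt})\sprt L(\m_{\mxprt})$ (obtained via Lemma \ref{lem: simpspr}/Corollary \ref{cor: sprtdelta}) combined with Lemma \ref{lem: SIspr}, whose uniqueness clause yields the final assertion exactly as you use it. The only minor difference is that you re-derive the identification $\soc(L(\m_{\nmxprt})\times L(\m_{\mxprt}))=L(\m)$ by embedding $L(\m_{\nmxprt})\times L(\m_{\mxprt})$ into $\cstd{\m}=\cstd{\m_{\nmxprt}}\times\cstd{\m_{\mxprt}}$, whereas the paper simply invokes the sufficient criterion \eqref{eq: LIsimp}.
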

This follows from \eqref{eq: LIsimp} and Lemmas \ref{lem: SIspr} and \ref{lem: simpspr}.

\subsubsection{}
\begin{lemma} \label{lem: socsocsoc}
Suppose that $\one\ne\pi\in\Irr$ and $0\ne\m\in\MS{\Seg}$ with $\max\supp\pi<\max\m$.
Then
\begin{multline} \label{eq: socsocsoc}
\soc(\pi\times L(\m))=\soc(\pi\times\soc(L(\m_{\nmxprt})\times L(\m_{\mxprt})))\\=
\soc(\soc(\pi\times L(\m_{\nmxprt}))\times L(\m_{\mxprt}))=\soc(\pi\times L(\m_{\nmxprt})\times L(\m_{\mxprt})).
\end{multline}
In particular, for any $\m'\in\MS{\Seg}$
\[
L(\m')\hookrightarrow\pi\times L(\m)\iff\m'_{\mxprt}=\m_{\mxprt}\text{ and }L(\m'_{\nmxprt})\hookrightarrow\pi\times L(\m_{\nmxprt}).
\]
\end{lemma}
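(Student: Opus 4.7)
The plan is to prove the chain of equalities and the ``in particular'' clause simultaneously, building on a separation relation.

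\emph{Setup.} I would pick $\Delta\in\Seg$ with $e(\Delta)=\max\m$ and $b(\Delta)=\min\{b(\Delta'):\Delta'\in\m_\mxprt\}$. Since $\max\supp\pi<\max\m$ and $\max\supp\m_\nmxprt<\max\m$, every irreducible subquotient $L(\m'')$ of $\pi\times L(\m_\nmxprt)$ has every segment ending strictly below $e(\Delta)$, so $\m''=\m''_{<\Delta}$; also $\m_\mxprt=(\m_\mxprt)_{\ge\Delta}$ by construction. Corollary \ref{cor: sprtdelta} then delivers the separation $\pi\times L(\m_\nmxprt)\sprt L(\m_\mxprt)$.

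\emph{Outer equalities.} The first equality in \eqref{eq: socsocsoc} is immediate from Lemma \ref{lem: nnmx}, which gives $L(\m)=\soc(L(\m_\nmxprt)\times L(\m_\mxprt))$. The third equality follows from the above separation and Lemma \ref{lem: socprod}, since $L(\m_\mxprt)$ is already irreducible. The middle equality is equivalent to $\soc(\pi\times L(\m))=\soc(\pi\times L(\m_\nmxprt)\times L(\m_\mxprt))$; one inclusion is immediate from $L(\m)\hookrightarrow L(\m_\nmxprt)\times L(\m_\mxprt)$, and the reverse inclusion will follow from the ``in particular'' clause.

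\emph{``In particular'' clause.} The backward direction is easy: Lemma \ref{lem: nnmx} gives $L(\m')\hookrightarrow L(\m'_\nmxprt)\times L(\m_\mxprt)\hookrightarrow\pi\times L(\m_\nmxprt)\times L(\m_\mxprt)$, placing $L(\m')$ in the socle of the triple product. The forward direction is the substantive step. Given $L(\m')\hookrightarrow\pi\times L(\m_\nmxprt)\times L(\m_\mxprt)$, Frobenius reciprocity combined with the separation forces the $(\cs(\pi)+\supp\m_\nmxprt)\otimes\supp\m_\mxprt$-component of $J(L(\m'))$ to surject onto $(\pi\times L(\m_\nmxprt))\otimes L(\m_\mxprt)$. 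Counting the multiplicity of $\max\m$ in $\supp\m'=\supp\m$ (unaffected by segments of $\m'_\nmxprt$, whose endpoints lie below $\max\m$) yields $|\m'_\mxprt|=|\m_\mxprt|$, and the M\oe glin--Waldspurger extraction of maximal-ending segments recalled in \S\ref{sec: Zeleinv} pins down $\m'_\mxprt=\m_\mxprt$. With this equality in hand, Lemma \ref{lem: SIspr}, applied to each irreducible constituent of $\soc(\pi\times L(\m_\nmxprt))$, identifies the left tensor factor as $L(\m'_\nmxprt)$, giving $L(\m'_\nmxprt)\hookrightarrow\pi\times L(\m_\nmxprt)$. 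Finally, using the multiplicity-one property of $L(\m)$ in $L(\m_\nmxprt)\times L(\m_\mxprt)$ from Lemma \ref{lem: nnmx} and the fact that all other constituents of that product have maximal-ending multisegment different from $\m_\mxprt$, the embedding of $L(\m')$ into the triple product must factor through $\pi\times L(\m)$; this yields both the reverse inclusion in the middle equality and the forward direction of the claim.

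\emph{Main obstacle.} The crux is forcing $\m'_\mxprt=\m_\mxprt$ in the forward direction: this is essentially the assertion that the maximal-ending segments of $L(\m')$ are detected by the Jacquet component at cuspidal support $\supp\m_\mxprt$, and the cleanest route is through the M\oe glin--Waldspurger description of the Zelevinsky involution.
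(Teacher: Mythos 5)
Your setup and the outer equalities are fine and essentially follow the paper's route (Lemma \ref{lem: nnmx} for the first equality; Corollary \ref{cor: sprtdelta} together with Lemma \ref{lem: socprod} for the last). The genuine gap is the step where you force $\m'_{\mxprt}=\m_{\mxprt}$. First, Frobenius reciprocity does not give a surjection of the relevant component of $J(L(\m'))$ onto $(\pi\times L(\m_{\nmxprt}))\otimes L(\m_{\mxprt})$; it only gives a nonzero map, because $\pi\times L(\m_{\nmxprt})$ is in general reducible (the surjectivity exploited in Lemma \ref{lem: SIspr} depends on both tensor factors being irreducible). Second, counting the multiplicity of $\max\m$ in the cuspidal support (which, incidentally, is $\cs(\pi)+\supp\m$, not $\supp\m$) only yields the \emph{number} of segments of $\m'$ ending at $\max\m$; it says nothing about their beginnings, and that is exactly what must be controlled. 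The appeal to ``the M\oe glin--Waldspurger extraction'' is not an argument: \S\ref{sec: Zeleinv} describes how to compute $\m^{\#}$, and no mechanism is indicated by which it would identify $\m'_{\mxprt}$ with $\m_{\mxprt}$ from the given embedding; a priori $\m'$ could have maximal-ending segments with different beginnings, compensated inside $\m'_{\nmxprt}$, and nonvanishing of a Jacquet component does not rule this out without a genuine analysis.

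The missing step actually follows at once from ingredients you already have, and this is how the paper argues: by the last equality of \eqref{eq: socsocsoc}, an irreducible subrepresentation $L(\m')$ of the triple product embeds into $\soc(\pi\times L(\m_{\nmxprt}))\times L(\m_{\mxprt})$, hence into $L(\n)\times L(\m_{\mxprt})$ for some irreducible $L(\n)\hookrightarrow\pi\times L(\m_{\nmxprt})$ with $\max\supp\n<\max\m$; applying Lemma \ref{lem: nnmx} to the multisegment $\n+\m_{\mxprt}$ identifies $L(\m')$ with $L(\n+\m_{\mxprt})$, which gives simultaneously $\m'_{\mxprt}=\m_{\mxprt}$ and $L(\m'_{\nmxprt})=L(\n)\hookrightarrow\pi\times L(\m_{\nmxprt})$, with no Jacquet-module computation needed. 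Note also that your final sentence (that the embedding must factor through $\pi\times L(\m)$) is too quick as written: the paper handles it by contradiction, observing that if $L(\m')\cap(\pi\times L(\m))=0$ then $L(\m')\hookrightarrow\pi\times L(\m'')$ for some constituent $L(\m'')$ of $L(\m_{\nmxprt})\times L(\m_{\mxprt})$ with $\m''_{\mxprt}\ne\m_{\mxprt}$ (second part of Lemma \ref{lem: nnmx}), and then repeating the maximal-part identification for $\m''$ contradicts $\m'_{\mxprt}=\m_{\mxprt}$. Without these two repairs the proposal does not constitute a proof.
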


\begin{proof}
The first equality of \eqref{eq: socsocsoc} follows from Lemma \ref{lem: nnmx} while the last one follows from
Lemma \ref{lem: socprod}, Corollary \ref{cor: sprtdelta} and the condition on $\pi$.
It remains to show that any irreducible subrepresentation $\pi'$ of $\pi\times L(\m_{\nmxprt})\times L(\m_{\mxprt})$
is contained in $\pi\times L(\m)$.
Assume on the contrary that this is not the case. Then
$\pi'\hookrightarrow\pi\times L(\m'')$ for some $L(\m'')\le L(\m_{\nmxprt})\times L(\m_{\mxprt})$
with $\m''_{\mxprt}\ne\m_{\mxprt}$ (by the second part of Lemma \ref{lem: nnmx}).
On the other hand, by the last equality of \eqref{eq: socsocsoc} we have
$\pi'\hookrightarrow\tau\times L(\m_{\mxprt})$ for some irreducible $\tau\hookrightarrow\pi\times L(\m_{\nmxprt})$
and hence by Lemma \ref{lem: nnmx} we have $\m'_{\mxprt}=\m_{\mxprt}$ where we write $\pi'=L(\m')$.

Now, $\pi'\hookrightarrow\pi\times L(\m''_{\nmxprt})\times L(\m''_{\mxprt})$.
Hence, $\pi'\hookrightarrow\tau'\times L(\m''_{\mxprt})$
for some irreducible subquotient $\tau'$ of $\pi\times L(\m''_{\nmxprt})$.
It follows from Lemma \ref{lem: nnmx} that
$\m''_{\mxprt}=\m'_{\mxprt}=\m_{\mxprt}$. We get a contradiction.

The last part follows from \eqref{eq: socsocsoc} and Lemma \ref{lem: nnmx}.
\end{proof}

\begin{corollary} \label{cor: wdelta}
Let $\pi$ and $\m$ be as in Lemma \ref{lem: socsocsoc}.
Let $\Delta$ be a segment in $\m$ such that $e(\Delta)=\max\m$. Then, for any $\m'\in\MS{\Seg}$
\[
L(\m')\hookrightarrow\pi\times L(\m)\iff\Delta\text{ is contained in $\m'$ and }
L(\m'-\Delta)\hookrightarrow\pi\times L(\m-\Delta).
\]
\end{corollary}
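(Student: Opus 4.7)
The plan is to reduce to Lemma \ref{lem: socsocsoc}, with the argument splitting according to whether $\m_{\mxprt}=\Delta$ or $\m_{\mxprt}\ne\Delta$. Since $e(\Delta)=\max\m$, the segment $\Delta$ automatically lies in $\m_{\mxprt}$, so in particular membership of $\Delta$ in $\m$ is not an issue.

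For the forward implication I would apply Lemma \ref{lem: socsocsoc} to $L(\m')\hookrightarrow\pi\times L(\m)$ to obtain $\m'_{\mxprt}=\m_{\mxprt}$ (whence $\Delta$ is in $\m'$) and $L(\m'_{\nmxprt})\hookrightarrow\pi\times L(\m_{\nmxprt})$. If $\m_{\mxprt}=\Delta$ then $\m-\Delta=\m_{\nmxprt}$ and $\m'-\Delta=\m'_{\nmxprt}$, so the conclusion is immediate. If $\m_{\mxprt}\ne\Delta$ then $\max(\m-\Delta)=\max\m$ and the decompositions $(\m-\Delta)_{\mxprt}=\m_{\mxprt}-\Delta$, $(\m-\Delta)_{\nmxprt}=\m_{\nmxprt}$ (and the analogous ones for $\m'-\Delta$) allow me to invoke the backward direction of Lemma \ref{lem: socsocsoc} to conclude $L(\m'-\Delta)\hookrightarrow\pi\times L(\m-\Delta)$.

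For the converse, suppose $\Delta$ is in $\m'$ and $L(\m'-\Delta)\hookrightarrow\pi\times L(\m-\Delta)$. Comparing supercuspidal supports gives $\supp\m'=\supp\pi+\supp\m$, and since $\max\supp\pi<\max\m$ this forces $\max\m'=\max\m$, so $\Delta\in\m'_{\mxprt}$. When $\m_{\mxprt}\ne\Delta$, Lemma \ref{lem: socsocsoc} applied to $\pi\times L(\m-\Delta)$ (still with $\max(\m-\Delta)=\max\m$) produces $\m'_{\mxprt}-\Delta=\m_{\mxprt}-\Delta$ and $L(\m'_{\nmxprt})\hookrightarrow\pi\times L(\m_{\nmxprt})$; a second appeal to Lemma \ref{lem: socsocsoc} then yields $L(\m')\hookrightarrow\pi\times L(\m)$. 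When $\m_{\mxprt}=\Delta$ the lemma is not directly available for $\pi\times L(\m-\Delta)$, and I would instead use the support inequality $\max\supp(\m'-\Delta)\le\max(\max\supp\pi,\max\supp(\m-\Delta))<\max\m$ to force every segment of $\m'$ ending at $\max\m$ to equal $\Delta$; this gives $\m'_{\mxprt}=\Delta$ and $\m'_{\nmxprt}=\m'-\Delta$, and Lemma \ref{lem: socsocsoc} again closes the argument.

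The only mildly delicate point is the degenerate case $\m_{\mxprt}=\Delta$ of the backward direction, where Lemma \ref{lem: socsocsoc} is not applicable to $\pi\times L(\m-\Delta)$; this is handled by the elementary support comparison above. The rest is routine bookkeeping around the $\mxprt$/$\nmxprt$ decomposition.
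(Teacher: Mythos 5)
Your argument is correct and follows the paper's proof essentially verbatim: the same case split on whether $\m_{\mxprt}=\Delta$, with the corollary reducing directly to Lemma \ref{lem: socsocsoc} in the degenerate case and to a double application of that lemma (to $\m$ with $\m'$ and to $\m-\Delta$ with $\m'-\Delta$) otherwise. The only difference is that you spell out the small supercuspidal-support comparison identifying $\m'_{\mxprt}=\Delta$ in the degenerate backward direction, which the paper leaves implicit.
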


Indeed, if $\max(\m-\Delta)\ne\max\m$, i.e., if $\m_{\mxprt}=\Delta$ then this is just Lemma \ref{lem: socsocsoc};
otherwise this follows from Lemma \ref{lem: socsocsoc} by applying it to both $\m$ and $\m-\Delta$
(with $\m'$ and $\m'-\Delta$ respectively).

\subsubsection{}
In fact, the following slightly more precise statement holds (although we shall not use it here).
\begin{corollary}
Under the conditions of Corollary \ref{cor: wdelta} we have
\[
\soc(\pi\times L(\m))=
\soc(\soc(\pi\times L(\m-\Delta))\times L(\Delta))=\soc(\pi\times L(\m-\Delta)\times L(\Delta)).
\]
\end{corollary}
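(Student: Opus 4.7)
The plan is to prove the two socle identities separately. The central ingredient is the following observation: if $\m'$ is a multisegment with $\Delta\subseteq\m'$ and $\max\supp\m'\le\max\m$, then $\SLI(\Delta,\m'-\Delta)$ holds, and hence $L(\m')\hookrightarrow L(\m'-\Delta)\times L(\Delta)$. By \eqref{eq: LIsimp} it suffices to verify $\Delta\not\prec\Delta_b$ for every $\Delta_b\in\m'-\Delta$: if $e(\Delta_b)=e(\Delta)$ then $e(\Delta_b)\in\Delta$ violates the precedence requirement $e(\Delta_b)\notin\Delta$; if $e(\Delta_b)<e(\Delta)=\max\m$ then $e(\Delta_b)\notin\Delta$ would force $e(\Delta_b)<b(\Delta)$, but the condition $b(\Delta_b)\in\rshft\Delta$ forces $b(\Delta_b)>b(\Delta)>e(\Delta_b)$, a contradiction.

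For the first equality $\soc(\pi\times L(\m))=\soc(\pi\times L(\m-\Delta)\times L(\Delta))$, the inclusion $\subseteq$ is immediate: given $L(\m')\hookrightarrow\pi\times L(\m)$, Corollary \ref{cor: wdelta} supplies $L(\m'-\Delta)\hookrightarrow\pi\times L(\m-\Delta)$ and the observation yields $L(\m')\hookrightarrow L(\m'-\Delta)\times L(\Delta)\hookrightarrow\pi\times L(\m-\Delta)\times L(\Delta)$. For the reverse inclusion, I would note that the segments of $\m_{\mxprt}$ are pairwise unlinked, so $L(\m_{\mxprt}-\Delta)\times L(\Delta)\simeq L(\m_{\mxprt})$, and $L(\m-\Delta)\hookrightarrow L(\m_{\nmxprt})\times L(\m_{\mxprt}-\Delta)$ by Lemma \ref{lem: nnmx} applied to $\m-\Delta$ (trivially when $\m_{\mxprt}=\Delta$). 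Combining, $\pi\times L(\m-\Delta)\times L(\Delta)\hookrightarrow\pi\times L(\m_{\nmxprt})\times L(\m_{\mxprt})$, whose socle equals $\soc(\pi\times L(\m))$ by Lemma \ref{lem: socsocsoc}, so any irreducible subrepresentation of the former embeds into $\pi\times L(\m)$.

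For the second equality $\soc(\pi\times L(\m-\Delta)\times L(\Delta))=\soc(\soc(\pi\times L(\m-\Delta))\times L(\Delta))$, the inclusion $\supseteq$ is trivial. For the reverse, any $L(\m')\hookrightarrow\pi\times L(\m-\Delta)\times L(\Delta)$ also embeds into $\pi\times L(\m)$ by the first equality, and hence Corollary \ref{cor: wdelta} yields $L(\m'-\Delta)\hookrightarrow\pi\times L(\m-\Delta)$; since $\soc(\pi\times L(\m-\Delta))$ is semisimple, $L(\m'-\Delta)$ is a direct summand of it, giving $L(\m'-\Delta)\times L(\Delta)\hookrightarrow\soc(\pi\times L(\m-\Delta))\times L(\Delta)$, and combining with the observation yields $L(\m')\hookrightarrow\soc(\pi\times L(\m-\Delta))\times L(\Delta)$.

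The main delicate point I anticipate is upgrading the above containments of socles to equalities as semisimple representations, that is, matching multiplicities rather than merely isomorphism classes of constituents. The rigidity afforded by $L(\m'-\Delta)\times L(\Delta)$ being \SI\ with socle $L(\m')$ (a consequence of $\SLI(\Delta,\m'-\Delta)$) should make this bookkeeping routine, but if needed one can invoke a Jacquet module comparison analogous to that in the proof of Lemma \ref{lem: socprod}.
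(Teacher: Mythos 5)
Your preliminary observation (that $\Delta\subseteq\m'$ together with $\max\supp\m'\le\max\m$ forces $\SLI(\Delta,\m'-\Delta)$, hence $L(\m')\hookrightarrow L(\m'-\Delta)\times L(\Delta)$) is correct, and with it the \emph{first} equality can be closed honestly: applying it with $\m'=\m$ gives $\pi\times L(\m)\hookrightarrow\pi\times L(\m-\Delta)\times L(\Delta)$, your embedding of the latter into $\pi\times L(\m_{\nmxprt})\times L(\m_{\mxprt})$ (Lemma \ref{lem: nnmx} applied to $\m-\Delta$ together with $L(\m_{\mxprt})=L(\m_{\mxprt}-\Delta)\times L(\Delta)$) sandwiches $\pi\times L(\m)$ and $\pi\times L(\m-\Delta)\times L(\Delta)$ between two modules whose socles coincide by Lemma \ref{lem: socsocsoc}, and equality of all three socles follows at the level of subobjects, multiplicities included. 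That is exactly the paper's argument for this equality; your constituent-by-constituent detour through Corollary \ref{cor: wdelta} is unnecessary there and is what creates the multiplicity worry in the first place.

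For the \emph{second} equality the gap is genuine. Your argument only shows that every irreducible subrepresentation of $\pi\times L(\m-\Delta)\times L(\Delta)$ is \emph{isomorphic} to some subrepresentation of $\soc(\pi\times L(\m-\Delta))\times L(\Delta)$; since these socles are in general reducible, this does not yield the asserted equality, and the multiplicity matching you defer is the real content. Corollary \ref{cor: wdelta} is a pure existence statement with no multiplicity information, and the SI property of $L(\m'-\Delta)\times L(\Delta)$ only manufactures copies of $L(\m')$ inside $\soc(\pi\times L(\m-\Delta))\times L(\Delta)$, i.e., it helps the trivial inclusion, not the needed upper bound on $\dim\Hom(L(\m'),\pi\times L(\m-\Delta)\times L(\Delta))$. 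The fallback you suggest, a Jacquet-module comparison as in Lemma \ref{lem: socprod}, requires the separation $(\pi\times L(\m-\Delta))\sprt L(\Delta)$, which is not available: in the only nontrivial case, when $\m-\Delta$ still contains a segment $\Delta_i$ with $e(\Delta_i)=\max\m$, the criterion of Lemma \ref{lem: simpspr} fails (one has $\lshft\Delta_i\prec\Delta$ as soon as $b(\Delta)\in\Delta_i$), and extra terms do in general appear in the relevant graded component of the Jacquet module (already when $\Delta$ occurs twice in $\m$). The missing idea, which is how the paper proceeds, is that $L(\m_{\mxprt})=L(\m_{\mxprt}-\Delta)\times L(\Delta)$ is $\square$-irreducible, so that for any irreducible, hence any semisimple, $\sigma$ one has $\soc(\sigma\times L(\m_{\mxprt}-\Delta)\times L(\Delta))=\soc(\soc(\sigma\times L(\m_{\mxprt}-\Delta))\times L(\Delta))$; applying this with $\sigma=\soc(\pi\times L(\m_{\nmxprt}))$ and invoking Lemma \ref{lem: socsocsoc} for both $\m$ and $\m-\Delta$ gives the second equality with no multiplicity bookkeeping at all.
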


\begin{proof}
Since
\[
\pi\times L(\m)\hookrightarrow
\pi\times L(\m-\Delta)\times L(\Delta)\hookrightarrow
\pi\times L(\m_{\nmxprt})\times L(\m_{\mxprt}-\Delta)\times L(\Delta)
\]
and $L(\m_{\mxprt})=L(\m_{\mxprt}-\Delta)\times L(\Delta)$,
the equality $\soc(\pi\times L(\m))=\soc(\pi\times L(\m-\Delta)\times L(\Delta))$ follows from Lemma \ref{lem: socsocsoc}
which also implies that
\[
\soc(\pi\times L(\m))=\soc(\soc(\pi\times L(\m_{\nmxprt}))\times L(\m_{\mxprt}-\Delta)\times L(\Delta)).
\]
Note that since $L(\m_{\mxprt})=L(\m_{\mxprt}-\Delta)\times L(\Delta)$ is $\square$-irreducible, for any $\sigma\in\Irr$ we have
\[
\soc(\sigma\times L(\m_{\mxprt}-\Delta)\times L(\Delta))=
\soc(\soc(\sigma\times L(\m_{\mxprt}-\Delta))\times L(\Delta)).
\]
The same is therefore true for any semisimple $\sigma$.
Since $\soc(\pi\times L(\m_{\nmxprt}))$ is semisimple (by definition of $\soc$) we infer that
\[
\soc(\pi\times L(\m))=\soc(\soc(\soc(\pi\times L(\m_{\nmxprt}))\times L(\m_{\mxprt}-\Delta))\times L(\Delta)).
\]
On the other hand, applying Lemma \ref{lem: socsocsoc} to $\m-\Delta$ (if $\max(\m-\Delta)=\max\m$) we have
\[
\soc(\pi\times L(\m-\Delta))=\soc(\soc(\pi\times L(\m_{\nmxprt}))\times L(\m_{\mxprt}-\Delta)).
\]
(If $\max(\m-\Delta)\ne\max\m$ then the last relation is trivial.)
Hence,
\[
\soc(\pi\times L(\m))=\soc(\soc(\pi\times L(\m-\Delta))\times L(\Delta)).
\]
The corollary follows.
\end{proof}

\subsubsection{}
We can now prove the first part of Proposition \ref{prop: mm-}, which is a strengthening of \cite{MR3573961}*{Lemma 4.16}.

Suppose first that $(\m+\m')^-=\m+\m'^-$. Then, by comparing $\supp$ of both sides, $\maxdel(\m+\m')=\maxdel(\m')$.
For simplicity write $\Delta=\maxdel(\m')$ and $\pi=Z(\m)$. By Corollary \ref{cor: wdelta},
\begin{multline*}
Z(\m+\m')=L((\m+\m')^\#)\le\soc(\pi\times L(\m'^\#))=\soc(\pi\times Z(\m'))\iff
\\Z(\m+\m'^-)=Z((\m+\m')^-)=L((\m+\m')^\#-\Delta)\le\soc(\pi\times L(\m'^\#-\Delta))=
\soc(\pi\times Z(\m'^-))
\end{multline*}
as required.

It remains therefore to show that $\LI(\m,\m')$ implies $(\m+\m')^-=\m+\m'^-$.
The argument is essentially in \cite{MR3573961}. For completeness we recall it.
Write $\m=\sum_{i\in I}\Delta_i$, $\m'=\sum_{i'\in I'}\Delta_{i'}$ and let $i'_1,\dots,i'_{m'}$ be leading indices of $\m'$.
Assume on the contrary that $(\m+\m')^-\ne\m+\m'^-$.
Then, there exist indices $l'\le m'$ and $l\in I$ such that $\Delta_l\prec\Delta_{i'_{l'}}$, $e(\Delta_l)=e(\lshft\Delta_{i'_{l'}})$
and either $l'=m'$ or $\Delta_l\subsetneq\Delta_{i'_{l'+1}}$.
By Corollary \ref{cor: gedelta} we may replace $\m$ and $\m'$ by $\m_{\ge\Delta_l}$ and $\m'_{\ge\Delta_l}$ respectively
since neither the condition $\LI(\m,\m')$ nor the assumption $(\m+\m')^-\ne\m+\m'^-$ is affected by this change.
Thus, we may assume without loss of generality that $\m=\m_{\ge\Delta_l}$ and $l'=m'$.
By Lemma \ref{lem: socsocsoc} (applied to $\m'^\#$), the condition $\LI(\m,\m')$ implies that
\[
\m'^\#_{\mxprt}=(\m+\m')^\#_{\mxprt}
\]
However, the right-hand side admits a segment which contains $e(\Delta_l)$ in its support while the left-hand side does not.
We obtain a contradiction.

\subsection{Proof of second part of Proposition \ref{prop: mm-}}

\subsubsection{}

\begin{lemma} \label{lem: vaninorb}
Let $\m=\sum_{i\in I}\Delta_i\in\MS{\Seg}$ and let $i_1,\dots,i_m$ be leading indices of $\m$. (See \S\ref{sec: Zeleinv}.)
Then, any nonempty open $G_\m$-invariant subset $A$ of $\cnt_\m$ contains
an element $\lambda$ whose coordinates satisfy $\lambda_{i,i_j}=\delta_{i,i_{j+1}}$ for all $j=1,\dots,m-1$ and
$i\in I$ such that $e(\Delta_i)=e(\Delta_{i_{j+1}})$.
\end{lemma}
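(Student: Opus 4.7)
The plan is to reduce to a normal-form statement for the action of $\stb_{\m}$ on $\cnt_{\m}$. Since $A$ is a non-empty Zariski-open subset of the irreducible affine variety $\cnt_{\m}$, it is Zariski-dense. Let $N \subset \cnt_{\m}$ denote the closed subvariety of elements whose coordinates $\lambda_{i, i_j}$ take the prescribed values for all relevant pairs $(i, j)$. It will suffice to prove that $\stb_{\m} \cdot N$ is Zariski-dense in $\cnt_{\m}$: for then $A \cap (\stb_{\m} \cdot N) \ne \emptyset$, and the $\stb_{\m}$-invariance of $A$ forces $A \cap N \ne \emptyset$, as desired.

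To establish density, I would construct, for a sufficiently generic $\lambda_0 \in \cnt_{\m}$, an element $g \in \stb_{\m}$ with $g \cdot \lambda_0 \in N$, proceeding by induction on the layer index $j = 1, \ldots, m-1$. At the $j$-th layer, the maximality of the leading index $i_{j+1}$ among the $i \in I$ with $\Delta_i \prec \Delta_{i_j}$ and $e(\Delta_i) = e(\lshft \Delta_{i_j})$ forces any other such $i \ne i_{j+1}$ to satisfy $\Delta_i \subseteq \Delta_{i_{j+1}}$. First, the element $\beta_{i_{j+1}, i_{j+1}}$ lies in $\Lieg_{\m}$ (since $\Delta \prec \rshft \Delta$ holds for every non-empty segment, so $(i,i) \in Y_{\m}$ for all $i$), and its associated one-parameter subgroup rescales $\lambda_{i_{j+1}, i_j}$ to $1$ (this coordinate being generically non-zero). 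Then, for each relevant $i \ne i_{j+1}$, the one-parameter subgroup $\exp(t \beta_{i, i_{j+1}})$ zeroes out $\lambda_{i, i_j}$ via the commutator identity $[\beta_{i, i_{j+1}}, \alpha_{i_{j+1}, i_j}] = \alpha_{i, i_j}$ from \S\ref{sec: expGLS}. Processing the layers in increasing order of $j$ ensures that later operations do not disturb earlier normalizations: each elementary operation at layer $j$ only affects coordinates in row $i$ or column $i_{j+1}$, and the leading indices $\{i_{j'}\}_{j' \ge 1}$ are mutually distinct.

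The main obstacle is verifying that $\beta_{i, i_{j+1}}$ genuinely lies in $\Lieg_{\m}$, i.e., that $(i, i_{j+1}) \in Y_{\m}$. When $\Delta_i = \Delta_{i_{j+1}}$ this is immediate. When $\Delta_i \subsetneq \Delta_{i_{j+1}}$ strictly, a direct check using \eqref{eq: sprec} may fail, and one must replace the elementary element $\beta_{i, i_{j+1}}$ by a composite of $\stb_{\m}$-operations chained through intermediate indices in $\m$ whose segments bridge $\Delta_i$ and $\Delta_{i_{j+1}}$. The maximality of the leading indices and the combinatorial structure of $\m$ guarantee the existence of such chains, but the precise bookkeeping is delicate and constitutes the most substantive part of the proof. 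Once this is handled, assembling the layer-by-layer constructions produces the required $g \in \stb_{\m}$, and hence the required $\lambda \in A$.
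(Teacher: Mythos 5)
Your overall strategy---take a generic $\lambda$ in $A$ and conjugate it into the normal form layer by layer by explicit elements of $\stb_{\m}$---is the same as the paper's, and the reduction via density of $\stb_{\m}\cdot N$ is fine. The genuine gap is exactly the step you defer: it stems from a misreading of ``maximal'' in the definition of leading indices, and the workaround you sketch is not carried out. In \S\ref{sec: Zeleinv} maximality refers to the total order of \S\ref{sec: partorder}, and for segments with a common end $\Delta\le\Delta'$ means $\Delta\supseteq\Delta'$; hence maximality of $\Delta_{i_{j+1}}$ among the candidates $\Delta_i\prec\Delta_{i_j}$ with $e(\Delta_i)=e(\lshft\Delta_{i_j})$ gives $\Delta_{i_{j+1}}\subseteq\Delta_i$ for every such $i$ --- the reverse of your claim $\Delta_i\subseteq\Delta_{i_{j+1}}$. (The convention can be tested: for $\m=[0,0]+[-1,0]+[-1,0]$ one has $\m^{\#}=3\cdot[0,0]+2\cdot[-1,-1]$, since $L(\m)=Z([0,0])\times Z([0,0]+[-1,-1])\times Z([0,0]+[-1,-1])$ is irreducible; the algorithm returns this only if the leading segment is the \emph{shortest} candidate $[0,0]$.) With the correct inclusion, the membership you single out as the main obstacle is immediate from \eqref{eq: sprec}: $b(\Delta_{i_{j+1}})\in\Delta_i$ because $\Delta_{i_{j+1}}\subseteq\Delta_i$, and $e(\Delta_i)=e(\Delta_{i_{j+1}})\in\Delta_{i_{j+1}}$, so $(i,i_{j+1})\in Y_{\m}$ and $\beta_{i,i_{j+1}}\in\Lieg_\m$. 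Thus the ``hard case'' $\Delta_i\subsetneq\Delta_{i_{j+1}}$ never arises; and, as written, your proposed chaining through intermediate indices is unproven (nothing guarantees that bridging segments occur in $\m$, and composite moves would disturb other coordinates), so the only nontrivial verification of the lemma is missing from the proposal.

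Two smaller remarks. The coordinates to be normalized at layer $j$ are only those $\lambda_{i,i_j}$ with $(i,i_j)\in X_{\m}$, i.e.\ $\Delta_i\prec\Delta_{i_j}$ and $e(\Delta_i)=e(\Delta_{i_{j+1}})$; these are precisely the candidates above, so nothing beyond the inclusion just discussed is needed. Also, your non-interference argument (``only row $i$ or column $i_{j+1}$, and the leading indices are distinct'') is too coarse: conjugating by $\exp(t\beta_{i,i_{j+1}})$ does alter entries in earlier columns $i_{j'}$, $j'<j$ (in row $i$); what protects the earlier normalizations is that the ends $e(\Delta_{i_{j'+1}})$ are pairwise distinct, so the altered entries satisfy $e(\Delta_i)=e(\Delta_{i_{j+1}})\ne e(\Delta_{i_{j'+1}})$ and are not among the constrained ones. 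The paper's proof conjugates by a single explicit $g\in\stb_{\m}$ per layer, with $g_{i,i_{j+1}}=\lambda_{i,i_j}$ for the relevant $i$ (invertible because $\lambda_{i_{j+1},i_j}\ne0$ generically), and checks this non-disturbance directly; once $(i,i_{j+1})\in Y_{\m}$ is established, your elementary-move version amounts to the same computation.
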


\begin{proof}
We argue as in \cite{MR863522}.
We prove by induction on $l=0,\dots,m-1$ that there exists $\lambda\in A$ such
$\lambda_{i,i_j}=\delta_{i,i_{j+1}}$ for all $j=1,\dots,l$ and
$i\in I$ such that $e(\Delta_i)=e(\Delta_{i_{j+1}})$.
The base of the induction ($l=0$) is trivial. For the induction step, we assume that $0<l<m$ and the statement
holds for $l-1$. In addition, by openness, we may assume that $\lambda_{i_{l+1},i_l}\ne0$.
Take $g\in G_\m$ whose coordinates are given by
\[
g_{i,j}=\begin{cases}\lambda_{i,i_l}&\text{if }j=i_{l+1}\text{ and }e(\Delta_i)=e(\Delta_{i_{l+1}}),\\\delta_{i,j}&\text{otherwise,}\end{cases}
\ \ (i,j)\in Y_{\m}.
\]
($g$ is invertible since $\lambda_{i_{l+1},i_l}\ne0$.) Let $\tilde\lambda=g^{-1}\lambda g\in A$. We show that $\tilde\lambda$ satisfies
the required conditions for $l$. Suppose first that $j<l$. By assumption, for any $\rho\in\Delta_{i_j}$ we have
$\lambda \xbasis_{\rho,i_j}=\xbasis_{\lshft\rho,i_{j+1}}+\xi$
where the coordinates of $\xi$ with respect to $\xbasis_{\lshft\rho,i}$ vanish if $e(\Delta_i)=e(\Delta_{i_{j+1}})$.
The same condition holds for $g^{-1}\xi$. It follows that
\[
\tilde\lambda \xbasis_{\rho,i_j}=
g^{-1}\lambda g\xbasis_{\rho,i_j}=g^{-1}\lambda \xbasis_{\rho,i_j}=\xbasis_{\lshft\rho,i_{j+1}}+g^{-1}\xi
\]
and hence, $\tilde\lambda_{i,i_j}=\delta_{i,i_{j+1}}$ for all $i$ such that $e(\Delta_r)=e(\Delta_{i_{j+1}})$.
Also, we may write
\[
\lambda \xbasis_{\rho,i_l}=\sum_{i:\Delta_i\prec\Delta_{i_l}\text{ and }e(\Delta_i)=e(\Delta_{i_{l+1}})}\lambda_{i,i_l}\xbasis_{\lshft\rho,i}+\xi=
g\xbasis_{\lshft\rho,i_{l+1}}+\xi
\]
where the coordinates of $\xi$ with respect to $\xbasis_{\rho,i}$ vanish if $e(\Delta_i)=e(\Delta_{i_{l+1}})$.
Hence,
\[
\tilde\lambda \xbasis_{\rho,i_l}=g^{-1}\lambda \xbasis_{\rho,i_l}=\xbasis_{\lshft\rho,i_{l+1}}+g^{-1}\xi=\xbasis_{\lshft\rho,i_{l+1}}+\xi.
\]
It follows that $\tilde\lambda_{i,i_l}=\delta_{i,i_{l+1}}$ for all $i$ such that $e(\Delta_i)=e(\Delta_{i_{l+1}})$.
This concludes the induction step.
\end{proof}

\subsubsection{}
Similarly, we have the following.
\begin{lemma} \label{lem: vaninorb2}
Let $\m\in\MS{\Seg}$.
Then, any nonempty open $G_{\m^-}$-invariant subset $B$ of $\cnt_{\m^-}$ contains an element
$\lambda$ whose coordinates satisfy $\lambda_{i,j}=0$ for all $(i,j)\in X_{\m^-}\setminus X_\m$.
\end{lemma}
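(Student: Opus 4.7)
The plan is to adapt the proof of Lemma \ref{lem: vaninorb} to the setting of $\cnt_{\m^-}$: use the openness and $G_{\m^-}$-invariance of $B$ to conjugate a generic $\lambda\in B$ into a form in which all coordinates indexed by $X_{\m^-}\setminus X_\m$ vanish. The key point is that $X_{\m^-}\setminus X_\m$ has a very constrained combinatorial structure, reflecting the fact that only the leading segments are being modified.

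First I would carry out the combinatorial bookkeeping. Since $\m\mapsto\m^-$ only shortens the segments $\Delta_{i_1},\dots,\Delta_{i_m}$, every pair in $X_{\m^-}\setminus X_\m$ must involve at least one of the indices $i_1,\dots,i_m$. Using the identities $b(\Delta_{i_k}^-)=b(\Delta_{i_k})$, $e(\Delta_{i_k}^-)=e(\lshft\Delta_{i_k})$ and $\rshft(\Delta_{i_k}^-)=\rshft\Delta_{i_k}\setminus\{\rshft e(\Delta_{i_k})\}$, a direct check of the definition of $\prec$ reveals that a pair $(i_k,j)$ with $j\notin\{i_1,\dots,i_m\}$ lies in $X_{\m^-}\setminus X_\m$ exactly when $e(\Delta_j)=e(\Delta_{i_k})$ and $\Delta_j\subsetneq\Delta_{i_k}$; symmetric explicit conditions describe the pairs $(i,i_l)$ and $(i_k,i_l)$ of the other two types. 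This produces a short, stratified list of coordinates to be killed.

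With this description in hand, I would run an induction parallel to the one in the proof of Lemma \ref{lem: vaninorb}. At stage $l$ I would assume that the offending coordinates $\lambda_{i,j}$ controlled by $i_1,\dots,i_{l-1}$ have already been made zero, use openness to pass to a $\lambda$ for which a suitable scalar (playing the role of $\lambda_{i_{l+1},i_l}$ in Lemma \ref{lem: vaninorb}) is nonzero, and construct $g\in G_{\m^-}$ supported on the relevant entries of $Y_{\m^-}$ so that $g^{-1}\lambda g\in B$ has the coordinates of stage $l$ annihilated. The commutation relation $[\beta_{i,j}^{\m^-},\alpha_{k,l}^{\m^-}]=\delta_{j,k}\alpha_{i,l}-\delta_{i,l}\alpha_{k,j}$ recalled in \S\ref{sec: expGLS} shows at the Lie-algebra level that such a conjugation affects precisely the targeted coordinate, without reintroducing any of the previously cancelled ones.

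The main obstacle is choosing the order of the induction so that cancellations carried out at one step are not disturbed by later ones. The natural choice is to process the leading indices in the order $i_1,i_2,\dots,i_m$, taking advantage of the chain $\Delta_{i_m}\prec\dots\prec\Delta_{i_1}$ to ensure a consistent directionality. Once the induction is organized so that each new conjugation lies in the subgroup of $G_{\m^-}$ fixing the coordinates already put to zero, the openness and $G_{\m^-}$-invariance of $B$ together with the combinatorial analysis of $X_{\m^-}\setminus X_\m$ deliver an element $\lambda\in B$ with the required vanishing property.
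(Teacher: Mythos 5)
Your overall strategy (induction over the leading indices, elementary conjugations in $G_{\m^-}$, openness to secure a nonzero pivot) is the same as the paper's, but two of the specific claims on which your plan rests are incorrect, and they are exactly where the work lies. First, the assertion that the commutation relation $[\beta_{i,j},\alpha_{k,l}]=\delta_{j,k}\alpha_{i,l}-\delta_{i,l}\alpha_{k,j}$ shows that each conjugation ``affects precisely the targeted coordinate'' is false: conjugating by an elementary element with a nonzero entry $g_{a,b}$ changes an entire row and an entire column of coordinates (all $\alpha_{a,\cdot}$ and all $\alpha_{\cdot,b}$ that exist), so one must check case by case that none of the disturbed coordinates were cleaned earlier. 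Second, your processing order $i_1,i_2,\dots,i_m$ is the wrong direction for this check. The coordinates to be killed at the stage of $i_l$ are $\lambda_{i_l,i}$ with $e(\Delta_i)=e(\Delta_{i_l})$, $\Delta_i\subsetneq\Delta_{i_l}$, and the only available pivot is $\lambda_{i_l,i_{l-1}}\ne0$, compensated by a transvection adding multiples of $\xbasis_{\cdot,i_{l-1}}$ to $\xbasis_{\cdot,i}$ (this uses $(i_{l-1},i)\in Y_{\m^-}$, which must be deduced from the maximality clause in the definition of leading indices -- a point you do not address). That conjugation perturbs the whole row indexed by $i_{l-1}$; in your ascending order this is precisely the row you cleaned at the immediately preceding step, so the previously arranged zeros are generically destroyed. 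The paper runs a \emph{descending} induction, treating $i_m$ first and $i_2$ last, exactly so that the disturbed row $i_{l-1}$ is always one not yet treated, while the column disturbance lands in columns indexed by segments with endpoint $e(\Delta_{i_l})$, disjoint from the columns cleaned before.

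A smaller inaccuracy: you expect ``other two types'' of pairs $(i,i_l)$ and $(i_k,i_l)$ in $X_{\m^-}\setminus X_\m$. A direct check with $\Delta\prec\Delta'$ shows these sets are empty: shortening a segment can only create new pairs in which the shortened segment precedes another segment with the same (original) end, so
\[
X_{\m^-}\setminus X_{\m}=\{(i_j,i)\in X_{\m^-}: j\ge 2,\ e(\Delta_{i_j})=e(\Delta_i)\},
\]
and it is this one-row-at-a-time structure that makes the cleaning argument feasible. Your characterization of the pairs $(i_k,j)$ with $j$ non-leading is correct, but without the emptiness of the other cases and without the correct induction direction and the verification that the conjugating element actually lies in $G_{\m^-}$, the proposal does not yet constitute a proof.
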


\begin{proof}
Let $i_1,\dots,i_m$ be leading indices of $\m$.
Note that
\[
X_{\m^-}\setminus X_{\m}=\{(i_j,i)\in X_{\m^-}:j=2,\dots,m\text{ and }e(\Delta_{i_j})=e(\Delta_i)\}.
\]
We show by descending induction on $l=m,\dots,1$ that there exists $\lambda\in B$ such that
$\lambda_{i_j,i}=0$ for all $j>l$ and $i$ such that $e(\Delta_{i_j})=e(\Delta_i)$.
Once again, the base of the induction ($l=m$) is trivial.

For the induction step we may assume that $\Delta_{i_{l-1}}$ is not a singleton (or equivalently, $(i_l,i_{l-1})\in X_{\m^-}$)
since otherwise, $(i_l,i)\notin X_{\m^-}$
for all $i$ such that $e(\Delta_{i_j})=e(\Delta_i)$. We may also assume that $\lambda\in B$ satisfies $\lambda_{i_l,i_{l-1}}\ne0$.
Consider the set
\[
I_l=\{i\in I:(i_l,i)\in X_{\m^-}\text{ and }e(\Delta_{i_l})=e(\Delta_i)\}.
\]
Note that it follows from the definition of $i_l$ that $(i_{l-1},i)\in Y_{\m^-}$ for all $i\in I_l$.
Let $g\in G_{\m^-}$ be such that $g_{i_{l-1},i}=-\lambda_{i_l,i_{l-1}}^{-1}\lambda_{i_l,i}$ for all $i\in I_l$
and $g_{i,j}=\delta_{i,j}$ if $j\ne I_l$ or $i\ne i_{l-1}$. Let $\tilde\lambda=g^{-1}\lambda g\in B$.
Then, for any $\rho\in\Delta_i$, $i\in I_l$ the coefficients of
\[
\lambda g\xbasis_{\rho,i}=\lambda(\xbasis_{\rho,i}-\lambda_{i_l,i_{l-1}}^{-1}\lambda_{i_l,i}\xbasis_{\rho,i_{l-1}})
\]
with respect to $\xbasis_{\lshft\rho,j}$ are $0$ for all $j$ such that $\Delta_j\ge\Delta_{i_l}$.
Hence, the coefficient of $\xbasis_{\lshft\rho,i_l}$ in $\tilde\lambda \xbasis_{\rho,i}$ is $0$ as well.
Thus, $\tilde\lambda_{i_l,i}=0$ for all $i\in I_l$.
Moreover, if $i\in I_j$ with $j>l$ then $\tilde\lambda \xbasis_{\rho,i}=g^{-1}\lambda \xbasis_{\rho,i}=\lambda \xbasis_{\rho,i}$
since $g$ fixes $\xbasis_{\lshft\rho,k}$ for all $k$ such that $\Delta_k\le\Delta_{i_l}$. It follows from the induction hypothesis
that $\tilde\lambda_{i_j,i}=\lambda_{i_j,i}=0$ for all $i\in I_j$. This completes the induction step.
\end{proof}

\subsubsection{}

Recall the sets $X_{\m,\m'}$ and $Y_{\m,\m'}$ defined in \S\ref{def: Xmm'}.

\begin{lemma} \label{lem: fmm'}
Suppose that $0\ne\m,\m'\in\MS{\Seg}$ with $\max\m<\max\m'$.
Write $\m=\sum_{i\in I}\Delta_i$, $\m'=\sum_{i\in I'}\Delta_{i'}$ and let $i'_1,\dots,i'_{m'}$ be leading indices of $\m'$.
Write $\m'^-=\sum_{i\in I'}\Delta'_{i'}$ where
\[
\Delta'_{i'}=\begin{cases}\Delta_{i'}^-&\text{if $i'=i'_{j'}$ for some $j'=1,\dots,m'$},\\
\Delta_{i'}&\text{otherwise.}\end{cases}
\]
Then, $X_{\m,\m'^-}\subseteq X_{\m,\m'}$ and $Y_{\m,\m'^-}\subseteq Y_{\m,\m'}$.
Let $\tilde X_{\m,\m'}=X_{\m,\m'}\setminus X_{\m,\m'^-}$ and $\tilde Y_{\m,\m'}=Y_{\m,\m'}\setminus Y_{\m,\m'^-}$
and let $f_{\m,\m'}:\tilde Y_{\m,\m'}\rightarrow\tilde X_{\m,\m'}$ be given by $(i,i'_{j'})\mapsto (i,i'_{j'-1})$.
Then, upon fixing a total order on $I$, the function $f_{\m,\m'}$ is strictly monotone increasing with respect to the lexicographic order,
and in particular injective.
Moreover, $f_{\m,\m'}$ is a bijection (i.e., onto) if and only if $(\m+\m')^-=\m+\m'^-$.
In particular, this is the case if $\#\tilde X_{\m,\m'}\le\#\tilde Y_{\m,\m'}$.
\end{lemma}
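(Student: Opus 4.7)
The plan is to establish each assertion of the lemma by direct combinatorial bookkeeping with segments, using \eqref{eq: sprec} to translate the $\prec$ and $\rshft{\cdot}\prec$ conditions into inequalities on $b(\Delta)$ and $e(\Delta)$. First, since $\m'$ and $\m'^-$ differ only at the leading indices $i'_{j'}$, and the truncation $\Delta_{i'_{j'}} \mapsto \Delta_{i'_{j'}}^-$ satisfies $b(\Delta_{i'_{j'}}^-) = b(\Delta_{i'_{j'}})$ and $e(\Delta_{i'_{j'}}^-) = \lshft{e(\Delta_{i'_{j'}})}$, the inclusions $X_{\m,\m'^-} \subseteq X_{\m,\m'}$ and $Y_{\m,\m'^-} \subseteq Y_{\m,\m'}$ follow immediately. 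The same bookkeeping pinpoints that $(i, i'_{j'}) \in \tilde X_{\m,\m'}$ exactly when $\Delta_i \prec \Delta_{i'_{j'}}$ and $e(\Delta_i) = \lshft{e(\Delta_{i'_{j'}})}$ (the endpoint condition being the only defining condition of $\prec$ that the truncation can affect), while $(i, i'_{j'}) \in \tilde Y_{\m,\m'}$ exactly when $\Delta_i \prec \rshft\Delta_{i'_{j'}}$ and $e(\Delta_i) = e(\Delta_{i'_{j'}})$, equivalently when $\Delta_{i'_{j'}} \subseteq \Delta_i$.

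For well-definedness and monotonicity of $f$: given $(i, i'_{j'}) \in \tilde Y_{\m,\m'}$, the hypothesis $\max\m < \max\m'$ forces $e(\Delta_i) < e(\Delta_{i'_1})$, so $j' \ge 2$ and $i'_{j'-1}$ is defined. The leading-chain relations $\Delta_{i'_{j'}} \prec \Delta_{i'_{j'-1}}$ and $e(\Delta_{i'_{j'}}) = \lshft{e(\Delta_{i'_{j'-1}})}$, combined with $\Delta_{i'_{j'}} \subseteq \Delta_i$, yield $b(\Delta_i) \le b(\Delta_{i'_{j'}}) < b(\Delta_{i'_{j'-1}})$ and $e(\Delta_i) + 1 = e(\Delta_{i'_{j'-1}})$, from which the three defining conditions of $\Delta_i \prec \Delta_{i'_{j'-1}}$ all hold, placing $(i, i'_{j'-1}) \in \tilde X_{\m,\m'}$. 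Monotonicity is then clear: ordering the leading indices by their index $j'$ and combining with the chosen total order on $I$ via the lex product, $f$ fixes the first coordinate and shifts the second by $j' \mapsto j'-1$, which is strictly order-preserving; in particular $f$ is injective.

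The heart of the proof is the equivalence $f\text{ bijective} \iff (\m+\m')^- = \m+\m'^-$. Here I plan to run the M\oe glin--Waldspurger algorithm of \S\ref{sec: Zeleinv} directly on $\m + \m'$ and compare with the chain $i'_1, \dots, i'_{m'}$ of $\m'$. Because $\max\m < \max\m'$, the algorithm must start with $\Delta_{i'_1}$, and at each subsequent step, once $\Delta_{i'_{j'}}$ has been selected, the candidates for extending the chain within $\m + \m'$ are exactly $\Delta_{i'_{j'+1}}$ (when $j' < m'$) together with those $\Delta_i$ for which $(i, i'_{j'}) \in \tilde X_{\m,\m'}$, all sharing the common endpoint $\lshft{e(\Delta_{i'_{j'}})}$. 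Unwinding the ``maximal'' convention of \S\ref{sec: Zeleinv} with the lex order on $\Seg$, the chain of $\m + \m'$ coincides with $i'_1, \dots, i'_{m'}$ (up to inconsequential relabelling of equal candidates) if and only if (a) there is no $(i, i'_{m'}) \in \tilde X_{\m,\m'}$ and (b) for every $j' < m'$ and every $(i, i'_{j'}) \in \tilde X_{\m,\m'}$ one has $b(\Delta_i) \le b(\Delta_{i'_{j'+1}})$, i.e.\ $\Delta_{i'_{j'+1}} \subseteq \Delta_i$. Comparing with the $\tilde Y$-characterization from paragraph one, this is exactly the statement that every $(i, i'_{j'}) \in \tilde X_{\m,\m'}$ lies in the image of $f$, giving the asserted equivalence. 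The main obstacle will be being careful about the lex-ordering convention underlying ``maximal'' (where ``maximal'' corresponds to the \emph{shortest} candidate among those with the prescribed endpoint), so that the inequality lands in the direction matching the $\tilde Y$ condition rather than its reverse; extra care is also required at the boundary $j' = m'$ where no extension is permitted.

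Finally, the ``in particular'' is immediate from the injectivity established above: $\#\tilde Y_{\m,\m'} \le \#\tilde X_{\m,\m'}$, so the hypothesis $\#\tilde X_{\m,\m'} \le \#\tilde Y_{\m,\m'}$ forces equality of cardinalities and hence surjectivity of $f$.
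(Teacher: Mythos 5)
Your overall route---identifying $\tilde X_{\m,\m'}$ and $\tilde Y_{\m,\m'}$ explicitly via \eqref{eq: sprec} and then comparing the M\oe glin--Waldspurger chain of $\m+\m'$ with that of $\m'$---is the same as the paper's, which records precisely your characterizations of $\tilde X_{\m,\m'}$ and $\tilde Y_{\m,\m'}$ and then declares the equivalence with $(\m+\m')^-=\m+\m'^-$ easy to check; your treatment of well-definedness, monotonicity and the ``in particular'' clause is fine. There is, however, one genuine gap in the key equivalence. You prove ``$f_{\m,\m'}$ onto $\iff$ the chain of $\m+\m'$ coincides (as segments) with $\Delta_{i'_1},\dots,\Delta_{i'_{m'}}$'', and the implication ``chains coincide $\implies(\m+\m')^-=\m+\m'^-$'' is immediate; but the converse ``$(\m+\m')^-=\m+\m'^-\implies$ chains coincide''---which is exactly what the direction $(\m+\m')^-=\m+\m'^-\implies f_{\m,\m'}$ onto requires---is taken for granted. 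It is not automatic: $(\m+\m')^-$ is defined only through the \emph{output} of the algorithm, and a priori two different chains could yield the same truncated multisegment. The missing step is short: writing $\Delta_{k_1},\dots,\Delta_{k_m}$ for the leading chain of $\m+\m'$, the equality $(\m+\m')^-=\m+\m'^-$ is equivalent to the multiset identity $\sum_j\Delta_{k_j}+\sum_{j'}\Delta_{i'_{j'}}^-=\sum_{j'}\Delta_{i'_{j'}}+\sum_j\Delta_{k_j}^-$; comparing supports gives $\maxdel(\m+\m')=\maxdel(\m')$, hence $m=m'$, and then comparing, at the first index $t$ of divergence, the segments on each side whose end equals $e(\Delta_{i'_t})$ (only $\Delta_{k_t}$, $\Delta_{i'_t}$ and the common $\Delta_{i'_{t-1}}^-$ can occur) forces $\Delta_{k_t}=\Delta_{i'_t}$, a contradiction. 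With this inserted, your equivalence closes.

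A smaller point: your claim that $(i,i'_{j'})\in\tilde Y_{\m,\m'}$ is ``equivalently'' characterized by $\Delta_{i'_{j'}}\subseteq\Delta_i$ is false as stated---if $\Delta_{i'_{j'}}\subseteq\Delta_i$ but $e(\Delta_{i'_{j'}})<e(\Delta_i)$, then $e(\Delta_i)\notin\Delta_{i'_{j'}}$ and $(i,i'_{j'})$ does not even lie in $Y_{\m,\m'}$. The correct characterization (the one the paper records) is $\Delta_{i'_{j'}}\subseteq\Delta_i$ \emph{together with} $e(\Delta_i)=e(\Delta_{i'_{j'}})$. In the two places where you use the inclusion (well-definedness of $f_{\m,\m'}$, and the comparison with condition (b)) the end-equality is automatic from the ambient membership in $\tilde Y_{\m,\m'}$, resp.\ from $e(\Delta_i)=\lshft{e(\Delta_{i'_{j'}})}=e(\Delta_{i'_{j'+1}})$ for $(i,i'_{j'})\in\tilde X_{\m,\m'}$ with $j'<m'$, so no downstream damage occurs, but the characterization itself should be corrected.
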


\begin{proof}
The fact that $X_{\m,\m'^-}\subseteq X_{\m,\m'}$ and $Y_{\m,\m'^-}\subseteq Y_{\m,\m'}$ is
clear. The function $f_{\m,\m'}$ is well-defined since $\max\m<\max\m'$.
Note that
\[
\tilde X_{\m,\m'}=\{(i,i'_{j'})\in X_{\m,\m'}:j'=1,\dots,m',\ e(\Delta_i)=e(\lshft\Delta_{i'_{j'}})\}
\]
and
\[
\tilde Y_{\m,\m'}=\{(i,i'_{j'})\in Y_{\m,\m'}:j'=2,\dots,m',\ e(\Delta_i)=e(\Delta_{i'_{j'}})\}.
\]
The monotonicity of $f_{\m,\m'}$ is obvious.
The equivalence of $(\m+\m')^-=\m+\m'^-$ and the surjectivity of $f_{\m,\m'}$ is easy to check
(cf., proof of \cite{MR3573961}*{Proposition 6.19}).
\end{proof}

\subsubsection{}
We are now ready to show the second part of Proposition \ref{prop: mm-},

let $i'_1,\dots,i'_{m'}$ be leading indices of $\m'$.

As in Lemma \ref{lem: fmm'} we view $X_{\m,\m'^-}$ and $Y_{\m,\m'^-}$ as subsets of $X_{\m,\m'}$ and $Y_{\m,\m'}$
respectively and let $\tilde X_{\m,\m'}=X_{\m,\m'}\setminus X_{\m,\m'^-}$ and $\tilde Y_{\m,\m'}=Y_{\m,\m'}\setminus Y_{\m,\m'^-}$.

Assume first that $\LC(\m,\m')$ holds.
Thus, $v_{i,j}^{\m,\m'}(\lambda,\lambda')$, $(i,j)\in X_{\m,\m'}$ are linearly independent in $\C^{Y_{\m,\m'}}$
for some $(\lambda,\lambda')\in \cnt_\m\times \cnt_{\m'}$ (see \eqref{eq: vectors}).
Let
\begin{gather*}
\tilde X_{\m'}=\{(i',i'_{j'})\in X_{\m'}:j'=1,\dots,m'-1, i'\ne i'_{j'+1}\text{ and }e(\Delta_{i'})=e(\Delta_{i'_{j'+1}})\},\\
\hat X_{\m'}=\{(i'_{j'},i')\in X_{\m'}:e(\Delta_{i'_{j'}})=b(\lshft\Delta_{i'})\}.
\end{gather*}
Note that $X_{\m'}\setminus X_{\m'^-}=\tilde X_{\m'}\cup\hat X_{\m'}$.
By Lemma \ref{lem: vaninorb} (applied to $\m'$) we can assume without loss of generality that $\lambda'_{i,j}=0$
for all $(i,j)\in\tilde X_{\m'}$.

Let $\pr:\C^{Y_{\m,\m'}}\rightarrow\C^{Y_{\m,\m'^-}}$ be the standard projection.
It is easy to check that the vanishing of $\lambda'_{i,j}$ for $(i,j)\in\tilde X_{\m'}$
(regardless of the values of $\lambda'_{i,j}$, $(i,j)\in\hat X_{\m'}$) implies that
$\pr(v_{i,j}^{\m,\m'}(\lambda,\lambda'))=0$ for all $(i,j)\in \tilde X_{\m,\m'}$. 
In particular, $\#\tilde X_{\m,\m'}\le\#\tilde Y_{\m,\m'}$.
By Lemma \ref{lem: fmm'} we infer that $(\m+\m')^-=\m+\m'^-$ and $\#\tilde X_{\m,\m'}=\#\tilde Y_{\m,\m'}$.
It also follows that $\pr(v_{i,j}^{\m,\m'}(\lambda,\lambda'))$, $(i,j)\in X_{\m,\m'^-}$ are linearly independent in
$\C^{Y_{\m,\m'^-}}$.
However, once again because of the condition on $\lambda'$, we have
$\pr(v_{i,j}^{\m,\m'}(\lambda,\lambda'))=v_{i,j}^{\m,\m'^-}(\lambda,\tilde\lambda')$ for all $(i,j)\in X_{\m,\m'^-}$ where
the coefficients of $\tilde\lambda'\in \cnt_{\m'^-}$ are given by
$\tilde\lambda'_{i,j}=\lambda'_{i,j}$ if $(i,j)\in X_{\m'^-}\cap X_{\m'}$ and $\tilde\lambda'_{i,j}=0$ if $(i,j)\in X_{\m'^-}\setminus X_{\m'}$.
Hence, $\LC(\m,\m'^-)$ holds.

Conversely, assume that $\LC(\m,\m'^-)$ and $(\m+\m')^-=\m+\m'^-$.
Let $(\lambda,\tilde\lambda')\in \cnt_\m\times \cnt_{\m'}$ be such that $v_{i,j}^{\m,\m'^-}(\lambda,\tilde\lambda')$, $(i,j)\in X_{\m,\m'^-}$
are linearly independent in $\C^{Y_{\m,\m'^-}}$, and
let $f=f_{\m,\m'}:\tilde Y_{\m,\m'}\rightarrow\tilde X_{\m,\m'}$ be the bijection defined in Lemma \ref{lem: fmm'}.
By Lemma \ref{lem: vaninorb2} we may assume without loss of generality that $\tilde\lambda'_{i,j}=0$ if
$(i,j)\in X_{\m'^-}\setminus X_{\m'}$.
We may also assume that $\tilde\lambda'_{i'_{j'+1},i'_{j'}}\ne0$ for all $j'=1,\dots,m'-1$ such that $\Delta_{i'_{j'}}$ is not a singleton.
Define $\lambda'\in \cnt_{\m'}$ by
\[
\lambda'_{i,j}=\begin{cases}\tilde\lambda'_{i,j}&(i,j)\in X_{\m'}\cap X_{\m'^-},\\0&(i,j)\in\tilde X_{\m'},\\
1&(i,j)\in\hat X_{\m'}.\end{cases}
\]
As before,
\[
\pr(v_{i,j}^{\m,\m'}(\lambda,\lambda'))=\begin{cases}v_{i,j}^{\m,\m'^-}(\lambda,\tilde\lambda')&(i,j)\in X_{\m,\m'^-},\\
0&(i,j)\in\tilde X_{\m,\m'}.\end{cases}
\]
To conclude $\LC(\m,\m')$ it suffices to show that $v_{i,j}^{\m,\m'}(\lambda,\lambda')$, $(i,j)\in\tilde X_{\m,\m'}$ are linearly independent.
Indeed, for any $(i,j)\in\tilde Y_{\m,\m'}$ the
$(i,j)$-th coefficient of $v_{f(i,j)}^{\m,\m'}(\lambda,\lambda')$ is $\lambda'_{j,j'}\ne0$ where $f(i,j)=(i,j')$
while the only other non-zero coefficients of $v_{i,j}^{\m,\m'}(\lambda,\lambda')$ $(i,j)\in\tilde X_{\m,\m'}$ can occur at coordinates
$(r,j)$ with $r<i$. Thus, upon ordering $\tilde X_{\m,\m'}$ and $\tilde Y_{\m,\m'}$ lexicographically
(with respect to a prescribed total order on $I$),
the square matrix formed by the $\tilde Y_{\m,\m'}$-coordinates of $v_{i,j}^{\m,\m'}(\lambda,\lambda')$, $(i,j)\in\tilde X_{\m,\m'}$
is upper triangular with non-zero diagonal entries.

\subsection{Proof of first part of Theorem \ref{thm: mainevid}}
Suppose that $\m=\Delta_1+\dots+\Delta_k$ with $\Delta_{i+1}\prec\Delta_i$ for all $i=1,\dots,k-1$.
We show by induction on $k$ that the conditions $\LI(\m,\m')$ and $\LC(\m,\m')$ are equivalent for any multisegment
$\m'=\sum_{i\in I'}\Delta_{i'}$. The case $k=0$ is trivial.
Assume that $k>0$ and the statement holds for $k-1$.
By Proposition \ref{prop: mm-} we may assume that $\max\m'\le\max\m$.
In this case, by \cite{MR3573961}*{Proposition 6.1} $\LI(\m,\m')$ is equivalent to $\LI(\m-\Delta_1,\m'_{<\Delta_1})$.
To complete the induction step we show that the conditions $\LC(\m,\m')$ and $\LC(\m-\Delta_1,\m'_{<\Delta_1})$ are also equivalent.

Note that $X_{\m,\m'}=X_{\m-\Delta_1,\m'_{<\Delta_1}}\cup X_{\m-\Delta_1,\m'_{\ge\Delta_1}}$,
$Y_{\m,\m'}=Y_{\m-\Delta_1,\m'_{<\Delta_1}}\cup Y_{\Delta_1,\m'_{\ge\Delta_1}}\cup Y_{\m-\Delta_1,\m'_{\ge\Delta_1}}$
and $X_{\m'}=X_{\m'_{<\Delta_1}}\cup X_{\m'_{<\Delta_1},\m'_{\ge\Delta_1}}$.

We compare the vectors $v_{i,j}^{\m,\m'}(\lambda,\lambda')\in\C^{Y_{\m,\m'}}$, $(i,j)\in X_{\m,\m'}$
for $(\lambda,\lambda')\in \cnt_\m\times \cnt_{\m'}$ and
$v_{i,j}^{\m-\Delta_1,\m'_{<\Delta_1}}(\tilde\lambda,\tilde\lambda')\in\C^{Y_{\m-\Delta_1,\m'_{<\Delta_1}}}$, $(i,j)\in X_{\m-\Delta_1,\m'_{<\Delta_1}}$
for $(\tilde\lambda,\tilde\lambda')\in \cnt_{\m-\Delta_1}\times \cnt_{\m'_{<\Delta_1}}$.

Suppose that $\lambda_{i,j}=\tilde\lambda_{i,j}$ for all $(i,j)\in X_{\m-\Delta_1}$
and $\lambda'_{i,j}=\tilde\lambda'_{i,j}$ for all $(i,j)\in X_{\m'_{<\Delta_1}}$.
Then, it is easy to check 
that for all $(i,j)\in X_{\m-\Delta_1,\m'_{<\Delta_1}}$, the $Y_{\m-\Delta_1,\m'_{<\Delta_1}}$-coordinates of
$v_{i,j}^{\m,\m'}(\lambda,\lambda')$ and $v_{i,j}^{\m-\Delta_1,\m'_{<\Delta_1}}(\tilde\lambda,\tilde\lambda')$ coincide,
while the coordinates of $v_{i,j}^{\m,\m'}(\lambda,\lambda')$ outside $Y_{\m-\Delta_1,\m'_{<\Delta_1}}$ all vanish.
It follows that $\LC(\m,\m')\implies\LC(\m-\Delta_1,\m'_{<\Delta_1})$.
For the converse implication, observe that if $(i,j)\in X_{\m-\Delta_1,\m'_{\ge\Delta_1}}$, then
$(i-1,j)\in Y_{\m,\m'_{\ge\Delta_1}}$, $(i,i-1)\in X_{\m}$
and the $(i-1,j)$-coordinate of vectors $v_{i,j}^{\m,\m'}(\lambda,\lambda')$ is $\lambda_{i,i-1}$.
Moreover, if the $(i',j')$-coordinate of $v_{i,j}^{\m,\m'}(\lambda,\lambda')$ is nonzero for some $(i',j')\in Y_{\m,\m'_{\ge\Delta_1}}$,
then $j'=j$ and $i'<i$.
By imposing the open condition $\lambda_{i,i-1}\ne0$ for all $(i,i-1)\in X_{\m}$ we see that $\LC(\m-\Delta_1,\m'_{<\Delta_1})\implies\LC(\m,\m')$.
This completes the induction step.

\begin{remark}
In \cite{MR3573961}*{Proposition 6.15} we also proved that if at least one of $\m$ and $\m'$ is a ladder then
$\LC(\m,\m')$ is equivalent to the condition that there exists an injective map $f:X_{\m,\m'}\rightarrow Y_{\m,\m'}$
such that for all $(i,i')\in X_{\m,\m'}$ either $f(i,i')=(i,j')$ with $\Delta_{j'}\prec\Delta_{i'}$ or $f(i,i')=(i,j)$ with $\Delta_i\prec\Delta_j$.
In general, the latter condition is strictly weaker than $\LC(\m,\m')$.
\end{remark}

\subsection{Proof of Proposition \ref{prop: 3ms} and second part of Theorem \ref{thm: mainevid}}

Suppose that $Z(\m+\m'+\n)\hookrightarrow Z(\m+\m')\times Z(\n)$ and $Z(\m+\m')\hookrightarrow Z(\m)\times Z(\m')$.
Then, $Z(\m+\m'+\n)\hookrightarrow Z(\m)\times Z(\m')\times Z(\n)$. Thus, there exists an irreducible subquotient
$\pi$ of $Z(\m')\times Z(\n)$ such that $Z(\m+\m'+\n)\hookrightarrow Z(\m)\times\pi$. Since
$Z(\m+\m'+\n)\leunq Z(\m)\times Z(\m')\times Z(\n)$ and $Z(\m+\m'+\n)\leunq Z(\m)\times Z(\m'+\n)$,
we infer that necessarily $\pi=Z(\m'+\n)$. Hence, $\LI(\m,\m'+\n)$.

Now suppose that $\LC(\m+\m',\n)$ holds.
As before, we use the criterion \ref{part: linind} of Proposition \ref{prop: LCint2}.
Let $\lambda\in \cnt_{\m+\m'}$ and $\lambda'\in \cnt_{\n}$
be such that $v_{i,j}^{\m+\m',\n}(\lambda,\lambda')$, $(i,j)\in X_{\m+\m',\n}$ are linearly independent in $\C^{Y_{\m+\m',\n}}$.
Denote by $\lambda_{\m}$ the element of $\cnt_{\m}$ whose coordinates coincide with the $X_{\m}$-coordinates of $\lambda$.
Similarly for $\lambda_{\m'}$. \label{sec: lambdam}
If $\LC(\m,\m')$ holds, then by Proposition \ref{prop: LCint2}, we may assume that $\lambda_{i,j}=0$ for all $(i,j)\in X_{\m,\m'}$.
(We may still assume that $(\lambda_{\m},\lambda_{\m'})\in \cnt_{\m}\times \cnt_{\m'}$ is generic.)
In this case, the $Y_{\m',\n}$-coordinates of $v_{i,j}^{\m+\m',\n}(\lambda,\lambda')$, $(i,j)\in X_{\m,\n}$ vanish.
On the other hand, the $Y_{\m,\n}$-coordinates of $v_{i,j}^{\m+\m',\n}(\lambda,\lambda')$,
$(i,j)\in X_{\m,\n}$ coincide with those of $v_{i,j}^{\m,\n}(\lambda_{\m},\lambda')$.
The condition $\LC(\m,\n)$ follows.

Consider $v_{i,j}^{\m,\m'+\n}(\lambda_{\m},\mu)$ where $\mu_{\m',\n}=\mu_{\n,\m'}=0$ and $\mu_{\n}=\lambda'$.
Then, for $(i,j)\in X_{\m,\n}$ the $Y_{\m,\n}$-coordinates of $v_{i,j}^{\m,\m'+\n}(\lambda_{\m},\mu)$ are the same as those of
$v_{i,j}^{\m+\m',\n}(\lambda,\lambda')$ while the $Y_{\m,\m'}$-coordinates vanish.
Similarly, for $(i,j)\in X_{\m,\m'}$, the $Y_{\m,\m'}$-coordinates of $v_{i,j}^{\m,\m'+\n}(\lambda_{\m},\mu)$ are the same as those of
$v_{i,j}^{\m,\m'}(\lambda_{\m},\mu_{\m'})$ while the $Y_{\m,\n}$-coordinates vanish.
The property $\LC(\m,\m')$ therefore implies $\LC(\m,\m'+\n)$.

For the third part, write $X_{\m'+\n}=X_{\m'}\cup X_{\n}\cup X_{\m',\n}\cup X_{\n,\m'}$
and take $\lambda\in \cnt_{\m'+\n}$ whose coordinates with respect to $X_{\m',\n}\cup X_{\n,\m'}$ vanish.

The fourth part is similar.

The fifth part is an immediate consequence.

To prove the last part of Proposition \ref{prop: 3ms}, recall that $\LI(\m+\m',\n)$
if and only if the image of the intertwining operator $R_{Z(\n),Z(\m+\m')}$ is $Z(\m+\m'+\n)$.
On the other hand, by assumption $Z(\m+\m')=Z(\m)\times Z(\m')$ and therefore
\[
R_{Z(\n),Z(\m+\m')}=(\id\times R_{Z(\n),Z(\m')})\circ(R_{Z(\n),Z(\m)}\times\id).
\]
The required equivalence follows from the fact that $Z(\m+\m'+\n)\leunq Z(\n)\times Z(\m+\m')$,
$\JH(Z(\n+\m)\times Z(\m'))$ and $\JH(Z(\n+\m')\times Z(\m))$ and that the images of
$R_{Z(\n),Z(\m+\m')}$, $R_{Z(\n),Z(\m)}$ and $R_{Z(\n),Z(\m')}$ are irreducible.

This finishes the proof of Proposition \ref{prop: 3ms}.

The second part of Theorem \ref{thm: mainevid} now follows by induction on $k$.




\subsection{Proof of Proposition \ref{prop: sumofseg}}

The first part is a special case of \eqref{eq: factsirrs}.
We prove the second part.
Observe that $X_{\n}=X_\m\cup X_{\m'}\cup X_{\m,\m'}\cup X_{\m',\m}$ and $Y_{\n}=Y_\m\cup Y_{\m'}\cup Y_{\m,\m'}\cup Y_{\m',\m}$.
Assume that $\lambda\in \cntpress$.
Let $\lambda_{\m}$ and $\lambda_{\m'}$ be as in \S\ref{sec: lambdam}.
Then, for any $(i,j)\in X_{\n}$
\begin{enumerate}
\item The $Y_{\n,\m}$-coordinates of $v_{i,j}^{\n}(\lambda)$
are $v_{i,j}^{\n,\m}(\lambda,\lambda_{\m})$ if  $(i,j)\in X_{\n,\m}$ and $0$ otherwise.
\item If $(i,j)\in X_{\n,\m'}$ then the $Y_{\m'}$-coordinates of $v_{i,j}^{\n}(\lambda)$
are $v_{i,j}^{\m'}(\lambda_{\m'})$ if $(i,j)\in X_{\m'}$ and $0$ otherwise.
\item If $(i,j)\in X_{\m,\m'}$ then the $Y_{\m,\m'}$-coordinates of $v_{i,j}^{\n}(\lambda)$ are $v_{i,j}^{\m,\m'}(\lambda_{\m},\lambda_{\m'})$.
\end{enumerate}

It follows from the assumptions $\GLS(\m')$ and $\LC(\m,\m')$ that in order to show $\GLS(\n)$, it is enough to know that
$v_{i,j}^{\n,\m}(\lambda,\lambda_{\m})$, $(i,j)\in X_{\n,\m}$ are linearly independent for generic $\lambda\in \cnt_{\m',\m}$.
By the condition $\LC(\n,\m)$, $v_{i,j}^{\n,\m}(\lambda,\lambda')$, $(i,j)\in X_{\n,\m}$ are linearly independent for generic
$(\lambda,\lambda')\in \cnt_{\n}\times \cnt_{\m}$.
Since this condition in $\lambda$ is invariant under the action of $G_{\n}$,
it follows from Proposition \ref{prop: LCint2} and the condition $\LC(\m,\m')$
that $v_{i,j}^{\n,\m}(\lambda,\lambda')$, $(i,j)\in X_{\n,\m}$ are linearly independent for generic
$(\lambda,\lambda')\in  \cntpress\times \cnt_{\m}$.
Since this condition in $\lambda'$ is invariant under the action of $G_\m$, the condition $\GLS(\m)$ (which says that $\cnt_{\m}$ admits an open $G_{\m}$-orbit)
guarantees that we can take $\lambda'=\lambda_{\m}$, as required.

\subsection{Proof of Proposition \ref{prop: rhoext}}
For this subsection fix $\rho\in\Cusp$.

\subsubsection{}
For any $\pi\in\Irr$ there exists an integer $m\ge0$ and $\pi'\in\Irr$ such that $\pi\hookrightarrow \rho^{\times m}\times\pi'$
and $\rho\sprt\pi'$.
Moreover, $m$ and $\pi'$ are unique.
(In fact, $m$ is the largest non-negative integer for which $\pi\hookrightarrow \rho^{\times m}\times\pi'$
for some $\pi'\in\Reps$.)
We write $\mult^\rho(\pi)=m$ and $\lderiv\rho\pi=\pi'$ (the left $\rho$-derivative of $\pi$).
If $\pi=Z(\m)$ and $\pi'=Z(\m')$ we also write $\mult^\rho(\m)=m$ and $\lderiv\rho\m=\m'$.

If $\pi$ is $\square$-irreducible then so is $\lderiv\rho\pi$ \cite{MR3866895}*{Corollary 2.13}.
Similarly, $\GLS(\m)\implies\GLS(\lderiv\rho\m)$ \cite{MR3866895}*{Lemma 4.17}.

We now recall some results from \cites{MR2306606, MR2527415} (see also \cites{MR3573961, MR3866895}).

Let $\m=\sum_{i\in I}\Delta_i\in\MS{\Seg}$. We say that two subsets $A$ and $B$ of $I$
are equivalent and write $A\sim B$ if $\sum_{i\in A}\Delta_i=\sum_{i\in B}\Delta_i$.
Let
\[
X^\rho_{\m}=\{i\in I:\rho\prec\Delta_i\}=\{i\in I:b(\Delta_i)=\rshft\rho\}
\]
and
\[
Y^\rho_{\m}=\{i\in I:\lshft\rho\prec\Delta_i\}=\{i\in I:b(\Delta_i)=\rho\}.
\]
A one-to-one relation $R\subset Y^\rho_{\m}\times X^\rho_{\m}$ between $Y^\rho_{\m}$ and $X^\rho_{\m}$ is called a \emph{$\rho$-matching}
(with respect to $\m$)
if $\Delta_i\prec\Delta_j$ for all $(i,j)\in R$. We think of $R$ as a partially defined bijection and we write $R(i)=j$ and $R^{-1}(j)=i$ if $(i,j)\in R$.
We write
\[
A(R)=\{i\in Y^\rho_{\m}:R(i)\text{ is not defined}\},\ B(R)=\{j\in X^\rho_{\m}:R^{-1}(j)\text{ is not defined}\}.
\]

Given two $\rho$-matchings $R_1$, $R_2$ we say that $R_2$ dominates $R_1$ if one of the following conditions holds.
\begin{enumerate}
\item $R_2\supset R_1$.
\item There exist $i,j\in Y^\rho_{\m}$ and $k\in X^\rho_{\m}$ such that $R_2\setminus R_1=\{(j,k)\}$,
$R_1\setminus R_2=\{(i,k)\}$ and $\Delta_i<\Delta_j$.
\item There exist $i\in Y^\rho_{\m}$ and $j,k\in X^\rho_{\m}$ such that $R_2\setminus R_1=\{(i,k)\}$,
$R_1\setminus R_2=\{(i,j)\}$ and $\Delta_k<\Delta_j$.
\end{enumerate}
The transitive closure of domination is a partial order on the set of $\rho$-matchings, which we denote by $\le$.
Clearly, if $R\le R'$ then $\sum_{i\in A(R')}\Delta_i\le\sum_{i\in A(R)}\Delta_i$
in the sense that for every $\Delta\in\Seg$, $\#\{i\in A(R'):\Delta_i\ge\Delta\}\le\#\{i\in A(R):\Delta_i\ge\Delta\}$.
Clearly, a $\rho$-matching $R$ is maximal (with respect to $\le$) if and only if whenever $\Delta_i\prec\Delta_j$ with
$(i,j)\in Y^\rho_{\m}\times X^\rho_{\m}$ (exactly) one of the following possibilities occurs.
\begin{enumerate}
\item $R(i)$ and $R^{-1}(j)$ are defined.
\item $R(i)$ is defined, $R^{-1}(j)$ is not defined and $\Delta_j\ge\Delta_{R(i)}$.
\item $R(i)$ is not defined, $R^{-1}(j)$ is defined and $\Delta_i\le\Delta_{R^{-1}(j)}$.
\end{enumerate}

We say that two $\rho$-matchings $R_1$ and $R_2$ are equivalent if
\[
\sum_{(i,j)\in R_1}(\Delta_i,\Delta_j)=\sum_{(i,j)\in R_2}(\Delta_i,\Delta_j)
\]
as elements of $\MS{\Seg\times\Seg}$.

Maximal $\rho$-matchings are not unique up to equivalence.
(For instance, we could take $\m=\Delta_1+\Delta_2+\Delta_3+\Delta_4$ such that
$X_\m^\rho=\{3,4\}$, $Y_\m^\rho=\{1,2\}$, $\Delta_1\ne\Delta_2$, $\Delta_3\ne\Delta_4$
and $\Delta_i\prec\Delta_j$ for $i=1,2$, $j=3,4$. Then both
$\{(1,3),(2,4)\}$ and $\{(1,4),(2,3)\}$ are maximal $\rho$-matchings.)
However, up to equivalence, $A(R)$ and $B(R)$ are independent of $R$ if $R$ is maximal. 
We denote them by $A^\rho_{\m}$ and $B^\rho_{\m}$. (Technically, they are only defined up to equivalence, but this will not matter in what follows.)
Moreover,
\begin{enumerate}
\item $\lderiv\rho\m=\sum_{i\in I}\Delta'_i$ where
\[
\Delta'_i=\begin{cases}^-\Delta_i&i\in A^\rho_{\m},\\
\Delta_i&\text{otherwise.}\end{cases}
\]
Thus, $\mult^\rho(\m)=\#A^\rho_{\m}$ and in particular,
$\rho\sprt Z(\m)$ if and only if $A^\rho_{\m}=\emptyset$.
\item If $B^\rho_{\m}=\emptyset$, then $\soc(\rho\times Z(\m))=Z(\m+\{\rho\})$.
Otherwise, fix an index $i_0\in B^\rho_{\m}$ for which $\Delta_{i_0}$ is maximal. Then
$\soc(\rho\times Z(\m))=Z(\m')$ where
$\m'=\sum_{i\in I}\Delta'_i$ with $\Delta'_i=\begin{cases}^+\Delta_i&\text{if }i=i_0\\\Delta_i&\text{otherwise.}\end{cases}$
\end{enumerate}

A maximal $\rho$-matching $R$ between $Y^\rho_{\m}$ and $X^\rho_{\m}$ is called \emph{best $\rho$-matching} if
$\nexists (i,j),(i',j')\in R$ such that $\Delta_i<\Delta_{i'}\prec\Delta_j<\Delta_{j'}$.
A best $\rho$-matching exists and is unique up to equivalence.
Moreover, a maximal $\rho$-matching $R$ is best if and only if the product of the representations $Z(\Delta_i)$,
$i\in A^\rho_{\m}\cup B^\rho_{\m}$ and $Z(\Delta_i+\Delta_j)$, $(i,j)\in R$ is irreducible, in which case it is equal to
$Z(\sum_{i\in I:b(\Delta_i)\in\{\rho,\rshft\rho\}}\Delta_i)$. \label{sec: onlyrhosrho}

In particular, if $b(\Delta_i)\in\{\rho,\rshft\rho\}$ for all $i$,
then it follows from Theorem \ref{thm: mainevid} that
\begin{multline} \label{eq: brhos}
\text{$\m$ is good. Moreover, for any $\m'$,}\\
\LI(\m,\m')\iff\LI(\Delta_i,\m')\ \forall i\in A^\rho_{\m}\cup B^\rho_{\m}
\text{ and }\LI(\Delta_i+\Delta_j,\m')\ \forall (i,j)\in R.
\end{multline}

It is useful to introduce another notion which is weaker than maximality.
\begin{definition}
We say that a $\rho$-matching $R$ for $\m$ is saturated if $X_{\m}\cap(A(R)\times B(R))=\emptyset$
and for every $(i,j)\in R$ and $i'\in A(R)$ such that $(i',j)\in X_{\m}$ we have $\Delta_{i'}\le\Delta_i$.
\end{definition}

\begin{lemma} \label{lem: satur}
Let $R$ be a saturated $\rho$-matching $R$ for $\m$. Then, for any $R'\ge R$, $R'$ is saturated and
$A(R)=A(R')$. Hence, $A(R)\sim A^\rho_{\m}$.
\end{lemma}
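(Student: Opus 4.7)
The plan is to analyze a single-step domination and show that when $R_1$ is saturated, the result $R_2$ either equals $R_1$ (in cases 1 and 2 of the domination definition) or is again saturated with $A(R_2) = A(R_1)$ (in case 3). Since $\le$ is by definition the transitive closure of one-step domination, iterating along a chain $R = R_0 \to R_1 \to \dots \to R_n = R'$ will give $R'$ saturated and $A(R) = A(R')$. Taking $R'$ to be any maximal $\rho$-matching above $R$ (which exists since the set of $\rho$-matchings is finite) then yields $A(R) = A(R') \sim A^\rho_{\m}$ from the definition of $A^\rho_{\m}$ as $A$ of a maximal matching.

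Cases 1 and 2 will be ruled out as follows. In case 1 ($R_2 \supsetneq R_1$), any new edge $(i,j) \in R_2 \setminus R_1$ must have $i \in A(R_1)$, $j \in B(R_1)$, and $(i,j) \in X_\m$, directly contradicting the first saturation clause $X_\m \cap (A(R_1) \times B(R_1)) = \emptyset$. In case 2, an edge $(i,k) \in R_1$ is replaced by $(j,k)$ with $j \in A(R_1)$, $(j,k) \in X_\m$, and $\Delta_i < \Delta_j$; the second saturation clause of $R_1$ applied to $(i,k) \in R_1$ and $j$ then forces $\Delta_j \le \Delta_i$, a contradiction.

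The main step is case 3, where $(i,j) \in R_1$ is replaced by $(i,k)$ with $k \in B(R_1)$, $\Delta_i \prec \Delta_k$, and $\Delta_k < \Delta_j$. The equality $A(R_2) = A(R_1)$ is immediate from the definition. For the first saturation clause of $R_2$, the only potentially problematic pair is $(i',j)$ with $i' \in A(R_1) = A(R_2)$ and $j \in B(R_2)$, so suppose $\Delta_{i'} \prec \Delta_j$. The second saturation clause of $R_1$ applied to $(i,j)$ gives $\Delta_{i'} \le \Delta_i$. I will then exploit the elementary observation that for $i' \in Y^\rho_{\m}$ and $k \in X^\rho_{\m}$ one has $b(\Delta_{i'}) = \rho$ and $b(\Delta_k) = \rshft\rho$, so $\Delta_{i'} \prec \Delta_k$ reduces to $e(\Delta_{i'}) < e(\Delta_k)$, while $\Delta_{i'} \le \Delta_i$ reduces to $e(\Delta_{i'}) \le e(\Delta_i)$. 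Combined with $\Delta_i \prec \Delta_k$ (i.e., $e(\Delta_i) < e(\Delta_k)$) this gives $e(\Delta_{i'}) \le e(\Delta_i) < e(\Delta_k)$, whence $\Delta_{i'} \prec \Delta_k$, contradicting the first saturation clause of $R_1$ for $k \in B(R_1)$. For the second saturation clause of $R_2$, only the new edge $(i,k)$ needs checking; but for $i' \in A(R_1)$ the relation $\Delta_{i'} \not\prec \Delta_k$ is immediate from the saturation of $R_1$ since $k \in B(R_1)$, so the condition is vacuous.

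The main obstacle is the case-3 verification of the first saturation clause of $R_2$; it hinges on the explicit characterization of $\prec$ between segments beginning at $\rho$ and at $\rshft\rho$, together with the compatibility of this characterization with the total order $\le$ on segments sharing a starting cuspidal. Once this is in place the remaining arguments are either formal or immediate contradictions.
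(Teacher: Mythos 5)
Your proposal is correct and follows essentially the same route as the paper: reduce to a single domination step, use the two saturation clauses to rule out cases (1) and (2) of domination, and in case (3) verify both clauses for the new matching exactly as the paper does (your endpoint comparison $e(\Delta_{i'})\le e(\Delta_i)<e(\Delta_k)$ is just an explicit spelling-out of the paper's implicit step ``$\Delta_{i'}\le\Delta_i$ and $(i,j')\in X_\m$ imply $(i',j')\in X_\m$''). The concluding passage to $A^\rho_\m$ via a maximal matching above $R$ also matches the paper.
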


\begin{proof}
It is enough to show it when $R'$ dominates $R$.
By the saturation of $R$ it is clear that $R'$ is obtained from $R$ by replacing a certain pair $(i,j)\in R$
by $(i,j')\in X_\m$ where $j'\in B(R)$ and $\Delta_{j'}<\Delta_j$. Thus, $A(R)=A(R')$ and $B(R')=B(R)\cup\{j\}\setminus\{j'\}$.
Let us show that $R'$ is saturated. First, if $(i',j)\in X_{\m}$ with $i'\in A(R)$ then $\Delta_{i'}\le\Delta_i$
by saturation and hence $(i',j')\in X_{\m}$ (since $(i,j')\in X_\m$). Thus, $X_{\m}\cap(A(R')\times B(R'))=\emptyset$.
On the other hand, we cannot have $i'\in A(R)$ with $(i',j')\in X_{\m}$ since $R$ is saturated.
Therefore, the second condition for saturation for $R'$ follows from the saturation of $R$.
\end{proof}

\subsubsection{}

For any $\m'\in\MS{\Seg}$ let
\begin{equation} \label{def: xmm'rho}
\tilde X^\rho_{\m,\m'}=X_{\m,\m'}\cap (A^\rho_{\m}\times X^\rho_{\m'}),\ \
\tilde Y^\rho_{\m,\m'}=Y_{\m,\m'}\cap (A^\rho_{\m}\times Y^\rho_{\m'}).
\end{equation}
As usual, we write $\tilde X^\rho_{\m}=\tilde X^\rho_{\m,\m}$.

\begin{lemma} \label{lem: char=}
Suppose that $\rho\sprt Z(\m')$. Then, $\#\tilde X^\rho_{\m,\m'}\ge\#\tilde Y^\rho_{\m,\m'}$.
Moreover, equality holds if and only if
\begin{equation} \label{eq: socrho}
\soc(\rho^{\times\mult^\rho(\m)}\times Z(\lderiv\rho\m+\m'))=Z(\m+\m').
\end{equation}
\end{lemma}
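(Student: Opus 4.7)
The plan is to split the proof into two parts: a direct combinatorial verification of the inequality $\#\tilde X^\rho_{\m,\m'}\ge\#\tilde Y^\rho_{\m,\m'}$ via Hall's criterion, and a more delicate identification of the equality case with the socle identity via left $\rho$-derivatives.

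Unwinding the definitions, for each $i\in A^\rho_\m$ the slice of $\tilde X^\rho_{\m,\m'}$ over $i$ is $\{i'\in X^\rho_{\m'}:e(\Delta_{i'})>e(\Delta_i)\}$, while that of $\tilde Y^\rho_{\m,\m'}$ is $\{i'\in Y^\rho_{\m'}:e(\Delta_{i'})\ge e(\Delta_i)\}$. Setting
\[
h_{\m'}(t):=\#\{i'\in X^\rho_{\m'}:e(\Delta_{i'})>t\}-\#\{i'\in Y^\rho_{\m'}:e(\Delta_{i'})\ge t\},
\]
the hypothesis $\rho\sprt Z(\m')$ is equivalent via Hall's criterion, applied to the bipartite graph $Y^\rho_{\m'}\times X^\rho_{\m'}$ whose edges are the pairs $(i',j')$ with $e(\Delta_{j'})>e(\Delta_{i'})$, to $h_{\m'}(t)\ge 0$ for every $t$. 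Summing over $i\in A^\rho_\m$ gives
\[
\#\tilde X^\rho_{\m,\m'}-\#\tilde Y^\rho_{\m,\m'}=\sum_{i\in A^\rho_\m}h_{\m'}(e(\Delta_i))\ge 0,
\]
with equality iff $h_{\m'}(e(\Delta_i))=0$ for every $i\in A^\rho_\m$.

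For the equivalence with the socle identity, set $\nu:=\lderiv\rho\m+\m'$ and $k:=\mult^\rho(\m)=\#A^\rho_\m$. Since $\rho\sprt Z(\lderiv\rho\m)$ by the definition of $\lderiv\rho\m$ and $\rho\sprt Z(\m')$ by hypothesis, combining maximum $\rho$-matchings saturates $Y^\rho_\nu$, so $\rho\sprt Z(\nu)$; by cuspidality of $\rho$ and a short use of the geometric lemma this upgrades to $\rho^{\times k}\sprt Z(\nu)$, and Lemma~\ref{lem: SIspr} gives that $\rho^{\times k}\times Z(\nu)$ is $\SI$ with irreducible socle $Z(\n^\ast)$. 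On the other hand $Z(\m+\m')$ embeds uniquely into $\rho^{\times k_\ast}\times Z(\lderiv\rho(\m+\m'))$ with $k_\ast:=\mult^\rho(\m+\m')$, and this embedding is itself the socle by the same lemma. Comparing, $\n^\ast=\m+\m'$ iff $k=k_\ast$ and $\nu=\lderiv\rho(\m+\m')$, which together amount to the equality of multisegments $\sum_{i\in A^\rho_\m}\Delta_i=\sum_{i\in A^\rho_{\m+\m'}}\Delta_i$. In matching terms this says that, fixing maximum $\rho$-matchings $R_\m$ and $R_{\m'}$ with $A(R_\m)=A^\rho_\m$ and $A(R_{\m'})=\emptyset$, the union $R_\m\cup R_{\m'}$ is itself maximal in the dominance partial order on $\rho$-matchings of $\m+\m'$.

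It remains to show that $R_\m\cup R_{\m'}$ is maximal iff $h_{\m'}(e(\Delta_i))=0$ for every $i\in A^\rho_\m$. Maximality of $R_\m$ within $\m$ and of $R_{\m'}$ within $\m'$ rules out every domination that stays inside a single piece; what survives is the ``cross'' case, which modifies a pair of $R_{\m'}$ by swapping its $Y^\rho_{\m'}$-entry for an element $i\in A^\rho_\m$, or symmetrically an $X$-swap. A standard augmenting-path analysis shows that such a swap, possibly iterated to produce a dominating matching, exists precisely when some $j'\in X^\rho_{\m'}$ with $e(\Delta_{j'})>e(\Delta_i)$ is in surplus over the $Y^\rho_{\m'}$-elements with $e\ge e(\Delta_i)$, i.e.\ exactly when $h_{\m'}(e(\Delta_i))>0$. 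Carrying this rearrangement out rigorously in the dominance order (rather than merely improving matching cardinality) while tracking the asymmetry among the three domination types and the ``best matching'' characterization of $A^\rho$ and $B^\rho$ recalled in the paragraphs preceding the lemma will be the main technical obstacle of the proof, though it amounts to a careful but routine extension of the $\rho$-matching techniques of \cite{MR863522, MR2306606, MR2527415}.
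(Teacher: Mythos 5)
Your first half is fine: the slice computation, the Hall-type reformulation of $\rho\sprt Z(\m')$, and the identity $\#\tilde X^\rho_{\m,\m'}-\#\tilde Y^\rho_{\m,\m'}=\sum_{i\in A^\rho_{\m}}h_{\m'}(e(\Delta_i))\ge 0$ are correct, and are just a counting version of the paper's injection $(i,j')\mapsto (i,R'(j'))$ built from a maximal (complete) $\rho$-matching $R'$ of $\m'$. Likewise your reduction of \eqref{eq: socrho} to the equivalence $A^\rho_{\m+\m'}\sim A^\rho_{\m}$, via $\rho\sprt Z(\lderiv\rho\m+\m')$, the \SI\ property and the uniqueness of the pair $(\mult^\rho,\lderiv\rho)$, is sound and agrees with the paper.

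The gap is in the final combinatorial step. You replace the condition $A^\rho_{\m+\m'}\sim A^\rho_{\m}$ by ``$R_{\m}\cup R_{\m'}$ is maximal in the dominance order on $\rho$-matchings of $\m+\m'$'', and then claim this is equivalent to $h_{\m'}(e(\Delta_i))=0$ for all $i\in A^\rho_{\m}$. Neither equivalence is true: dominations of the third type (replacing $(i,j)$ by $(i,k)$ with $\Delta_k<\Delta_j$) change $B(R)$ but not $A(R)$, so the union of two maximal matchings can fail to be maximal for reasons that have nothing to do with $A^\rho_{\m}$ or with \eqref{eq: socrho}. Concretely, in the notation of \eqref{eq: Lex} take $\rho=\abs{\cdot}^0$, $\m=[0,1]+[1,3]$, $\m'=[1,2]$: then $A^\rho_{\m}=\emptyset$, both cardinalities in the lemma vanish and \eqref{eq: socrho} holds trivially, yet $R_{\m}\cup R_{\m'}=\{([0,1],[1,3])\}$ is dominated by $\{([0,1],[1,2])\}$, so it is not maximal. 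Hence your bridge statement is false, and the hard implication ($h_{\m'}\equiv 0$ on $A^\rho_{\m}$ $\Rightarrow$ \eqref{eq: socrho}), which you in any case defer to a sketched ``augmenting-path analysis'', is not established and is aimed at the wrong target. This is exactly the point the paper handles with the notion of a \emph{saturated} matching: when the injection is onto, $R\cup R'$ is saturated, and Lemma \ref{lem: satur} shows that saturation and $A(R)$ persist under arbitrary dominations, so $A(R\cup R')\sim A^\rho_{\m+\m'}$ without any maximality claim for the union. Your argument needs this (or an equivalent) ingredient; the converse direction — if $h_{\m'}(e(\Delta_i))>0$ for some $i\in A^\rho_{\m}$, a single domination (adding $(i,j')$ with $j'\in B^\rho_{\m'}$, or swapping $i$ for a smaller $Y$-entry of $R'$) strictly decreases $\sum_{a\in A}\Delta_a$, whence $A^\rho_{\m+\m'}\not\sim A^\rho_{\m}$ — is fine and is what the paper does.
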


\begin{proof}
Let $R$ and $R'$ be maximal $\rho$-matchings with respect to $\m$ and $\m'$ respectively. By assumption, $A^\rho_{\m'}=\emptyset$.
The function $f:\tilde Y^\rho_{\m,\m'}\rightarrow\tilde X^\rho_{\m,\m'}$ given by $f(i,j')=(i,R'(j'))$ is well-defined
and injective.

Since $R\cup R'$ is a $\rho$-matching with respect to $\m+\m'$,
we have $\mult^\rho(\m+\m')=\#A^\rho_{\m+\m'}\le\#A^\rho_{\m}=\mult^\rho(\m)$.
By the above descriptions of $\lderiv\rho\m$ and $\soc(\rho\times *)$, the condition \eqref{eq: socrho}
means that $A^\rho_{\m+\m'}\sim A^\rho_{\m}$. We show that this is equivalent to the surjectivity of $f$.
Suppose that $f$ is not onto and let $(i,j')\in\tilde X^\rho_{\m,\m'}$ be outside the image of $f$.
Then, either $j'\in B^\rho_{\m'}$, in which case $R\cup R'\cup \{(i,j')\}$ dominates $R\cup R'$,
or $\Delta_{i'}<\Delta_i$ where $i'=R'^{-1}(j')$, in which case $R\cup R'\cup\{(i,j')\}\setminus\{(i',j')\}$
dominates $R\cup R'$. In both cases $\sum_{i\in A^\rho_{\m+\m'}}\Delta_i<\sum_{i\in A^\rho_{\m}}\Delta_i$.
Conversely, suppose that $f$ is onto. Then, $R\cup R'$ is saturated (with respect to $\m+\m'$).
It follows from Lemma \ref{lem: satur} that $A^\rho_{\m+\m'}\sim A^\rho_{\m}$ as required.
\end{proof}

\subsubsection{}
The following result follows from Lemma \ref{lem: SIspr}.
\begin{lemma}
Suppose that $\pi\in\Irr$ with $\rho\sprt\pi$ and let $m\ge1$.
Then, $\rho^{\times m}\times\pi$ is \SI, $\mult^\rho(\soc(\rho^{\times m}\times\pi))=m$
and if $\pi'$ is any subquotient of $\rho^{\times m}\times\pi$ other than $\soc(\rho^{\times m}\times\pi)$
then $\mult^\rho(\pi')<m$.
\end{lemma}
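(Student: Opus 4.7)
The strategy is to reduce the entire statement to a single application of Lemma~\ref{lem: SIspr} with $\pi_1=\rho^{\times m}$ and $\pi_2=\pi$. First I would remark that $\rho^{\times m}$ is irreducible: since the segment $\{\rho\}$ is not linked to itself, \eqref{eq: LIsimp} (applied inductively) shows that $\rho^{\times m}=Z(m\{\rho\})$ is irreducible. Thus both factors lie in $\Irr$, and the proof reduces to establishing the relation $\rho^{\times m}\sprt\pi$.

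The central technical input is the vanishing
\[
J(\pi)_{k\rho\otimes(\cs(\pi)-k\rho)}=0\quad\text{for every }k\ge 1,
\]
where this denotes the component of the appropriate piece of $J(\pi)$ whose first tensor factor has cuspidal support $k\rho$. This would follow from $\rho\sprt\pi$ by contradiction: every irreducible representation with cuspidal support $k\rho$ is $\rho^{\times k}$, so a nonzero such component would possess a simple quotient $\rho^{\times k}\otimes\tau$ with $\tau\in\Irr$, by the direct-sum decomposition of $\Reps\otimes\Reps$ by supercuspidal support. Frobenius reciprocity then produces $\pi\hookrightarrow\rho^{\times k}\times\tau\simeq\rho\times(\rho^{\times(k-1)}\times\tau)$, and a second application produces a quotient of $J(\pi)$ of the form $\rho\otimes(\rho^{\times(k-1)}\times\tau)$, whose cuspidal support is $\rho\otimes(\cs(\pi)-\rho)$, contradicting $\rho\sprt\pi$.

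Next I would compute the $m\rho\otimes\cs(\pi)$-component of $J(\rho^{\times m}\times\pi)$ via the geometric lemma. Its irreducible subquotients come in terms indexed by $j=0,\dots,m$ of the shape $(\rho^{\times j}\times\tau_1)\otimes(\rho^{\times(m-j)}\times\tau_2)$, with $\tau_1\otimes\tau_2$ a subquotient of the appropriate piece of $J(\pi)$. The cuspidal support condition forces $\cs(\tau_1)=(m-j)\rho$, and the vanishing above rules out all $j<m$; only the $j=m$ term survives, contributing exactly one copy of $\rho^{\times m}\otimes\pi$. Hence $\rho^{\times m}\sprt\pi$, and Lemma~\ref{lem: SIspr} yields: $\rho^{\times m}\times\pi$ is \SI\ with socle $\sigma$ satisfying $J(\sigma)_{m\rho\otimes\cs(\pi)}=\rho^{\times m}\otimes\pi$, and $\sigma$ is the unique irreducible subquotient of $\rho^{\times m}\times\pi$ whose Jacquet module has a nonzero such component.

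The multiplicity statements then follow formally. The embedding $\sigma\hookrightarrow\rho^{\times m}\times\pi$ together with $\pi\in\Irr$, $\rho\sprt\pi$, and the uniqueness of the pair $(\mult^\rho(\sigma),\lderiv\rho\sigma)$ gives $\mult^\rho(\sigma)=m$. On the other hand, any irreducible subquotient $\pi'$ of $\rho^{\times m}\times\pi$ with $\mult^\rho(\pi')\ge m$ admits $\pi'\hookrightarrow\rho^{\times m}\times\tau$ for some $\tau\in\Reps$; comparing cuspidal supports forces $\cs(\tau)=\cs(\pi)$, and Frobenius reciprocity yields $J(\pi')_{m\rho\otimes\cs(\pi)}\ne 0$, so the uniqueness clause of Lemma~\ref{lem: SIspr} forces $\pi'\simeq\sigma$. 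I expect the main obstacle to be the vanishing statement of the second paragraph; once it is in hand, the geometric-lemma bookkeeping and Lemma~\ref{lem: SIspr} do the rest.
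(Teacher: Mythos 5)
Your proposal is correct and follows the paper's route: the paper simply derives this lemma from Lemma \ref{lem: SIspr}, and your argument is exactly that application, with the routine verification that $\rho^{\times m}\sprt\pi$ (via the vanishing of $J(\pi)_{k\rho\otimes(\cs(\pi)-k\rho)}$ for $k\ge1$ and the geometric lemma) and the uniqueness of $(\mult^\rho,\lderiv\rho)$ spelled out. No gaps; this is the intended proof.
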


\begin{corollary} \label{cor: socinstg}
Suppose that $\pi'\in\Irr$ with $\rho\sprt\pi'$. Then, for any $\pi\in\Irr$ we have
\begin{equation} \label{eq: socinstg}
\soc(\pi\times\pi')=\soc(\rho^{\times m}\times\lderiv\rho\pi\times\pi')=
\soc(\rho^{\times m}\times\soc(\lderiv\rho\pi\times\pi')).
\end{equation}
\end{corollary}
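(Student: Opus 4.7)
Let $\sigma = \lderiv\rho\pi \times \pi'$ and $m = \mult^\rho(\pi)$, so that $\pi \hookrightarrow \rho^{\times m} \times \lderiv\rho\pi$. The plan is to establish both equalities by combining the preceding lemma with Lemma \ref{lem: socprod} and the Bernstein--Zelevinsky Leibniz rule for the highest $\rho$-derivative. The key preliminary fact is that every irreducible subquotient $\tau'' \in \JH(\sigma)$ satisfies $\rho \sprt \tau''$, equivalently $\mult^\rho(\tau'') = 0$. This follows from the BZ Leibniz formula applied to $\lderiv\rho\pi \times \pi'$, whose two factors are both $\rho$-separated (by construction and by hypothesis); more generally, $\mult^\rho(\tau) \le \mult^\rho(\tau_1) + \mult^\rho(\tau_2)$ for every irreducible $\tau \in \JH(\tau_1 \times \tau_2)$.

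For the second equality, I will verify that $\rho^{\times m} \sprt \sigma$. By Lemma \ref{lem: subrep}, it suffices to check $\tau_1 \sprt \tau''$ for every $\tau_1 \in \JH(\rho^{\times m})$ and $\tau'' \in \JH(\sigma)$. Expanding via the geometric lemma, the $\cs(\tau_1) \otimes \cs(\tau'')$-component of $J(\tau_1 \times \tau'')$ receives contributions only from the identity shuffle (which yields precisely $\tau_1 \otimes \tau''$), since any nontrivial shuffle would require the $l\rho \otimes \ast$-component of the left $l \deg\rho$-Jacquet module of $\tau''$ to be nonzero for some $l \ge 1$ --- incompatible with $\mult^\rho(\tau'') = 0$ by the standard Bernstein--Zelevinsky analysis. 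Hence $\rho^{\times m} \sprt \sigma$, and applying Lemma \ref{lem: socprod} twice --- once to $(\rho^{\times m}, \sigma)$ and once to $(\rho^{\times m}, \soc(\sigma))$ (the latter pair still $\sprt$ by Lemma \ref{lem: subrep}) --- gives
\[
\soc(\rho^{\times m} \times \sigma) = \soc(\soc(\rho^{\times m}) \times \soc(\sigma)) = \soc(\rho^{\times m} \times \soc(\sigma)),
\]
which is the second equality.

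For the first equality, the inclusion $\soc(\pi \times \pi') \hookrightarrow \soc(\rho^{\times m} \times \sigma)$ follows from $\pi \hookrightarrow \rho^{\times m} \times \lderiv\rho\pi$. For the reverse, consider the exact sequence
\[
0 \to \pi \times \pi' \to \rho^{\times m} \times \lderiv\rho\pi \times \pi' \to Q \times \pi' \to 0
\]
with $Q = (\rho^{\times m} \times \lderiv\rho\pi)/\pi$. By the preceding lemma, every irreducible subquotient of $Q$ has $\mult^\rho < m$; combined with $\mult^\rho(\pi') = 0$ and the multiplicativity estimate above, every irreducible subquotient of $Q \times \pi'$ has $\mult^\rho < m$. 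Conversely, applying the preceding lemma to each $\tau'' \in \JH(\sigma)$ together with an induction on the length of $\sigma$ via short exact sequences, every irreducible subrepresentation of $\rho^{\times m} \times \sigma$ has $\mult^\rho = m$. These two conclusions force any such subrepresentation to lie inside $\pi \times \pi'$ rather than inject into the quotient $Q \times \pi'$, giving the reverse inclusion. The main technical ingredient throughout is the Bernstein--Zelevinsky vanishing underpinning both $\rho^{\times m} \sprt \sigma$ and the multiplicativity of $\mult^\rho$, a standard but delicate consequence of the theory of $\rho$-derivatives.
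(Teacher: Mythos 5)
Your proof is correct and follows essentially the same route as the paper: $\rho$-separation of $\lderiv\rho\pi\times\pi'$ plus Lemma \ref{lem: socprod} for the second equality, and a comparison of $\mult^\rho$ (socle of $\rho^{\times m}\times\soc(\lderiv\rho\pi\times\pi')$ has $\mult^\rho=m$, while subquotients of the complementary piece $Q\times\pi'$ have $\mult^\rho<m$ by the preceding lemma) for the first. Your explicit verification that $\rho^{\times m}\sprt\lderiv\rho\pi\times\pi'$, which the paper leaves implicit when invoking Lemma \ref{lem: socprod}, is a welcome extra detail but not a departure in method.
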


\begin{proof}
Since $\rho\sprt\lderiv\rho\pi$ and $\rho\sprt\pi'$, we have $\rho\sprt\lderiv\rho\pi\times\pi'$.
The second equality of \eqref{eq: socinstg} follows from Lemma \ref{lem: socprod}.
To prove the first equality suppose on the contrary that $\sigma$ is a subrepresentation of
$\rho^{\times m}\times\lderiv\rho\pi\times\pi'$ which is not a subrepresentation of $\pi\times\pi'$.
Then, $\sigma\hookrightarrow\tau\times\pi'$ for some irreducible subquotient $\tau$ of $\rho^{\times m}\times\lderiv\rho\pi$
other than $\pi$. But then, $\mult^\rho(\sigma)=\mult^\rho(\tau)<m$. On the other hand, we already know
by the second equality of \eqref{eq: socinstg} that $\sigma\hookrightarrow\rho^{\times m}\times\sigma'$ for some $\sigma'$.
We get a contradiction.
\end{proof}

\subsubsection{}

\begin{lemma} \label{lem: opvan}
Let $R$ be a $\rho$-matching for $\m$.
Then, any non-empty open $G_\m$-invariant subset $S$ of $\cnt_\m$ contains an element $\lambda$ such that
$\lambda_{i,R(j)}=\delta_{i,j}$ for all $(i,j)\in Y_{\m}\cap (Y^\rho_{\m}\times Y^\rho_{\m})$ such that $R(j)$ is defined.
\end{lemma}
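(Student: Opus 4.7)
The plan is to imitate the proofs of Lemmas \ref{lem: vaninorb} and \ref{lem: vaninorb2}: repeatedly conjugate a chosen $\lambda \in S$ by elements of $G_\m$ (which keeps us inside $S$ by invariance), using the openness of $S$ at each step to ensure that the pivot coordinate we are about to normalize is nonzero. The induction will run over the matched pairs of $R$, processed one column at a time.

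The key preliminary observation will be that since $b(\Delta_j) = \rho$ for every $j \in Y^\rho_\m$, the family $\{\Delta_j : j \in Y^\rho_\m,\ R(j)\text{ defined}\}$ is totally ordered by inclusion. I will enumerate these indices as $j_1, \ldots, j_k$ so that $s \le l$ implies $\Delta_{j_s} \supseteq \Delta_{j_l}$, and prove by induction on $l = 0, \ldots, k$ that some $\lambda \in S$ satisfies $\lambda_{j_s, R(j_s)} = 1$ together with $\lambda_{i, R(j_s)} = 0$ for every $s \le l$ and every $i \in Y^\rho_\m \setminus \{j_s\}$ with $(i, j_s) \in Y_\m$.

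At the inductive step, openness of $S$ lets me assume in addition that $\lambda_{j_l, R(j_l)} \ne 0$. Setting $I_l = \{i \in Y^\rho_\m \setminus \{j_l\} : (i, j_l) \in Y_\m\}$ and $c_i = \lambda_{i, R(j_l)}/\lambda_{j_l, R(j_l)}$ for $i \in I_l$, I will consider
\[
g = 1 + \sum_{i \in I_l} c_i\, \beta^{\m}_{i, j_l},
\]
which lies in $G_\m$ and is unipotent because the $\beta^{\m}_{i, j_l}$ compose pairwise to zero. Using the commutator relation $[\beta^{\m}_{i, j_l}, \alpha^{\m}_{p, q}] = \delta_{j_l, p}\, \alpha^{\m}_{i, q} - \delta_{i, q}\, \alpha^{\m}_{p, j_l}$, a direct computation gives $(g^{-1} \lambda g)_{p, q} = \lambda_{p, q} - c_p\, \lambda_{j_l, q}$ for every $q \ne j_l$ and $p \in I_l$ (with no change for $p \notin I_l$). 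Specializing to $q = R(j_l)$ zeroes out the desired off-diagonal entries while preserving $\lambda_{j_l, R(j_l)}$; a diagonal rescaling inside $G_\m$ acting on the basis vectors indexed by $j_l$ then brings the pivot to $1$ without affecting entries in any other column.

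The main (and essentially only) non-routine verification will be that the stage-$l$ conjugation preserves the normalizations from earlier stages. Specialized to $q = R(j_s)$ with $s < l$, the same formula perturbs column $R(j_s)$ by $-c_p\, \lambda_{j_l, R(j_s)}$; the ordering forces $\Delta_{j_l} \subseteq \Delta_{j_s}$ with $j_l \ne j_s$, so $(j_l, j_s) \in Y_\m$, and the inductive hypothesis at stage $s$ yields $\lambda_{j_l, R(j_s)} = 0$. Consequently all corrections vanish and the previous normalizations persist. This is where the inclusion-decreasing enumeration of the matched indices is decisive, and it is the step I expect to require the most care.
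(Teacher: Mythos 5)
Your proposal is correct and is essentially the paper's own argument: enumerate the matched indices $j_1,\dots,j_k$ with $\Delta_{j_1}\supseteq\dots\supseteq\Delta_{j_k}$, conjugate at stage $l$ by a column-$j_l$ modification of the identity to clear the entries $\lambda_{i,R(j_l)}$, and observe that the corrections to the earlier columns $R(j_s)$ are all multiples of $\lambda_{j_l,R(j_s)}$, which vanishes by the inductive hypothesis since $(j_l,j_s)\in Y_\m$ — your unipotent-plus-diagonal factorization of $g$ is only a cosmetic variant of the single matrix used in the paper, and your "openness" step has exactly the same (harmless) level of informality as the paper's. One small imprecision: the diagonal rescaling does alter entries outside the pivot column (it scales all of row $j_l$ and column $j_l$), but the only previously normalized entries it touches are the zeros $\lambda_{j_l,R(j_s)}$, so the conclusion is unaffected.
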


\begin{proof}
Let $\{j_1,\dots,j_r\}$ be the domain of $R$ with $\Delta_{j_1}\ge\dots\ge\Delta_{j_r}$.
We show by induction on $l$ that we can find $\lambda\in S$ such that
$\lambda_{i,R(j_m)}=\delta_{i,j_m}$ for all $m=1,\dots,l$ and all $i\in Y^\rho_{\m}$ such that $(i,j_m)\in Y_\m$.
The case $l=0$ is trivial.
For the induction step, let $0<l\le r$ and assume that the statement holds for $l-1$.
By openness, we may assume in addition that $\lambda$ satisfies $\lambda_{j_l,R(j_l)}\ne0$.
Define $g\in G_{\m}$ by
\[
g_{i,j}=\begin{cases}\lambda_{i,R(j_l)}&j=j_l,\\
\delta_{i,j}&\text{otherwise,}\end{cases}\\(i,j)\in Y_\m.
\]
Note that $g$ is invertible because $\lambda_{j_l,R(j_l)}\ne0$.

Let $\tilde\lambda=g^{-1}\lambda g$. Then, $gx_{\rshft\rho,R(j_l)}=x_{\rshft\rho,R(j_l)}$ and
\[
\lambda x_{\rshft\rho,R(j_l)}=\sum_i\lambda_{i,R(j_l)}x_{\rho,i}=g x_{\rho,j_l}+\sum_{i:(i,j_l)\notin Y_\m}\lambda_{i,R(j_l)}x_{\rho,i}.
\]
Thus,
\[
\tilde\lambda x_{\rshft\rho,R(j_l)}=x_{\rho,j_l}+\sum_{i:(i,j_l)\notin Y_\m}\lambda_{i,R(j_l)}x_{\rho,i}
\]
It follows that $\tilde\lambda_{i,R(j_l)}=\delta_{i,j_l}$ for all $i\in Y^\rho_\m$ such that $(i,j_l)\in Y_\m$.
Now let $m<l$. Then, $gx_{\rshft\rho,R(j_m)}=x_{\rshft\rho,R(j_m)}$ and by induction hypothesis
\[
\lambda x_{\rshft\rho,R(j_m)}=x_{\rho,j_m}+\xi
\]
where the coordinate of $\xi$ with respect to $x_{\rho,i}$ is zero unless $i\notin Y^\rho_{\m}$ or $\Delta_i>\Delta_{j_m}$.
Since $\Delta_{j_l}\le\Delta_{j_m}$ it follows that
\[
\tilde\lambda x_{\rshft\rho,R(j_m)}=\lambda x_{\rshft\rho,R(j_m)}=x_{\rho,j_m}+\xi.
\]
This completes the induction hypothesis.
\end{proof}

\subsubsection{}
By passing to the contragredient we get
\begin{lemma} \label{lem: opvan2}
Let $R$ be a $\rho$-matching for $\m$.
Then, any non-empty open $G_\m$-invariant subset $S$ of $\cnt_\m$ contains an element $\lambda$ such that
$\lambda_{R^{-1}(i),j}=\delta_{i,j}$ for all $(i,j)\in Y_{\m}\cap (X^\rho_{\m}\times X^\rho_{\m})$ such that $R^{-1}(i)$ is defined.
\end{lemma}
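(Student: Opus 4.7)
The plan is to mimic the proof of Lemma \ref{lem: opvan}, exchanging the roles of columns and rows in the matrix coordinates of elements of $\cnt_\m$; this exchange is what the phrase \emph{passing to the contragredient} encodes, corresponding to the self-duality of $\cnt_\m$ induced by the $\Aut(\Vect)$-invariant trace pairing between $\rshft\grend(\Vect)$ and $\lshft\grend(\Vect)$. Under this duality, the index set $Y^\rho_\m$ of segments beginning at $\rho$ is swapped with $X^\rho_\m$ of segments beginning at $\rshft\rho$, a $\rho$-matching $R$ is replaced by its formal reverse, and the normalization condition $\lambda_{R^{-1}(i), j} = \delta_{i,j}$ becomes precisely the condition appearing in Lemma \ref{lem: opvan} for the dualized data.

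Concretely, enumerate the image of $R$ as $\{i_1, \dots, i_r\} \subset X^\rho_\m$, ordered so that $\Delta_{i_1} \le \Delta_{i_2} \le \dots \le \Delta_{i_r}$; note that this is the \emph{reverse} of the ordering used in Lemma \ref{lem: opvan}. I would prove by induction on $l$ that there exists $\lambda \in S$ with $\lambda_{R^{-1}(i_m), j} = \delta_{i_m, j}$ for all $m \le l$ and all $j \in X^\rho_\m$ with $(R^{-1}(i_m), j) \in Y_\m$. The base case $l = 0$ is vacuous. For the induction step, by openness I may assume in addition that $\lambda_{R^{-1}(i_l), i_l} \ne 0$ and then choose $g \in G_\m$ dual (in the above sense) to the element constructed in Lemma \ref{lem: opvan}: its coordinates are supported on a single set of entries indexed by $R^{-1}(i_l)$, with values arranged so that the conjugate $\tilde\lambda = g^{-1}\lambda g \in S$ realizes the required normalization in row $R^{-1}(i_l)$.

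The main obstacle, as in Lemma \ref{lem: opvan}, is to verify that the stage-$l$ conjugation does not disturb the normalizations already achieved at stages $m < l$. Here the reversed ordering of the $\Delta_{i_m}$ is what does the work: for $m < l$ we have $\Delta_{i_m} \le \Delta_{i_l}$, and via the precedence structure encoded in $Y_\m$ this ensures that the corrections introduced by $g$ at stage $l$ vanish at the relevant entries of row $R^{-1}(i_m)$. Apart from this bookkeeping, the calculation of $\tilde\lambda \xbasis_{\rshft\rho, j}$ and the resulting verification that the desired coordinates are fixed are routine and entirely parallel to those appearing in the proof of Lemma \ref{lem: opvan}.
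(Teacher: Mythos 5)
Your plan is viable in outline, and it is in fact the honest content behind the paper's one\-line derivation of this lemma from Lemma \ref{lem: opvan} (``by passing to the contragredient''): one reruns the induction with the two sides of the matching interchanged, processes the pairs in the order $\Delta_{i_1}\le\dots\le\Delta_{i_r}$ opposite to the one used in Lemma \ref{lem: opvan}, and assumes $\lambda_{R^{-1}(i_l),i_l}\ne 0$ by openness. Two corrections to the framing, though. First, the ``self-duality of $\cnt_\m$ induced by the trace pairing'' does not exist as stated: the trace pairing places $\rshft \grend(\Vect)$ and $\lshft \grend(\Vect)$ in duality but gives no self-duality of $\cnt_\m$, and no duality of the data interchanges the segments beginning at $\rho$ with those beginning at $\rshft\rho$ (the contragredient $\m\mapsto\m^\vee$ trades beginnings for ends). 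So nothing is gained by invoking it; the work must be carried by the explicit induction. Second, your induction statement quantifies over $j\in X^\rho_\m$ with $(R^{-1}(i_m),j)\in Y_\m$, whereas the lemma requires $(i_m,j)\in Y_\m$; these differ (e.g.\ your version is vacuous for a pair with $\Delta_{R^{-1}(i_m)}=\{\rho\}$ while the lemma is not), and it is the condition $(i_m,i_l)\in Y_\m$ that is needed in the protection step below.

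The genuine gap is the choice of the conjugating element: you place its support in the entries indexed by $a_l:=R^{-1}(i_l)$. Taking $g$ equal to the identity outside row $a_l$, one finds for a constrained column $j$ that $\tilde\lambda_{a_l,j}=g_{a_l,a_l}^{-1}\bigl(\lambda_{a_l,j}-\sum_{k\ne a_l,\,(a_l,k)\in Y_\m}g_{a_l,k}\lambda_{k,j}\bigr)$, so the normalization is achievable only if row $a_l$ of $\lambda$, restricted to the constrained columns, lies in the span of the rows $k$ with $(a_l,k)\in Y_\m$. This fails in general: for $\m=[0,1]+[1,2]+[1,3]$, $\rho=\abs{\cdot}^0$ and $R=\{([0,1],[1,2])\}$, the rows indexed by $[1,2]$ and $[1,3]$ vanish identically on the columns indexed by $X^\rho_\m$, while $\lambda_{[0,1],[1,3]}$ is a free coordinate of $\cnt_\m$ that the lemma requires you to kill (supporting $g$ in the column $a_l$ is even weaker, as it only rescales that row). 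The element that works is supported in the row indexed by $i_l$, with values taken from row $a_l$ of $\lambda$: $g_{i_l,i_l}=\lambda_{a_l,i_l}^{-1}$, $g_{i_l,j}=-\lambda_{a_l,i_l}^{-1}\lambda_{a_l,j}$ for $(i_l,j)\in Y_\m$, $j\ne i_l$, and $g_{i,j}=\delta_{i,j}$ otherwise. Since $g^{-1}$ differs from the identity only in row $i_l$ and each $a_m\in Y^\rho_\m$ is distinct from $i_l\in X^\rho_\m$, the row $a_m$ of $\tilde\lambda=g^{-1}\lambda g$ equals the row $a_m$ of $\lambda$ multiplied on the right by $g$, whose entry at a column $j\ne i_l$ is $\lambda_{a_m,j}+\lambda_{a_m,i_l}g_{i_l,j}$ and at $j=i_l$ is $\lambda_{a_m,i_l}g_{i_l,i_l}$. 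For $m=l$ this is exactly $\delta_{i_l,j}$ on the constrained columns (the $j\in X^\rho_\m$ with $(i_l,j)\in Y_\m$), and for $m<l$ the ordering $\Delta_{i_m}\le\Delta_{i_l}$ gives $(i_m,i_l)\in Y_\m$, so the induction hypothesis yields $\lambda_{a_m,i_l}=0$ and row $a_m$ is untouched. So your intuition about where the reversed ordering enters is correct, but the stage-$l$ element must sit in row $i_l$, not row $R^{-1}(i_l)$; as written, the induction step fails.
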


\subsubsection{}
Finally, we can prove Proposition \ref{prop: rhoext}.

By Lemma \ref{lem: char=} and Corollary \ref{cor: socinstg}, if $\#\tilde X^\rho_{\m,\m'}=\#\tilde Y^\rho_{\m,\m'}$,
then the conditions $\LI(\lderiv\rho\m,\m')$ and $\LI(\m,\m')$ are equivalent.
We show that $\LI(\m,\m')$ implies $\#\tilde X^\rho_{\m,\m'}=\#\tilde Y^\rho_{\m,\m'}$. By Corollary \ref{cor: gedelta},
we may assume that $b(\Delta_i)\in\{\rho,\rshft\rho\}$ for all $i\in I\cup I'$.
If $\LI(\m,\m')$, then it follows from \eqref{eq: brhos} that $\LI(\Delta_i,\m')$ for every $i\in A^\rho_{\m}$.
Taking a best matching $R'$ for $\m'$, this means that $\LI(\Delta_i,\Delta_{j'})$ for every $j'\in B^\rho_{\m'}$
and $\LI(\Delta_i,\Delta_{i'}+\Delta_{j'})$ for every $(i',j')\in R'$. This exactly means that the function
$f$ defined in the proof of Lemma \ref{lem: char=} is surjective, i.e. that $\#\tilde X^\rho_{\m,\m'}=\#\tilde Y^\rho_{\m,\m'}$.

For the second part, note that $X_{\lderiv\rho\m,\m'}\subseteq X_{\m,\m'}$, $Y_{\lderiv\rho\m,\m'}\subseteq Y_{\m,\m'}$
and
\[
\tilde X^\rho_{\m,\m'}=X_{\m,\m'}\setminus X_{\lderiv\rho\m,\m'},\ \
\tilde Y^\rho_{\m,\m'}=Y_{\m,\m'}\setminus Y_{\lderiv\rho\m,\m'}.
\]
Also, we have
\[
X_\m\setminus X_{\lderiv\rho\m}=\tilde X^\rho_{\m,\m}\cup \hat X_{\m}
\text{ where }\hat X_{\m}=\{i\in I:e(\Delta_i)=\lshft\rho\}\times A^\rho_{\m},
\]
and
\[
X_{\lderiv\rho\m}\setminus X_\m=X_{\lderiv\rho\m}\cap(Y^\rho_{\m}\times A^\rho_{\m}).
\]

Suppose that $\LC(\m,\m')$.
Let $(\lambda,\lambda')\in \cnt_{\m}\times \cnt_{\m'}$ be such that $v_{i,j}^{\m,\m'}(\lambda,\lambda')$, $(i,j)\in X_{\m,\m'}$
are linearly independent in $\C^{Y_{\m,\m'}}$.
Note that the $Y_{\lderiv\rho\m,\m'}$-coordinates of $v_{i,j}^{\m,\m'}(\lambda,\lambda')$, $(i,j)\in X_{\m,\m'}$
are independent of the $\hat X_{\m}$-coordinates of $\lambda$.
By Lemma \ref{lem: opvan} (for $R$ maximal) we may assume that $\lambda_{i,j}=0$ for all $(i,j)\in\tilde X^\rho_{\m}$.
This guarantees that
\begin{equation} \label{eq: prp1}
\text{for every $(i,j)\in\tilde X^\rho_{\m,\m'}$, the $Y_{\lderiv\rho\m,\m'}$-coordinates of $v_{i,j}^{\m,\m'}(\lambda,\lambda')$
vanish.}
\end{equation}
Hence, $\#\tilde X^\rho_{\m,\m'}\le\#\tilde Y^\rho_{\m,\m'}$. Thus, Lemma \ref{lem: char=} $\#\tilde X^\rho_{\m,\m'}=\#\tilde Y^\rho_{\m,\m'}$.
The vanishing of $\lambda_{i,j}=0$ for $(i,j)\in\tilde X^\rho_\m$ also guarantees that
\begin{equation} \label{eq: prp2}
\forall (i,j)\in X_{\lderiv\rho\m,\m'}, \text{the $Y_{\lderiv\rho\m,\m'}$-coordinates of $v_{i,j}^{\m,\m'}(\lambda,\lambda')$ and
$v_{i,j}^{\lderiv(\m),\m'}(\tilde\lambda,\lambda')$ coincide}
\end{equation}
where $\tilde\lambda\in \cnt_{\lderiv\rho\m}$ is given by
\[
\tilde\lambda_{i,j}=\begin{cases}\lambda_{i,j}&(i,j)\in X_{\lderiv(\m)}\cap X_\m,\\
0&(i,j)\in X_{\lderiv(\m)}\setminus X_\m.\end{cases}
\]
Thus, $\LC(\m,\m')\implies\LC(\lderiv\rho\m,\m')$.

Conversely, suppose that $\LC(\lderiv\rho\m,\m')$ is satisfied and $\#\tilde X^\rho_{\m,\m'}=\#\tilde Y^\rho_{\m,\m'}$.
Let $(\tilde\lambda,\lambda')\in \cnt_{\lderiv\rho\m}\times \cnt_{\m'}$ be such that $v_{i,j}^{\lderiv\rho\m,\m'}(\tilde\lambda,\lambda')$,
$(i,j)\in X_{\lderiv\rho\m,\m'}$
are linearly independent in $\C^{Y_{\lderiv\rho\m,\m'}}$.
By Lemma \ref{lem: opvan2} (applied to $\lderiv\rho\m$ and a maximal $R$) we may assume that
$\tilde\lambda_{i,j}=0$ for $(i,j)\in X_{\lderiv\rho\m}\setminus X_{\m}$.
Define $\lambda\in \cnt_\m$ by
\[
\lambda_{i,j}=\begin{cases}\tilde\lambda_{i,j}&(i,j)\in X_\m\cap X_{\lderiv\rho\m},\\0&(i,j)\in X_\m\setminus X_{\lderiv\rho\m}.
\end{cases}
\]
Then, \eqref{eq: prp1} and \eqref{eq: prp2} are satisfied.
Consider the (square) matrix $M'(\lambda')$ of size $\#\tilde X^\rho_{\m,\m'}\times\#\tilde Y^\rho_{\m,\m'}$
formed by the $\tilde Y^\rho_{\m,\m'}$-coordinates of $v_{i,j}^{\m,\m'}(\lambda,\lambda')$,
$(i,j)\in\tilde X^\rho_{\m,\m'}$. It is independent of $\lambda$ and depends only on the
$X_{\m'}\cap (Y^\rho_{\m'}\times X^\rho_{\m'})$-coordinates of $\lambda'$ (in fact, it does not depend on the
$X_{\m'}\cap (Y^\rho_{\m'}\times B^\rho_{\m'})$-coordinates of $\lambda'$).
To conclude $\LC(\m,\m')$ it remains to show that $M'(\lambda)$ is non-singular for generic $\lambda'\in \cnt_{\m'}$,
or equivalently for $\lambda'\in \cnt_{\m'}$ of our choice.
As in the proof of Lemma \ref{lem: char=}, let $R'$ be a maximal $\rho$-matching with respect to $\m'$ and
let $f:\tilde Y^\rho_{\m,\m'}\rightarrow\tilde X^\rho_{\m,\m'}$ be the injective function given by $f(i,j')=(i,R'(j'))$.
Since $\#\tilde X^\rho_{\m,\m'}=\#\tilde Y^\rho_{\m,\m'}$, $f$ is onto. Taking $\lambda'\in \cnt_{\m'}$ such that
$\lambda'_{i',j'}=\delta_{j',R'(i')}$ for all $(i',j')\in X_{\m'}\cap (Y^\rho_{\m'}\times X^\rho_{\m'})$,
$M'(\lambda')$ becomes the permutation matrix representing $f$. Our claim follows.

\def\cprime{$'$} 
\begin{bibdiv}
\begin{biblist}

\bib{MR0425031}{article}{
      author={Bern{\v{s}}te{\u\i}n, I.~N.},
      author={Zelevinski{\u\i}, A.~V.},
       title={Induced representations of the group {$GL(n)$} over a {$p$}-adic
  field},
        date={1976},
        ISSN={0374-1990},
     journal={Funkcional. Anal. i Prilo\v zen.},
      volume={10},
      number={3},
       pages={74\ndash 75},
      review={\MR{0425031 (54 \#12989)}},
}

\bib{MR0579172}{article}{
      author={Bernstein, I.~N.},
      author={Zelevinsky, A.~V.},
       title={Induced representations of reductive {${\germ p}$}-adic groups.
  {I}},
        date={1977},
        ISSN={0012-9593},
     journal={Ann. Sci. \'Ecole Norm. Sup. (4)},
      volume={10},
      number={4},
       pages={441\ndash 472},
      review={\MR{0579172 (58 \#28310)}},
}

\bib{MR3769724}{article}{
      author={Bernstein, Joseph},
      author={Bezrukavnikov, Roman},
      author={Kazhdan, David},
       title={Deligne--{L}usztig duality and wonderful compactification},
        date={2018},
        ISSN={1022-1824},
     journal={Selecta Math. (N.S.)},
      volume={24},
      number={1},
       pages={7\ndash 20},
         url={https://doi.org/10.1007/s00029-018-0391-5},
      review={\MR{3769724}},
}

\bib{MR748505}{incollection}{
      author={Bernstein, Joseph~N.},
       title={{$P$}-invariant distributions on {${\rm GL}(N)$} and the
  classification of unitary representations of {${\rm GL}(N)$}
  (non-{A}rchimedean case)},
        date={1984},
   booktitle={Lie group representations, {II} ({C}ollege {P}ark, {M}d.,
  1982/1983)},
      series={Lecture Notes in Math.},
      volume={1041},
   publisher={Springer},
     address={Berlin},
       pages={50\ndash 102},
         url={http://dx.doi.org/10.1007/BFb0073145},
      review={\MR{748505 (86b:22028)}},
}

\bib{MR1106898}{incollection}{
      author={Deligne, P.},
       title={Cat\'egories tannakiennes},
        date={1990},
   booktitle={The {G}rothendieck {F}estschrift, {V}ol.\ {II}},
      series={Progr. Math.},
      volume={87},
   publisher={Birkh\"auser Boston, Boston, MA},
       pages={111\ndash 195},
      review={\MR{1106898}},
}

\bib{MR2144987}{article}{
      author={Geiss, Christof},
      author={Leclerc, Bernard},
      author={Schr{\"o}er, Jan},
       title={Semicanonical bases and preprojective algebras},
        date={2005},
        ISSN={0012-9593},
     journal={Ann. Sci. \'Ecole Norm. Sup. (4)},
      volume={38},
      number={2},
       pages={193\ndash 253},
         url={http://dx.doi.org/10.1016/j.ansens.2004.12.001},
      review={\MR{2144987}},
}

\bib{MR2822235}{article}{
      author={Gei{\ss}, Christof},
      author={Leclerc, Bernard},
      author={Schr{\"o}er, Jan},
       title={Kac-{M}oody groups and cluster algebras},
        date={2011},
        ISSN={0001-8708},
     journal={Adv. Math.},
      volume={228},
      number={1},
       pages={329\ndash 433},
         url={http://dx.doi.org/10.1016/j.aim.2011.05.011},
      review={\MR{2822235}},
}

\bib{MR2682185}{article}{
      author={Hernandez, David},
      author={Leclerc, Bernard},
       title={Cluster algebras and quantum affine algebras},
        date={2010},
        ISSN={0012-7094},
     journal={Duke Math. J.},
      volume={154},
      number={2},
       pages={265\ndash 341},
         url={http://dx.doi.org/10.1215/00127094-2010-040},
      review={\MR{2682185}},
}

\bib{MR3077685}{incollection}{
      author={Hernandez, David},
      author={Leclerc, Bernard},
       title={Monoidal categorifications of cluster algebras of type {$A$} and
  {$D$}},
        date={2013},
   booktitle={Symmetries, integrable systems and representations},
      series={Springer Proc. Math. Stat.},
      volume={40},
   publisher={Springer, Heidelberg},
       pages={175\ndash 193},
         url={http://dx.doi.org/10.1007/978-1-4471-4863-0_8},
      review={\MR{3077685}},
}

\bib{MR2306606}{article}{
      author={Jantzen, Chris},
       title={Jacquet modules of {$p$}-adic general linear groups},
        date={2007},
        ISSN={1088-4165},
     journal={Represent. Theory},
      volume={11},
       pages={45\ndash 83 (electronic)},
         url={http://dx.doi.org/10.1090/S1088-4165-07-00316-0},
      review={\MR{2306606 (2008g:22023)}},
}

\bib{MR3314831}{article}{
      author={Kang, Seok-Jin},
      author={Kashiwara, Masaki},
      author={Kim, Myungho},
      author={Oh, Se-jin},
       title={Simplicity of heads and socles of tensor products},
        date={2015},
        ISSN={0010-437X},
     journal={Compos. Math.},
      volume={151},
      number={2},
       pages={377\ndash 396},
         url={http://dx.doi.org/10.1112/S0010437X14007799},
      review={\MR{3314831}},
}

\bib{MR3758148}{article}{
      author={Kang, Seok-Jin},
      author={Kashiwara, Masaki},
      author={Kim, Myungho},
      author={Oh, Se-jin},
       title={Monoidal categorification of cluster algebras},
        date={2018},
        ISSN={0894-0347},
     journal={J. Amer. Math. Soc.},
      volume={31},
      number={2},
       pages={349\ndash 426},
         url={https://doi.org/10.1090/jams/895},
      review={\MR{3758148}},
}

\bib{MR1371654}{article}{
      author={Knight, Harold},
      author={Zelevinsky, Andrei},
       title={Representations of quivers of type {$A$} and the multisegment
  duality},
        date={1996},
        ISSN={0001-8708},
     journal={Adv. Math.},
      volume={117},
      number={2},
       pages={273\ndash 293},
         url={http://dx.doi.org/10.1006/aima.1996.0013},
      review={\MR{1371654 (97e:16029)}},
}

\bib{Elmaunote}{misc}{
      author={Lapid, Erez},
       title={Explicit decomposition of certain induced representations of the
  general linear group},
        date={2019},
        note={Preprint},
}

\bib{MR3163355}{article}{
      author={Lapid, Erez},
      author={M{\'{\i}}nguez, Alberto},
       title={On a determinantal formula of {T}adi\'c},
        date={2014},
        ISSN={0002-9327},
     journal={Amer. J. Math.},
      volume={136},
      number={1},
       pages={111\ndash 142},
         url={http://dx.doi.org/10.1353/ajm.2014.0006},
      review={\MR{3163355}},
}

\bib{MR3573961}{article}{
      author={Lapid, Erez},
      author={M{\'{\i}}nguez, Alberto},
       title={On parabolic induction on inner forms of the general linear group
  over a non-archimedean local field},
        date={2016},
        ISSN={1022-1824},
     journal={Selecta Math. (N.S.)},
      volume={22},
      number={4},
       pages={2347\ndash 2400},
         url={http://dx.doi.org/10.1007/s00029-016-0281-7},
      review={\MR{3573961}},
}

\bib{MR3866895}{article}{
      author={Lapid, Erez},
      author={M\'{i}nguez, Alberto},
       title={Geometric conditions for {$\square$}-irreducibility of certain
  representations of the general linear group over a non-archimedean local
  field},
        date={2018},
        ISSN={0001-8708},
     journal={Adv. Math.},
      volume={339},
       pages={113\ndash 190},
         url={https://doi.org/10.1016/j.aim.2018.09.027},
      review={\MR{3866895}},
}

\bib{MR1959765}{article}{
      author={Leclerc, B.},
       title={Imaginary vectors in the dual canonical basis of {$U_q(\germ
  n)$}},
        date={2003},
        ISSN={1083-4362},
     journal={Transform. Groups},
      volume={8},
      number={1},
       pages={95\ndash 104},
         url={http://dx.doi.org/10.1007/BF03326301},
      review={\MR{1959765}},
}

\bib{MR2527415}{article}{
      author={M{\'{\i}}nguez, Alberto},
       title={Sur l'irr\'eductibilit\'e d'une induite parabolique},
        date={2009},
        ISSN={0075-4102},
     journal={J. Reine Angew. Math.},
      volume={629},
       pages={107\ndash 131},
         url={http://dx.doi.org/10.1515/CRELLE.2009.028},
      review={\MR{2527415 (2010h:22025)}},
}

\bib{MR863522}{article}{
      author={M\oe~glin, C.},
      author={Waldspurger, J.-L.},
       title={Sur l'involution de {Z}elevinski},
        date={1986},
        ISSN={0075-4102},
     journal={J. Reine Angew. Math.},
      volume={372},
       pages={136\ndash 177},
         url={https://doi.org/10.1515/crll.1986.372.136},
      review={\MR{863522}},
}

\bib{MR0390138}{article}{
      author={Pjasecki{\u\i}, V.~S.},
       title={Linear {L}ie groups that act with a finite number of orbits},
        date={1975},
        ISSN={0374-1990},
     journal={Funkcional. Anal. i Prilo\v zen.},
      volume={9},
      number={4},
       pages={85\ndash 86},
      review={\MR{0390138 (52 \#10964)}},
}

\bib{MR1471867}{article}{
      author={Schneider, Peter},
      author={Stuhler, Ulrich},
       title={Representation theory and sheaves on the {B}ruhat-{T}its
  building},
        date={1997},
        ISSN={0073-8301},
     journal={Inst. Hautes \'Etudes Sci. Publ. Math.},
      number={85},
       pages={97\ndash 191},
         url={http://www.numdam.org/item?id=PMIHES_1997__85__97_0},
      review={\MR{1471867 (98m:22023)}},
}

\bib{MR870688}{article}{
      author={Tadi{\'c}, Marko},
       title={Classification of unitary representations in irreducible
  representations of general linear group (non-{A}rchimedean case)},
        date={1986},
        ISSN={0012-9593},
     journal={Ann. Sci. \'Ecole Norm. Sup. (4)},
      volume={19},
      number={3},
       pages={335\ndash 382},
         url={http://www.numdam.org/item?id=ASENS_1986_4_19_3_335_0},
      review={\MR{870688 (88b:22021)}},
}

\bib{MR1989693}{article}{
      author={Waldspurger, J.-L.},
       title={La formule de {P}lancherel pour les groupes {$p$}-adiques
  (d'apr\`es {H}arish-{C}handra)},
        date={2003},
        ISSN={1474-7480},
     journal={J. Inst. Math. Jussieu},
      volume={2},
      number={2},
       pages={235\ndash 333},
         url={https://doi.org/10.1017/S1474748003000082},
      review={\MR{1989693}},
}

\bib{MR617466}{article}{
      author={Zelevinski{\u\i}, A.~V.},
       title={The {$p$}-adic analogue of the {K}azhdan-{L}usztig conjecture},
        date={1981},
        ISSN={0374-1990},
     journal={Funktsional. Anal. i Prilozhen.},
      volume={15},
      number={2},
       pages={9\ndash 21, 96},
      review={\MR{617466 (84g:22039)}},
}

\bib{MR783619}{article}{
      author={Zelevinski{\u\i}, A.~V.},
       title={Two remarks on graded nilpotent classes},
        date={1985},
        ISSN={0042-1316},
     journal={Uspekhi Mat. Nauk},
      volume={40},
      number={1(241)},
       pages={199\ndash 200},
      review={\MR{783619 (86e:14027)}},
}

\bib{MR584084}{article}{
      author={Zelevinsky, A.~V.},
       title={Induced representations of reductive {${\germ p}$}-adic groups.
  {II}. {O}n irreducible representations of {${\rm GL}(n)$}},
        date={1980},
        ISSN={0012-9593},
     journal={Ann. Sci. \'Ecole Norm. Sup. (4)},
      volume={13},
      number={2},
       pages={165\ndash 210},
         url={http://www.numdam.org/item?id=ASENS_1980_4_13_2_165_0},
      review={\MR{584084 (83g:22012)}},
}

\end{biblist}
\end{bibdiv}

\end{document}